\newtheorem{theorem}{Theorem}[section]
\newtheorem{lemma}[theorem]{Lemma}
\newtheorem{proposition}[theorem]{Proposition}
\newtheorem{corollary}[theorem]{Corollary}
\theoremstyle{definition}
\newtheorem{definition}[theorem]{Definition}
\theoremstyle{remark}
\newtheorem{remark}[theorem]{Remark}
\numberwithin{equation}{section}
\newcommand{\iph}{\ensuremath i\!+\!3}
\newcommand{\ipt}{\ensuremath i\!+\!2}
\newcommand{\ipo}{\ensuremath i\!+\!1}
\newcommand{\imo}{\ensuremath i\!-\!1}
\newcommand{\imt}{\ensuremath i\!-\!2}
\newcommand{\imh}{\ensuremath i\!-\!3}
\newcommand{\imf}{\ensuremath i\!-\!4}
\newcommand{\triple}{\ensuremath i\!-\!1,i,i\!+\!1}
\newcommand{\npo}{\ensuremath n\!+\!1}
\newcommand{\nmo}{\ensuremath n\!-\!1}
\newcommand{\nmt}{\ensuremath n\!-\!2}
\newcommand{\nmh}{\ensuremath n\!-\!3}
\newcommand{\G}{\ensuremath\mathcal{G}}
\newcommand{\C}{\ensuremath\mathcal{C}}
\newcommand{\LSP}{\ensuremath\mathrm{LSP}}
\newcommand{\B}{\bullet}
\newcommand{\cs}[1]{c@{\hskip #1ex}}
\newlength\cellsize \setlength\cellsize{12\unitlength}
\newcommand\cellify[1]{\def\thearg{#1}\def\nothing{}%
\ifx\thearg\nothing
\vrule width0pt height\cellsize depth0pt\else
\hbox to 0pt{\usebox2\hss}\fi%
\vbox to 12\unitlength{
\vss
\hbox to 12\unitlength{\hss$#1$\hss}
\vss}}
\newcommand\tableau[1]{\vtop{\let\\=\cr
\setlength\baselineskip{-12000pt}
\setlength\lineskiplimit{12000pt}
\setlength\lineskip{0pt}
\halign{&\cellify{##}\cr#1\crcr}}}
\newlength\smcellsize \setlength\smcellsize{8\unitlength}
\newcommand\smcellify[1]{\def\thearg{#1}\def\nothing{}%
\ifx\thearg\nothing
\vrule width0pt height\smcellsize depth0pt\else
\hbox to 0pt{\usebox3\hss}\fi%
\vbox to 8\unitlength{
\vss
\hbox to 8\unitlength{\hss$#1$\hss}
\vss}}
\newcommand\smtableau[1]{\vtop{\let\\=\cr
\setlength\baselineskip{-8000pt}
\setlength\lineskiplimit{8000pt}
\setlength\lineskip{0pt}
\halign{&\smcellify{##}\cr#1\crcr}}}
\newcommand{\e}{\mbox{}}
\definecolor{boxgray}{gray}{.7}
\newcommand{\cb}{\color{boxgray}\rule{1\cellsize}{1\cellsize}\hspace{-\cellsize}\usebox2}
\newcommand{\stab}[3]{\begin{array}{c}\rnode{#1}{\tableau{#2}}\\\rnode{#1#1}{_{#3}}\end{array}}
\newcommand{\smstab}[3]{\begin{array}{c}\rnode{#1}{\smtableau{#2}}\\\rnode{#1#1}{_{#3}}\end{array}}
\newcommand{\sbull}[2]{\begin{array}{c}\rnode{#1}{\B}\\[-1ex]\rnode{#1#1}{\makebox[0pt]{$_{#2}$}}\end{array}}
\definecolor{lightgray}{gray}{.85}
\begin{document}


\title[LLT and Macdonald positivity]{Dual equivalence graphs and \\ 
  a combinatorial proof of LLT and Macdonald positivity}  

\author[S. Assaf]{Sami H. Assaf}
\address{Department of Mathematics, University of Southern California, Los Angeles, CA 90089-2532}
\email{shassaf@usc.edu}
\thanks{Work supported in part by NSF MSPRF DMS-0703567.}

\subjclass[2000]{Primary 05E10; Secondary 05A30, 33D52}



\keywords{LLT polynomials, Macdonald polynomials, dual equivalence
  graphs, quasisymmetric functions, Schur positivity}

\begin{abstract}
  We make a systematic study of a new combinatorial construction
  called a dual equivalence graph. We axiomatize these graphs and
  prove that their generating functions are symmetric and Schur
  positive. By constructing a graph on ribbon tableaux which we
  transform into a dual equivalence graph, we give a combinatorial
  proof of the symmetry and Schur positivity of the ribbon tableaux
  generating functions introduced by Lascoux, Leclerc and
  Thibon. Using Haglund's formula for the transformed Macdonald
  polynomials, this also gives a combinatorial formula for the Schur
  expansion of Macdonald polynomials.
\end{abstract}

\maketitle

%
\section{Introduction}
%
\label{sec:introduction}

The immediate purpose of this paper is to give a combinatorial formula
for the Schur coefficients of LLT polynomials which, as a corollary,
yields a combinatorial formula for the Schur coefficients of Macdonald
polynomials. Our real purpose, however, is not only to obtain these
results, but also to introduce a new combinatorial construction,
called a \emph{dual equivalence graph}, by which one can establish the
symmetry and Schur positivity of functions expressed in terms of
monomials.

The transformed Macdonald polynomials, $\widetilde{H}_{\mu}(x;q,t)$, a
transformation of the polynomials introduced by Macdonald
\cite{Macdonald1988} in 1988, are defined to be the unique symmetric
functions satisfying certain triangularity and orthogonality
conditions. The existence of functions satisfying these conditions is
a theorem, from which it follows that the $\widetilde{H}_{\mu}(x;q,t)$
form a basis for symmetric functions in two additional parameters. The
\emph{Kostka-Macdonald coefficients}, denoted
$\widetilde{K}_{\lambda,\mu}(q,t)$, give the change of basis from
Macdonald polynomials to Schur functions, namely,
\begin{displaymath}
  \widetilde{H}_{\mu}(x;q,t) = \sum_{\lambda}
  \widetilde{K}_{\lambda,\mu}(q,t) s_{\lambda}(x) .
\end{displaymath}
A priori, $\widetilde{K}_{\lambda,\mu}(q,t)$ is a rational function in
$q$ and $t$ with rational coefficients,
i.e. $\widetilde{K}_{\lambda,\mu}(q,t) \in \mathbb{Q}(q,t)$.

The Macdonald Positivity Theorem \cite{Haiman2001}, first conjectured
by Macdonald in 1988 \cite{Macdonald1988}, states that
$\widetilde{K}_{\lambda,\mu} (q,t)$ is in fact a polynomial in $q$ and
$t$ with nonnegative integer coefficients,
i.e. $\widetilde{K}_{\lambda,\mu} (q,t) \in \mathbb{N}[q,t]$. Garsia
and Haiman \cite{GaHa1993} conjectured that the transformed Macdonald
polynomials $\widetilde{H}_{\mu}(x;q,t)$ could be realized as the
bi-graded characters of certain modules for the diagonal action of the
symmetric group $S_n$ on two sets of variables. Once resolved, this
conjecture gives a representation theoretic interpretation of
Kostka-Macdonald coefficients as the graded multiplicity of an
irreducible representation in the Garsia-Haiman module, and hence
$\widetilde{K}_{\lambda,\mu} (q,t) \in \mathbb{N}[q,t]$. Following an
idea outlined by Procesi, Haiman \cite{Haiman2001} proved this
conjecture by analyzing the algebraic geometry of the isospectral
Hilbert scheme of $n$ points in the plane, consequently establishing
Macdonald Positivity. This proof, however, is purely geometric and
does not offer a combinatorial interpretation for
$\widetilde{K}_{\lambda,\mu} (q,t)$.

The LLT polynomial $\widetilde{G}_{\mu}^{(k)}(x;q)$, originally
defined by Lascoux, Leclerc and Thibon \cite{LLT1997} in 1997, is the
$q$-generating function of $k$-ribbon tableaux of shape $\mu$ weighted
by a statistic called cospin. By the Stanton-White correspondence
\cite{StWh1985}, $k$-ribbon tableaux are in bijection with certain
$k$-tuples of tableaux, from which it follows that LLT polynomials are
$q$-analogs of products of Schur functions. More recently, an
alternative definition of
$\widetilde{G}_{\boldsymbol{\mu}}^{(k)}(x;q)$ as the $q$-generating
function of $k$-tuples of semi-standard tableaux of shapes
$\boldsymbol{\mu} = (\mu^{(0)}, \ldots, \mu^{(k-1)})$ weighted by a
statistic called $k$-inversions is given in \cite{HHLRU2005}.

Using Fock space representations of quantum affine Lie algebras
constructed by Kashiwara, Miwa and Stern \cite{KMS1995}, Lascoux,
Leclerc and Thibon \cite{LLT1997} proved that
$\widetilde{G}^{(k)}_{\mu}(x;q)$ is a symmetric function.  Thus we may
define the Schur coefficients, $\widetilde{K}^{(k)}_{\lambda,\mu}(q)$,
by
\begin{displaymath}
  \widetilde{G}^{(k)}_{\mu}(x;q) = \sum_{\lambda}
  \widetilde{K}^{(k)}_{\lambda,\mu}(q) s_{\lambda}(x) .
\end{displaymath}
Using Kazhdan-Lusztig theory, Leclerc and Thibon \cite{LeTh2000}
proved that $\widetilde{K}^{(k)}_{\lambda,\mu}(q) \in \mathbb{N}[q]$
for straight shapes $\mu$. Grojnowski and Haiman \cite{GrHa2007}
report to have extended this to skew shapes. The proof of
positivity is by a geometric argument, and as such offers no
combinatorial description for $\widetilde{K}^{(k)}_{\lambda,\mu}(q)$.

In 2004, Haglund \cite{Haglund2004} conjectured a combinatorial
formula for the monomial expansion of
$\widetilde{H}_{\mu}(x;q,t)$. Haglund, Haiman and Loehr \cite{HHL2005}
proved this formula using an elegant combinatorial argument, but this
does not prove that $\widetilde{K}_{\lambda,\mu} (q,t) \in
\mathbb{N}[q,t]$ since monomials are not Schur positive.  Combining
Theorem 2.3, Proposition 3.4 and equation (23) from \cite{HHL2005},
Haglund's formula expresses $\widetilde{H}_{\mu}(x;q,t)$ as a positive
sum of LLT polynomials $\widetilde{G}^{(\mu_1)}_{\nu}(x;q)$ for
certain skew shapes $\nu$ depending on $\mu$. Therefore a proof of LLT
positivity for skew shapes would also provide a proof of Macdonald
positivity. One of the main purposes of this paper is to give a
combinatorial proof of LLT positivity for arbitrary shapes, thereby
completing the combinatorial proof of Macdonald positivity from
Haglund's formula.

Combinatorial formulas for $\widetilde{K}^{(k)}_{\lambda,\mu}(q)$ and
$\widetilde{K}_{\lambda,\mu}(q,t)$ have been found for certain special
cases.  In 1995, Carr\'{e} and Leclerc \cite{CaLe1995} gave a
combinatorial interpretation of $\widetilde{K}^{(2)}_{\lambda,\mu}(q)$
in their study of $2$-ribbon tableaux, though a complete proof of
their result wasn't found until 2005 by van Leeuwen
\cite{vanLeeuwen2005} using the theory of crystal graphs. Also in
1995, Fishel \cite{Fishel1995} gave the first combinatorial
interpretation for $\widetilde{K}_{\lambda,\mu}(q,t)$ when $\mu$ is a
partition with $2$ columns using rigged configurations. Other
techniques have also led to formulas for the $2$ column Macdonald
polynomials \cite{Zabrocki1999,LaMo2003,Haglund2004}, but in all
cases, finding extensions for these formulas has proven elusive.

In this paper, we consider the dual equivalence relation on standard
tableaux defined in \cite{Haiman1992}. From this relation, Haiman
suggested defining an edge-colored graph on standard tableaux and
investigating how this graph may be related to the crystal graph on
semi-standard tableaux. The result of this idea is a new combinatorial
method for establishing the Schur positivity of a function expressed
in terms of monomials. We apply this method to LLT polynomials to
obtain a combinatorial proof that
$\widetilde{K}^{(k)}_{\lambda,\mu}(q)$ and
$\widetilde{K}_{\lambda,\mu}(q,t)$ are nonnegative integer
polynomials.

This paper is organized as follows. In
Section~\ref{sec:preliminaries}, we review symmetric functions and the
associated tableaux combinatorics.  The theory of dual equivalence
graphs is developed in Section~\ref{sec:deg}, beginning in
Section~\ref{sec:deg-standard} with a review of dual equivalence and
the construction of the graphs suggested by Haiman. In
Section~\ref{sec:deg-general}, we define a \emph{dual equivalence
  graph} and present the structure theorem stating that every dual
equivalence graph is isomorphic to one of the graphs from
Section~\ref{sec:deg-standard}. On the symmetric function level, this
shows that the generating function of a dual equivalence graph is
symmetric and Schur positive and gives a combinatorial interpretation
for the Schur coefficients. The proof of the theorem is left to
Section~\ref{sec:deg-proof}.

The remainder of this paper contains the first application of this
theory, beginning in Section~\ref{sec:llt} with the construction of a
graph on $k$-tuples of tableaux.  We present a reformulation of LLT
polynomials in Section~\ref{sec:llt-words}, and use it to describe the
vertices and signatures of the graph. The edges are constructed in
Section~\ref{sec:llt-edges} using a natural analog of dual
equivalence. While these graphs are not, in general, dual equivalence
graphs, we show in Section~\ref{sec:Dgraphs} that they can be
transformed into dual equivalence graphs in a natural way that
preserves the generating function. In particular, connected components
of these graphs are Schur positive. The main consequence of this is a
purely combinatorial proof of the symmetry and Schur positivity of LLT
and Macdonald polynomials.

Examples of the graphs and transformations introduced in this paper are
given in the appendices.

\newpage
\begin{center}
{\sc Acknowledgments}
\end{center}

The author is grateful to Mark Haiman for inspiring and helping to
develop many of the ideas contained in this paper and in its precursor
\cite{Assaf2007}. The author also thanks A. Garsia and G. Musiker for
helping to implement the algorithms described in
Section~\ref{sec:Dgraphs} in Maple. Finally, the author is indebted to
M. Haiman, J. Haglund, S. Billey, N. Bergeron and the referees for
carefully reading earlier drafts and providing feedback that greatly
improved the exposition.

%
\section{Preliminaries}
%
\label{sec:preliminaries}

\subsection{Partitions and tableaux}
\label{sec:pre-partitions}

We represent an integer \emph{partition} $\lambda$ by the decreasing
sequence of its (nonzero) parts
$$
\lambda = (\lambda_1,\lambda_2, \ldots, \lambda_l), \;\;\;\;\;
\lambda_1 \geq \lambda_2 \geq \cdots \geq \lambda_l > 0 .
$$
We denote the size of $\lambda$ by $|\lambda| = \sum_i \lambda_i$ and
the length of $\lambda$ by $l(\lambda) = \max \{i : \lambda_i >
0\}$. If $|\lambda| = n$, we say that $\lambda$ is a \emph{partition of
  $n$}. Let $\geq$ denote the \emph{dominance partial ordering} on
partitions of $n$, defined by
\begin{eqnarray}
  \lambda \geq \mu & \Leftrightarrow & \lambda_1 + \lambda_2 + \cdots +
  \lambda_i \geq \mu_1 + \mu_2 + \cdots + \mu_i \;\;\; \forall \; i .
\label{eqn:dominance}
\end{eqnarray}

A \emph{composition} $\pi$ is a finite sequence of non-negative
integers $\pi = (\pi_1, \pi_2, \ldots, \pi_m), \pi_i \geq 0$.

The \emph{Young diagram} of a partition $\lambda$ is the set of points
$(i,j)$ in the $\mathbb{Z} \times\mathbb{Z}$ lattice such that $1 \leq
i \leq \lambda_j$. We draw the diagram so that each point $(i,j)$ is
represented by the unit cell southwest of the point; see
Figure~\ref{fig:5441}.  Abusing notation, we write $\lambda$ for both
the partition and its diagram.

\begin{figure}[ht]
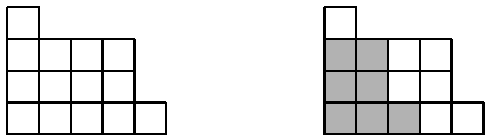

  \begin{displaymath}
    \tableau{ \e  \\
      \e & \e & \e & \e \\
      \e & \e & \e & \e \\
      \e & \e & \e & \e & \e} 
    \hspace{5\cellsize}
    \tableau{ \e  \\
      \cb & \cb & \e & \e \\
      \cb & \cb & \e & \e \\
      \cb & \cb & \cb & \e & \e} 
  \end{displaymath}
  \caption{\label{fig:5441} The Young diagram for $(5,4,4,1)$ and the
    skew diagram for $(5,4,4,1)/(3,2,2)$.}
\end{figure}

For partitions $\lambda,\mu$, we write $\mu \subset \lambda$ whenever
the diagram of $\mu$ is contained within the diagram of $\lambda$;
equivalently $\mu_i \leq \lambda_i$ for all $i$. In this case, we
define the \emph{skew diagram} $\lambda / \mu$ to be the set theoretic
difference $\lambda - \mu$, e.g. see Figure~\ref{fig:5441}. For our
purposes, we depart from the norm by \emph{not} identifying skew
shapes that are translates of one another.  A \emph{connected skew
  diagram} is one where exactly one cell has no cell immediately north
or west of it, and exactly one cell has no cell immediately south or
east of it.  A \emph{ribbon}, also called a \emph{rim hook}, is a
connected skew diagram containing no $2\times 2$ block.

A \emph{filling} of a (skew) diagram $\lambda$ is a map $S : \lambda
\rightarrow \mathbb{Z}_+$. A \emph{semi-standard Young tableau} is a
filling which is weakly increasing along each row and strictly
increasing along each column. A semi-standard Young tableau is
\emph{standard} if it is a bijection from $\lambda$ to $[n]$, where
$[n] = \{1,2,\ldots,n\}$. For $\lambda$ a diagram of size $n$, define
\begin{eqnarray*}
  \mathrm{SSYT}(\lambda) & = & \{\mbox{semi-standard tableaux}\; 
  T : \lambda \rightarrow \mathbb{Z}_+ \}, \\
  \mathrm{SYT}(\lambda) & = & \{\mbox{standard tableaux}\; 
  T : \lambda \tilde{\rightarrow} [n]\}. 
\end{eqnarray*}
For $T \in \mathrm{SSYT}(\lambda)$, we say that $T$ has \emph{shape}
$\lambda$.  If $T$ contains entries $1^{\pi_1}, 2^{\pi_2}, \ldots$ for
some composition $\pi$, then we say $T$ has \emph{weight} $\pi$.

\begin{figure}[ht]
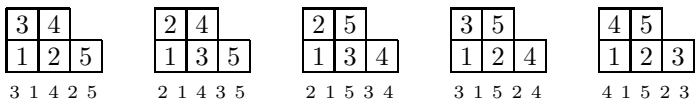

  \begin{displaymath}
    \begin{array}{ccccc}
          \stab{a}{3 & 4 \\ 1 & 2 & 5}{3 \ 1 \ 4 \ 2 \ 5} &
          \stab{b}{2 & 4 \\ 1 & 3 & 5}{2 \ 1 \ 4 \ 3 \ 5} &
          \stab{c}{2 & 5 \\ 1 & 3 & 4}{2 \ 1 \ 5 \ 3 \ 4} &
          \stab{d}{3 & 5 \\ 1 & 2 & 4}{3 \ 1 \ 5 \ 2 \ 4} &
          \stab{e}{4 & 5 \\ 1 & 2 & 3}{4 \ 1 \ 5 \ 2 \ 3}
    \end{array}
  \end{displaymath}
  \caption{\label{fig:SYT32} The standard Young tableaux of shape
    $(3,2)$ with their content reading words.}
\end{figure}

The \emph{content} of a cell of a diagram indexes the diagonal on
which it occurs, i.e. $c(x) = i-j$ when the cell $x$ lies in position
$(i,j) \in \mathbb{Z}_+ \times \mathbb{Z}_+$. The \emph{content
  reading word} of a semi-standard tableaux is obtained by reading the
entries in increasing order of content, going southwest to northeast
along each diagonal (on which the content is constant). For examples,
see Figure~\ref{fig:SYT32}.

\subsection{Symmetric functions}
\label{sec:pre-functions}

We have the familiar integral bases for $\Lambda$, the ring of
symmetric functions, from \cite{Macdonald1995}: the monomial symmetric
functions $m_{\lambda}$, the elementary symmetric functions
$e_{\lambda}$, the complete homogeneous symmetric functions
$h_{\lambda}$, and, most importantly, the \emph{Schur functions},
$s_{\lambda}$, which may be defined in several ways. For the purposes
of this paper, we take the tableau approach:
\begin{equation}
  s_{\lambda}(x) = \sum_{T \in \mathrm{SSYT}(\lambda)} x^{T} ,
\label{eqn:s}
\end{equation}
where $x^T$ is the monomial $x_{1}^{\pi_1} x_{2}^{\pi_2} \cdots$ when
$T$ has weight $\pi$. This formula also defines the \emph{skew Schur
  functions}, $s_{\lambda/\mu}$, by taking the sum over semi-standard
tableaux of shape $\lambda/\mu$. 

The \emph{Kostka numbers}, $K_{\lambda,\mu}$, give the change of basis
from the complete homogeneous symmetric functions to the Schur
functions and, dually, the change of basis from Schur functions to
monomial symmetric functions, i.e.
$$
h_{\mu} = \sum_{\lambda} K_{\lambda,\mu} s_{\lambda} ; \;\;\;
s_{\lambda} = \sum_{\mu} K_{\lambda,\mu} m_{\mu} .
$$
In particular, $K_{\lambda,\mu}$ is the number of semi-standard Young
tableaux of shape $\lambda$ and weight $\mu$. For example,
$K_{(3,2),(1^5)}=5$ corresponding to the five standard Young tableaux
of shape $(3,2)$ in Figure~\ref{fig:SYT32}. Since the Schur functions
are the characters of the irreducible representations of
$\mathrm{GL}_n$, the Kostka numbers also give weight multiplicities
for $\mathrm{GL}_n$ modules. Throughout this paper, we are interested
in certain one- and two-parameter generalizations of the Kostka
numbers.

As we shall see in Section~\ref{sec:deg}, it will often be useful to
express a function in terms of Gessel's fundamental quasi-symmetric
functions \cite{Gessel1984} rather than monomials. For $\sigma \in
\{\pm 1\}^{\nmo}$, the \emph{fundamental quasi-symmetric function}
$Q_{\sigma}(x)$ is defined by
\begin{equation}
  Q_{\sigma}(x) = \sum_{\substack{i_1 \leq \cdots \leq i_n \\ i_j =
      i_{j+1} \Rightarrow \sigma_j = +1}} x_{i_1} \cdots x_{i_n} .
\label{eqn:quasisym}
\end{equation}
We have indexed quasi-symmetric functions by sequences of $+1$'s and
$-1$'s, though by setting $D(\sigma) = \{ i | \sigma_i = -1\}$, we may
change the indexing to subsets of $[\nmo]$. Similarly, letting
$\pi(\sigma)$ be the composition defined by setting $\pi_1 + \cdots +
\pi_i$ to be the position of the $i$th $-1$, where here we regard
$\sigma_n=-1$ as the final $-1$, we may change the indexing to
compositions of $n$.

To connect quasi-symmetric functions with Schur functions, for $T$ a
standard tableau on $[n]$ with content reading word $w_{T}$, define
the \emph{descent signature} $\sigma(T) \in \{\pm1\}^{\nmo}$ by
\begin{equation}
  \sigma(T)_{i} \; = \; \left\{ 
    \begin{array}{ll}
      +1 & \; \mbox{if $i$ appears to the left of $\ipo$ in $w_T$} \\
      -1 & \; \mbox{if $\ipo$ appears to the left of $i$ in $w_T$}
    \end{array} \right. .
\label{eqn:sigma}
\end{equation}
For example, the descent signatures for the tableaux in
Figure~\ref{fig:SYT32} are $+-++, \ -+-+, \ -++-, \ +-+-, \ ++-+$,
from left to right.  Note that if we replace the content reading word
with either the row or column reading word, the resulting sequence in
\eqref{eqn:sigma} remains unchanged.

\begin{proposition}[\cite{Gessel1984}]
  The Schur function $s_{\lambda}$ is expressed in terms of
  quasi-symmetric functions by
  \begin{equation}
    s_{\lambda}(x) = \sum_{T \in \mathrm{SYT}(\lambda)} Q_{\sigma(T)}(x) .
  \label{eqn:quasi-s}
  \end{equation}
\label{prop:quasisym}
\end{proposition}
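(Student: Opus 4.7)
The proof I have in mind is the classical one via standardization. Given an SSYT $T$ of shape $\lambda$ and weight $\pi$, define its \emph{standardization} $\mathrm{std}(T) \in \mathrm{SYT}(\lambda)$ by relabeling the $\pi_1$ entries equal to $1$ as $1, 2, \ldots, \pi_1$ in the order they appear in the content reading word, then the $\pi_2$ entries equal to $2$ as $\pi_1+1,\ldots,\pi_1+\pi_2$ in reading order, and so on. The semi-standard column-strict condition ensures that equal entries never share a column, so this relabeling respects strict increase along columns and weak increase along rows; hence $\mathrm{std}(T)$ is indeed an SYT of shape $\lambda$.

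The plan is then to classify the fiber $\mathrm{std}^{-1}(U)$ for each fixed $U \in \mathrm{SYT}(\lambda)$. An SSYT $T$ with $\mathrm{std}(T) = U$ is determined by the sequence $(v_1, v_2, \ldots, v_n)$, where $v_k$ is the entry of $T$ placed in the cell occupied by $k$ in $U$. Reading these off in the content reading order of $U$ yields a weakly increasing sequence, because entries of $T$ listed in reading order by standardization value are nondecreasing. Moreover, if $\sigma(U)_k = -1$, i.e.\ $k\!+\!1$ precedes $k$ in the reading word of $U$, then one must have $v_k < v_{k+1}$: otherwise $v_k = v_{k+1}$ would force standardization to assign $k$ to the cell currently labeled $k\!+\!1$, contradicting $\mathrm{std}(T) = U$. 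Conversely, given any such weakly increasing sequence $(v_k)$ satisfying the strict inequalities dictated by $\sigma(U)$, filling $U$ by $v_k \mapsto (\text{cell of } k)$ produces an SSYT standardizing back to $U$: the row and column conditions of $U$ together with the weak monotonicity of the $v_k$ ensure the SSYT conditions for $T$ (strict column increase uses that any two cells in the same column correspond to indices $k < k'$ with $k\!+\!1, \ldots, k'$ crossing a descent).

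Combining these two steps, I would write
\begin{equation*}
  s_\lambda(x) \;=\; \sum_{T \in \mathrm{SSYT}(\lambda)} x^T \;=\; \sum_{U \in \mathrm{SYT}(\lambda)} \;\sum_{T \in \mathrm{std}^{-1}(U)} x^T \;=\; \sum_{U \in \mathrm{SYT}(\lambda)} \;\sum_{\substack{v_1 \le \cdots \le v_n \\ v_k < v_{k+1} \text{ if } \sigma(U)_k = -1}} x_{v_1} \cdots x_{v_n},
\end{equation*}
and observe that the inner sum is exactly $Q_{\sigma(U)}(x)$ by the definition \eqref{eqn:quasisym}.

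The main obstacle is verifying the bijection $\mathrm{SSYT}(\lambda) \leftrightarrow \{(U, (v_k))\}$ honestly, in particular checking that a candidate filling reconstructed from $(U,(v_k))$ actually satisfies strict column increase. This reduces to the observation that if two cells lie in the same column of $\lambda$, with the upper cell labeled $k$ and the lower cell labeled $k'$ in $U$, then $k < k'$ (since $U$ is column-strict), and the reading word visits the lower cell of a column before the upper cell on the same diagonal-by-diagonal SW-to-NE sweep; so along the interval $[k,k']$ there must be at least one descent of $U$, forcing a strict inequality among $v_k, \ldots, v_{k'}$ and hence $v_k < v_{k'}$. Once this geometric fact is established, the proposition follows immediately.
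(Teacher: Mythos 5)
The paper states this proposition as a citation to Gessel and supplies no proof of its own, so there is no internal argument to compare against. Your standardization argument is the standard proof and is correct: for a fixed $U \in \mathrm{SYT}(\lambda)$, the fiber $\mathrm{std}^{-1}(U)$ is parametrized exactly by weakly increasing sequences $(v_1 \le \cdots \le v_n)$ with $v_k < v_{k+1}$ whenever $\sigma(U)_k = -1$, and summing $x_{v_1}\cdots x_{v_n}$ over these sequences is precisely $Q_{\sigma(U)}(x)$. Your key geometric observation — that two cells in the same column of $\lambda$ carry labels $k < k'$ in $U$ such that the larger label is read first, forcing a descent in $[k,k'-1]$ and hence $v_k < v_{k'}$ — is exactly what makes the reconstructed filling column-strict. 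One small item worth writing out explicitly in a final version: having built $T$ from $(U,(v_k))$, you should also verify that $\mathrm{std}(T)=U$; this holds because the indices sharing a common value $v$ form a descent-free interval of $U$, so those cells already occur in increasing index order in the reading word, and standardization of $T$ relabels them in that same order. A minor notational caution: the paper draws diagrams in French convention (row $j=1$ at the bottom, values increasing upward, and the cell higher on the page having smaller content and hence being read earlier), so your phrases about which cell is ``upper'' and which is ``visited first'' should be flipped to match; the two flips cancel, so the mathematics is unaffected.
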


Comparing \eqref{eqn:s} with \eqref{eqn:quasi-s}, using
quasi-symmetric functions instead of monomials allows us to work with
standard tableaux rather than semi-standard tableaux. One advantage of
this formula is that unlike \eqref{eqn:s}, the right hand side of
\eqref{eqn:quasi-s} is finite. Continuing with the example in
Figure~\ref{fig:SYT32},
\begin{displaymath}
  s_{(3,2)}(x) = Q_{+-++}(x) + Q_{-+-+}(x) + Q_{-++-}(x) + Q_{+-+-}(x)
  + Q_{++-+}(x).
\end{displaymath}

\subsection{LLT polynomials}
\label{sec:pre-llt}

Lascoux, Leclerc and Thibon \cite{LLT1997} originally defined
$\widetilde{G}_{\mu}^{(k)}(x;q)$ to be the $q$-generating function of
$k$-ribbon tableaux of shape $\mu$ weighted by cospin. Below we give
an alternative definition of
$\widetilde{G}_{\boldsymbol{\mu}}^{(k)}(x;q)$ as the $q$-generating
function of $k$-tuples of semi-standard tableaux of shapes
$\boldsymbol{\mu} = (\mu^{(0)}, \ldots, \mu^{(k-1)})$ weighted by
$k$-inversions first presented in \cite{HHLRU2005}. For a detailed
account of the equivalence of these definitions (actually
$q^{a}\widetilde{G}_{\mu}^{(k)}(x;q) =
\widetilde{G}_{\boldsymbol{\mu}}^{(k)}(x;q)$ for a constant $a \geq 0$
depending on $\mu$), see \cite{HHLRU2005,Assaf2007}.

Extending prior notation, define
\begin{eqnarray*}
  \mathrm{SSYT}_k(\boldsymbol{\lambda}) & = & 
  \{\mbox{semi-standard $k$-tuples of tableaux of shapes
    $(\lambda^{(0)}, \ldots, \lambda^{(k-1)})$} \} , \\  
  \mathrm{SYT}_k(\boldsymbol{\lambda}) & = & 
  \{\mbox{standard $k$-tuples of tableaux of shapes $(\lambda^{(0)},
    \ldots, \lambda^{(k-1)})$} \}.  
\end{eqnarray*}
As with tableaux, if $\mathbf{T}=(T^{(0)}, \ldots, T^{(k-1)}) \in
\mathrm{SSYT}_k(\boldsymbol{\lambda})$ has entries $1^{\pi_1},
2^{\pi_2}, \ldots$, then we say that $\mathbf{T}$ has \emph{shape}
$\boldsymbol{\lambda}$ and \emph{weight} $\pi$. Note that a standard
$k$-tuple of tableaux has weight $(1^n)$, e.g. see
Figure~\ref{fig:LLT-inv}, and this is not the same as a $k$-tuple of
standard tableaux, which has weight $(1^{m_1},2^{m_2}, \ldots)$ where
$m_i$ is the number of shapes of size at least $i$.

\begin{figure}[ht]
  \begin{displaymath}
      \tableau{\\ 7 & 11 \\ 2 & 6 & 10} \hspace{2\cellsize}
      \tableau{\\ 8 \\ 1 & 12}          \hspace{2\cellsize}
      \makebox[0pt]{\rule[-2\cellsize]{.5pt}{\cellsize}}
      \rule[-2\cellsize]{\cellsize}{.5pt}  
      \hspace{2\cellsize}
      \tableau{9 \\ 3 & 5 \\ \cb & 4}
  \end{displaymath}
  \caption{\label{fig:LLT-inv}A standard $4$-tuple of shape $( \ (3,2),
    \ (2,1), \ \varnothing, \ (2,2,1)/(1) \ )$}
\end{figure}

For a $k$-tuple of (skew) shapes $(\lambda^{(0)}, \ldots,
\lambda^{(k-1)})$, define the \emph{shifted content} of a cell $x$ by
\begin{equation}
  \widetilde{c}(x) \; = \; k \cdot c(x) + i
\label{eqn:shifted-content}
\end{equation}
when $x$ is a cell of $\lambda^{(i)}$, where $c(x)$ is the usual
content of $x$ regarded as a cell of $\lambda^{(i)}$. For $\mathbf{T}
\in \mathrm{SSYT}_k$, let $\mathbf{T}(x)$ denote the entry of the cell
$x$ in $\mathbf{T}$. Define the \emph{set of $k$-inversions of
  $\mathbf{T}$} by
\begin{equation}
  \mathrm{Inv}_k(\mathbf{T}) = \{ (x,y) \; | \; k > \widetilde{c}(y) -
  \widetilde{c}(x) > 0 \; \mbox{and} \; \mathbf{T}(x) > \mathbf{T}(y) \}.
\label{eqn:Invk-T}
\end{equation}
Then the \emph{$k$-inversion number of $\mathbf{T}$} is given by
\begin{equation}
  \mathrm{inv}_k(\mathbf{T}) = \left| \mathrm{Inv}_k(\mathbf{T}) \right| .
\label{eqn:invk-T}
\end{equation}

For example, suppose $\mathbf{T}$ is the $4$-tuple of tableaux in
Figure~\ref{fig:LLT-inv}. Since $\mathbf{T}$ is standard, let us abuse
notation by representing a cell of $\mathbf{T}$ by the entry it
contains. Then the set of $4$-inversions is
\begin{displaymath}
  \mathrm{Inv}_4(\mathbf{T}) = \left\{ \begin{array}{c}
      (9,7), \ (9,8), \ ( 7,3), \ (8,3), \ (8,2), \ (3,2), \ ( 3,1), \\
      ( 2,1), \ (11,1), \ (11,5), \ ( 6,4), \ (12,4), \ (12,10)
    \end{array} \right\} ,
\end{displaymath}
and so $\mathrm{inv}_4(\mathbf{T}) = 13$.  

By \cite{HHLRU2005}, the LLT polynomial
$\widetilde{G}^{(k)}_{\boldsymbol{\mu}}(x;q)$ is given by
\begin{equation}
  \widetilde{G}^{(k)}_{\boldsymbol{\mu}}(x;q) \; = \; 
  \sum_{\mathbf{T} \in \mathrm{SSYT}_{k}(\boldsymbol{\mu})}
  q^{\mathrm{inv}_k(\mathbf{T})} x^{\mathbf{T}} ,
\label{eqn:llt}
\end{equation}
where $x^{\mathbf{T}}$ is the monomial $x_1^{\pi_1}
x_{2}^{\pi_2}\cdots$ when $\mathbf{T}$ has weight $\pi$.

Notice that when $q=1$, \eqref{eqn:llt} reduces to a product of Schur
functions:
\begin{equation}
  \sum_{\mathbf{T} \in \mathrm{SSYT}_{k}(\boldsymbol{\lambda})} x^{\mathbf{T}} \; = \; \prod_{i=0}^{k-1}
  \sum_{T^{(i)} \in  \mathrm{SSYT}(\lambda^{(i)})} x^{T^{(i)}} \; = \;
  \prod_{i=0}^{k-1} s_{\lambda^{(i)}}(x) .
\label{eqn:schurprod} 
\end{equation}

Define the \emph{content reading word} of a $k$-tuple of tableaux to
be the word obtained by reading entries in increasing order of shifted
content and reading diagonals southwest to northeast. For the example
in Figure~\ref{fig:LLT-inv}, the content reading word is
$(9,7,8,3,2,11,1,5,6,12,4,10)$.

For $\mathbf{T}$ a standard $k$-tuple of tableaux, define
$\sigma(\mathbf{T})$ analogously to \eqref{eqn:sigma} using the
content reading word. Expressed in terms of quasi-symmetric functions,
\eqref{eqn:llt} becomes
\begin{equation}
  \widetilde{G}^{(k)}_{\boldsymbol{\mu}}(x;q) \; = \; \sum_{\mathbf{T} \in \mathrm{SYT}_{k}(\boldsymbol{\mu})}
  q^{\mathrm{inv}_k(\mathbf{T})} Q_{\sigma(\mathbf{T})}(x).
\label{eqn:llt-quasi}
\end{equation}
One of the main goals of this paper is to understand the Schur
coefficients of $\widetilde{G}^{(k)}_{\boldsymbol{\mu}}(x;q)$ defined by
\begin{displaymath}
  \widetilde{G}^{(k)}_{\boldsymbol{\mu}}(x;q) = \sum_{\lambda}
  \widetilde{K}^{(k)}_{\lambda,\boldsymbol{\mu}}(q) s_{\lambda}(x) .
\end{displaymath}
In particular, we will show that
$\widetilde{K}^{(k)}_{\lambda,\boldsymbol{\mu}}(q)$ is a polynomial in
$q$ with nonnegative integer coefficients.

\subsection{Macdonald polynomials}
\label{sec:pre-mac}

The transformed Macdonald polynomials $\widetilde{H}_{\mu}(x;q,t)$
were originally defined by Macdonald \cite{Macdonald1988} to be the
unique symmetric functions satisfying certain orthogonality and
triangularity conditions. Haglund's monomial expansion for Macdonald
polynomials \cite{Haglund2004,HHL2005} gives an alternative
combinatorial definition of $\widetilde{H}_{\mu}(x;q,t)$ as the
$q,t$-generating functions for fillings of the diagram of $\mu$,
e.g. see Figure~\ref{fig:filling}. Since the proof of the equivalence
of these two definitions is purely combinatorial \cite{HHL2005}, we
will use the latter characterization.

For a cell $x$ in the diagram of $\lambda$, define the \emph{arm of
  $x$} to be the set of cells east of $x$, and the \emph{leg of $x$}
to be the set of cells north of $x$. Denote the sizes of the arm and
leg of $x$ by $a(x)$ and $l(x)$, respectively. For example, letting
$x$ denote the cell with entry $3$ in the filling in
Figure~\ref{fig:filling}, the arm of $x$ consists of the cells with
entries $4$ and $10$ and the leg of $x$ consists of the cell with
entry $14$, and so we have $a(x)=2$ and $l(x)=1$.

\begin{figure}[ht]
  \begin{displaymath}
    \tableau{ 5 \\
      11 & 14 &  9 &  2 \\
      6 &  3 &  4 & 10 \\
      8 &  1 & 13 &  7 & 12}
  \end{displaymath}
  \caption{\label{fig:filling}A standard filling of shape $(5,4,4,1)$.}
\end{figure}

Let $S$ be a filling of a partition $\lambda$. A \emph{descent} of $S$
is a cell $c$ of $\lambda$, not in the first row, such that the entry
in $c$ is greater than the entry in the cell immediately south of
$c$. Denote by $\mathrm{Des}(S)$ the set of all descents of $S$, i.e.
\begin{equation}
  \mathrm{Des}(S) = \{ (i,j) \in \lambda \; | \; j>1 \; \mbox{and} \; S(i,j) > S(i,j-1) \}.
\label{eqn:Des}
\end{equation}
Define the \emph{major index} of $S$, denoted $\mathrm{maj}(S)$, by
\begin{equation}
  \mathrm{maj}(S) \; \stackrel{\mbox{\scriptsize def}}{=} \; 
  \left| \mathrm{Des}(S) \right| + \sum_{c \in \mathrm{Des}(S)} l(c) . 
\label{eqn:maj}
\end{equation}
Note that for $\mu=(1^n)$, this gives the usual major index for the
reading word of the filling. 

For example, let $S$ be the filling in Figure~\ref{fig:filling}. As
before, let us abuse notation by representing a cell of $S$ by the
entry which it contains. Then the descents of $S$ are $\mathrm{Des}(S)
= \left\{11, \ 14, \ 9, \ 3, \ 10 \right\}$, and so the major index of
$S$ is $\mathrm{maj}(S) = 5 + (1 + 0 + 0 + 1 + 1) = 8$.

An ordered pair of cells $(c,d)$ is called \emph{attacking} if $c$ and
$d$ lie in the same row with $c$ to the west of $d$, or if $c$ is in
the row immediately north of $d$ and $c$ lies strictly east of $d$.
An \emph{inversion pair} of $S$ is an attacking pair $(c,d)$ such that
the entry in $c$ is greater than the entry in $d$. Denote by
$\mathrm{Inv}(S)$ the set of inversion pairs of $S$, i.e.
\begin{equation}
  \mathrm{Inv}(S) = \left\{ \left((i,j), (g,h)\right) \in \lambda \; \left| \; 
    \begin{array}{c}
      j=h \;\mbox{and}\; i<g \;\mbox{or}\; j=h+1 \;\mbox{and}\; g<i, \\
      \mbox{and}\; S(i,j) > S(g,h) 
    \end{array} \right. \right\}.
\label{eqn:Inv}
\end{equation}
Define the \emph{inversion number} of $S$, denoted $\mathrm{inv}(S)$, by
\begin{equation}
  \mathrm{inv}(S) \; \stackrel{\mbox{\scriptsize def}}{=} \; 
  \left| \mathrm{Inv}(S) \right| - \sum_{c \in \mathrm{Des}(S)} a(c) .
\label{eqn:inv}
\end{equation}
Note that for $\mu=(n)$, this gives the usual inversion number for the
reading word of the filling.

For our running example, the inversion pairs of $S$ are given by
\begin{displaymath}
  \mathrm{Inv}(S) = \left\{ \begin{array}{rrrrrr}
      (11,9), & (14,2), & (9,6), & ( 6,4), & (10,1), & (13,7), \\
      (11,2), & (14,6), & (9,3), & ( 4,1), & ( 8,1), & (13,12) \\
      (14,9), & ( 9,2), & (6,3), & (10,8), & ( 8,7), &
    \end{array} \right\} ,
\end{displaymath}
and so the inversion number of $S$ is $\mathrm{inv}(S) = 17 - (3 + 2 +
1 + 2 + 0) = 9$.

\begin{remark}
  If $c \in \mathrm{Des}(S)$, say with $d$ the cell of $S$ immediately
  south of $c$, then for every cell $e$ of the arm of $c$, the entry
  in $e$ is either bigger than the entry in $d$ or smaller than the
  entry in $c$ (or both). In the former case, $(e,d)$ will form an
  inversion pair, and in the latter case, $(c,e)$ will form an
  inversion pair. Thus every triple of cells $(c,e,d)$ with $d$
  immediately south of $c$ and $e$ in the arm of $c$ contributes at
  least one inversion to $\mathrm{inv}(S)$, and so $\mathrm{inv}(S)$
  is a non-negative integer.
\label{rmk:inv-pos}
\end{remark}

By \cite{HHL2005}, the transformed Macdonald polynomial
$\widetilde{H}_{\mu}(x;q,t)$ is given by
\begin{equation}
  \widetilde{H}_{\mu}(x;q,t) = \sum_{S : \mu \rightarrow \mathbb{Z}_+}
  q^{\mathrm{inv}(S)} t^{\mathrm{maj}(S)} x^{S}  = \sum_{S : \mu
    \stackrel{\sim}{\rightarrow} [n]} q^{\mathrm{inv}(S)} t^{\mathrm{maj}(S)}
  Q_{\sigma(S)}, 
  \label{eqn:haglund}
\end{equation}
where $\sigma(S)$ is defined analogously to \eqref{eqn:sigma} using
the \emph{row reading word} of a standard filling $S$. For example,
the row reading word for the standard filling in
Figure~\ref{fig:filling} is $(5,11,14,9,2,6,3,4,10,8,1,13,7,12)$.
Again, our main objective is to understand the Schur coefficients
defined by
\begin{equation}
  \widetilde{H}_{\mu}(x;q,t) = \sum_{\lambda} \widetilde{K}_{\lambda,\mu}(q,t)
  s_{\lambda}(x) .
  \label{eqn:qt-kostka}
\end{equation}
In this paper, we give a combinatorial proof that
$\widetilde{K}_{\lambda,\mu}(q,t)$ is a polynomial in $q$ and $t$ with
nonnegative integer coefficients. This proof is a corollary to the
proof for $\widetilde{K}^{(k)}_{\lambda,\boldsymbol{\mu}}(q)$ as we
now explain.

The expression in \eqref{eqn:haglund} is related to LLT polynomials as
follows. Let $D$ be a possible descent set for $\mu$, i.e. $D$ is a
collection cells of $\mu/(\mu_1)$. For $i=1,\ldots,\mu_1$, let
$\mu_D^{(i-1)}$ be the ribbon obtained from the $i$th column of $\mu$
by putting the cell $(i,j)$ immediately south of $(i,j+1)$ if $(i,j+1)
\in D$ and immediately east of $(i,j+1)$ otherwise. Translate each
$\mu_D^{(i)}$ so that the southeastern most cell has shifted content
$n+i$ for some (any) fixed integer $n$. Then each filling $S$ of shape
$\mu$ with $\mathrm{Des}(S)=D$ may be regarded as a semi-standard
$\mu_1$-tuple of tableaux of shape $\boldsymbol{\mu}_{D}$, denoted
$\mathbf{S}$.  For example, the filling $S$ of shape $(5,4,4,1)$ in
Figure~\ref{fig:filling} corresponds to the $5$-tuple of ribbons of
shapes $(3,3,3,2)/(3,3,1), \ (1,1,1), \ (2,2,1)/(2), \ (2,2,1)/(2,1),
\ (1)$; see Figure~\ref{fig:ribbons}.

\begin{figure}[ht]
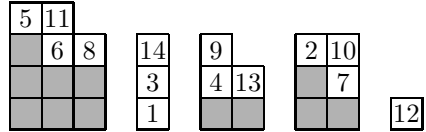

  \begin{displaymath}
    \tableau{5 & 11 \\ \cb & 6 & 8 \\ \cb & \cb & \cb \\ \cb & \cb & \cb} \hspace{\cellsize}
    \tableau{\\ 14 \\ 3 \\ 1} \hspace{\cellsize}
    \tableau{\\ 9 \\ 4 & 13 \\ \cb & \cb} \hspace{\cellsize}
    \tableau{\\ 2 & 10 \\ \cb & 7 \\ \cb & \cb} \hspace{\cellsize}
    \tableau{\\ \\  \\ 12} 
  \end{displaymath}
  \caption{\label{fig:ribbons} A standard filling of shape $(5,4,4,1)$
    transformed into a $5$-tuple of ribbons of shapes
    $(3,3,3,2)/(3,3,1), \ (1,1,1), \ (2,2,1)/(2), \ (2,2,2)/(2,1), \
    (1)$.}
\end{figure}

For this correspondence, it is crucial that we do not identify skew
shapes that are translates of one another. For example, the row
reading word of the filling in Figure~\ref{fig:filling} is precisely
the content reading word of $5$-tuple in Figure~\ref{fig:ribbons}, but
this is not the case if the first tableau is instead considered to
have shape $(3,2)/(1)$. Furthermore, the inversion pairs of $S$ as
defined in \eqref{eqn:Inv} correspond precisely with the
$\mu_1$-inversions of $\mathbf{S}$ as defined in
\eqref{eqn:Invk-T}. Since the major index statistic depends only on
the descent set, for a given descent set $D$ we may define
$\mathrm{maj}(D)$ by $\mathrm{maj}(D) = \mathrm{maj}(S)$ for any
filling $S$ with $\mathrm{Des}(S) = D$. Similarly, define $a(D) =
\sum_{c \in D} a(c)$. Then we may rewrite \eqref{eqn:haglund} in terms
of LLT polynomials as
\begin{equation}
  \widetilde{H}_{\mu}(x;q,t) = \sum_{D \subseteq \mu/(\mu_1)}
  q^{-a(D)} t^{\mathrm{maj}(D)}
  \widetilde{G}_{\boldsymbol{\mu}_D}^{(\mu_1)}(x;q) . 
\label{eqn:hag-llt}
\end{equation}
Note that each term of
$\widetilde{G}_{\boldsymbol{\mu}_D}^{(\mu_1)}(x;q)$ contains a factor
of $q^{a}$ for some $a \geq a(D)$ (in fact, this is the same constant
mentioned in Section~\ref{sec:pre-llt}). In terms of Schur expansions,
\eqref{eqn:hag-llt} may also be expressed as
\begin{equation}
  \widetilde{K}_{\lambda,\mu}(q,t) = \sum_{D \subseteq \mu/(\mu_1)} 
  q^{-a(D)} t^{\mathrm{maj}(D)}
  \widetilde{K}_{\lambda,\boldsymbol{\mu}_D}^{(\mu_1)}(q) . 
\label{eqn:hag-llt-kostka}
\end{equation}
By the previous remark, proving
$\widetilde{K}_{\lambda,\boldsymbol{\mu}_D}^{(\mu_1)}(q) \in
\mathbb{N}[q]$ consequently proves $\widetilde{K}_{\lambda,\mu}(q,t)
\in \mathbb{N}[q,t]$.

%
\section{Dual equivalence graphs}
%
\label{sec:deg}

\subsection{The standard dual equivalence graph}
\label{sec:deg-standard}

Dual equivalence was first defined by Haiman \cite{Haiman1992} as a
relation on tableaux dual to \emph{jeu de taquin} equivalence under
the Schensted correspondence.  We use this relation to construct a
graph whose vertices are standard tableaux and edges are elementary
dual equivalence relations. Using quasi-symmetric functions, we define
the generating function on the vertices of these graphs, thereby
providing the connection with symmetric functions.

We begin by recalling the definition of dual equivalence on
permutations regarded as words on $[n]$, which we extend to standard
Young tableaux via the content reading word.

\begin{definition}[\cite{Haiman1992}]
  An \emph{elementary dual equivalence} on three consecutive letters
  $\triple$ of a permutation is given by switching the outer two
  letters whenever the middle letter is not $i$:
  $$
  \cdots\; i \;\cdots\;i\pm 1\;\cdots\;i\mp 1\;\cdots \equiv^{*}
  \cdots\;i\mp 1\;\cdots\;i\pm 1\;\cdots\; i \;\cdots
  $$
  Two permutations are \emph{dual equivalent} if they differ by some
  sequence of elementary dual equivalences. Two standard tableaux of
  the same shape are \emph{dual equivalent} if their content reading
  words are.
\label{defn:ede}
\end{definition}

Construct an edge-colored graph on standard tableaux of partition
shape from the dual equivalence relation in the following way.
Whenever two standard tableaux $T,U$ have content reading words that
differ by an elementary dual equivalence for $\triple$, connect $T$
and $U$ with an edge colored by $i$. Recall the definition of the
content reading word $w_{T}$ and the \emph{descent signature} of a
standard tableau $T$ from \eqref{eqn:sigma}:
\begin{displaymath}
  \sigma(T)_{i} \; = \; \left\{ 
    \begin{array}{ll}
      +1 & \; \mbox{if $i$ appears to the left of $\ipo$ in $w_{T}$} \\
      -1 & \; \mbox{if $\ipo$ appears to the left of $i$ in $w_{T}$}
    \end{array} \right. .
\end{displaymath}
We associate to each tableau $T$ the signature $\sigma(T)$. Several
examples are given in Figure~\ref{fig:G5}, and several more in
Appendix~\ref{app:DEGs}.

\begin{figure}[ht]
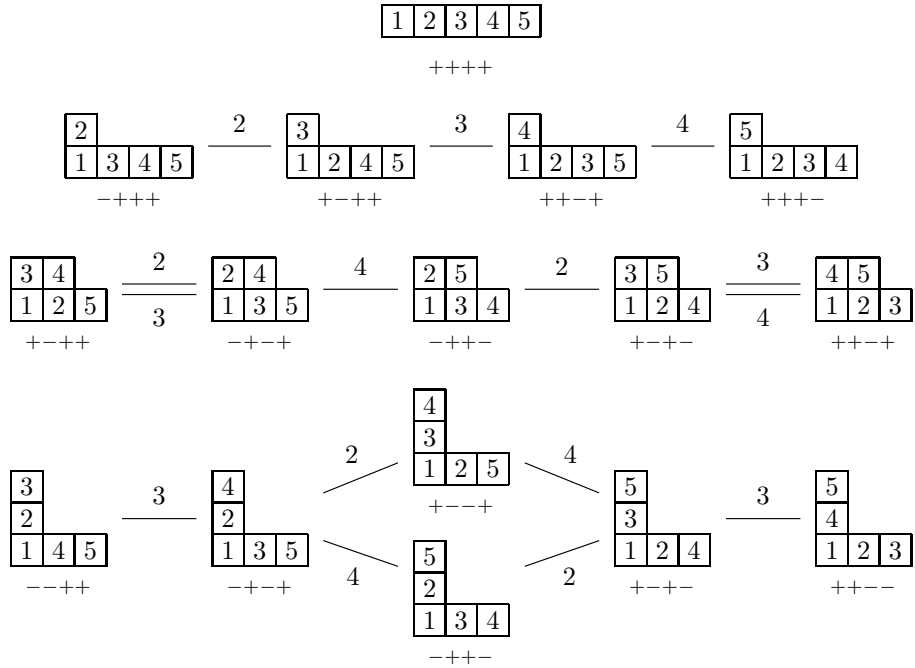

  \begin{center}
    \begin{displaymath}
      \begin{array}{c}
        \begin{array}{c}
          \stab{h}{1 & 2 & 3 & 4 & 5}{++++}
        \end{array} \\[5ex] 
        \begin{array}{\cs{6} \cs{6} \cs{6} c}
          \stab{h}{2 \\ 1 & 3 & 4 & 5}{-+++} &
          \stab{i}{3 \\ 1 & 2 & 4 & 5}{+-++} &
          \stab{j}{4 \\ 1 & 2 & 3 & 5}{++-+} &
          \stab{k}{5 \\ 1 & 2 & 3 & 4}{+++-}
        \end{array} \\[7ex]
        \begin{array}{\cs{7} \cs{7} \cs{7} \cs{7} c}
          \stab{a}{3 & 4 \\ 1 & 2 & 5}{+-++} &
          \stab{b}{2 & 4 \\ 1 & 3 & 5}{-+-+} &
          \stab{c}{2 & 5 \\ 1 & 3 & 4}{-++-} &
          \stab{d}{3 & 5 \\ 1 & 2 & 4}{+-+-} &
          \stab{e}{4 & 5 \\ 1 & 2 & 3}{++-+}
        \end{array} \\[6ex]
        \begin{array}{\cs{7} \cs{7} \cs{7} \cs{7} c}
          & & \stab{w}{4 \\ 3 \\ 1 & 2 & 5}{+--+} & & \\[-4ex]
              \stab{u}{3 \\ 2 \\ 1 & 4 & 5}{--++} &
              \stab{v}{4 \\ 2 \\ 1 & 3 & 5}{-+-+} & &
              \stab{y}{5 \\ 3 \\ 1 & 2 & 4}{+-+-} &
              \stab{z}{5 \\ 4 \\ 1 & 2 & 3}{++--} \\[-5ex]
          & & \stab{x}{5 \\ 2 \\ 1 & 3 & 4}{-++-} & &
        \end{array}
        \psset{nodesep=6pt,linewidth=.1ex}
        \ncline  {h}{i} \naput{2}
        \ncline  {i}{j} \naput{3}
        \ncline  {j}{k} \naput{4}
        \ncline[offset=2pt] {a}{b} \naput{2}
        \ncline[offset=2pt] {b}{a} \naput{3}
        \ncline             {b}{c} \naput{4}
        \ncline             {c}{d} \naput{2}
        \ncline[offset=2pt] {d}{e} \naput{3}
        \ncline[offset=2pt] {e}{d} \naput{4}
        \ncline {u}{v}  \naput{3}
        \ncline {v}{w}  \naput{2}
        \ncline {v}{x}  \nbput{4}
        \ncline {w}{y}  \naput{4}
        \ncline {x}{y}  \nbput{2}
        \ncline {y}{z}  \naput{3}
      \end{array}
    \end{displaymath}
    \caption{\label{fig:G5}The standard dual equivalence graphs
      $\G_{5}, \G_{4,1}, \G_{3,2}$ and $\G_{3,1,1}$.}
  \end{center}
\end{figure}

The connected components of the graph so constructed are the dual
equivalence classes of standard tableaux. Let $\G_{\lambda}$ denote
the subgraph on tableaux of shape $\lambda$. The following proposition
tells us that the $\G_{\lambda}$ exactly give the connected components
of the graph.

\begin{proposition}[\cite{Haiman1992}]
  Two standard tableaux on partition shapes are dual equivalent if and
  only if they have the same shape.
\label{prop:deshape}
\end{proposition}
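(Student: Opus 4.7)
The plan is to prove both directions of the biconditional, following the approach of Haiman~\cite{Haiman1992}. The forward implication (dual equivalent $\Rightarrow$ same shape) reduces, by transitivity, to showing that a single elementary move applied to $w_T$ produces $w_U$ for some SYT $U$ of the same shape. The reverse implication (same shape $\Rightarrow$ dual equivalent) is a connectivity statement about $\G_{\lambda}$.

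For the forward direction, an elementary move on the triple $\{i-1, i, i+1\}$ in $w_T$ interchanges the positions in $T$ of $i$ with $i+1$ (when the positional middle of the triple is $i-1$) or of $i$ with $i-1$ (when the positional middle is $i+1$). I would verify that the result is another SYT of shape $\lambda$ by showing that the two interchanged entries must lie in different rows and different columns of $T$, so that swapping them preserves both row-strict and column-strict increase. The only obstructions are the two entries sharing a row or column of $T$, and a short case analysis using the content reading word ordering (cells on a diagonal are read SW to NE, and diagonals of smaller content are read first) shows that the move's precondition rules these out. For instance, if $i$ and $i+1$ lay in the same row at $(a, b)$ and $(a+1, b)$, then each possible position of $i-1$ putting it strictly between $i$ and $i+1$ in $w_T$ would force a cell of $\lambda$ (for example, $(a+1, b-1)$ when $b > 1$) to carry a value strictly between $i-1$ and $i+1$, hence equal to $i$, contradicting $T(a, b) = i$. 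The same-column case and the analogous $(i-1, i)$-swap case are dispatched in the same way.

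For the reverse direction, I would induct on $n = |\lambda|$, the base $n \leq 2$ being trivial. For the inductive step, fix SYT $T, T'$ of shape $\lambda$. It suffices to (a) transform $T$ by elementary moves into some $\widetilde{T}$ whose entry $n$ occupies the same cell $c$ as does $n$ in $T'$, and then (b) appeal to the inductive hypothesis applied to $\widetilde{T}\big|_{[n-1]}$ and $T'\big|_{[n-1]}$, both of shape $\lambda - c$, noting that any elementary move on a triple $\{i-1, i, i+1\}$ with $i \leq n - 2$ leaves $n$ fixed and descends to an elementary dual equivalence on the restriction to $[n-1]$. For step (a), the key is to show that for any two outer corners $c_1, c_2$ of $\lambda$ the entry $n$ can be migrated from $c_1$ to $c_2$ by elementary moves; I would reduce to adjacent outer corners and effect the transfer via an edge on the triple $\{n-2, n-1, n\}$, after first using moves on smaller triples (that is, repositioning entries $\leq n-2$) to arrange for $n-2$ to lie between $n-1$ and $n$ in the reading word.

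The main obstacle I anticipate is step (a), the corner migration: orchestrating elementary moves that relocate the maximal entry without losing control of the rest of the tableau. The clean tactic is to treat the inductive hypothesis on $[n-1]$ as a black box permitting arbitrary prior repositioning of the smaller entries, so that the single $(n-1)$-colored swap of $n-1$ and $n$ can always be enabled. Combining this with the shape preservation from the forward direction yields Proposition~\ref{prop:deshape}.
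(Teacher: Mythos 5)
The paper states this proposition without proof, citing it from Haiman~\cite{Haiman1992}, where it follows from the machinery relating dual equivalence to jeu de taquin and the Robinson--Schensted correspondence. Your proposal instead proceeds by a self-contained combinatorial induction, avoiding that machinery entirely, which is a legitimate and more elementary route. The forward direction is handled correctly: reducing to the claim that the two swapped entries are neither row- nor column-adjacent is exactly the right move, and your argument (that a value strictly between $i-1$ and $i+1$ would be forced into an already-occupied cell) works in each case, including the column case and the case where $i$ and $i-1$ are swapped.

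In the reverse direction the outline is right, but two things need tightening, one of which is a genuine gap. First, ``repositioning entries $\leq n-2$'' does not suffice for the corner migration: to move $n$ from $c_1$ to an adjacent corner $c_2$ by a single move on the triple $\{n-2,n-1,n\}$, the entry $n-1$ must first sit at $c_2$, and that requires moving $n-1$ itself. Your ``clean tactic'' of treating the inductive hypothesis on $[n-1]$ as a black box is the correct fix, since it repositions all of $\{1,\dots,n-1\}$, not merely $\{1,\dots,n-2\}$; this should replace the earlier phrasing. Second, and more seriously, you assert without justification that after placing $n$ at $c_1$ and $n-1$ at $c_2$ there exists a cell available for $n-2$ lying strictly between $c_1$ and $c_2$ in the content reading order. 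This is true but is the crux of the migration and must be proved: writing the adjacent outer corners as $c_1=(a_1,b_1)$, $c_2=(a_2,b_2)$ with $c(c_1)<c(c_2)$, adjacency forces all rows strictly between $b_2$ and $b_1$ to have length $a_1$, and one then checks that $\lambda-c_1-c_2$ has a corner at $(a_1,b_1-1)$ of content $c(c_1)+1$ when $b_1>b_2+1$, or at $(a_2-1,b_2)$ of content $c(c_2)-1$ when $b_1=b_2+1$; either way the content falls strictly between $c(c_1)$ and $c(c_2)$ because outer corners of a partition have contents differing by at least $2$. Without this verification the induction is incomplete.
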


Define the generating function associated to $\G_{\lambda}$ by
\begin{equation}
  \sum_{v \in V(\G_{\lambda})} Q_{\sigma(v)}(x) = s_{\lambda}(x).
\label{eqn:glamschur}
\end{equation}
By Proposition~\ref{prop:quasisym}, this is Gessel's quasi-symmetric
function expansion for a Schur function. In particular, the generating
function of any vertex-signed graph whose connected components are
isomorphic to the graphs $\G_{\lambda}$ is automatically Schur
positive. This observation is the main idea behind the following
method for establishing the symmetry and Schur positivity of a
function expressed in terms of fundamental quasi-symmetric
functions. We will realize the given function as the generating
function for a vertex-signed, edge-colored graph such that connected
components of the graph are isomorphic to the graphs
$\G_{\lambda}$. Therefore the connected components of the graph will
correspond precisely to terms in the Schur expansion of the given
function.

\subsection{Axiomatization of dual equivalence}
\label{sec:deg-general}

In this section, we characterize $\G_{\lambda}$ in terms of edges and
signatures so that we can readily identify those graphs that are
isomorphic to some $\G_{\lambda}$.

\begin{definition}
  A \emph{signed, colored graph of type $(n,N)$} consists of the
  following data:
  \begin{itemize}
    \item a finite vertex set $V$;
    \item a signature function $\sigma : V \rightarrow \{\pm
      1\}^{N-1}$;
    \item for each $1 < i < n$, a collection $E_i$ of pairs of
      distinct vertices of $V$.
  \end{itemize}
  We denote such a graph by $\G = (V,\sigma,E_{2} \cup\cdots\cup E_{\nmo})$ or
  simply $(V,\sigma,E)$.
\label{defn:scg}
\end{definition}

\begin{definition}
  A signed, colored graph $\G = (V,\sigma,E)$ of type $(n,N)$ is a
  \emph{dual equivalence graph of type $(n,N)$} if $n \leq N$ and the
  following hold:
  \begin{itemize}
    
  \item[(ax$1$)] For $w \in V$ and $1<i<n$, $\sigma(w)_{\imo} =
    -\sigma(w)_{i}$ if and only if there exists $x \in V$ such that
    $\{w,x\} \in E_{i}$. Moreover, $x$ is unique when it exists.

  \item[(ax$2$)] For $\{w,x\} \in E_{i}$, $\sigma(w)_j = -\sigma(x)_j$
    for $j=\imo,i$, and $\sigma(w)_h = \hspace{1ex}\sigma(x)_h$ for $h
    < \imt$ and $h > \ipo$.
      
  \item[(ax$3$)] For $\{w,x\} \in E_{i}$, if $\sigma(w)_{\imt} =
    -\sigma(x)_{\imt}$, then $\sigma(w)_{\imt} = -\sigma(w)_{\imo}$,
    and if $\sigma(w)_{\ipo} = -\sigma(x)_{\ipo}$, then
    $\sigma(w)_{\ipo} = -\sigma(w)_{i}$.
    
  \item[(ax$4$)] Every connected component of $(V,\sigma, E_{\imo}
    \cup E_{i})$ appears in Figure~\ref{fig:lambda4} and every
    connected component of $(V,\sigma,E_{\imt} \cup E_{\imo} \cup
    E_{i})$ appears in Figure~\ref{fig:lambda5}.

  \item[(ax$5$)] If $\{w,x\} \in E_i$ and $\{x,y\} \in E_j$ for $|i-j|
    \geq 3$, then $\{w,v\} \in E_j$ and $\{v,y\} \in E_i$ for some $v
    \in V$.

  \item[(ax$6$)] Any two vertices of a connected component of
    $(V,\sigma,E_2 \cup\cdots\cup E_i)$ may be connected by a path
    crossing at most one $E_i$ edge.

  \end{itemize}
\label{defn:deg}
\end{definition}

Note that if $n>4$, then the allowed structure for connected
components of $(V,\sigma,E_{\imt} \cup E_{\imo} \cup E_{i})$ dictates
that every connected component of $(V,\sigma, E_{\imo} \cup E_{i})$
appears in Figure~\ref{fig:lambda4}.

\begin{figure}[ht]
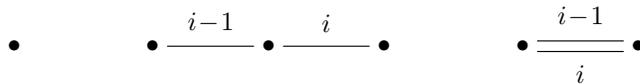

  \begin{displaymath}
    \begin{array}{\cs{11}\cs{9}\cs{9}\cs{11}\cs{9}c}
      \B & \rnode{a}{\B} & \rnode{b}{\B} & \rnode{c}{\B} & \rnode{d}{\B} & \rnode{e}{\B}
    \end{array}
    \psset{nodesep=3pt,linewidth=.1ex}
    \ncline            {a}{b} \naput{\imo}
    \ncline            {b}{c} \naput{i}
    \ncline[offset=2pt]{d}{e} \naput{\imo}
    \ncline[offset=2pt]{e}{d} \naput{i}
  \end{displaymath}
  \caption{\label{fig:lambda4} Allowed $2$-color connected
    components of a dual equivalence graph.}
\end{figure}

\begin{figure}[ht]
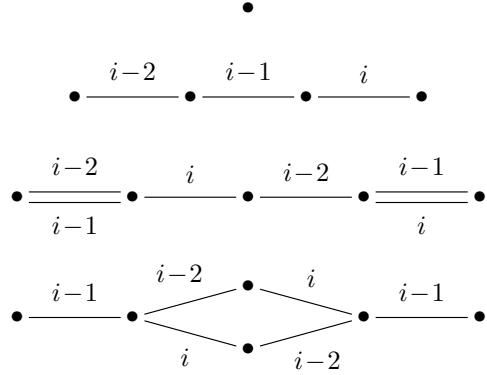

  \begin{displaymath}
    \begin{array}{c}
        \begin{array}{c}
          \B
        \end{array}\\[5ex]
        \begin{array}{\cs{9} \cs{9} \cs{9} c}
          \rnode{h}{\B} & \rnode{i}{\B} & \rnode{j}{\B} & \rnode{k}{\B}
        \end{array} \\[6ex]
        \begin{array}{\cs{9} \cs{9} \cs{9} \cs{9} c}
          \rnode{a}{\B} & \rnode{b}{\B} & \rnode{c}{\B} & \rnode{d}{\B} & \rnode{e}{\B}
        \end{array} \\[5ex]
        \begin{array}{\cs{9} \cs{9} \cs{9} \cs{9} c}
          & & \rnode{w}{\B} & & \\
          \rnode{u}{\B} & \rnode{v}{\B} & & \rnode{y}{\B} & \rnode{z}{\B} \\
          & & \rnode{x}{\B} & &
        \end{array}
        \psset{nodesep=2pt,linewidth=.1ex}
        \ncline  {h}{i} \naput{\imt}
        \ncline  {i}{j} \naput{\imo}
        \ncline  {j}{k} \naput{i}
        \ncline[offset=2pt] {a}{b} \naput{\imt}
        \ncline[offset=2pt] {b}{a} \naput{\imo}
        \ncline             {b}{c} \naput{i}
        \ncline             {c}{d} \naput{\imt}
        \ncline[offset=2pt] {d}{e} \naput{\imo}
        \ncline[offset=2pt] {e}{d} \naput{i}
        \ncline {u}{v}  \naput{\imo}
        \ncline {v}{w}  \naput{\imt}
        \ncline {v}{x}  \nbput{i}
        \ncline {w}{y}  \naput{i}
        \ncline {x}{y}  \nbput{\imt}
        \ncline {y}{z}  \naput{\imo}
      \end{array}
    \end{displaymath}
  \caption{\label{fig:lambda5} Allowed $3$-color connected
    components of a dual equivalence graph.}
\end{figure}

Every connected component of a dual equivalence graph of type $(n,N)$
is again a dual equivalence graph of type $(n,N)$. 

It is often useful to consider a restricted set of edges of a signed,
colored graph. To be precise, for $m \leq n$ and $M \leq N$, the
\emph{$(m,M)$-restriction} of a signed, colored graph $\G$ of type
$(n,N)$ consists of the vertex set $V$, signature function $\sigma: V
\rightarrow \{\pm 1\}^{M-1}$ obtained by truncating $\sigma$ at $M-1$,
and the edge set $E_{2} \cup\cdots\cup E_{m-1}$. For $m \leq n,M \leq
N$, the $(m,M)$-restriction of a dual equivalence graph of type
$(n,N)$ is a dual equivalence graph of type $(m,M)$.

The graph for $\G_{\lambda'}$ is obtained from $\G_{\lambda}$ by
conjugating each standard tableau and multiplying the signatures
coordinate-wise by $-1$. Therefore the structure of $\G_{(2,1,1,1)},
\G_{(2,2,1)}$ and $\G_{(1,1,1,1,1)}$ is also indicated by
Figure~\ref{fig:G5}. Comparing this with Figure~\ref{fig:lambda5},
axiom $4$ stipulates that the restricted components of a dual
equivalence graph are exactly the graphs for $\G_{\lambda}$ when
$\lambda$ is a partition of $5$.

\begin{proposition}
  For $\lambda$ a partition of $n$, $\G_{\lambda}$ is a dual
  equivalence graph of type $(n,n)$.
\label{prop:good-defn}
\end{proposition}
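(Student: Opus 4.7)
My plan is to verify the six axioms for $\G_\lambda$ in turn, using Proposition~\ref{prop:deshape} and its skew extension (both due to Haiman in \cite{Haiman1992}) as the principal imported facts.

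Axioms (ax1)--(ax3) reduce to tracking how a single elementary dual equivalence affects the content reading word and hence the descent signature. First I would observe that $\sigma(T)_{i-1} = -\sigma(T)_i$ holds precisely when $i$ is not the positionally middle letter among $\{i-1, i, i+1\}$ in the content reading word, which by Definition~\ref{defn:ede} is exactly the condition under which the swap applies; the outcome is uniquely determined, giving (ax1). For (ax2), since the swap only permutes labels in $\{i-1, i, i+1\}$, signatures at positions $h < i-2$ or $h > i+1$ are unaffected, while a short subcase analysis (on whether the middle letter is $i-1$ or $i+1$) shows that $\sigma_{i-1}$ and $\sigma_i$ both flip. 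For (ax3), $\sigma_{i-2}$ flips exactly when the position of $i-1$ crosses that of $i-2$ during the swap (which occurs only when the swap exchanges $i-1$ and $i$); inspecting the two resulting subcases yields the claimed sign relation with $\sigma_{i-1}$, and the argument for $\sigma_{i+1}$ is symmetric.

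Axiom (ax4) follows from the observation that a 2-color component using colors $i-1, i$ depends only on the sub-tableau of $T$ on the four cells labeled $\{i-2, i-1, i, i+1\}$, which is a standard skew tableau; similarly, a 3-color component depends only on the sub-tableau on the five cells labeled $\{i-3, \ldots, i+1\}$. By the skew extension of Proposition~\ref{prop:deshape}, the dual equivalence graph on standard skew tableaux decomposes into connected components each isomorphic to some $\G_{\lambda'}$, where $\lambda'$ ranges over the rectification shapes. Since rectifications of sizes 4 and 5 are partitions of 4 and 5, whose dual equivalence graphs $\G_{\lambda'}$ are exactly those in Figures~\ref{fig:lambda4} and~\ref{fig:lambda5}, this yields (ax4). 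Axiom (ax5) is immediate: when $|i-j| \geq 3$ the triples $\{i-1, i, i+1\}$ and $\{j-1, j, j+1\}$ are disjoint, so the two elementary dual equivalences commute as operations on the content reading word; the applicability of the $E_j$-swap at $w$ follows from (ax2), which guarantees $\sigma(w)_{j-1} = \sigma(x)_{j-1}$ and $\sigma(w)_j = \sigma(x)_j$.

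The main obstacle is (ax6), which I would prove by induction on $i$. A connected component of the $(i+1, n)$-restriction of $\G_\lambda$ consists of all $T$ with $T|_{[1, i+1]}$ of a fixed partition shape $\nu$; restricting further to the $(i, n)$-restriction decomposes it into sub-components indexed by the corners $c$ of $\nu$, according to the shape $\nu - c$ of $T|_{[1, i]}$. By Haiman's theorem applied to $\nu - c$, any two tableaux in the same sub-component can be connected using only $E_2, \ldots, E_{i-1}$ edges, and these dual equivalences lift to $\G_\lambda$ since the labels $\geq i+1$ are unaffected. It therefore suffices to show that for any two distinct corners $c, c'$ of $\nu$, there exist $U, U'$ with $U|_{[1, i]}$ of shape $\nu - c$, $U'|_{[1, i]}$ of shape $\nu - c'$, and $\{U, U'\} \in E_i$. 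Observing that $c'$ is always a corner of $\nu - c$ when $c, c'$ are distinct corners of $\nu$, I would construct $U$ by placing $i+1$ at $c$, $i$ at $c'$, and $i-1$ at a corner of $\nu - \{c, c'\}$ whose content lies strictly between those of $c$ and $c'$; such a corner exists by a case analysis on the outline of $\nu$ between the two corners. With $i-1$ so placed, the middle letter among $\{i-1, i, i+1\}$ in the content reading word is $i-1$, so the elementary dual equivalence swapping $i$ and $i+1$ is valid and produces the required $U'$ with $U'|_{[1,i]}$ of shape $\nu - c'$. Concatenating the path from $T$ to $U$, the single $E_i$-edge, and the path from $U'$ to $T'$ completes the proof.
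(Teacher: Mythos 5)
Your proof is correct and follows essentially the same line as the paper's: axioms 1--3 by tracking the effect of an elementary dual equivalence on the descent signature, axiom 5 by disjointness of the triples, and axiom 6 by reducing to the $(i,i)$-restriction via Proposition~\ref{prop:deshape} and then constructing a single $E_i$-edge that bridges two restricted components. Your treatment of axiom 6 is a bit more careful than the paper's in that you handle general $i$ explicitly (by first passing to a connected component of the $(i+1,n)$-restriction) rather than implicitly reducing to $i=n-1$.

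The one place where you take a genuinely different route is axiom 4. The paper uses axiom 3 to obtain an explicit description of double edges and then declares axiom 4 a ``straightforward, finite check.'' You instead observe that the $E_{\imo}\cup E_i$ (resp.\ $E_{\imt}\cup E_{\imo}\cup E_i$) component of a tableau is determined by its restriction to the four (resp.\ five) cells labeled $\{\imt,\ldots,\ipo\}$ (resp.\ $\{\imh,\ldots,\ipo\}$), and then invoke Haiman's theorem that dual equivalence graphs on skew standard tableaux decompose into components isomorphic to $\G_{\lambda'}$ with $\lambda'$ the rectification shape. Since rectifications of $4$- and $5$-cell skew tableaux run over partitions of $4$ and $5$, this gives exactly the shapes in Figures~\ref{fig:lambda4} and~\ref{fig:lambda5}. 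This is valid: the content reading word order of the restricted letters agrees with the content reading word order of the restricted skew tableau, and dual equivalence commutes with jeu de taquin, so a component of the skew graph is carried isomorphically by rectification onto $\G_{\lambda'}$. Your approach buys a cleaner, check-free argument at the cost of importing a stronger form of Haiman's theorem than Proposition~\ref{prop:deshape} alone; the paper's finite-check argument is more self-contained but less transparent. Both are acceptable.
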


\begin{proof}
  For $T \in \mathrm{SYT}(\lambda)$, $\sigma(T)_{\imo} = -\sigma(T)_i$
  if and only if $i$ does not lie between $\imo$ and $\ipo$ in the
  content reading word of $T$. In this case, there exists $U \in
  \mathrm{SYT}(\lambda)$ such that $T$ and $U$ differ by an elementary
  dual equivalence for $\triple$. Therefore $U$ is obtained from $T$
  by swapping $i$ with $\imo$ or $\ipo$, whichever lies further away,
  with the result that $\sigma(T)_j = -\sigma(U)_j$ for $j=\imo,i$ and
  also $\sigma(T)_h = \sigma(U)_h$ for $h<\imt$ and $\ipo<h$. This
  verifies axioms $1$ and $2$.
  
  For axiom $3$, if $\sigma(T)_{\imt} = -\sigma(U)_{\imt}$, then $i$
  and $\imo$ have interchanged positions with $\imt$ lying between, so
  that $T$ and $U$ also differ by an elementary dual equivalence for
  $i\!-\!2,i\!-\!1,i$, and similarly for $\ipo$. From this, we obtain
  an explicit description of double edges, and so axiom $4$ becomes a
  straightforward, finite check. If $|i-j| \geq 3$, then $\{\triple\}
  \cap \{j\!-\!1,j,j\!+\!1\} = \emptyset$, so the elementary dual
  equivalences for $\triple$ and for $j\!-\!1,j,j\!+\!1$ commute,
  thereby demonstrating axiom $5$.

  Finally, for $T,U \in \mathrm{SYT}(\lambda)$, $|\lambda| = \ipo$, we
  must show that there exists a path from $T$ to $U$ crossing at most
  one $E_i$ edge. Let $\C_T$ (resp. $\C_U$) denote the connected
  component of the $(i,i)$-restriction of $\G_{\lambda}$ containing
  $T$ (resp.  $U$).  Let $\mu$ (resp. $\nu$) be the shape of $T$
  (resp. $U$) with the cell containing $\ipo$ removed. Then $\C_T
  \cong \G_{\mu}$ and $\C_U \cong \G_{\nu}$. If $\mu = \nu$, then, by
  Proposition~\ref{prop:deshape}, $\C_T = \C_U$ and axiom $6$
  holds. Assume, then, that $\mu \neq \nu$. Since $\mu,\nu \subset
  \lambda$ and $|\mu| = |\nu| = |\lambda|-1$, both cells $\lambda/\mu$
  and $\lambda/\nu$ must be northeastern corners of
  $\lambda$. Therefore there exists $T' \in \mathrm{SYT}(\lambda)$
  with $i$ in position $\lambda/\nu$, $\ipo$ in position
  $\lambda/\mu$, and $\imo$ between $i$ and $\ipo$ in the content
  reading word of $T'$.  Let $U'$ be the result of swapping $i$ and
  $\ipo$ in $T'$, in particular, $\{T',U'\} \in E_{i}$. By
  Proposition~\ref{prop:deshape}, $T'$ is in $\C_T$ and $U'$ is in
  $\C_U$, hence there exists a path from $T$ to $T'$ and a path from
  $U'$ to $U$ each crossing only edges $E_h$, $h < i$. This
  establishes axiom $6$.
\end{proof}

\begin{remark}
  For partitions $\lambda \subset \rho$, with $|\lambda| = n$ and
  $|\rho| = N$, choose a tableau $A$ of shape $\rho/\lambda$ with
  entries $n+1,\ldots,N$. Define the set of standard Young tableaux of
  shape $\lambda$ augmented by $A$, denoted
  $\mathrm{ASYT}(\lambda,A)$, to be those $T \in \mathrm{SYT}(\rho)$
  such that $T$ restricted to $\rho/\lambda$ is $A$. Let
  $\G_{\lambda,A}$ be the signed, colored graph of type $(n,N)$
  constructed on $\mathrm{ASYT}(\lambda,A)$ with $i$-edges given by
  elementary dual equivalences for $\triple$ with $i<n$. Then
  $\G_{\lambda,A}$ is a dual equivalence graph of type $(n,N)$, and
  the $(n,n)$-restriction of $\G_{\lambda,A}$ is $\G_{\lambda}$.
\label{rmk:augment}
\end{remark}

Proposition~\ref{prop:good-defn} is the first step towards justifying
Definition~\ref{defn:deg}, and also allows us to refer to
$\G_{\lambda}$ as the \emph{standard dual equivalence graph
  corresponding to $\lambda$}. In order to show the converse, we first
need the notion of a morphism between two signed, colored graphs.

\begin{definition}
  A \emph{morphism} between two signed, colored graphs of type
  $(n,N)$, say $\G=(V,\sigma,E)$ and $\mathcal{H}=(W,\tau,F)$, is a
  map $\phi : V \rightarrow W$ such that for every $u,v \in V$
  \begin{itemize}
  \item for every $1 \leq i < N$, we have $\sigma(v)_i =
    \tau(\phi(v))_i$, and 
  \item for every $1 < i < n$, if $\{u,v\} \in E_i$, then
    $\{\phi(u),\phi(v)\} \in F_i$.
  \end{itemize}
  A morphism is an \emph{isomorphism} if it is a bijection on vertex
  sets.
\label{defn:isomorphism}
\end{definition}

When two graphs satisfy axiom 1, as all graphs in this paper do, an
isomorphism between them is a sign-preserving bijection on vertex sets
that respects color-adjacency.

\begin{remark}
  If $\phi$ is a morphism from a signed, colored graph $\G$ of type
  $(n,N)$ satisfying axiom $1$ to an augmented standard dual
  equivalence graph $\G_{\lambda,A}$, then $\phi$ is
  surjective. Indeed, suppose $T = \phi(v)$ for some $T \in
  \mathrm{ASYT}(\lambda,A)$ and some vertex $v$ of $\G$. Then for
  every $1 < i < n$, if $\{T,U\} \in E_{i}$, then since $\sigma(v) =
  \sigma(T)$, by axiom $1$ there exists a unique vertex $w$ of $\G$
  such that $\{v,w\} \in E_i$ in $\G$. Since $\phi$ is a morphism, we
  must have $\{T,\phi(w)\} \in E_i$ in $\G_{\lambda,A}$. Thus by the
  uniqueness condition of axiom $1$, $\phi(w) = U$, and so $U$ also
  lies in the image of $\phi$. Therefore the $i$-neighbor of any
  vertex in the image of $\phi$ also lies in the image since $\phi$
  preserves $i$-edges. Since $\G_{\lambda,A}$ is connected, $\phi$ is
  surjective.
\label{rmk:onto}
\end{remark}

The final justification of this axiomatization is the following
converse of Proposition~\ref{prop:good-defn}.

\begin{theorem}
  Every connected component of a dual equivalence graph of type
  $(n,n)$ is isomorphic to $\G_{\lambda}$ for a unique partition
  $\lambda$ of $n$.
\label{thm:isomorphic}
\end{theorem}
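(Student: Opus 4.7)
My plan is to prove the theorem by induction on $n$. The base cases $n \leq 5$ reduce directly to axiom $4$: Figure~\ref{fig:lambda5} exhibits exactly the graphs $\G_\lambda$ for partitions $\lambda$ of $5$ (using that $\G_{\lambda'}$ is obtained from $\G_\lambda$ by conjugation and coordinate-wise sign reversal, as observed after Proposition~\ref{prop:good-defn}), so every connected component of a dual equivalence graph of type $(5,5)$ is already one of them; smaller $n$ are analogous but easier. Uniqueness of $\lambda$ is clear because the $\G_\lambda$ are pairwise non-isomorphic as signed, colored graphs (e.g.\ by vertex count, or by the multiset of signatures appearing).

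For the inductive step, let $\C$ be a connected component of a dual equivalence graph of type $(n,n)$ with $n \geq 6$, and consider its $(n-1,n-1)$-restriction $\C^{-}$. By induction, each connected component of $\C^{-}$ is isomorphic to $\G_{\mu}$ for a unique partition $\mu$ of $n-1$. Fix a base vertex $v_0 \in \C$, let $\C_0$ be its $(n-1,n-1)$-restricted component, and fix an isomorphism $\phi_0 : \C_0 \to \G_{\mu_0}$. The partition $\lambda$ of $n$ for which $\C \cong \G_{\lambda}$ should be the unique $\lambda$ whose northeast corner removals yield precisely the shapes $\mu$ appearing as $(n-1,n-1)$-components of $\C$; concretely, $\mu_0$ together with the set of $E_{n-1}$-edges available at vertices of $\C_0$ (governed by axiom $1$) pins down $\lambda$ uniquely.

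I would then build a global isomorphism $\phi : \C \to \G_{\lambda}$ extending $\phi_0$ by propagation along edges: given an edge $\{u,w\} \in E_i$ in $\C$ with $\phi(u)$ already defined, axiom $1$ applied inside $\G_{\lambda}$ uniquely specifies $\phi(w)$. Propagation along $E_i$ with $i < n-1$ is controlled within each $(n-1,n-1)$-component by the inductive hypothesis (using augmentation as in Remark~\ref{rmk:augment} to keep track of the position of the cell containing $n$), while propagation across $E_{n-1}$ edges is forced by axiom $1$ directly.

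The hard part, and the heart of the argument, is showing that $\phi$ so defined is independent of the chosen path from $v_0$, i.e.\ consistent around every closed loop in $\C$. Loops arising from commuting edges of colors $i,j$ with $|i-j|\geq 3$ are resolved by axiom $5$, since the corresponding tableau edges commute identically in $\G_{\lambda}$. Local $3$-color loops are resolved by axiom $4$ together with the enumeration in Figure~\ref{fig:lambda5}, which pins each allowed configuration to a unique realization in $\G_{\lambda}$. Axiom $6$ further allows one to decompose any global loop into segments crossing at most one $E_{n-1}$ edge, reducing the remaining coherence checks either to within a single $(n-1,n-1)$-component (handled by induction) or across a single $E_{n-1}$ edge (handled by axiom $1$ together with the $3$-color axioms). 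Once $\phi$ is shown to be a well-defined morphism, surjectivity is automatic by Remark~\ref{rmk:onto}, and injectivity follows from the uniqueness clause of axiom $1$: two distinct vertices of $\C$ with the same image would have identically prescribed edge-neighborhoods, and chasing these back via the connectivity of $\C$ forces them to coincide.
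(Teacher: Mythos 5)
Your high-level structure — induct on $n$, identify $\lambda$ from the $(n-1,n-1)$-restricted components, build a map to $\G_{\lambda}$ by propagation, and check loop consistency — parallels the paper's proof, which factors through Theorem~\ref{thm:cover} (construction of a not-necessarily-injective morphism) and Corollary~\ref{cor:fibers} (equal fiber cardinalities). But the injectivity step at the end is incorrect. You claim that two distinct vertices $u\neq u'$ of $\C$ with $\phi(u)=\phi(u')$ would, by chasing through the uniqueness clause of axiom $1$, be forced to coincide. That reasoning fails: from $\phi(u)=\phi(u')$ and uniqueness of $i$-neighbors you get only $\phi(E_i(u))=\phi(E_i(u'))$, i.e.\ the fibers of $\phi$ are consistently transported around $\C$; nothing collapses a fiber to a point. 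The graph in Figure~\ref{fig:gregg} is a connected signed, colored graph satisfying axioms $1$--$5$ that admits a globally consistent two-to-one morphism onto $\G_{(3,2,1)}$, so your chasing argument, which uses only axiom $1$, would ``prove'' it isomorphic to $\G_{(3,2,1)}$ — which is false. The paper instead first proves (Corollary~\ref{cor:fibers}) that from a fixed restricted component $\C_0$ and any partition $\nu$ there is a \emph{unique} restricted component mapping to $\G_{\nu}$ reachable by at most one $E_{n-1}$ edge, and then invokes axiom $6$ to conclude that \emph{every} restricted component is so reachable, forcing each fiber to be a singleton. Injectivity must come from this substantive use of axiom $6$, not from axiom $1$.

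A second, smaller concern: the well-definedness of $\phi$ around loops, which you correctly flag as ``the hard part,'' is the bulk of the paper's technical work (the proof of Theorem~\ref{thm:cover}). It requires a two-case analysis according to whether $n+1$ or $n-1$ lies between the other two of $n-1,n,n+1$ in the image tableau, an explicit identification of the $E_{n-2}$ edges that commute with $E_{n-1}$, and repeated appeals to axiom $4$ to pin down structure. Asserting that ``local $3$-color loops are resolved by axiom~$4$'' skips the content: axiom $4$ constrains $3$-color components, but one must still show that a lift defined on two adjacent restricted components agrees when continued across an $E_{n-1}$ edge, and that coherence check is exactly where the work lies. Your outline would need this step filled in before it constitutes a proof.
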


The proof of Theorem~\ref{thm:isomorphic} is postponed until
Section~\ref{sec:deg-proof}.  We conclude this section by interpreting
Theorem~\ref{thm:isomorphic} in terms of symmetric functions.

\begin{corollary}
  Let $\G$ be a dual equivalence graph of type $(n,n)$ such that every
  vertex is assigned some additional statistic $\alpha$. Let
  $C(\lambda)$ denote the set of connected components of $\G$ that
  are isomorphic to $\G_{\lambda}$. If $\alpha$ is constant on
  connected components of $\G$, then
  \begin{equation}
    \sum_{v \in V(\G)} q^{\alpha(v)} Q_{\sigma(v)}(X) = 
    \sum_{\lambda} \sum_{\C \in C(\lambda)} q^{\alpha(\C)} s_{\lambda}(X) .
    \label{eqn:schurpos}
  \end{equation}
  In particular, the generating function for $\G$ so defined is
  symmetric and Schur positive.
\label{cor:schurpos}
\end{corollary}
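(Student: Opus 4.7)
The plan is to reduce the statement to a straightforward bookkeeping argument once Theorem~\ref{thm:isomorphic} is in hand. First I would partition the vertex set $V(\G)$ according to the connected components of $\G$, writing
\begin{displaymath}
  \sum_{v \in V(\G)} q^{\alpha(v)} Q_{\sigma(v)}(X) \; = \; \sum_{\C} \sum_{v \in V(\C)} q^{\alpha(v)} Q_{\sigma(v)}(X),
\end{displaymath}
where the outer sum ranges over the connected components $\C$ of $\G$. Since $\alpha$ is by hypothesis constant on each $\C$, the factor $q^{\alpha(v)} = q^{\alpha(\C)}$ pulls out of the inner sum.

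Next I would invoke Theorem~\ref{thm:isomorphic}, which asserts that each connected component $\C$ of the dual equivalence graph $\G$ (which is itself a dual equivalence graph of type $(n,n)$) is isomorphic to $\G_{\lambda}$ for a unique partition $\lambda$ of $n$. By Definition~\ref{defn:isomorphism}, any such isomorphism $\phi : \C \to \G_{\lambda}$ is a signature-preserving bijection on vertex sets, so
\begin{displaymath}
  \sum_{v \in V(\C)} Q_{\sigma(v)}(X) \; = \; \sum_{T \in V(\G_{\lambda})} Q_{\sigma(T)}(X).
\end{displaymath}
Applying Proposition~\ref{prop:quasisym}, which is equation~\eqref{eqn:glamschur}, this last sum is precisely $s_{\lambda}(X)$. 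Regrouping connected components according to their isomorphism type as indexed by $C(\lambda)$ then yields the claimed identity~\eqref{eqn:schurpos}.

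Finally, symmetry and Schur positivity of the generating function follow immediately from the right-hand side of~\eqref{eqn:schurpos}: each $s_{\lambda}(X)$ is symmetric, and the coefficient $\sum_{\C \in C(\lambda)} q^{\alpha(\C)}$ is a sum of monomials in $q$ with nonnegative integer coefficients (assuming $\alpha$ takes nonnegative integer values, as will be the case in our applications). The only nontrivial input to this argument is Theorem~\ref{thm:isomorphic}; the rest is formal. Consequently the real work lies entirely in Section~\ref{sec:deg-proof}, and there is no additional obstacle to overcome here.
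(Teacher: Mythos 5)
Your proof is correct and is precisely the argument the paper intends: the paper states Corollary~\ref{cor:schurpos} without proof as an immediate consequence of Theorem~\ref{thm:isomorphic} together with equation~\eqref{eqn:glamschur}, and the decomposition-by-components bookkeeping you spell out is exactly what fills that gap. Your parenthetical caveat that $\alpha$ should take nonnegative integer values for the coefficients to be manifestly positive in $q$ is a fair (and slightly more careful) reading than the paper makes explicit.
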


We can, of course, include multivariate statistics in
\eqref{eqn:schurpos}, but as our immediate purpose is to apply this
theory to LLT polynomials, a single parameter suffices.

Equation \ref{eqn:schurpos} appears to be difficult to work with
since, in general, it is difficult to determine when two signed,
colored graphs are isomorphic. However, this problem simplifies for
dual equivalence graphs. For each vertex $v$ of a dual equivalence
graph, let $\pi(v)$ be the composition formed by the lengths of the
runs of the $+1$'s in $\sigma(v)$. As shown in
Proposition~\ref{prop:noauto-noniso}, each $\G_{\lambda}$ contains a
unique vertex $T_{\lambda}$ with the property that $\pi(T_{\lambda})$
forms a partition and, if $\pi(T)$ also forms a partition for some $T
\in \mathrm{SYT}(\lambda)$, then $\pi(T) \leq \pi(T_{\lambda})$ in
dominance order. Therefore if we know which vertices occur on a given
connected component of a dual equivalence graph, determining the
$\G_{\lambda}$ to which the component is isomorphic is simply a matter
of comparing $\pi(v)$ for each vertex of the component.

\subsection{The structure of dual equivalence graphs}
\label{sec:deg-proof}

We begin the proof of Theorem~\ref{thm:isomorphic} by showing that the
standard dual equivalence graphs are non-redundant in the sense that
they are mutually non-isomorphic and have no nontrivial
automorphisms. Both results stem from the observation that
$\G_{\lambda}$ contains a unique vertex such that the composition
formed by the lengths of the runs of $+1$'s in the signature gives a
maximal partition.

\begin{proposition} 
  If $\phi: \G_{\lambda} \rightarrow \G_{\mu}$ is an isomorphism, then
  $\lambda = \mu$ and $\phi=\mathrm{id}$.
\label{prop:noauto-noniso}
\end{proposition}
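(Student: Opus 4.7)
The plan is to single out a canonical vertex $T_\lambda$ in each $\G_\lambda$ characterized purely by its signature, so that any isomorphism $\phi$ is forced to match canonical to canonical. Concretely, define $T_\lambda \in \mathrm{SYT}(\lambda)$ to be the \emph{superstandard tableau}, obtained by placing the values $\lambda_1 + \cdots + \lambda_{k-1}+1,\ldots,\lambda_1+\cdots+\lambda_k$ left-to-right in row $k$. A direct unwinding of the content reading word along diagonals shows that $\pi(T_\lambda)$, viewed as a composition of $n$, is precisely $\lambda$.

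The crux is the following claim, from which everything else will follow: $T_\lambda$ is the unique vertex of $\G_\lambda$ whose $\pi$-composition is a partition dominating $\lambda$. To prove it, suppose $T \in \mathrm{SYT}(\lambda)$ has $\pi(T) = \nu$ with $\nu$ a partition. Group $\{1,\ldots,n\}$ into the $\nu$-blocks $B_k = \{\nu_1 + \cdots + \nu_{k-1}+1, \ldots, \nu_1+\cdots+\nu_k\}$, and let $T'$ be the filling of $\lambda$ obtained by replacing each entry of $T$ lying in $B_k$ by the constant $k$. The hypothesis $\pi(T) = \nu$ forces the values in each $B_k$ to appear in increasing order in the content reading word $w_T$, while any two cells of a common column are read in \emph{decreasing} order of row index (a higher row has smaller content and is thus read earlier). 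Hence no two cells of the same column of $T$ belong to the same $B_k$, so $T'$ is a genuine semi-standard tableau of shape $\lambda$ and weight $\nu$. This yields $K_{\lambda,\nu} \geq 1$, which forces $\nu \leq \lambda$ in dominance. In the equality case $\nu = \lambda$, we have $K_{\lambda,\lambda} = 1$ with the unique SSYT placing $k$'s throughout row $k$; the constraint $\pi(T) = \lambda$ then forces the $B_k$-entries of $T$ to be listed in content order along row $k$, recovering $T = T_\lambda$.

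Now suppose $\phi:\G_\lambda \to \G_\mu$ is an isomorphism. By Definition \ref{defn:isomorphism}, $\phi$ preserves signatures and hence $\pi$. The vertex $\phi(T_\lambda) \in V(\G_\mu)$ therefore has $\pi = \lambda$, a partition, so applying the claim inside $\G_\mu$ gives $\lambda \leq \mu$ in dominance; applying the same reasoning to $\phi^{-1}(T_\mu)$ yields $\mu \leq \lambda$. Hence $\lambda = \mu$ and $\phi(T_\lambda) = T_\lambda$.

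For the identity statement, I use the uniqueness clause of axiom 1 inductively: if $\phi(v) = v$ and $\{v,w\} \in E_i$, then $\phi$ sends this to an $i$-edge $\{v, \phi(w)\}$, and since the $i$-neighbor of $v$ is unique when it exists, $\phi(w) = w$. As $\G_\lambda$ is connected by Proposition \ref{prop:deshape}, propagating outward from the fixed point $T_\lambda$ gives $\phi = \mathrm{id}$ on all of $V(\G_\lambda)$. The main obstacle in this plan is the combinatorial claim identifying $T_\lambda$ via its signature; once established, the rest is a routine consequence of the morphism definition and axiom 1.
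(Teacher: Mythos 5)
Your proof is correct and follows essentially the same route as the paper: both single out the superstandard tableau $T_\lambda$, observe that its signature blocks identify $\lambda$ extremally among partitions in dominance order (the paper phrases this via horizontal strips, you via $K_{\lambda,\nu}\geq 1$, which is the same fact), conclude $\lambda=\mu$ and $\phi(T_\lambda)=T_\lambda$, and then propagate the fixed point along edges using the uniqueness clause of axiom 1 and the connectedness of $\G_\lambda$ from Proposition~\ref{prop:deshape}. Your write-up simply spells out in more detail why the $B_k$-blocks avoid common columns, which the paper takes as read.
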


\begin{proof}
  Let $T_{\lambda}$ be the tableau obtained by filling the numbers 1
  through $n$ into the rows of $\lambda$ from left to right, bottom to
  top, in which case $\sigma(T_{\lambda}) = +^{\lambda_1\!-\!1}, -,
  +^{\lambda_2\!-\!1}, -, \cdots$. For any standard tableau $T$ such
  that $\sigma(T) = \sigma(T_{\lambda})$, the numbers $1$ through
  $\lambda_1$, and also $\lambda_1 + 1$ through $\lambda_1 +
  \lambda_2$, and so on, must form horizontal strips. In particular,
  if $\sigma(T) = \sigma(T_{\lambda})$ for some $T$ of shape $\mu$,
  then $\lambda \leq \mu$ with equality if and only if $T =
  T_{\lambda}$.

  Suppose $\phi : \G_{\lambda} \rightarrow \G_{\mu}$ is an
  isomorphism. Let $T_{\lambda}$ be as above for $\lambda$, and let
  $T_{\mu}$ be the corresponding tableau for $\mu$. Then since
  $\sigma(\phi(T_{\lambda})) = \sigma(T_{\lambda})$, $\lambda \leq
  \mu$. Conversely, since $\sigma(\phi^{-1}(T_{\mu})) =
  \sigma(T_{\mu})$, $\mu \leq \lambda$.  Therefore $\lambda=\mu$.
  Furthermore, $\phi(T_{\lambda}) = T_{\lambda}$. For $T \in
  \mathrm{SYT}(\lambda)$ such that $\{T_{\lambda},T\} \in E_i$, we
  have $\{T_{\lambda},\phi(T)\} \in E_i$, so $\phi(T) = T$ by dual
  equivalence axiom $1$.  Extending this, every tableau connected to a
  fixed point by some sequence of edges is also a fixed point for
  $\phi$, hence $\phi = \mathrm{id}$ on each $\G_{\lambda}$ by
  Proposition~\ref{prop:deshape}.
\end{proof}

In order to avoid cumbersome notation, as we investigate the
connection between an arbitrary dual equivalence graph and the
standard one, we will often abuse notation by simultaneously referring
to $\sigma$ and $E$ as the signature function and edge set for both
graphs.

\begin{definition}
  Let $\G = (V,\sigma,E)$ be a signed, colored graph of type $(n,N)$
  satisfying axiom $1$. For $1 < i < N$, we say that a vertex $w \in
  V$ \emph{admits an $i$-neighbor} if $\sigma(w)_{\imo} =
  -\sigma(w)_i$.
\label{defn:neighbor}
\end{definition}

For $1 < i < n$, if $\sigma(w)_{\imo} = -\sigma(w)_i$ for some $w \in
V$, then axiom $1$ implies the existence of $x \in V$ such that
$\{w,x\} \in E_i$. That is, if $w$ admits an $i$-neighbor for some
$1<i<n$, then $w$ has an $i$-neighbor in $\G$. For $n \leq i < N$,
though $i$-edges do not exist in $\G$, if $\G$ were the restriction of
a graph of type $(\ipo,N)$ also satisfying axiom $1$, then the
condition $\sigma(w)_{\imo} = -\sigma(w)_i$ would imply the existence
of a vertex $x$ such that $\{w,x\} \in E_{i}$ in the type $(\ipo,N)$
graph. When convenient, $E_{i}$ may be regarded as an involution on
vertices admitting an $i$-neighbor, i.e. if $w$ admits an
$i$-neighbor, then $E_i(w) = x$ where $\{w,x\} \in E_{i}$.

Recall the notion of augmenting a partition $\lambda$ by a skew
tableau $A$ and the resulting dual equivalence graph $\G_{\lambda,A}$
from Remark~\ref{rmk:augment}.

\begin{lemma}
  Let $\G = (V,\sigma,E)$ be a connected dual equivalence graph of
  type $(n,N)$, and let $\phi$ be an isomorphism from the
  $(n,n)$-restriction of $\G$ to $\G_{\lambda}$ for some partition
  $\lambda$ of $n$.  Then there exists a semi-standard tableau $A$ of
  shape $\rho/\lambda$, $|\rho| = N$, with entries $n+1,\ldots,N$ such
  that $\phi$ gives an isomorphism from $\G$ to $\G_{\lambda,A}$.
  Moreover, the position of the cell of $A$ containing $n+1$ is
  unique.
\label{lem:extend-signs}
\end{lemma}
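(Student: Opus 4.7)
The plan is to induct on $N - n$. The base case $N = n$ is immediate with $A = \varnothing$. The first inductive step, from $N = n$ to $N = n + 1$, establishes both the existence of the cell containing $n+1$ and its uniqueness (the second assertion of the lemma); every subsequent step adds a cell for $N$ without uniqueness. I begin with the observation that for any edge $\{w, x\} \in E_i$ we have $1 < i < n$, and axiom~$2$ forces $\sigma(w)_h = \sigma(x)_h$ whenever $h > i + 1$, hence in particular for every $h \geq n + 1$ since $i + 1 \leq n$. As $\G$ is connected, each such $\sigma(v)_h$ takes a constant value $\varepsilon_h$ on $V$; these constants will govern the placement of entries $n + 2, \ldots, N$.

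For the inductive step with $N > n + 1$, suppose $\G_{\lambda, A'}$ is isomorphic via $\phi$ to the $(n, N-1)$-restriction of $\G$ for some $A'$ of shape $\rho'/\lambda$. One must find an outer corner of $\rho'$ for the entry $N$ whose content lies on the correct side, dictated by $\varepsilon_{N-1}$, of the content of $N-1$ in $A'$. Since the inner corners and outer corners of $\rho'$ have strictly interleaving contents, and $N-1$ sits at an inner corner of $\rho'$, such an outer corner always exists; any choice completes the induction.

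For the critical step $N = n + 1$, I must construct $A$ consisting of a single cell $q$ and show $q$ is uniquely determined. Since $\phi$ is a bijection onto $\mathrm{SYT}(\lambda)$, the cell $p_v$ containing $n$ in $\phi(v)$ ranges over every inner corner of $\lambda$ as $v$ varies. In any candidate $\mathrm{ASYT}(\lambda, \{(q, n+1)\})$ one has $\sigma(\phi(v) \cup (q, n+1))_n = +1$ precisely when $c(p_v) < c(q)$. Thus $q$ must be chosen so that the map $v \mapsto \sigma(v)_n$ agrees with the threshold function determined by $c(q)$; for existence I must show that the set of inner-corner contents on which $\sigma(v)_n = +1$ is downward-closed, after which $q$ is forced to be the unique outer corner with content strictly between the maximum ``$+$'' and the minimum ``$-$'' inner-corner content (existence and uniqueness of this outer corner follow from the strict interleaving of inner and outer corner contents).

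The monotonicity of $\sigma(v)_n$ as a function of the content of $p_v$ is the main obstacle. I expect to prove it by examining an $E_{n-1}$-edge $\{v, w\}$: viewed through $\phi$ on the $(n, n)$-restriction, such an edge corresponds to an elementary dual equivalence on the entries $n-2, n-1, n$ of $\phi(v) \in \mathrm{SYT}(\lambda)$, which may move $n$ between adjacent inner corners. Axiom~$3$ controls whether $\sigma(v)_n$ flips across this edge, and by appealing to the allowed three-color component structures of Figure~\ref{fig:lambda5} (which are inherited from the $(n,n)$-restriction $\G_\lambda$ up to the possible flip in coordinate $n$), I expect the flip pattern to precisely track the content of $p_v$, giving the required downward-closedness. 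Once $q$ is established, the extended map $\phi : V \to \mathrm{ASYT}(\lambda, \{(q, n+1)\})$ preserves all signature coordinates and, by the inductive hypothesis of the broader induction, all edges, completing the proof.
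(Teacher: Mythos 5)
Your structural plan mirrors the paper's: observe that $\sigma_h$ is constant on $V$ for $h \geq n+1$, reduce to $N = n+1$, show that $\sigma(\cdot)_n$ is monotone as a function of the content of the inner corner of $\lambda$ carrying $n$ under $\phi$, and then place $n+1$ at the outer corner forced by that threshold (with uniqueness from the interleaving of inner and outer corner contents). This is the right road map, and the handling of the entries $n+2,\ldots,N$ is also correct.

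The gap is that the monotonicity claim, which is the entire substance of the lemma, is left as a plan rather than an argument (``I expect to prove\ldots'', ``I expect the flip pattern to\ldots''). Your sketch also misattributes the mechanism: the three-color structures of Figure~\ref{fig:lambda5} (axiom 4 at color $i = n-1$) govern only the signature coordinates $n-4,\ldots,n-1$ and carry no information about $\sigma_n$, and there is no ``possible flip in coordinate $n$'' in those diagrams since $E_n$ edges do not exist in a graph of type $(n,N)$. The flip constraint comes entirely from axiom 3 at $i = n-1$: if $\sigma(v)_n = -\sigma(E_{n-1}(v))_n$ then $\sigma(v)_n = -\sigma(v)_{n-1}$. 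Combining this with the value of $\sigma(v)_{n-1}$ --- read off through $\phi$ from whether the $E_{n-1}$ edge moves $n$ to an inner corner of lower or higher content --- yields precisely the one-step monotonicity, but you must actually carry this out. You must also justify that every pair of inner-corner components of the $(n-1,n+1)$-restriction is linked by some $E_{n-1}$ edge; the paper quotes axiom 6 of $\G$ for this, and your route via $\phi$ and the structure of $\G_\lambda$ also works, but neither appears in the proposal. (Minor: an $E_{n-1}$ edge can carry $n$ between any two inner corners, not only adjacent ones, though this is harmless for the argument.) As written, this is a correct outline rather than a proof.
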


\begin{proof}
  By axiom $2$ and the fact that $\G$ is connected, $\sigma_h$ is
  constant on $\G$ for $h \geq \npo$. Therefore once a suitable cell
  for $\npo$ has been chosen, the cells for $n+2, \cdots, N$ may be
  chosen in any way that gives the correct signature. One solution is
  to place $j$ north of the first column if $\sigma_{j-1} = -1$ or
  east of the first row if $\sigma_{j-1} = +1$ for
  $j=n+2,\cdots,N$. Assume, then, that $N = \npo$.
  
  By dual equivalence axiom $2$, $\sigma_n$ is constant on connected
  components of the $(\nmo,\npo)$-restriction of $\G$. By
  Proposition~\ref{prop:deshape}, a connected component of the
  $(\nmo,\nmo)$-restriction of $\G_{\lambda}$ consists of all standard
  Young tableaux where $n$ lies in a particular northeastern cell of
  $\lambda$. Therefore, for each connected component of the
  $(\nmo,\npo)$-restriction of $\G$, we may identify its image under
  $\phi$ with $\G_{\mu}$ for some partition $\mu \subset \lambda$,
  $|\mu| = \nmo$, with $n$ lying in position $\lambda/\mu$. We will
  show that $\sigma_n$ has the monotonicity property on connected
  components of the $(\nmo,\npo)$-restriction of $\G$ depicted in
  Figure~\ref{fig:monotonic}, i.e., there is an inner corner above
  which $\sigma_n = +1$ and below which $\sigma_n = -1$.

  \begin{figure}[ht]
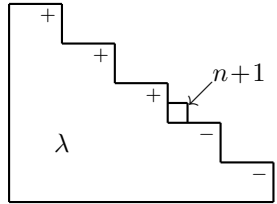

    \begin{center}
      \psset{xunit=2em}
      \psset{yunit=1.5em}
      \pspicture(0,0)(5,5)
      \psline(0,0)(0,5)
      \psline(1,4)(1,5)
      \psline(2,3)(2,4)
      \psline(3,2)(3,3)
      \psline(4,1)(4,2)
      \psline(5,0)(5,1)
      \psline(0,0)(5,0)
      \psline(4,1)(5,1)
      \psline(3,2)(4,2)
      \psline(2,3)(3,3)
      \psline(1,4)(2,4)
      \psline(0,5)(1,5)
      \rput(0.75,4.7){$_+$}
      \rput(1.75,3.7){$_+$}
      \rput(2.75,2.7){$_+$}
      \rput(3.75,1.7){$_-$}
      \rput(4.75,0.7){$_-$}
      \psline(3,2.5)(3.375,2.5)
      \psline(3.375,2)(3.375,2.5)
      \rput(3.625,2.75){$\swarrow$}
      \rput(4.35,3.25){$\npo$}
      \rput(1,1.5){$\lambda$}
      \endpspicture
      \caption{\label{fig:monotonic} Identifying the unique position
        for $\npo$ based on $\sigma_n$.}
    \end{center}
  \end{figure}

  Let $\C$ and $\mathcal{D}$ be two distinct connected components of
  the $(\nmo,\npo)$-restriction of $\G$ such that there exist vertices
  $v$ of $\C$ and $u$ of $\mathcal{D}$ with $\{v,u\} \in
  E_{\nmo}$. Let $\phi(\C) \cong \G_{\mu}$, and let $\phi(\mathcal{D})
  \cong \G_{\nu}$. Since $\{v,u\} \in E_{\nmo}$, $\phi(v)$ must have
  $\nmo$ in position $\lambda/\mu$ with $\nmt$ lying between $\nmo$
  and $n$ in the content reading word. Since $\phi$ preserves
  $E_{\nmo}$ edges, $\phi(u)$ must be the result of an elementary dual
  equivalence on $\phi(v)$ for $\nmt, \nmo, n$, which will necessarily
  interchange $\nmo$ and $n$. Since $\phi$ preserves signatures,
  $\lambda/\nu$ lies northwest of the position of $\lambda/\mu$ if and
  only if $\sigma(v)_{\nmt,\nmo} = + -$ and $\sigma(u)_{\nmt,\nmo} = -
  +$. If $\lambda/\nu$ lies northwest of the position of $\lambda/\mu$
  and $\sigma(v)_{n} = -1$, then that $\sigma(v)_{n} =
  \sigma(v)_{\nmo}$. Thus, by axiom 3, $\sigma(u)_{n} = \sigma(v)_{n}
  = -1$. Similarly, if $\lambda/\nu$ lies northwest of the position of
  $\lambda/\mu$ and $\sigma(u)_{n} = +1$, then $\sigma(u)_{n} =
  \sigma(u)_{\nmo}$. Thus, by axiom 3, $\sigma(v)_{n} = \sigma(u)_{n}
  = +1$.

  Abusing notation and terminology, we have shown that if
  $\sigma_n(\C) = +1$ and $\mathcal{D}$ is any component connected to
  $\C$ by an $\nmo$-edge such that $\phi(\mathcal{D})$ lies northwest
  of $\phi(\C)$, then $\sigma_n(\mathcal{D}) = +1$ as well. Similarly,
  if $\sigma_n(\C) = -1$ and $\mathcal{D}$ is any component connected
  to $\C$ by an $\nmo$-edge such that $\phi(\mathcal{D})$ lies
  southeast of $\phi(\C)$, then $\sigma_n(\mathcal{D}) = -1$ as
  well. By dual equivalence graph axiom 6, for any two distinct
  connected components $\C$ and $\mathcal{D}$ of the
  $(\nmo,\npo)$-restriction of $\G$ and any pair of vertices $w$ on
  $\C$ and $x$ on $\mathcal{D}$, there is a path from $w$ to $x$
  crossing at most one, and hence exactly one, $\nmo$ edge. Therefore
  for any $\C$ and $\mathcal{D}$, there exist vertices $v$ of $\C$ and
  $u$ of $\mathcal{D}$ such that $\{v,u\} \in E_{\nmo}$. Hence every
  two connected components of the $(\nmo,\npo)$-restriction of $\G$
  are connected by an $\nmo$-edge, thus establishing the monotonicity
  depicted in Figure~\ref{fig:monotonic}.

  This established, it follows that there exists a unique row such
  that $\sigma(\C)_n = -1$ whenever the $\phi(\C)$ has $n$ south of
  this row and $\sigma(\C)_n = +1$ whenever the $\phi(\C)$ has $n$
  north of this row. In this case, the cell containing $\npo$ must be
  placed at the eastern end of this pivotal row, and doing so extends
  $\phi$ to an isomorphism between $(n,\npo)$ graphs.
\end{proof}

Once Theorem~\ref{thm:isomorphic} has been proved,
Lemma~\ref{lem:extend-signs} may be used to obtain the
following generalization of Theorem~\ref{thm:isomorphic} for dual
equivalence graphs of type $(n,N)$: Every connected component of a
dual equivalence graph of type $(n,N)$ is isomorphic to
$\G_{\lambda,A}$ for a unique partition $\lambda$ and some skew
tableau $A$ of shape $\rho/\lambda$, $|\rho| = N$, with entries $\npo,
\ldots, N$.

Finally we have all of the ingredients necessary to prove the main
result of this section.

\begin{theorem}
  Let $\G$ be a connected signed, colored graph of type $(\npo,\npo)$
  satisfying axioms $1$ through $5$ such that each connected component
  of the $(n,n)$-restriction of $\G$ is isomorphic to a standard dual
  equivalence graph. Then there exists a morphism $\phi$ from $\G$ to
  $\G_{\lambda}$ for some unique partition $\lambda$ of
  $\npo$. 
\label{thm:cover}
\end{theorem}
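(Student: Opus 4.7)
The plan is to build $\phi$ one component at a time on the $(n,n)$-restriction of $\G$, then verify that these local maps glue consistently across the additional $E_n$ edges. Let $\{\C_j\}$ denote the connected components of the $(n,n)$-restriction of $\G$. Each $\C_j$, equipped with its inherited length-$n$ signatures, is a dual equivalence graph of type $(n, n+1)$: axioms $1$--$5$ restrict cleanly from $\G$, and axiom $6$ holds for $\C_j$ because $\C_j$ is isomorphic to some $\G_{\mu_j}$ by hypothesis, and $\G_{\mu_j}$ satisfies axiom $6$ by Proposition~\ref{prop:good-defn}. I would then apply Lemma~\ref{lem:extend-signs} to each $\C_j$ to extend its isomorphism with $\G_{\mu_j}$ to $\tilde\phi_j: \C_j \to \G_{\mu_j, A_j}$, where $A_j$ is the uniquely determined single cell containing $n+1$. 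Write $\rho_j$ for the corresponding partition of $n+1$ (so that $\G_{\mu_j, A_j}$ is built on tableaux of shape $\rho_j$) and define $\phi(v) = \tilde\phi_{j(v)}(v)$, where $\C_{j(v)}$ is the component containing $v$.

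The substantive step is to show that all $\rho_j$ coincide with a single partition $\lambda$ and that $\phi$ respects $E_n$ edges across components. Since $\G$ is connected, both reduce to the following claim: for any $E_n$ edge $\{v, w\}$ in $\G$ with $v \in \C_1, w \in \C_2$, we have $\rho_1 = \rho_2$ and $\{\tilde\phi_1(v), \tilde\phi_2(w)\}$ is an $E_n$ edge in $\G_{\rho_1}$. Axiom~$2$ forces $\sigma(v)$ and $\sigma(w)$ to agree at positions $h \leq n-3$ and to swap at positions $n-1, n$, while axiom~$3$ governs their relationship at position $n-2$. The strategy is to translate these sign conditions into statements about the positions of the triple $\{n-1, n, n+1\}$ inside $\tilde\phi_1(v)$ and $\tilde\phi_2(w)$, and to argue that this triple must occupy the same three cells in both tableaux, with one arrangement obtained from the other by the elementary dual equivalence on $n-1, n, n+1$. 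This simultaneously forces $\rho_1 = \rho_2$ and verifies that $\phi$ preserves the $E_n$ edge.

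The main obstacle is the case analysis in the previous paragraph, since signatures alone do not determine tableaux. The argument must combine the rigidity of Lemma~\ref{lem:extend-signs} (in particular the monotonicity that pins down the position of $n+1$ in each $\C_j$) with axiom~$5$ (to commute the $E_n$ edge past distant $E_j$ edges with $|n-j| \geq 3$) in order to pin down the common shape $\lambda$. Conceptually this parallels the monotonicity analysis in the proof of Lemma~\ref{lem:extend-signs}, lifted one dimension higher; the new subtlety is the absence of axiom~$6$ at the full level of $\G$, so one may only invoke it on restricted components. Once the common $\lambda$ is established, $\phi$ is the desired morphism into $\G_\lambda$, and uniqueness of $\lambda$ follows from Proposition~\ref{prop:noauto-noniso} combined with the surjectivity of $\phi$ guaranteed by Remark~\ref{rmk:onto}.
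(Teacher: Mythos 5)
Your top-level architecture matches the paper's: decompose $\G$ on its $(n,n)$-restriction components, apply Lemma~\ref{lem:extend-signs} to each component to pin down the cell containing $\npo$, and then glue across $E_n$ edges. You also correctly reduce the problem, via connectedness, to showing that a single $E_n$ edge $\{v,w\}$ forces both $\rho_1 = \rho_2$ and preservation of the edge. The gap is in the gluing step itself, which you acknowledge is the ``main obstacle'' but whose proposed resolution does not work.

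You suggest that sign conditions at coordinates $\nmt, \nmo, n$ can be translated into positional information about the triple $\{\nmo, n, \npo\}$ inside $\tilde\phi_1(v)$ and $\tilde\phi_2(w)$, using axiom~$5$ and the monotonicity from Lemma~\ref{lem:extend-signs}. But the signature of a single vertex at a handful of coordinates does not determine the positions of entries in a tableau, and axiom~$5$ only gives commutativity of $E_n$ with $E_h$ for $h \leq \nmh$; it says nothing about $E_{\nmt}$ and $E_{\nmo}$, which is precisely where the difficulty lives. The paper's proof necessarily splits into two cases according to whether $\npo$ or $\nmo$ lies between the other two in the reading word of $T = \tilde\phi_1(v)$. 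In the first case one must exhibit a path from $T$ to a tableau $T'$ in $\G_{\mu,A}$ where $\nmt$ sits between $\nmo$ and $n$, so that $E_{\nmo}(T')=E_n(T')$ is a double edge, and then commute this whole path past $E_n$; the conclusion $\C_1 = \C_2$ is topological, not a consequence of signature matching. In the second case the argument is genuinely non-local: one must identify the full sub-tableau set with $n$ and $\npo$ in fixed cells and $\nmo$ lying anywhere between them, and determine exactly which $E_{\nmt}$ edges preserve this set — and that determination relies on axiom~$4$ (they are the $E_{\nmt}$ edges from the bottom component of Figure~\ref{fig:lambda5}), which is entirely absent from your proposed toolkit. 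Without axiom~$4$ and the two-case dichotomy, the argument as sketched cannot close the loop, even though the conclusion you aim for (same cells, entries swapped by an elementary dual equivalence) is the right target.
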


\begin{proof}
  When $\npo=2$ or, more generally, when $\G$ has no $n$-edges, the
  result follows immediately from
  Lemma~\ref{lem:extend-signs}. Therefore we proceed by induction,
  assuming that $\G$ has at least one $n$-edge and assuming the result
  for graphs of type $(n,n)$.

  By induction, for every connected component $\C$ of the
  $(n,\npo)$-restriction of $\G$, we have an isomorphism from the
  $(n,n)$-restriction of $\C$ to $\G_{\mu}$ for a unique partition
  $\mu$ of $n$. By Lemma~\ref{lem:extend-signs}, this isomorphism
  extends to an isomorphism from $\C$ to $\G_{\mu,A}$ for a unique
  augmenting tableau $A$, say with shape $\lambda/\mu$. We will show
  that for any $\C$ the shape of $\mu$ augmented with $A$ is the
  same and that we may glue these isomorphisms together to obtain a
  morphism from $\G$ to $\G_{\lambda}$.

  \begin{figure}[ht]
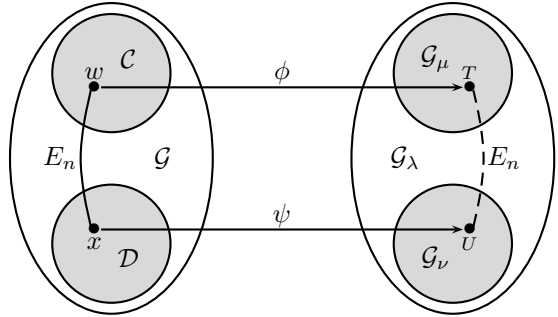

    \begin{center}
      \psset{xunit=3ex}
      \psset{yunit=2.5ex}
      \pspicture(0,0)(14,10)
      \pscircle[fillstyle=solid,fillcolor=lightgray](12,2){.8}
      \rput(12.5,2.5){$\bullet$}
      \rput(12.5,2){$_U$}
      \rput(11.5,1.5){$\G_{\nu}$}
      \pscircle[fillstyle=solid,fillcolor=lightgray](12,8){.8}
      \rput(12.5,7.5){$\bullet$}
      \rput(12.5,8){$_T$}
      \rput(11.5,8.5){$\G_{\mu}$}
      \psellipse(12,5)(3,5.5)
      \rput(10.6,5){$\G_{\lambda}$}
      \pscurve[linestyle=dashed](12.6,2.6)(12.8,3.75)(12.9,5)(12.8,6.25)(12.6,7.4)
      \rput(13.5,5){$E_n$}
      \pscircle[fillstyle=solid,fillcolor=lightgray](2,2){.8}
      \rput(1.5,2.5){$\bullet$}
      \rput(1.5,2){$x$}
      \rput(2.5,1.5){$\mathcal{D}$}
      \pscircle[fillstyle=solid,fillcolor=lightgray](2,8){.8}
      \rput(1.5,7.5){$\bullet$}
      \rput(1.5,8){$w$}
      \rput(2.5,8.5){$\C$}
      \psellipse(2,5)(3,5.5)
      \rput(3.5,5){$\G$}
      \pscurve(1.4,2.6)(1.2,3.75)(1.1,5)(1.2,6.25)(1.4,7.4)
      \rput(0.5,5){$E_n$}
      \psline{->}(1.7,7.5)(12.3,7.5)
      \rput(7,8){$\phi$}
      \psline{->}(1.7,2.5)(12.3,2.5)
      \rput(7,3){$\psi$}
      \endpspicture
      \caption{\label{fig:extend} An illustration of the gluing
        process.}
    \end{center}
  \end{figure}

  Suppose $\{w,x\} \in E_{n}$. Let $\C$ (resp. $\mathcal{D}$) denote
  the connected component of the $(n,\npo)$-restriction of $\G$
  containing $w$ (resp. $x$). Let $\phi$ (resp. $\psi$) be the
  isomorphism from $\C$ (resp. $\mathcal{D}$) to $\G_{\mu,A}$
  (resp. $\G_{\nu,B}$), and set $T = \phi(w)$; see
  Figure~\ref{fig:extend}. We will show that $\psi(x) = E_n(T)$, and
  hence if $\mu,A$ has shape $\lambda$, then so does $\nu,B$ and the
  maps $\phi$ and $\psi$ glue together to give an morphism from $\C
  \cup \mathcal{D}$ to $\G_{\lambda}$ that preserves $n$-edges. There
  are two cases to consider, based on the relative positions of
  $\nmo,n,\npo$ in $T$, regarded as a tableau of shape $\lambda$.

  First suppose that $\npo$ lies between $n$ and $\nmo$ in the reading
  word of $T$. We will show that, in this case, $\C =
  \mathcal{D}$. Since $\npo$ lies between $n$ and $\nmo$ in the
  reading word of $T$, both $\nmo$ and $n$ must be northeastern
  corners of $\mu$, and so there is a cell with entry less than $\nmo$
  that also lies between them. By Proposition~\ref{prop:deshape},
  there exists a tableau $T'$ with $\nmo,n,\npo$ in the same positions
  as in $T$, but now with $\nmt$ lying between $n$ and $\nmo$ in the
  reading word of $T'$. Furthermore, since both $T$ and $T'$ lie on
  the $(\nmt,\npo)$-restriction of $\G_{\mu,A}$, there is a path from
  $T$ to $T'$ in $\G_{\mu,A}$ using only edges $E_h$ with $h \leq
  \nmh$. Let $U' = E_n(T')$. Then since $\nmt$ lies between $n$ and
  $\nmo$ in $U'$, we have $U' = E_{\nmo}(T')$ as well. By axioms 2 and
  5, all edges in the path from $T$ to $T'$ commute with $E_n$, and so
  the same path takes $U = E_n(T)$ to $U'$, and each pair of
  corresponding tableaux on the two paths is connected by an $E_n$
  edge; see Figure~\ref{fig:double}.

  \begin{figure}[ht]
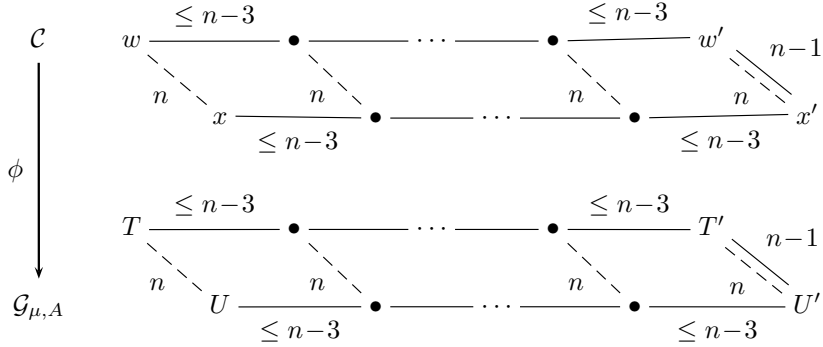

    \begin{center}
      \begin{displaymath}
        \begin{array}{\cs{5}\cs{6}\cs{5}\cs{6}\cs{3}\cs{3}\cs{3}\cs{6}\cs{5}\cs{6}c}
          \rnode{C}{\C} & \rnode{w}{w} & & \rnode{wl}{\B} & & \rnode{wm}{\cdots} 
          & & \rnode{wr}{\B} & & \rnode{wd}{w'} & \\[4ex]
          & & \Rnode{x}{x} & & \rnode{xl}{\B} & & \rnode{xm}{\cdots} 
          & & \rnode{xr}{\B} & & \rnode{xd}{x'} \\[7ex]
          & \Rnode[vref=0.5ex]{T}{T} & & \rnode{Tl}{\B} & & \rnode{Tm}{\cdots} 
          & & \rnode{Tr}{\B} & & \Rnode[vref=0.5ex]{Td}{T'} & \\[4ex]
          \rnode{G}{\G_{\mu,A}} & & \Rnode[vref=0.5ex]{U}{U} & & \rnode{Ul}{\B} 
          & & \rnode{Um}{\cdots} & & \rnode{Ur}{\B} & & \Rnode[vref=0.5ex]{Ud}{U'} 
        \end{array}
        \psset{nodesep=3pt,linewidth=.1ex}
        \ncline {w}{wl}   \naput{\leq \nmh}
        \ncline {wl}{wm}  
        \ncline {wm}{wr}  
        \ncline {wr}{wd}  \naput{\leq \nmh}
        \ncline {x}{xl}   \nbput{\leq \nmh}
        \ncline {xl}{xm}  
        \ncline {xm}{xr}  
        \ncline {xr}{xd}  \nbput{\leq \nmh}
        \ncline[linestyle=dashed] {w}{x} \nbput{n}
        \ncline[linestyle=dashed] {wl}{xl} \nbput{n}
        \ncline[linestyle=dashed] {wr}{xr} \nbput{n}
        \ncline[offset=2pt] {wd}{xd} \naput {\nmo}
        \ncline[linestyle=dashed,offset=2pt] {xd}{wd} \naput{n}
        \ncline[linewidth=.2ex,nodesep=6pt]{->} {C}{G}   \nbput{\phi}
        \ncline {T}{Tl}   \naput{\leq \nmh}
        \ncline {Tl}{Tm}  
        \ncline {Tm}{Tr}  
        \ncline {Tr}{Td}  \naput{\leq \nmh}
        \ncline {U}{Ul}   \nbput{\leq \nmh}
        \ncline {Ul}{Um}  
        \ncline {Um}{Ur}  
        \ncline {Ur}{Ud}  \nbput{\leq \nmh}
        \ncline[linestyle=dashed] {T}{U} \nbput{n}
        \ncline[linestyle=dashed] {Tl}{Ul} \nbput{n}
        \ncline[linestyle=dashed] {Tr}{Ur} \nbput{n}
        \ncline[offset=2pt] {Td}{Ud} \naput {\nmo}
        \ncline[linestyle=dashed,offset=2pt] {Ud}{Td} \naput{n}
      \end{displaymath}
    \end{center}
    \caption{\label{fig:double} Illustration of the path from $T$ to
      $U$ in $\G_{\mu,A}$ and its lift in $\C$.}
  \end{figure}

  Since the path from $T$ to $T'$ to $U'$ to $U$ uses only edges from
  $\G_{\mu,A}$, this path lifts via the isomorphism $\phi$ to a path
  in $\C$. Let $w' = \phi^{-1}(T')$ and $x' = \phi^{-1}(U')$. We will
  show that $x = \phi^{-1}(U)$ and so lies on $\C$. Since $\phi$
  preserves signatures, both $w'$ and $x'$ must admit an $n$-edge in
  $\G$. As summarized in Figure~\ref{fig:lambda4}, axioms 3 and 4
  dictate that the only way for two vertices connected by an
  $\nmo$-edge both to admit an $n$-edge is for $\{w',x'\} \in E_n$ in
  $\G$. By axioms 2 and 5, the path from $w'$ to $w$ gives an
  identical path from $x'$ to $\phi^{-1}(U)$. Since each corresponding
  pair along the two paths must be paired by an $n$-edge, we have
  $\phi^{-1}(U) = E_n(w) = x$, as desired.  Therefore $x$ lies on
  $\C$, and $\phi$ respects the $n$-edge between $w$ and $x$. In this
  case $\C=\mathcal{D}$ and, by Proposition~\ref{prop:noauto-noniso},
  $\psi=\phi$.

  For the second case, suppose that $\nmo$ lies between $n$ and $\npo$
  in $T$. Consider the subset of tableaux in $\G_{\mu,A}$ with $n$ and
  $\npo$ fixed in the same position as in $T$ and $\nmo$ lying
  anywhere between them in the reading word. In terms of the graph
  structure, these are all tableaux reachable from $T$ using edges
  $E_h$ with $h \leq \nmh$ and a certain subset of the $E_{\nmt}$
  edges. We will return soon to the question of which $E_{\nmt}$ edges
  these are. For now, let $\mathcal{T}$ denote the union of the graphs
  $\G_{\rho,R}$, where $\rho$ is a partition of $\nmt$ with augmenting
  tableau $R$ consisting of a single cell containing $\nmo$ such that
  $\rho,R$ is the shape of $T$ with $n$ and $\npo$ removed and the
  augmented cell of $R$ lies strictly between the positions of $n$ and
  $\npo$ in $T$. Clearly the set of $\rho,R$ uniquely determines the
  cells containing $n$ and $\npo$, and so uniquely determines
  $\lambda$. Furthermore, which of $n,\npo$ occupies which cell is
  determined by $\sigma_n$, which is constant on this subset by axioms
  $2$ and $3$. Lifting $\mathcal{T}$ to $\C$ using $\phi^{-1}$ gives
  rise to an induced subgraph of $\C$ that completely determines
  $\lambda$ as well as the positions of $n$ and $\npo$ in the image of
  this subgraph under $\phi$. We will show that the corresponding
  induced subgraph for $\mathcal{D}$ is isomorphic but with the
  opposite sign for $\sigma_n$.
  
  \begin{figure}[ht]
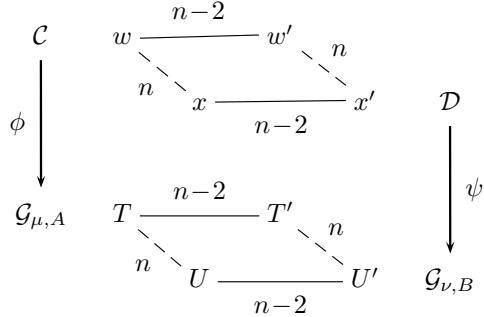

    \begin{center}
      \begin{displaymath}
        \begin{array}{\cs{4}\cs{5}\cs{5}\cs{5}\cs{4}c}
          \rnode{C}{\C} & \rnode{w}{w} & & \rnode{wd}{w'} & & \\[3ex]
          & & \Rnode{x}{x} & & \rnode{xd}{x'} & \rnode{D}{\mathcal{D}} \\[7ex]
          \rnode{A}{\G_{\mu,A}} & \Rnode[vref=0.5ex]{T}{T} & & \Rnode[vref=0.5ex]{Td}{T'} & & \\[3ex]
          & & \Rnode[vref=0.5ex]{U}{U} & & \Rnode[vref=0.5ex]{Ud}{U'} & \rnode{B}{\G_{\nu,B}} 
        \end{array}
        \psset{nodesep=3pt,linewidth=.1ex}
        \ncline {w}{wd}   \naput{\nmt}
        \ncline {x}{xd}   \nbput{\nmt}
        \ncline {T}{Td}   \naput{\nmt}
        \ncline {U}{Ud}   \nbput{\nmt}
        \ncline[linestyle=dashed] {w}{x} \nbput{n}
        \ncline[linestyle=dashed] {wd}{xd} \naput{n}
        \ncline[linestyle=dashed] {T}{U} \nbput{n}
        \ncline[linestyle=dashed] {Td}{Ud} \naput{n}
        \ncline[linewidth=.2ex,nodesep=6pt]{->} {C}{A}   \nbput{\phi}
        \ncline[linewidth=.2ex,nodesep=6pt]{->} {D}{B}   \naput{\psi}
      \end{displaymath}
    \end{center}
    \caption{\label{fig:typeC} Illustration of $E_{\nmt}$ edges on
      $\mathcal{T} \cup \mathcal{U}$ and their lifts in $\C \cup
      \mathcal{D}$.}
  \end{figure}

  To prove the assertion, we return to the question of which
  $E_{\nmt}$ edges are allowed in generating $\mathcal{T}$. Any
  $E_{\nmt}$ edge that keeps $\nmo$ between $n$ and $\npo$ clearly
  does not change $\sigma_{\nmo}$ or $\sigma_{n}$. Therefore such
  $E_{\nmt}$ edges must pair vertices both of which admit an
  $n$-neighbor. Further, neither of these vertices may have $E_n$ as a
  double edge with $E_{\nmo}$ since $\nmo$ lies between $n$ and
  $\npo$. By axiom 4, the $E_{\nmt}$ edges that meet these conditions
  are precisely those in the lower component of
  Figure~\ref{fig:lambda5}. In particular, these $E_{\nmt}$ edges
  commute with $E_n$ edges as depicted in Figure~\ref{fig:typeC}. By
  axioms 2 and 5, $E_h$ also commutes with $E_n$ for $h \leq
  \nmh$. Therefore all edges on the induced subgraph of $\C$
  containing $\phi^{-1}(\mathcal{T})$ commute with $E_n$. Therefore
  $E_n$ may be regarded as an isomorphism from this subgraph to
  $\mathcal{X} = E_n(\phi^{-1}(\mathcal{T}))$. Since $\{w,x\} \in E_n$
  and $w \in \phi^{-1}(\mathcal{T})$, we have $x \in
  \mathcal{X}$. Since all edges of the induced subgraph have color at
  most $\nmt$, it follows that $\mathcal{X} \subset \mathcal{D}$.

  Let $U = \psi(x)$, and, more generally, let $\mathcal{U} =
  \psi(\mathcal{X})$. Since $\phi,\psi$ and $E_n$ are isomorphisms,
  $\mathcal{U}$ together with its induced edges is isomorphic to
  $\mathcal{T}$ together with its induced edges, though, by axiom 1,
  the signs for $\sigma_n$ and $\sigma_{\npo}$ are reversed. By the
  earlier characterization of $\mathcal{T}$, this implies that the
  tableaux in $\mathcal{U}$ have shape $\lambda$, with the cells
  containing $n$ and $\npo$ reversed from that in $\mathcal{T}$. In
  particular, $\mathcal{U} = E_n(\mathcal{T})$, that is to say, $\phi$
  and $\psi$ glue to give a morphism from $\C \cup \mathcal{D} \subset
  \G$ to $\G_{\mu,A} \cup \G_{\nu,B} \subset \G_{\lambda}$ that
  respects $E_n$ edges of the induced subgraphs.

  Since $T$ admits an $n$-neighbor, $n$ cannot lie between $\nmo$ and
  $\npo$, so these two are the only cases. Thus we now have a
  well-defined morphism from the $(n,\npo)$-restriction of $\G$ to the
  $(n,\npo)$-restriction of $\G_{\lambda}$ that respects $n$-edges. As
  such, this map lifts to a morphism from $\G$ to $\G_{\lambda}$.
\end{proof}

By Remark~\ref{rmk:onto}, the morphism of Theorem~\ref{thm:cover} is
necessarily surjective, though in general it need not be injective.
The smallest example where injectivity fails was first observed by
Gregg Musiker in a graph of type $(6,6)$ with generating function $2
s_{(3,2,1)}(X)$; see Figure~\ref{fig:gregg} in
Appendix~\ref{app:axiom6}. 


\begin{corollary}
  Let $\G$ satisfy the hypotheses of Theorem~\ref{thm:cover}. Then the
  fiber over each vertex of $\G_{\lambda}$ in the morphism from $\G$
  to $\G_{\lambda}$ has the same cardinality.
  \label{cor:fibers}
\end{corollary}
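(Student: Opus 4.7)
The plan is to show that for each edge $\{T,T'\} \in E_i$ of $\G_\lambda$, the $i$-edge involution on $\G$ restricts to a bijection between the fibers $\phi^{-1}(T)$ and $\phi^{-1}(T')$. Since $\G_\lambda$ is connected, propagating this along any path then forces all fibers to have the same cardinality.

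First, I would fix $\{T,T'\} \in E_i$ in $\G_\lambda$ and an arbitrary $v \in \phi^{-1}(T)$. Because $\phi$ is a morphism, $\sigma(v) = \sigma(T)$; since $T$ has an $i$-neighbor in $\G_\lambda$, axiom 1 (applied in $\G_\lambda$) gives $\sigma(T)_{\imo} = -\sigma(T)_i$, and hence $\sigma(v)_{\imo} = -\sigma(v)_i$. Axiom 1 applied in $\G$ then produces a unique vertex $v' \in V(\G)$ with $\{v,v'\} \in E_i$. Because $\phi$ preserves $i$-edges, $\{T,\phi(v')\} \in E_i$ in $\G_\lambda$; by the uniqueness clause of axiom 1 in $\G_\lambda$, $\phi(v') = T'$. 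Thus $v \mapsto v'$ defines a map $\phi^{-1}(T) \to \phi^{-1}(T')$. The same argument with the roles of $T$ and $T'$ swapped produces a map in the reverse direction, and the two maps are mutually inverse since the $i$-edge is an involution. So $|\phi^{-1}(T)| = |\phi^{-1}(T')|$.

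Next, given any two vertices $T,U$ of $\G_\lambda$, the connectedness of $\G_\lambda$ (which follows, for instance, from Proposition~\ref{prop:deshape}) gives a path $T = T_0, T_1, \ldots, T_k = U$ with $\{T_{j-1},T_j\} \in E_{i_j}$ for each $j$. Applying the previous paragraph along this path yields $|\phi^{-1}(T)| = |\phi^{-1}(T_1)| = \cdots = |\phi^{-1}(U)|$, as required.

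There is really no obstacle in this argument; the whole content is that the local condition (axiom 1) together with the fact that $\phi$ preserves signatures and edges forces $\phi$ to intertwine with the $i$-edge involutions. The one subtlety worth noting is that this relies on $\phi$ being defined on all of $\G$ and being a morphism, not merely on connected components: both properties are already supplied by Theorem~\ref{thm:cover}. Surjectivity of $\phi$, recorded in Remark~\ref{rmk:onto}, ensures that no fiber is empty, so the common cardinality is a positive integer equal to $|V(\G)|/|V(\G_\lambda)|$.
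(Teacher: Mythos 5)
Your proof is correct and takes a genuinely different, more elementary route than the paper's. You show directly that for each edge $\{T,T'\}\in E_i$ of $\G_\lambda$ the involution $E_i$ on $\G$ restricts to a bijection $\phi^{-1}(T)\to\phi^{-1}(T')$, relying only on axiom $1$ (in both $\G$ and $\G_\lambda$) together with the fact that $\phi$ preserves signatures and edges; connectedness of $\G_\lambda$ then propagates equality of fiber sizes along any path. The paper instead argues at the level of connected components of the $(n,n)$-restriction of $\G$: it shows that for any such component $\C$ with $\phi(\C)\cong\G_\mu$ and any partition $\nu\subset\lambda$ of size $n$ there is a unique component $\mathcal{D}$ with $\phi(\mathcal{D})\cong\G_\nu$ reachable from $\C$ by crossing at most one $E_n$ edge, and deduces a bijection between the sets of restriction components lying over $\G_\mu$ and over $\G_\nu$. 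Your argument is shorter and more self-contained, since it does not lean on the inductive structure used in Theorem~\ref{thm:cover}. What the paper's version buys is the component-level uniqueness claim itself, which is strictly stronger than the corollary's statement and is precisely what gets reused (cited under the label of Corollary~\ref{cor:fibers}) in the proof of Theorem~\ref{thm:isomorphic}; if one replaced the paper's proof of the corollary with yours, that auxiliary claim would still need to be established separately to carry the paper's argument for the main theorem.
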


\begin{proof}
  Let $\phi$ be the morphism from $\G$ to $\G_{\lambda}$. We show that
  for any connected component $\C$ of the $(n,n)$-restriction of $\G$,
  say with $\phi(\C) = \G_{\mu}$, and any partition $\nu \subset
  \lambda$ of size $n$, there is a unique connected component
  $\mathcal{D}$ of the $(n,n)$-restriction of $\G$ with
  $\phi(\mathcal{D}) = \G_{\nu}$ that can be reached from $\C$ by
  crossing at most one $E_n$ edge. Once established, this gives a
  bijective correspondence between connected components of
  $\phi^{-1}(\G_{\mu})$ and connected components of
  $\phi^{-1}(\G_{\nu})$, thus proving the result.

  To prove existence, if $\nu \neq \mu$, let $T$ be a tableau of shape
  $\lambda$ with $\npo$ in position $\lambda/\mu$, $n$ in position
  $\lambda/\nu$, and $\nmo$ lying between in the reading
  word. Otherwise let $T$ be a tableau with $\npo$ in position
  $\lambda/\mu$ and $n$ and $\nmo$ lying on opposite sides in the
  reading word. Let $w$ be the unique element in $\phi^{-1}(T) \cap
  \C$. Then $w$ admits an $n$-neighbor, and, since $\phi$ is a
  morphism, $\phi(E_n(w)) = E_n(\phi(w)) \in \G_{\nu}$.

  To prove uniqueness, let $\{w,x\} \in E_n$ with $w \in \C \cong
  \G_{\mu}$ and $x \in \mathcal{D} \cong \G_{\nu}$. If $\npo$ lies
  between $n$ and $\nmo$ in $\phi(w)$, then $\mu = \nu$, and just as
  in the proof of Theorem~\ref{thm:cover}, we concluded that
  $\mathcal{D} = \C$ as desired. Alternately, assume $\nmo$ lies
  between $n$ and $\npo$ in $\phi(w)$, and suppose $\{w',x'\} \in
  E_{\nmo}$ with $w' \in \C$ and $x' \in \mathcal{D}' \cong
  G_{\nu}$. Since $\phi(w)$ and $\phi(w')$ have the same shape, and
  $E_n(\phi(w)) = \phi(E_n(w)) = \phi(x)$ and $E_n(\phi(w')) =
  \phi(E_n(w')) = \phi(x')$ have the same shape, just as in the proof
  of Theorem~\ref{thm:cover}, there must be a path from $\phi(w)$ to
  $\phi(w')$ in $\G_{\nu}$ using only edges $E_h$ with $h \leq \nmh$
  and those $E_{\nmt}$ that commute with $E_n$. Therefore this path
  gives rise to the same path from $\phi(x)$ to $\phi(x')$ in
  $\G_{\mu}$. The former path lifts to a path from $w$ to $w'$ in
  $\C$, and so the latter lifts to a path from $E_n(w)=x$ to
  $E_n(w')=x'$ in $\mathcal{D} = \mathcal{D}'$, which is as desired.
\end{proof}

In order to ensure that the morphism in the conclusion of
Theorem~\ref{thm:cover} is an isomorphism, and thereby complete the
proof of Theorem~\ref{thm:isomorphic}, we need only invoke the
heretofore uninvoked axiom 6.

\begin{proof}[Proof of Theorem~\ref{thm:isomorphic}]
  Let $\G$ be a dual equivalence graph of type $(\npo,\npo)$. We aim
  to show that $\G$ is isomorphic to $\G_{\lambda}$ for a unique
  partition $\lambda$ of $\npo$. We proceed by induction on $\npo$,
  noting that the result is trivial for $\npo = 2$.  Every connected
  component of the $(n,n)$-restriction of $\G$ is a dual equivalence
  graph, and so, by induction, is isomorphic to a standard dual
  equivalence graph. Thus, by Theorem~\ref{thm:cover}, there exists a
  morphism, say $\phi$, from $\G$ to $\G_{\lambda}$ for a unique
  partition $\lambda$ of $\npo$. By Corollary~\ref{cor:fibers}, for
  any connected component $\C$ of the $(n,n)$-restriction of $\G$ and
  any partition $\nu \subset \lambda$ of size $n$, there is a unique
  connected component $\mathcal{D}$ of the $(n,n)$-restriction of $\G$
  that can be reached from $\C$ by crossing at most one $E_n$ edge
  such that $\phi(\mathcal{D}) = \G_{\nu}$. By dual equivalence axiom
  $6$, any two connected components of the $(n,n)$-restriction of $\G$
  can be connected by a path using at most one $E_n$ edge. Therefore
  the connected components of the $(n,n)$-restriction of $\G$ are
  pairwise non-isomorphic. Hence the morphism from $\G$ to
  $\G_{\lambda}$ is injective on the $(n,\npo)$-restrictions, and so
  it is injective on all of $\G$. Surjectivity follows from
  Remark~\ref{rmk:onto}, thus $\phi$ is an isomorphism.
\end{proof}

%
\section{A graph for LLT polynomials}
%
\label{sec:llt}

\subsection{Words with content}
\label{sec:llt-words}

In this section we describe a modified characterization of LLT
polynomials as the generating function of \emph{$k$-ribbon words}. As
Proposition~\ref{prop:words} shows, these are precisely the content
reading words of semi-standard $k$-tuples of tableaux (which
correspond to \emph{ribbon} tableaux).

Given a word $w$ and a non-decreasing sequence of integers $c$ of the
same length, define the \emph{$k$-descent set of the pair $(w,c)$},
denoted $\mathrm{Des}_k(w,c)$, by
\begin{equation}
  \mathrm{Des}_k(w,c) = \left\{ (i,j) \; | \; w_{i} > w_{j} \; \mbox{and}
    \; c_{j} - c_{i} = k \right\}.
  \label{eqn:Desk-w}
\end{equation}

\begin{definition}
  A \emph{$k$-ribbon word} is a pair $(w,c)$ consisting of a word $w$
  and a non-decreasing sequence of integers $c$ of the same length
  such that if $c_{i} = c_{\ipo}$, then there exist integers $h$ and
  $j$ such that $(h,i), (\ipo,j) \in \mathrm{Des}_{k}(w,c)$ and
  $(i,j),(h,\ipo) \not\in \mathrm{Des}_k(w,c)$. In other words, $c_h =
  c_i-k$ and $w_i < w_h \leq w_{i+1}$ while $c_j = c_i+k$ and $w_i
  \leq w_j < w_{i+1}$.
\label{defn:ktableau}
\end{definition}

\begin{proposition}
  The pair $(w,c)$ is a $k$-ribbon word if and only if there exists a
  $k$-tuple of (skew) semi-standard tableaux such that $w$ is the
  content reading word of the $k$-tuple and $c$ gives the
  corresponding contents.
\label{prop:words}
\end{proposition}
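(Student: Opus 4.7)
The plan is to prove the biconditional by matching cell-level data of a $k$-tuple to word-level data of a $k$-ribbon word, cell by cell.

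For the forward implication, let $\mathbf{T}$ be a semi-standard $k$-tuple with content reading word $w$ and sequence of shifted contents $c$. Whenever $c_i = c_{i+1}$, the identity $\widetilde{c}(x) = k \cdot c(x) + m$ forces the two positions to correspond to cells of the same tableau $T^{(m)}$ on the same diagonal, read in southwest-to-northeast order. A short partition-monotonicity argument using $\mu^{(m)} \subset \lambda^{(m)}$ shows that the cells of any single diagonal of a skew shape form a contiguous northeast-pointing segment, so the two consecutive positions in the reading word in fact correspond to positionally adjacent cells $x = (a,b)$ and $z = (a+1,b+1)$. The same monotonicity forces $(a,b+1)$ and $(a+1,b)$ to lie in $\lambda^{(m)}/\mu^{(m)}$, and the semi-standard inequalities along the row through $(a+1,b)$ and the column through $(a,b+1)$ yield $w_i < T^{(m)}(a,b+1) \leq w_{i+1}$ and $w_i \leq T^{(m)}(a+1,b) < w_{i+1}$. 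Taking $h$ and $j$ to be the reading-word positions of $(a,b+1)$ and $(a+1,b)$ verifies Definition 2.2.

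For the reverse implication, given a $k$-ribbon word $(w,c)$, I construct the $k$-tuple directly. Grouping positions by $m \equiv c_a \pmod{k}$ partitions them into the intended $k$ tableaux, and the value $(c_a - m)/k$ specifies the diagonal of each cell. The $k$-ribbon condition immediately implies the strict diagonal increase $w_i < w_{i+1}$ whenever $c_i = c_{i+1}$ (from the chain $w_i < w_h \leq w_{i+1}$). The nontrivial task is to assign each cell an explicit position $(a,b)$ so the assembly is a valid skew shape. I propose to do this by induction on $|w|$, removing the position whose shifted content is maximal within its residue class (the northeast-most cell of its tuple), applying the induction hypothesis, and reinserting the removed cell as the new northeast endpoint of its diagonal. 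The witnesses $h$ and $j$ guaranteed for every same-content adjacent pair supply neighbors on the flanking diagonals with entries in exactly the ranges a semi-standard extension of a skew shape requires, so the extension can be placed to remain a valid skew diagram.

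The main obstacle I anticipate lies in this second step: verifying that the inductively chosen placement yields partitions $\mu^{(m)} \subset \lambda^{(m)}$, i.e., that row-contiguity and the monotonicity of row lengths are preserved as each cell is added. This reduces to a local case analysis (the inserted cell either starts a new diagonal of its tuple, extends an existing one, or sits adjacent to cells of flanking diagonals), and in each case the entry-inequality witnesses from Definition 2.2 dictate the correct local geometry. A parallel bookkeeping check is that the removal preserves the $k$-ribbon condition for all surviving adjacent pairs $(i, i+1)$ with $c_i = c_{i+1}$: the witness $h$ cannot be the removed position since $c_h < c_i$ forces $h$ to precede the extremal cell, and an extremal tuple can be chosen so no conflict arises with $j$ either.
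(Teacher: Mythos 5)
The forward direction is correct and essentially the same as the paper's: identify the two same-content positions as diagonally adjacent cells $(a,b)$ and $(a+1,b+1)$ of a single tableau, use the skew-shape condition to place $(a,b+1)$ and $(a+1,b)$ in the shape, and read off the witnesses $h,j$ from the semi-standard inequalities.

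The reverse direction, however, has a genuine gap. You propose to induct on $|w|$ by deleting the position of maximal shifted content in some residue class and later reinserting it. The first problem is that removing that position need not leave a $k$-ribbon word. By the $k$-ribbon condition itself, the top diagonal of a tuple has exactly one cell; if the second-highest diagonal of that tuple has two or more cells, then every same-content pair $(i,i+1)$ on that diagonal has its unique $j$-witness equal to the one cell you just removed, and $(w',c')$ fails Definition~\ref{defn:ktableau}. You acknowledge this and claim ``an extremal tuple can be chosen so no conflict arises with $j$,'' but this is false in general: take $k=2$ and a pair of $(2,2)$-shaped tableaux. Concretely, $w=(3,7,1,4,5,8,2,6)$ with $c=(-2,-1,0,0,1,1,2,3)$ is a $2$-ribbon word, yet removing either of the top cells (positions $7$ or $8$) destroys the $j$-witness for the pair at positions $(3,4)$ or $(5,6)$ respectively, so neither residue class admits a safe removal. (Removing such a cell also fails to leave a valid skew shape: deleting $(a+1,b)$ while keeping $(a,b)$ and $(a+1,b+1)$ is illegal.) The cell one should remove is an outer corner, but outer corners need not lie on a tuple's top diagonal, and identifying them from $(w,c)$ alone is precisely the kind of structural information you do not yet have at this stage of the construction. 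The paper avoids all of this by building the tuple directly: for each shifted content value it places the letters on one diagonal in increasing order, aligns consecutive diagonals via comparison of their minimal entries, and then verifies the semi-standard and skew-shape conditions by a cell-by-cell induction along each pair of adjacent diagonals, using the $k$-ribbon witnesses to supply exactly the inequalities needed at each step.
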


\begin{proof}
  Suppose first that $w$ is the content reading word of some $k$-tuple
  of semi-standard tableaux with corresponding shifted contents given
  by $c$. If $c_{i} = c_{\ipo}$, then in the $k$-tuple there must
  exist entries $w_h$ and $w_j$ as shown in
  Figure~\ref{fig:samecontent}. The semi-standard condition ensures
  that $w_i < w_h \leq w_{\ipo}$ and $w_i \leq w_j <
  w_{\ipo}$. Therefore the conditions of
  Definition~\ref{defn:ktableau} are met.

  \begin{figure}[ht]
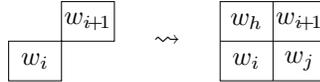

    \begin{center}
      \psset{xunit=2em,yunit=1.5em}
      \psset{linewidth=.3pt}      
      \pspicture(0,0)(6,2)
      \pspolygon(0,0)(0,1)(2,1)(2,2)(1,2)(1,0)
      \rput(0.5,0.5){$w_{i}$}
      \rput(1.5,1.5){$w_{\ipo}$}
      \rput(3,1){$\leadsto$}
      \pspolygon(4,0)(4,2)(6,2)(6,0)
      \psline(4,1)(6,1) \psline(5,0)(5,2)
      \rput(4.5,0.5){$w_{i}$}
      \rput(5.5,1.5){$w_{\ipo}$}
      \rput(4.5,1.5){$w_{h}$}
      \rput(5.5,0.5){$w_{j}$}
      \endpspicture
      \caption{\label{fig:samecontent} Situation when $c_{i} =
        c_{\ipo}$ for a $k$-tuple of semi-standard Young tableaux.}
    \end{center}
  \end{figure}
  
  Now suppose that $(w,c)$ is a $k$-ribbon word. For each $j$, arrange
  all $w_i$ such that $c_i = j$ into cells along a southwest to
  northeast diagonal in increasing order. Align the southwest corner
  of the diagonal for $j-k$ immediately north (resp. west) of the
  southwest corner of the diagonal for $j$ whenever the smallest
  letter with content $j-k$ is greater than (resp. less than or equal
  to) the smallest letter with content $j$.

  We must show that the result is a $k$-tuple of (skew) shapes whose
  entries satisfy the semi-standard condition. Consider two adjacent
  diagonals $j-k$ and $j$. By construction, the southwestern most
  cells of the diagonals form a partition shape and satisfy the
  semi-standard condition. By induction, assume that the entries in
  diagonal $j-k$ through $w_h$ and the entries in diagonal $j$ through
  $w_i$ belong to a semi-standard tableau of skew shape, with $w_h$
  immediately west or immediately north of $w_i$.

  Suppose that $c_{\ipo} = c_{i}$, noting that the case when
  $c_{h\!+\!1} = c_{h}$ may be solved similarly. If $w_h > w_i$, then
  we must show that $w_h \leq w_{\ipo}$. By
  Definition~\ref{defn:ktableau}, there exists an integer $l$ such
  that $(l,\ipo) \not\in \mathrm{Des}_k(w,c)$, and therefore $w_{l}
  \leq w_{\ipo}$. Since $c_{l}=j-k$, we have $w_{h} \leq w_{l} \leq
  w_{\ipo}$. If $w_h \leq w_i$, then we must show that $c_{h\!+\!1} =
  j-k$ and $w_i < w_{h\!+\!1} \leq w_{\ipo}$. By
  Definition~\ref{defn:ktableau}, there exists an integer $l$ such
  that $(l,i) \in \mathrm{Des}_k(w,c)$ and $(l,\ipo) \not\in
  \mathrm{Des}_k(w,c)$. Therefore $c_{l} = j-k$ and $w_{h} \leq w_{i}
  < w_{l} \leq w_{\ipo}$. The non-decreasing condition on $c$ implies
  that $c_{h\!+\!1} = j-k$, and so there exists an integer $m$ such
  that $(h\!+\!1,m) \in \mathrm{Des}_k(w,c)$ and $(h,m) \not\in
  \mathrm{Des}_k(w,c)$, i.e. $w_h \leq w_m < w_{h\!+\!1}$ with $c_m =
  j$. The only way to satisfy these two conditions is to have $m=i$
  and $l=h\!+\!1$.
\end{proof}

For $\mathbf{T}$ and $\mathbf{U}$ two $k$-tuples of semi-standard
tableaux, let $(w_{\mathbf{T}},c_{\mathbf{T}})$ and
$(w_{\mathbf{U}},c_{\mathbf{U}})$ denote the corresponding $k$-ribbon
words. Then $\mathbf{T}$ and $\mathbf{U}$ have the same shape if and
only if $\mathrm{Des}_k(w_{\mathbf{T}}) =
\mathrm{Des}_k(w_{\mathbf{U}})$ and $c_{\mathbf{T}} =
c_{\mathbf{U}}$. In particular, if we let $\mathrm{WRib}_k(c,D)$
denote the set of $k$-ribbon words with content vector $c$ and
$k$-descent set $D$, then we have established a bijective
correspondence
\begin{equation}
  \mathrm{WRib}_k(c,D) \; \stackrel{\sim}{\longleftrightarrow} \;
  \mathrm{SSYT}_k(\boldsymbol{\mu}) .
\label{eqn:correspondence}
\end{equation}

Define the \emph{set of $k$-inversions} and the \emph{$k$-inversion
  number} of a pair $(w,c)$ by
\begin{eqnarray*}
  \mathrm{Inv}_k(w,c) & = & \left\{ (i,j) \; | \; w_{i} > w_{j} \;
    \mbox{and} \; k > c_{j} - c_{i} > 0 \right\},
  \label{eqn:Invk-w}\\
  \mathrm{inv}_k(w,c) & = & \left| \mathrm{Inv}_k(w,c) \right| .
  \label{eqn:invk-w}
\end{eqnarray*}

Recalling \eqref{eqn:Invk-T}, we have
\begin{equation}
  \mathrm{Inv}_k(w_{\mathbf{T}},c_{\mathbf{T}}) \; = \;
  \mathrm{Inv}_k(\mathbf{T}) . 
\label{eqn:same-Invk}
\end{equation}
Therefore we may express LLT polynomials in terms of $k$-ribbon words
as follows.

\begin{corollary}
  Let $\boldsymbol{\mu}$ be a (skew) shape, and let $c,D$ be the
  content vector and $k$-descent set corresponding to
  $\boldsymbol{\mu}$ by \eqref{eqn:correspondence}. Then
  \begin{equation}
    \widetilde{G}^{(k)}_{\boldsymbol{\mu}}(x;q) \; = \; \sum_{(w,c)
      \in \mathrm{WRib}_k(c,D)} q^{\mathrm{inv}_k(w,c)} x^{w} \; = \;
    \sum_{\substack{(w,c) \in \mathrm{WRib}_k(c,D) \\ w \ \mathrm{a} \
        \mathrm{permutation}}} q^{\mathrm{inv}_k(w,c)} Q_{\sigma(w)}(x),
  \label{eqn:llt-words}    
  \end{equation}
  where $x^{w}$ is the monomial $x_1^{\pi_1} x_2^{\pi_2} \cdots$ when
  $w$ has weight $\pi$, and $\sigma(w)$ is defined as in
  \eqref{eqn:sigma}.
\label{cor:llt-words}
\end{corollary}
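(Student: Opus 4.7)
The strategy is to transport the two existing formulas for $\widetilde{G}^{(k)}_{\boldsymbol{\mu}}(x;q)$---the monomial expression \eqref{eqn:llt} and its quasi-symmetric refinement \eqref{eqn:llt-quasi}---through the bijection \eqref{eqn:correspondence} furnished by Proposition~\ref{prop:words}.

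First I would confirm that $c$ and $D$ really are functions of $\boldsymbol{\mu}$ alone, so that the right-hand side is well-defined. The content vector is the sequence of shifted contents of the cells of $\boldsymbol{\mu}$ read in the prescribed diagonal order, which plainly depends only on $\boldsymbol{\mu}$. For $D$, any pair of positions $(i,j)$ in the word with $c_j - c_i = k$ must come from two cells in the same factor tableau $\mu^{(l)}$ on adjacent diagonals, so they are either row-adjacent (in which case the semi-standard row-weak condition forbids a descent) or column-adjacent (in which case the column-strict condition forces one). Hence $D$ is precisely the intrinsic set of column-adjacent pairs of cells of $\boldsymbol{\mu}$, independent of the filling.

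Next, for the first equality, I would apply the bijection termwise to \eqref{eqn:llt}. The content reading word of $\mathbf{T}$ contains each entry of $\mathbf{T}$ exactly once, so $x^{\mathbf{T}} = x^{w_{\mathbf{T}}}$; by \eqref{eqn:same-Invk}, $\mathrm{inv}_k(\mathbf{T}) = \mathrm{inv}_k(w_{\mathbf{T}}, c_{\mathbf{T}})$; and summing over $\mathrm{SSYT}_k(\boldsymbol{\mu})$ becomes summing over $\mathrm{WRib}_k(c, D)$ by \eqref{eqn:correspondence}. For the second equality, I would do the same with \eqref{eqn:llt-quasi}, noting that the bijection restricts to one between $\mathrm{SYT}_k(\boldsymbol{\mu})$ and those $k$-ribbon words $(w, c) \in \mathrm{WRib}_k(c, D)$ whose underlying word is a permutation, since being standard means every letter in $[n]$ appears exactly once. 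The signature $\sigma(\mathbf{T})$ was defined using the content reading word of $\mathbf{T}$, which is $w_{\mathbf{T}}$, so $\sigma(\mathbf{T}) = \sigma(w_{\mathbf{T}})$, and the quasi-symmetric formula transfers verbatim.

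The corollary is essentially a relabelling, so there is no substantive obstacle; the one point worth writing out carefully is the argument that $D$ is intrinsic to $\boldsymbol{\mu}$ via the semi-standard row-weak/column-strict dichotomy above, which guarantees that $\mathrm{WRib}_k(c,D)$ is a well-defined target for the bijection.
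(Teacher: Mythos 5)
Your overall approach matches the paper's: Corollary~\ref{cor:llt-words} is a direct transport of the two known expressions \eqref{eqn:llt} and \eqref{eqn:llt-quasi} through the correspondence \eqref{eqn:correspondence}, using \eqref{eqn:same-Invk} to match the $q$-statistic and the fact that the content reading word defines both $\sigma(\mathbf{T})$ and $\sigma(w_{\mathbf{T}})$; the paper states exactly this and leaves the bookkeeping implicit. Your instinct to verify first that $c$ and $D$ are intrinsic to $\boldsymbol{\mu}$ is reasonable, since this is what makes $\mathrm{WRib}_k(c,D)$ a well-defined target.

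However, there is a small but genuine error in your argument that $D$ is intrinsic. You claim that any two cells of the same factor $\mu^{(l)}$ whose shifted contents differ by exactly $k$ (so whose ordinary contents differ by $1$) must be either row-adjacent or column-adjacent. That is false in general. For instance, in the shape $(4,4)$ the cells in positions $(2,1)$ and $(4,2)$ have contents $1$ and $2$, yet they are neither row-adjacent nor column-adjacent. Your dichotomy therefore does not exhaust the cases, so as written the argument does not establish that $D$ depends only on the shape. The conclusion is nonetheless true: whenever cells $x=(a,b)$ and $y=(a',b')$ of a skew shape have $c(y)=c(x)+1$, one has $a'-a = (b'-b)+1$, so either $y$ is weakly northeast of $x$ (and then the cell $(a',b)$ lies in the shape, giving $T(x)\le T(a',b)< T(y)$, forcing no descent) or $y$ is weakly southwest (giving $T(y)<T(x)$, forcing a descent). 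So the relative order of $w_i$ and $w_j$ is forced by semi-standardness even when the cells are not adjacent. With that repair, your proof is complete and agrees with the paper's.
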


\subsection{Dual equivalence for tuples of tableaux}
\label{sec:llt-edges}

Let $V^{(k)}_{c,D}$ denote the set of permutations $w$ such that
$(w,c)$ is a standard $k$-ribbon word with $k$-descent set $D$, i.e.
\begin{equation}
  V^{(k)}_{c,D} = \left\{ w \; | \; \mbox{$(w,c)$ is a standard
      $k$-ribbon word with $\mathrm{Des}_k(w,c) = D$} \right\}.
\label{eqn:V-words}
\end{equation}
Define the distance between two letters $i$ and $j$ of $w \in
V^{(k)}_{c,D}$ by
\begin{equation}
  \mathrm{dist}(w_i,w_j) = \left| c_i - c_j \right|,
\end{equation}
with the obvious extension $\mathrm{dist}(a_1,\ldots,a_l) =
\max_{i,j}\{\mathrm{dist}(a_i,a_j)\}$. Note that if $(w,c)$ is a
standard $k$-ribbon word, then none of $\triple$ may occur with the
same content.

Similar to Definition~\ref{defn:ede}, define involutions $d_i$ and
$\widetilde{d}_i$ on permutations in which $i$ does not lie between
$\imo$ and $\ipo$ by
\begin{eqnarray}
  d_i    (\cdots\;   i  \;\cdots\;i\pm 1\;\cdots\;i\mp 1\;\cdots ) 
  & = &  \cdots\;i\mp 1\;\cdots\;i\pm 1\;\cdots\;  i   \;\cdots \; ,
  \label{eqn:d} \\
  \widetilde{d}_i(\cdots\;   i  \;\cdots\;i\pm 1\;\cdots\;i\mp 1\;\cdots ) 
  & = &  \cdots\;i\pm 1\;\cdots\;i\mp 1\;\cdots\;  i   \;\cdots \; ,
  \label{eqn:dwig}
\end{eqnarray}
where all other entries remain fixed. Note that the former involution
is precisely Haiman's dual equivalence on permutations. For fixed $k$,
combine these two maps into an involution $D^{(k)}_i$ by
\begin{equation}
  D^{(k)}_i(w) \; = \; \left\{
    \begin{array}{ll}
      d_i(w) & \mbox{if} \;\; \mathrm{dist}(\triple) > k \\
      \widetilde{d}_i(w) & \mbox{if} \;\; \mathrm{dist}(\triple) \leq k
    \end{array} \right. .
  \label{eqn:Dk}
\end{equation}

\begin{proposition}
  For $w$ a permutation, $c$ a content vector and $k>0$ an integer, we
  have
  \begin{eqnarray}
    \mathrm{Des}_k(w,c) & = & \mathrm{Des}_k(D^{(k)}_i(w),c),
    \label{eqn:same-Desk} \\
    \mathrm{inv}_k(w,c) & = & \mathrm{inv}_k(D^{(k)}_i(w),c) .
    \label{eqn:same-invk}
  \end{eqnarray}
  In particular, $D^{(k)}_i$ is a well-defined involution on
  $V^{(k)}_{c,D}$ that preserves the number of $k$-inversions.
\label{prop:preserve}
\end{proposition}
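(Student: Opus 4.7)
The key observation is that $D^{(k)}_i$ only permutes the entries at the three positions occupied by the letters $\imo$, $i$, $\ipo$; every other position retains its original letter. Call these three positions the \emph{triple}. For any pair $(a,b)$ with at most one of $a,b$ in the triple, the unchanged letter is outside $\{\imo,i,\ipo\}$ and hence either at most $\imt$ or at least $\ipt$, so its comparison with whichever element of $\{\imo,i,\ipo\}$ occupies the triple position is the same before and after the swap. Since the content vector $c$ is fixed, both the descent and the inversion status of every such pair is preserved, and the verification of \eqref{eqn:same-Desk} and \eqref{eqn:same-invk} reduces to the three pairs contained in the triple.

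I proceed by case analysis on the left-to-right arrangement of $\imo,i,\ipo$ at their positions $p_1<p_2<p_3$ (four cases, since $i$ must occupy an extreme) and on the branch. The possible content orderings are pinned down in each case by two facts: $c$ is non-decreasing, and two positions of equal shifted content lie on the same diagonal of the same tableau and so hold letters increasing with position. When $\mathrm{dist}(\triple)>k$ and $d_i$ is applied, it swaps the letters at $p_1$ and $p_3$, and a direct inspection in each arrangement shows that only the pair $(p_1,p_3)$ changes descent status: the two pairs involving the middle (fixed) letter compare it against two letters that lie on the same side of it in value, so the descent status at those pairs is unchanged. Since the pair $(p_1,p_3)$ has content difference $c_3-c_1=\mathrm{dist}(\triple)>k$, it is neither a $k$-descent nor a $k$-inversion candidate, giving \eqref{eqn:same-Desk} and \eqref{eqn:same-invk} in this branch.

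When $\mathrm{dist}(\triple)\leq k$ and $\widetilde{d}_i$ is applied, the outer pair $(p_1,p_3)$ is unchanged in descent status while the two inner pairs change in opposite directions (one gaining, one losing a descent). Writing $a=c_2-c_1$ and $b=c_3-c_2$ with $a+b\leq k$, I verify \eqref{eqn:same-Desk} by excluding the scenarios where one of $a,b$ equals $k$ (which makes one of the changing pairs a $k$-descent candidate): if $a=k$ then $b=0$, so $c_2=c_3$, and the corresponding two triple positions must be adjacent in the word (any position between them would carry a letter strictly between $\imo$ and $\ipo$, of which only $i$ exists, and $i$ is placed elsewhere in the triple); applying the $k$-ribbon word axiom of Definition \ref{defn:ktableau} at this adjacent pair forces $w_h$ and $w_j$ into value ranges containing only letters already placed at triple positions, a contradiction. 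For \eqref{eqn:same-invk}, the two changing pairs have content gaps $a,b$, and when both lie in $(0,k)$ the resulting $+1$ and $-1$ cancel; the asymmetric cases in which exactly one of $a,b$ is zero are excluded by the same $k$-ribbon word axiom argument.

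Since $c$ is fixed and \eqref{eqn:same-Desk} holds, Definition \ref{defn:ktableau}, which depends only on $c$ and $\mathrm{Des}_k(w,c)$, is preserved, so $D^{(k)}_i(w)\in V^{(k)}_{c,D}$. The involution property follows from that of $d_i$ (Haiman's elementary dual equivalence) and of $\widetilde{d}_i$ (whose defining formula is symmetric under swapping the roles of leftmost and rightmost $i$ in the triple), together with the observation that the branch condition depends only on the unordered set of contents at the triple positions, which is unchanged. The main obstacle I anticipate is the case-by-case verification in the $\widetilde{d}_i$ branch that the bad content configurations ($a=k$, $b=k$, or exactly one of them zero) are ruled out by the $k$-ribbon word axiom.
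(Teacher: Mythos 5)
Your proof is correct and takes essentially the same route as the paper's own (very compressed) proof: in the $d_i$ branch only the outer pair changes status and its content gap exceeds $k$, while in the $\widetilde{d}_i$ branch the two inner pairs swap status and the outer pair is fixed, so the descent set and inversion count are both preserved. Your excursion through Definition~\ref{defn:ktableau} to rule out $a=0$ or $b=0$ in effect reproves the remark stated just before $d_i$ and $\widetilde{d}_i$ are introduced, namely that no two of $i-1,i,i+1$ can occur with the same content in a standard $k$-ribbon word; citing that remark directly gives $a,b\geq 1$ and hence $a,b<k$ without the case analysis on $w_h$ and $w_j$.
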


\begin{proof}
  If $i$ lies between $\imo$ and $\ipo$ in $w$, then the assertion is
  trivial. Assume then that $i$ does not lie between $\imo$ and $\ipo$
  in $w$. If $\mathrm{dist}(\triple) > k$ in $w$, then
  $\mathrm{Des}_k(w,c) = \mathrm{Des}_k(d_i(w),c)$ and
  $\mathrm{Inv}_k(w,c) = \mathrm{Inv}_k(d_i(w),c)$. Similarly, if
  $\mathrm{dist}(\triple) \leq k$ in $w$, then $\mathrm{Des}_k(w,c) =
  \mathrm{Des}_k(\widetilde{d}_i(w),c)$ and $\mathrm{inv}_k(w,c) =
  \mathrm{inv}_k(\widetilde{d}_i(w),c)$ (though $\mathrm{Inv}_k(w,c)
  \neq \mathrm{Inv}_k(\widetilde{d}_i(w),c)$). The result now follows.
\end{proof}

For each content vector $c$ of length $n$, and $k$-descent set $D$, we
construct a signed, colored graph $\G^{(k)}_{c,D}$ of type $(n,n)$ on
the vertex set $V^{(k)}_{c,D}$ as follows. Define the signature
function $\sigma: V^{(k)}_{c,D} \rightarrow \{\pm 1\}^{\nmo}$ by
\begin{equation}
  \sigma(w)_i \; = \; \left\{ 
    \begin{array}{ll}
      +1 & \; \mbox{if $i$ appears to the left of $\ipo$ in $w$} \\
      -1 & \; \mbox{if $\ipo$ appears to the left of $i$ in $w$}
    \end{array} \right. .
\label{eqn:sigma-words}
\end{equation}
By \eqref{eqn:same-Desk}, $D^{(k)}_i$ is an involution on vertices of
$V^{(k)}_{c,D}$ admitting an $i$-neighbor. Therefore for $1 < i < n$,
we may define the $i$-colored edges $E^{(k)}_{i}$ to be the set of
pairs $\{v,D_{i}^{(k)}(v)\}$ for each $v$ admitting an
$i$-neighbor. Finally, we define
\begin{equation}
  \G^{(k)}_{c,D} \; = \; \left( V^{(k)}_{c,D}, \sigma, E^{(k)} \right) .
\label{eqn:GVsE}
\end{equation}
An example of $\G^{(k)}_{c,D}$ is given in Figure~\ref{fig:LLT5}, and
additional examples may be found in Appendix~\ref{app:Dgraphs}.

\begin{figure}[ht]
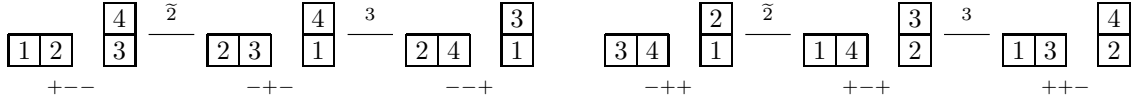

  \begin{displaymath}
    \begin{array}{\cs{4} \cs{4} \cs{4} \cs{4} \cs{4} c}
      \stab{a}{ & & & 4 \\ 1 & 2 & & 3}{+--} &
      \stab{b}{ & & & 4 \\ 2 & 3 & & 1}{-+-} &
      \stab{c}{ & & & 3 \\ 2 & 4 & & 1}{--+} &
      \stab{d}{ & & & 2 \\ 3 & 4 & & 1}{-++} &
      \stab{e}{ & & & 3 \\ 1 & 4 & & 2}{+-+} &
      \stab{f}{ & & & 4 \\ 1 & 3 & & 2}{++-} 
    \end{array}
    \psset{nodesep=5pt,linewidth=.1ex}
    \everypsbox{\scriptstyle}
    \ncline            {a}{b} \naput{\widetilde{2}}
    \ncline            {b}{c} \naput{3}
    \ncline            {d}{e} \naput{\widetilde{2}}
    \ncline            {e}{f} \naput{3}
  \end{displaymath}    
  \caption{\label{fig:LLT5}The graph
    $\G^{(2)}_{(-1,0,1,2),\{(-1,1)\}}$ on domino tableaux of shape $(
    \ (2), \ (1,1) \ )$.}
\end{figure}

By Corollary~\ref{cor:llt-words} and \eqref{eqn:llt-quasi}, the
generating function for $\G^{(k)}_{c,D}$ weighted by
$\mathrm{inv}_k(-,c)$ is given by
\begin{equation}
  \sum_{v \in V^{(k)}_{c,D}} q^{\mathrm{inv}_{k}(v,c)} Q_{\sigma(v)}(x) =
  \widetilde{G}^{(k)}_{\boldsymbol{\mu}}(x;q).
\label{eqn:llt-genfn}
\end{equation}
In particular, a formula for the Schur coefficients of the generating
function for $\G^{(k)}_{c,D}$ gives a formula for the Schur
coefficients of the LLT polynomial
$\widetilde{G}^{(k)}_{\boldsymbol{\mu}}(x;q)$. For example, since the
graph in Figure~\ref{fig:LLT5} is a dual equivalence graph, we have
\begin{displaymath}
  \widetilde{G}^{(2)}_{(2),(1,1)}(x;q) = q s_{3,1}(x) + q^2 s_{2,1,1}(x).
\end{displaymath}
In general, $\G^{(k)}_{c,D}$ does not satisfy dual equivalence axioms
$4$ or $6$; see Appendix~\ref{app:Dgraphs} for examples. These graphs
do, however, satisfy the other axioms as well as the following
weakened version of axiom $4$.

\begin{definition}
  A signed, colored graph $\G = (V,\sigma,E)$ of type $(n,N)$ is
  \emph{Schur positive for degree $m$}, denoted $\LSP_m$, if for every
  $m-2<i<n$ and every connected component $\C$ of
  $(V,\sigma,E_{i-(m-3)} \cup \cdots \cup E_i)$, the restricted degree
  $m$ generating function
  \begin{equation}
    \sum_{v \in \C} Q_{\sigma(v)_{i-(m-2),\ldots,i}}(x)
    \label{eqn:LSP}
  \end{equation}
  is symmetric and Schur positive. The graph $\G$ is \emph{locally
    Schur positive} if it is Schur positive for degrees $4$ and $5$.
\label{defn:LSP}
\end{definition}

Comparing Figures~\ref{fig:lambda4} and \ref{fig:lambda5} with the
standard dual equivalence graphs of sizes $4$ and $5$ (see
Figure~\ref{fig:G5}), dual equivalence graph axiom $4$ implies that
$\G_{\lambda}$ is locally Schur positive.

\begin{proposition}
  For each content vector $c$ and $k$-descent set $D$, the graph
  $\G^{(k)}_{c,D}$ satisfies dual equivalence graph axioms $1,2,3$ and
  $5$, is locally Schur positive, and the $k$-inversion number is
  constant on connected components.
\label{prop:ax1235}
\end{proposition}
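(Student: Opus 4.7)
The plan is to verify the six assertions one at a time. Axioms $1$, $2$, $5$ and the invariance of $\mathrm{inv}_k$ drop out of the definitions of $D^{(k)}_i$, $\sigma$, and $E^{(k)}_i$; axiom $3$ is a short case check; local Schur positivity is where the real work lies.

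Axiom $1$ holds by construction: the involution $D^{(k)}_i$ of~\eqref{eqn:Dk} is defined exactly on those $w$ in which $i$ does not lie between $\imo$ and $\ipo$, which by~\eqref{eqn:sigma-words} is the condition $\sigma(w)_{\imo} = -\sigma(w)_i$, and the $i$-neighbor is unique because $D^{(k)}_i$ is a single-valued involution. For axiom $2$, both $d_i$ and $\widetilde{d}_i$ permute only the three values $\imo, i, \ipo$, so $\sigma_h$ is unchanged for $h \le \imh$ and $h \ge \ipt$, while swapping $i$ with one of its two neighbors flips both $\sigma_{\imo}$ and $\sigma_i$. Axiom $5$ follows because when $|i-j| \ge 3$ the letters moved by $D^{(k)}_i$ and $D^{(k)}_j$ are disjoint, and the $d$-versus-$\widetilde{d}$ choice for each involution depends only on the contents of positions occupied by its own triple, which are left fixed by the other; hence $D^{(k)}_i$ and $D^{(k)}_j$ commute, and the required $v$ is $D^{(k)}_j(w)$. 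Finally, constancy of $\mathrm{inv}_k$ on each connected component is immediate from Proposition~\ref{prop:preserve}, since every edge preserves $\mathrm{inv}_k$.

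For axiom $3$ I would argue as follows. Because $\imt$ does not move under $D^{(k)}_i$, the signature $\sigma_{\imt}$ can flip across an $i$-edge only if the letter $\imo$ itself moves; this singles out the two $d_i$-patterns in which $\imo$ is the outer letter and the two $\widetilde{d}_i$-patterns in which $\imo$ is the middle letter. In every such case $\imo$ swaps positions with $i$, and $\sigma_{\imt}$ flips iff the position of $\imt$ in $w$ is strictly between the old positions of $\imo$ and $i$. A direct inspection of the resulting orderings shows this forces $\imt$ and $i$ to lie on the same side of $\imo$ in $w$, which by~\eqref{eqn:sigma-words} is precisely the condition $\sigma(w)_{\imt} = -\sigma(w)_{\imo}$. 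The argument for $\sigma_{\ipo}$ is the mirror image.

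The main obstacle is local Schur positivity. My plan is to classify the connected components of $(V^{(k)}_{c,D}, \sigma, E^{(k)}_{\imo} \cup E^{(k)}_i)$ and of $(V^{(k)}_{c,D}, \sigma, E^{(k)}_{\imt} \cup E^{(k)}_{\imo} \cup E^{(k)}_i)$ by cataloging the possible orderings of $\imh, \imt, \imo, i, \ipo$ in $w$ together with their pairwise $k$-distances, since this finite combinatorial data determines whether $d_\bullet$ or $\widetilde{d}_\bullet$ is invoked at each edge. The central difficulty, and the precise reason $\G^{(k)}_{c,D}$ may fail to be a dual equivalence graph, is that the $\widetilde{d}$-branch creates $2$- and $3$-color components not appearing in Figures~\ref{fig:lambda4} and~\ref{fig:lambda5}, so axiom $4$ genuinely fails in general. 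The weaker claim to be established is that the quasi-symmetric generating function of every such anomalous component, restricted to the window of $\sigma$-coordinates in \eqref{eqn:LSP}, still expands as a nonnegative sum of Schur functions via~\eqref{eqn:quasi-s}. For each combinatorial type arising, I would either exhibit a sign-preserving bijection with a disjoint union of allowed pieces from $\G_{\lambda}$ with $|\lambda| \in \{4,5\}$, or compute the restricted generating function directly and recognize it as Schur-positive. The real challenge is the bookkeeping: verifying that this enumeration truly closes and that no uncovered configuration arises from combining $d$- and $\widetilde{d}$-moves.
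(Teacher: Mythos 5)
Your approach matches the paper's: axioms $1$, $2$, $5$ and the invariance of $\mathrm{inv}_k$ by short inspection of the definitions, axiom $3$ by a case analysis on $d_i$ versus $\widetilde{d}_i$, and local Schur positivity by a finite classification of small components (the paper identifies $25$ non-isomorphic components of $E_{\imt}\cup E_{\imo}\cup E_i$, of which $7$ coincide with the degree-$5$ standard graphs and $18$ are Schur-positive but not dual equivalence components, and remarks that this reduces to a finite computer check over small content vectors and descent sets). Both you and the paper defer that enumeration rather than displaying it.

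One concrete slip in your axiom $3$ argument: under $\widetilde{d}_i$ the three letters $\imo,i,\ipo$ are cyclically shifted, so $\imo$ moves in \emph{all} $\widetilde{d}_i$ patterns, not only the two in which $\imo$ is the middle letter; in the patterns where $\imo$ is the outer letter, $\imo$ slides into $\ipo$'s old slot rather than $i$'s. Consequently your claim that $\imo$ always ``swaps positions with $i$'' and that the relevant window is bounded by the old positions of $\imo$ and $i$ is incorrect for those two patterns. This is not a fatal gap — a direct check (as in the paper's proof: in the $\widetilde{d}_i$ case, $\imt$ must lie between the old and new positions of $\imo$, hence between $\imo$ and $i$ in both $w$ and $x$) still yields $\sigma(w)_{\imt}=-\sigma(w)_{\imo}$ in all four $\widetilde{d}_i$ patterns, but you should carry the two omitted cases through rather than asserting they cannot occur.
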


\begin{proof}
  Axiom $1$ follows from the construction of $E^{(k)}$ using
  \eqref{eqn:Dk}, and axiom $2$ can easily be seen from equations
  (\ref{eqn:d}) and (\ref{eqn:dwig}). For axiom $3$, suppose $\{w,x\}
  \in E^{(k)}_{i}$ and $\sigma(w)_{\imt} = -\sigma(x)_{\imt}$. If $x =
  d_i(w)$, then both $\imt$ and $\ipo$ must lie between $\imo$ and
  $i$. In particular, $\sigma(w)_{\imt} = -\sigma(w)_{\imo}$. If $x =
  \widetilde{d}_i(w)$, then $\imt$ must lie between the position of
  $\imo$ in $w$ and the position of $\imo$ in $x$. In particular,
  $\imt$ must lie between $\imo$ and $i$ in both $w$ and $x$, and so
  again $\sigma(w)_{\imt} = -\sigma(w)_{\imo}$. The result for
  $\{w,x\} \in E^{(k)}_{i}$ with $\sigma(w)_{\ipo} =
  -\sigma(x)_{\ipo}$ is completely analogous. Axiom $5$ follows from
  the fact that if $w$ admits both an $i$-neighbor and a $j$-neighbor
  for some $|i-j| \geq 3$, then $D^{(k)}_{i}D^{(k)}_{j}(w) =
  D^{(k)}_{j}D^{(k)}_{i}(w)$.

  Notice that one may regard the data $k,c,D$ as specifying attacking
  positions in a permutation. That is, for a permutation $w$, say that
  $w_p$ attacks $w_q$ if $p<q$ and the difference in contents between
  $p$ and $q$ is at most $k$ when $(w,c)$ is regarded as a $k$-ribbon
  word. Therefore the structure of $\G^{(k)}_{c,D}$ is given by the
  graph on permutations where the edges are given by $\widetilde{d}_i$
  if $i$ attacks the rightmost of $i\pm 1$ or if the leftmost of $i
  \pm 1$ attacks $i$, and by $d_i$ otherwise. Since attacking
  positions are determined by distance, if $w_p$ attacks $w_r$ with
  $p<q<r$, then $w_p$ attacks $w_q$ as well. Therefore the graph on
  permutations of size $n$ is determined by $(a_1,\ldots,a_{n-1})$,
  where $a_j$ is the rightmost position that $w_j$ attacks. Since only
  triples of letters are of interest, we may assume that each position
  attacks its right neighbor, and so $j+1 \leq a_j \leq n$. Moreover,
  if $w_p$ attacks $w_r$ with $p<q<r$, then $w_q$ attacks $w_r$ as
  well. Therefore $a_p \leq a_{p+1}$. Hence the number of attacking
  vectors to consider for permutations of length $n$ is the $n-1$st
  Catalan number. In particular, this gives $5$ graph structures on
  permutations of $4$ and $14$ graph structures on permutations of
  $5$. These cases can be checked by hand or by computer (see
  Appendix~\ref{app:code}). This yields exactly $25$ possible
  non-isomorphic connected components of $(V,\sigma,E_{\imt} \cup
  E_{\imo} \cup E_i)$ in $\G^{(k)}_{c,D}$ for all possible $k,c,D$. Of
  these, $7$ correspond to the standard dual equivalence graphs of
  type $(5,5)$ and the remaining $18$ are locally Schur positive.

  Finally, the $k$-inversion number is constant on connected
  components of $\G^{(k)}_{c,D}$ by Proposition~\ref{prop:preserve}.
\end{proof}

As foreshadowed by Definition~\ref{defn:LSP}, the generating function
of a connected component of the signed, colored graph for LLT
polynomials is not, in general, a single Schur function, though it is
always Schur positive. In Section~\ref{sec:Dgraphs}, we describe an
algorithm by which the edges of every connected component of
$\G^{(k)}_{c,D}$ can be transformed so that the resulting graph is
indeed a dual equivalence graph. We do this inductively by
constructing a sequence of signed, colored graphs
\begin{displaymath}
  \G^{(k)}_{c,D} = \G_2 , \ldots , \G_{\nmo} = \widetilde{\G}^{(k)}_{c,D}
\end{displaymath}
on the same vertex set with the same signature function with the
following properties. For each $i=2,\ldots,\nmo$, the graph $\G_i$
satisfies dual equivalence graph axioms $1,2,3$ and $5$, and the
$(\ipo,N)$-restriction of $\G_i$ satisfies axioms $4$ and $6$ (and so
is a dual equivalence graph). Furthermore, vertices paired by $E_i$ in
$\G_i$ have the property that they lie on the same connected component
of $(V,\sigma,E_{2} \cup\cdots\cup E_{i})$ in $\G_{\imo}$. This
construction proves the following.

\begin{theorem}
  For $\boldsymbol{\mu}$ a $k$-tuple of (skew) shapes, let $c,D$ be the
  corresponding pair by \eqref{eqn:correspondence}, and let
  $\G^{(k)}_{c,D}$ be the signed, colored graph constructed
  above. Then for every connected component $\C$ of $\G^{(k)}_{c,D}$,
  the sum $\sum_{v \in V(\C)} Q_{\sigma(v)}(X)$ is symmetric and Schur
  positive.
\label{thm:Dgraph}
\end{theorem}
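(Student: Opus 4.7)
The plan is to reduce the theorem to \textbf{Corollary~\ref{cor:schurpos}} by constructing from $\G^{(k)}_{c,D}$ a genuine dual equivalence graph $\widetilde{\G}^{(k)}_{c,D}$ on the same vertex set, with the same signature function, and whose connected components refine those of $\G^{(k)}_{c,D}$. Since the quasi-symmetric generating function $\sum_{v} Q_{\sigma(v)}(x)$ depends only on vertices and signatures, the generating function of each connected component of $\G^{(k)}_{c,D}$ will decompose as a sum of generating functions of connected components of $\widetilde{\G}^{(k)}_{c,D}$. By \textbf{Theorem~\ref{thm:isomorphic}} every component of $\widetilde{\G}^{(k)}_{c,D}$ is isomorphic to some $\G_{\lambda}$, whose generating function is the Schur function $s_{\lambda}$, and Corollary~\ref{cor:schurpos} then delivers symmetry and Schur positivity for the generating function of each connected component of $\G^{(k)}_{c,D}$.

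To construct $\widetilde{\G}^{(k)}_{c,D}$, I would proceed inductively through the sequence $\G^{(k)}_{c,D} = \G_2, \G_3, \ldots, \G_{n-1} = \widetilde{\G}^{(k)}_{c,D}$, where $\G_i$ is obtained from $\G_{\imo}$ by modifying only edges of color $i$. The invariants to maintain at every stage are: (a) $\G_i$ continues to satisfy axioms $1,2,3,5$ globally and remains locally Schur positive; (b) the $(\ipo,n)$-restriction of $\G_i$ satisfies all six dual equivalence graph axioms; and (c) every new edge $\{v,w\} \in E_i$ joins vertices that already lay in a common connected component of $(V,\sigma,E_2 \cup \cdots \cup E_i)$ in $\G_{\imo}$. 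Invariant (c) is precisely what guarantees that the connected components of the full graph $\G_i$ only get refined as $i$ increases, so at the end the components of $\widetilde{\G}^{(k)}_{c,D}$ refine those of $\G^{(k)}_{c,D}$. The base case $i=2$ follows at once from \textbf{Proposition~\ref{prop:ax1235}}, since the $(3,n)$-restriction has only $E_2$ edges and axioms $4,6$ are vacuous at that level.

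For the inductive step, at each connected component $\C$ of the $(i,n)$-restriction of $\G_{\imo}$ — which by the previous stage's invariant is a dual equivalence graph, hence isomorphic to some $\G_{\mu}$ by Theorem~\ref{thm:isomorphic} — I would look at how the current $E_i$ edges cross between $\C$ and its neighbors. Local Schur positivity of $\G_{\imo}$ guarantees that the 4- and 5-vertex colored subgraphs involving $E_i$ which do \emph{not} match Figures~\ref{fig:lambda4} and \ref{fig:lambda5} are, as subgraphs of the same union $(V,\sigma, E_{\imt} \cup E_{\imo} \cup E_i)$, combinatorially paired in such a way that the old $E_i$ edges can be swapped to new $E_i$ edges between the same pair of $(V,\sigma,E_2 \cup \cdots \cup E_i)$-components of $\G_{\imo}$. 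Performing this swap across all such problematic configurations produces $\G_i$, which satisfies invariants (a)--(c).

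The main obstacle is constructing this canonical edge-swapping at each stage so that all invariants are simultaneously preserved: the new $E_i$ edges must respect axioms $1,2,3,5$, must not destroy the dual equivalence structure already installed at colors $< i$, and must land within the correct connected components. This amounts to a local, finite diagram chase comparing the $18$ disallowed 3-color configurations flagged in Proposition~\ref{prop:ax1235} against the $7$ standard dual equivalence graph shapes on five vertices in Figure~\ref{fig:lambda5}, together with an analogous analysis for axiom $6$. The explicit algorithm realizing this swap, together with the verification of invariants (a)--(c), is deferred to Section~\ref{sec:Dgraphs}.
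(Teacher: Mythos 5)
Your proposal matches the paper's approach essentially exactly: the paper likewise constructs the sequence $\G^{(k)}_{c,D} = \G_2, \ldots, \G_{\nmo} = \widetilde{\G}^{(k)}_{c,D}$ with the same three invariants (axioms $1,2,3,5$ preserved; the $(\ipo,N)$-restriction becomes a dual equivalence graph; new $E_i$ edges stay within the connected components of $E_2 \cup\cdots\cup E_i$ of the previous stage), and defers the edge-swapping machinery to Section~\ref{sec:Dgraphs}, then invokes Corollary~\ref{cor:schurpos}. Your observation that invariant (c) is what ensures the components of $\widetilde{\G}^{(k)}_{c,D}$ refine those of $\G^{(k)}_{c,D}$ is precisely the point the paper makes as well.
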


\begin{corollary}
  Let $\widetilde{\G}^{(k)}_{c,D}$ be the dual equivalence graph resulting
  from the transformation of the graph $\G^{(k)}_{c,D}$. Then for
  $\lambda$ a partition, we have
  \begin{equation}
    \widetilde{K}^{(k)}_{\lambda,\boldsymbol{\mu}}(q) \; = \; \sum_{\C \cong
      \G_{\lambda}} q^{\mathrm{inv}_{k}(\C)}, 
  \end{equation}
  where the sum is taken over all connected components $\C$ of
  $\widetilde{\G}^{(k)}_{c,D}$ that are isomorphic to
  $\G_{\lambda}$. In particular,
  $\widetilde{K}^{(k)}_{\lambda,\boldsymbol{\mu}}(q) \in
  \mathbb{N}[q]$, and, by \eqref{eqn:hag-llt-kostka},
  $\widetilde{K}_{\lambda,\mu}(q,t) \in \mathbb{N}[q,t]$.
\label{cor:Dgraph}
\end{corollary}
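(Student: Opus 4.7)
The plan is to combine the transformation result (Theorem~\ref{thm:Dgraph}) with the structure theorem for dual equivalence graphs (Theorem~\ref{thm:isomorphic}) and its quasi-symmetric consequence (Corollary~\ref{cor:schurpos}). The bulk of the real work has already been absorbed into Theorem~\ref{thm:Dgraph}; what remains is essentially to unpack the definitions and match coefficients.

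First, I would verify that the transformation producing $\widetilde{\G}^{(k)}_{c,D}$ preserves both the data needed to compute the generating function and the constancy of $\mathrm{inv}_k$ on components. By construction, every intermediate graph $\G_i$ has the same vertex set and signature function $\sigma$ as $\G^{(k)}_{c,D}$, so this holds for $\widetilde{\G}^{(k)}_{c,D}$ as well. Moreover, the construction only adds $i$-edges between pairs of vertices already belonging to a common connected component of the previous graph. Since $\mathrm{inv}_k$ is constant on connected components of $\G^{(k)}_{c,D}$ by Proposition~\ref{prop:ax1235}, an easy induction shows it remains constant on connected components of each $\G_i$, and in particular of $\widetilde{\G}^{(k)}_{c,D}$. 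I will therefore write $\mathrm{inv}_k(\C)$ unambiguously for a connected component $\C$.

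Next, by Theorem~\ref{thm:Dgraph}, $\widetilde{\G}^{(k)}_{c,D}$ is a dual equivalence graph, so by Theorem~\ref{thm:isomorphic} each connected component $\C$ is isomorphic to $\G_{\lambda}$ for a unique partition $\lambda$. Applying Corollary~\ref{cor:schurpos} to $\widetilde{\G}^{(k)}_{c,D}$ with the statistic $\alpha = \mathrm{inv}_k$, and using that the vertex set and signature function are preserved together with \eqref{eqn:llt-genfn}, we obtain
\begin{displaymath}
  \widetilde{G}^{(k)}_{\boldsymbol{\mu}}(x;q)
  \;=\; \sum_{v \in V^{(k)}_{c,D}} q^{\mathrm{inv}_k(v,c)} Q_{\sigma(v)}(x)
  \;=\; \sum_{\lambda} \Bigl(\sum_{\C \cong \G_{\lambda}} q^{\mathrm{inv}_k(\C)}\Bigr) s_{\lambda}(x).
\end{displaymath}
Comparing this with the definition $\widetilde{G}^{(k)}_{\boldsymbol{\mu}}(x;q) = \sum_{\lambda} \widetilde{K}^{(k)}_{\lambda,\boldsymbol{\mu}}(q) s_{\lambda}(x)$ and invoking linear independence of the Schur basis yields the claimed formula. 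Positivity of $\widetilde{K}^{(k)}_{\lambda,\boldsymbol{\mu}}(q)$ in $\mathbb{N}[q]$ is then immediate, since each $\mathrm{inv}_k(\C)$ is a nonnegative integer.

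For the Macdonald consequence, I would substitute the just-established LLT positivity into the expression \eqref{eqn:hag-llt-kostka}. The only subtle point is the factor $q^{-a(D)}$, but the remark following \eqref{eqn:hag-llt} records that every term of $\widetilde{G}^{(\mu_1)}_{\boldsymbol{\mu}_D}(x;q)$ carries a $q^a$ with $a \geq a(D)$, so the Schur expansion of $q^{-a(D)} \widetilde{G}^{(\mu_1)}_{\boldsymbol{\mu}_D}(x;q)$ still lies in $\mathbb{N}[q]$. Multiplying by $t^{\mathrm{maj}(D)}$ and summing over $D$ preserves nonnegativity, giving $\widetilde{K}_{\lambda,\mu}(q,t) \in \mathbb{N}[q,t]$. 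The only obstacle here is bookkeeping; the genuine difficulty—constructing the transformation and checking the axioms—sits inside Theorem~\ref{thm:Dgraph}, not in this corollary.
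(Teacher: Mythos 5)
Your proposal is correct and matches the argument the paper leaves implicit: preserve vertex set, signature, and constancy of $\mathrm{inv}_k$ through the transformation (using the stated property that new $E_i$ edges only join vertices already in a common component of $\G_{\imo}$), then apply Corollary~\ref{cor:schurpos} to $\widetilde{\G}^{(k)}_{c,D}$ with $\alpha = \mathrm{inv}_k$ and compare Schur coefficients via \eqref{eqn:llt-genfn}. The handling of the $q^{-a(D)}$ factor via the remark following \eqref{eqn:hag-llt} is also exactly what the paper intends.
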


The proof of Theorem~\ref{thm:Dgraph} is the content of
Section~\ref{sec:Dgraphs}. Before delving into the proof, we consider
two extremal cases of $\G^{(k)}_{c,D}$ where the connected components
have particularly nice Schur expansions that can be proved by more
elementary means.

\subsection{Special cases}
\label{sec:llt-special}

Since $\mathrm{dist}(\triple) \geq 2$ for every $w \in V^{(k)}_{c,D}$,
$D^{(1)}_i$ is just the standard elementary dual equivalence on
$\triple$. Therefore $\G^{(1)}_{c,D}$ is isomorphic to the standard
dual equivalence graph $\G_{\lambda}$ for a unique partition
$\lambda$.

When $k \geq 3$, $E^{(k)}_i$ will not give the edges of a dual
equivalence graph. For instance, if $w$ has the pattern $2431$ with
$\mathrm{dist}(1,2,3) \leq k$, then $D_{2}^{(k)}(w)$ contains the
pattern $3412$. By axiom $4$, a dual equivalence graph must have
$\{w,D_{2}^{(k)}(w)\} \in E^{(k)}_2 \cap E^{(k)}_3$. However,
$D_{2}^{(k)}(w) \neq D_{3}^{(k)}(w)$, so this is not the case for
$\G^{(k)}_{c,D}$.  Therefore for $k \geq 3$, Theorem~\ref{thm:Dgraph}
is the best we can hope for. When $k=2$, however, this problematic
case does not arise, and we have the following result.

\begin{theorem}
  The graph $\G^{(2)}_{c,D}$ on $2$-ribbon words with content vector
  $c$ and $2$-descent set $D$ is a dual equivalence graph, and the
  $2$-inversion number is constant on connected components.
\label{thm:dominoes}
\end{theorem}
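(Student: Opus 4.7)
The plan is to verify the six dual equivalence graph axioms for $\G^{(2)}_{c,D}$ directly. By Proposition~\ref{prop:ax1235}, axioms $1$, $2$, $3$, and $5$ already hold for $\G^{(k)}_{c,D}$ for every $k \geq 1$, and the $k$-inversion number is constant on connected components. Specializing to $k = 2$ thus disposes of four of the six axioms and of the second assertion of the theorem. What remains is to establish axioms $4$ and $6$ in the case $k = 2$.

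For axiom $4$, I would carry out a finite case analysis of the connected components of the two-color restrictions $(V,\sigma,E_{\imo}\cup E_i)$ and the three-color restrictions $(V,\sigma,E_{\imt}\cup E_{\imo}\cup E_i)$ of $\G^{(2)}_{c,D}$. As observed in the proof of Proposition~\ref{prop:ax1235}, for general $k$ there are exactly $25$ non-isomorphic three-color components, only $7$ of which match the allowed configurations in Figure~\ref{fig:lambda5}. The key point for $k=2$ is that the obstruction noted just before the theorem statement---a pattern $2431$ with $\mathrm{dist}(1,2,3)\leq k$ whose image under $D_2^{(k)}$ has pattern $3412$---fails to produce forbidden configurations here: since the content vector $c$ of a standard $2$-ribbon word is non-decreasing, the inequality $\mathrm{dist}(\imo,i,\ipo)\leq 2$ tightly couples the distances $\mathrm{dist}(\imt,\imo,i)$ and $\mathrm{dist}(i,\ipo,\ipt)$, and every resulting three-color neighborhood turns out to match one of the standard shapes in Figure~\ref{fig:lambda5}. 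Concretely, it suffices to enumerate all content vectors $c=(c_1,\ldots,c_5)$ with $c_1=0$ and $c_5\leq 10$, all admissible $2$-descent sets $D$, and to check that every three-color component of $\G^{(2)}_{c,D}$ agrees with a component of some $\G_{\lambda}$ with $|\lambda|=5$; this is a finite computer-verifiable check.

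For axiom $6$, I would argue by induction on $i$. The case $i=2$ is trivial because $(V,\sigma,E_2)$ consists only of $E_2$-edges. For the inductive step, fix $w,x$ in the same connected component of the $(\ipo,\ipo)$-restriction and choose any path from $w$ to $x$. By axiom $5$, any $E_i$-edge commutes past any $E_j$-edge with $j\leq \imh$; by the now-verified axiom $4$, whenever two $E_i$-edges appear with an $E_{\imo}$ or $E_{\imt}$ edge between them, the allowed patterns in Figure~\ref{fig:lambda5} let me either merge them (in the double-edge case) or re-route the path so that one of the $E_i$-edges is replaced by edges of colors $\imo$ and $\imt$. Iterating this reduction reduces the number of $E_i$-edges on the path to at most one. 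The main obstacle is bookkeeping in the second step: the reduction must be performed in a consistent order so that inserting a new $E_{\imt}$ edge does not reintroduce an extra $E_i$-edge further down the path. This is handled by always choosing the leftmost adjacent pair of $E_i$-edges and appealing to axiom~$5$ to commute intervening lower-color edges out of the way before merging.

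Once axioms $4$ and $6$ are in place, the definition of a dual equivalence graph is satisfied, and the constancy of $\mathrm{inv}_2$ on connected components has already been noted, completing the proof.
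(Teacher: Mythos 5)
You correctly observe that Proposition~\ref{prop:ax1235} already gives axioms $1$, $2$, $3$, $5$ and the constancy of $\mathrm{inv}_2$, leaving axioms $4$ and $6$. For axiom $4$, the paper's route is quite different from yours and, unlike yours, is self-contained: since for $k=2$ the three letters $\imo,i,\ipo$ occupy three consecutive shifted contents, the move $\widetilde{d}_i$ leaves $\sigma_j$ unchanged for every $j\neq\imo,i$; hence an $E_i$-edge $\{w,x\}$ with $\sigma(w)_{\imt}=-\sigma(x)_{\imt}$ must be a $d_i$-move that coincides with $d_{\imo}$, which settles the two-color case, and a short explicit content argument then handles the three-color components. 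Your proposal instead delegates axiom $4$ to a computer enumeration of length-$5$ windows. That device is indeed what the paper uses in the proof of Proposition~\ref{prop:ax1235} for local Schur positivity, but for the full axiom-$4$ condition you would still owe a justification of the reduction to five-letter windows and of the stated bound $c_5\leq 10$; the ``tight coupling'' heuristic you offer in place of that justification is not an argument.

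The axiom-$6$ step is where the proposal genuinely breaks. Your reduction uses only axiom $5$ (to commute $E_i$ past $E_j$ for $j\leq\imh$) and the now-established axiom $4$ (to merge or reroute adjacent $E_i$-edges), so if it worked it would derive axiom $6$ from axioms $1$--$5$ alone. But the paper explicitly exhibits a signed, colored graph satisfying axioms $1$--$5$ and failing axiom $6$: the two-fold cover of $\G_{(3,2,1)}$ in Figure~\ref{fig:gregg}. Concretely, your scheme has no mechanism to reduce two $E_i$-edges separated by a long alternating block of $E_{\imo}$ and $E_{\imt}$ edges: those colors are within $2$ of $i$, so axiom $5$ does not let you commute $E_i$ past them, and the local patterns of Figure~\ref{fig:lambda5} only reach across a single intervening edge, so the ``leftmost pair'' bookkeeping never terminates. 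You are right that axiom $6$ requires attention (the paper's proof of this theorem asserts only that ``it suffices to show that dual equivalence axiom $4$ holds'' and never returns to axiom $6$), but any verification must invoke some $k=2$-specific property of $\G^{(2)}_{c,D}$ beyond axioms $1$--$5$, which your sketch does not supply.
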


\begin{proof}
  By Proposition~\ref{prop:ax1235}, it suffices to show that dual
  equivalence axioms $4$ and $6$ hold. Since $k=2$, if $x =
  \widetilde{d}_i(w)$, then $\sigma(w)_{j} = \sigma(x)_{j}$ for all $j
  \neq \imo,i$. In particular, if $\{w,x\} \in E_{i}$ and
  $\sigma(w)_{\imt} = -\sigma(x)_{\imt}$, then $d_{i}(w) = x =
  d_{\imo}(w)$. This establishes axiom $4$ when $n \leq 4$.

  To prove that connected components of $(V^{(2)}_{c,D}, \sigma,
  E^{(2)}_{\imt} \cup E^{(2)}_{\imo} \cup E^{(2)}_{i})$ have the
  correct form, note that it suffices to show that if $x =
  D_{i}^{(2)}(w) = D_{\imo}^{(2)}(w)$ and $x$ admits an
  $\imt$-neighbor, then letting $y = D_{i}^{(2)}D_{\imt}^{(2)}(x)$, we
  have $D_{\imt}^{(2)}(y) = D_{\imo}^{(2)}(y)$. In this case, $x$ must
  have $\imt$ and $\ipo$ lying between $i$ and $\imo$ which have
  contents more than $2$ apart. Then in $D_{\imt}^{(2)}(x)$, $\imh,
  \imo$ and $\ipo$ will all lie between $i$ and $\imt$ which must also
  have contents more than $2$ apart. In
  $y=D_{i}^{(2)}D_{\imt}^{(2)}(x)$, $\imh$ and $i$ will both lie
  between $\imt$ and $\imo$ which must have contents more than $2$
  apart. Therefore $D_{\imt}^{(2)}(y) = d_{\imt}(y) = d_{\imo}(y) =
  D_{\imo}^{(2)}(y)$.

  Finally, to establish axiom $6$, it is helpful to have a
  characterization of the dual equivalence classes under
  $D_{i}^{(2)}$. It follows from \cite{A-mahonian08} that for a given
  dual equivalence class $\C$ there exists a partition $\lambda$ such
  that the Robinson-Schensted algorithm gives a bijection between $\C$
  and standard tableaux of shape $\lambda$ that preserves
  signatures. While this fact alone is enough to prove that
  $\G^{(2)}_{c,D}$ is a dual equivalence graph, it can also be used to
  give a direct description of the dual equivalence classes, from
  which a more direct proof of axiom $6$ follows.
\end{proof}

Since Theorem~\ref{thm:dominoes} does not use the transformations of
Section~\ref{sec:Dgraphs}, we obtain a simple proof of positivity of
LLT polynomials when $k=2$, and also of Macdonald polynomials indexed
by a partition with at most $2$ columns. For a bijective proof, see
also \cite{A-mahonian08}.

Next consider the case when $k \geq c_n - c_1$ and so $D_{i}^{(k)} =
\widetilde{d}_i$ for all $i$. Now there are no double edges in
$\G^{(k)}_{c,D}$. For the standard dual equivalence graphs,
$\G_{\lambda}$ has no double edges if and only if $\lambda$ is a hook,
i.e. $\lambda = (m,1^{n-m})$ for some $m$. Therefore the generating
function for a dual equivalence graph with no double edges is a sum of
Schur functions indexed by hooks. The analog of this fact for
$\G^{(k)}_{c,D}$ is that the generating function is a sum of skew
Schur functions indexed by ribbons.

Let $\nu$ be a ribbon of size $n$. Label the cells of $\nu$ from $1$
to $n$ in increasing order of content. Define the \emph{descent set of
  $\nu$}, denoted $\mathrm{Des}(\nu)$, to be the set of indices $i$
such that the cell labeled $\ipo$ lies south of the cell labeled
$i$.  Define the \emph{major index of a ribbon} by
\begin{equation}
  \mathrm{maj}(\nu) = \sum_{i \in \mathrm{Des}(\nu)} i.
\label{eqn:rib-maj}
\end{equation}
Notice that if $R$ is a filling of a column, and we reshape $R$ into a
semi-standard ribbon as described in Section~\ref{sec:pre-mac}, say of
shape $\nu$, then \eqref{eqn:rib-maj} agrees with \eqref{eqn:maj} in
the sense that $\mathrm{maj}(\nu) = \mathrm{maj}(R)$.

Any connected component of $\G^{(k)}_{c,D}$ such that $D_{i}^{(k)} =
\widetilde{d}_i$ on the entire component not only has constant
$k$-inversion number, but the relative ordering of the first and last
letters of each vertex is constant as well. That is, for $\C$ a
connected component of $\G^{(k)}_{c,D}$, $w_1 > w_n$ for some $w \in
V(\C)$ if and only if $w_1 > w_n$ for all $w \in V(\C)$. In the
affirmative case, say that \emph{$(1,n)$ is an inversion in $\C$}.

\begin{theorem}
  Let $\G^{(k)}_{c,D}$ be the signed, colored graph of type $(n,n)$ on
  $k$-ribbon words with contents $c$ and $k$-descent set $D$. Let $\C$
  be a connected component of $\G_{c,D}$ such that $D^{(k)}_{i}(v) =
  \widetilde{d}_i(v)$ for all $v \in V(\C)$. Then
  \begin{equation}
    \sum_{v \in V(\C)} Q_{\sigma(v)}(x) = \sum_{\nu \in \mathrm{Rib}(\C)} s_{\nu},
  \label{eqn:rib-expand}
  \end{equation}
  where $\mathrm{Rib}(\C)$ is the set of ribbons of length $n$ with major
  index equal to $\mathrm{inv}_k(\C)$ such that $\nmo$ is a descent if and
  only if $(1,n)$ is an inversion in $\C$.
\label{thm:k=n}
\end{theorem}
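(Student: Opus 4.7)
The plan is to set up a signature-preserving bijection between $V(\C)$ and $\bigsqcup_{\nu \in \mathrm{Rib}(\C)} \mathrm{SYT}(\nu)$, and then invoke the ribbon case of Gessel's formula, $s_\nu = \sum_{T \in \mathrm{SYT}(\nu)} Q_{\sigma(T)}$, to conclude. The first technical ingredient is the following claim: whenever $v$ admits an $i$-edge, applying $\widetilde{d}_i$ swaps exactly the bits at positions $\imo$ and $i$ of $\sigma(v)$ and leaves all other entries fixed. This is verified by direct case analysis on the four configurations of $\triple$ in the word (the two ``leftmost-$i$'' cases and the two ``rightmost-$i$'' cases); the key observation is that $\widetilde{d}_i$ merely cyclically rotates three letters whose contents are within distance $k$, so it cannot touch signature entries far away. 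This is precisely what distinguishes $\widetilde{d}_i$ from $d_i$ and gives rise to ribbon-level rather than arbitrary Schur-level decompositions.

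Given that, view each $v \in V(\C)$ as the content reading word of a unique SYT $T_v$ of the ribbon $\nu_v$ whose descent set equals $\{i : v_i > v_{i+1}\}$; one checks that the descent signature of $T_v$ agrees with $\sigma(v)$. Regrouping $\sum_{v \in V(\C)} Q_{\sigma(v)}$ by the ribbon $\nu_v$ therefore produces a partial contribution to each $s_{\nu_v}$. The identification of $\mathrm{Rib}(\C)$ then comes from translating the invariants of $\C$ into statements about ribbons: Proposition~\ref{prop:preserve} together with the hypothesis $\mathrm{dist}(\triple) \leq k$ throughout $\C$ should yield the equality $\mathrm{maj}(\nu_v) = \mathrm{inv}_k(\C)$ by careful bookkeeping of how close triples contribute to both quantities, while the constancy of the ordering between $w_1$ and $w_n$ directly translates into the descent-at-$(\nmo)$ condition. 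Once $\mathrm{Rib}(\C)$ is identified, Gessel's formula assembles the partial sums into the desired $\sum_\nu s_\nu$.

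The main obstacle will be the saturation check: verifying that for each ribbon $\nu \in \mathrm{Rib}(\C)$, the vertex set $V(\C)$ contains every permutation realizing a SYT of $\nu$, or equivalently every permutation with the prescribed descent set that is also a valid $k$-ribbon word with content $c$ and $k$-descent set $D$. The signature-swap property reduces this to showing that repeated $\widetilde{d}_i$ moves act transitively on $\{\pm 1\}^{\nmo}$-signatures with a fixed number of $-1$s. Proving that the $k$-ribbon-word structure never obstructs these moves, under the standing hypothesis $D_i^{(k)} = \widetilde{d}_i$ throughout $\C$, is the core technical step; it should follow from the fact that the distance bound controls both the admissibility of $\widetilde{d}_i$ and the stability of $D$ under it, so the entire $S_{\nmo}$-orbit of signatures is realized inside the same component.
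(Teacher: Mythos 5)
There is a genuine gap, and it sits exactly where you hedge with ``should'': the claim that $\mathrm{maj}(\nu_v) = \mathrm{inv}_k(\C)$. Since the ribbon $\nu_v$ you associate to $v$ has descent set $\mathrm{Des}(v) = \{i : v_i > v_{\ipo}\}$, you have $\mathrm{maj}(\nu_v) = \mathrm{maj}(v)$, so your claim amounts to $\mathrm{maj}(v) = \mathrm{inv}(v)$ for every $v$ in $\C$ (reducing, as one may, to $k \geq c_n - c_1$, where $\mathrm{inv}_k$ is the usual inversion statistic). The involutions $\widetilde{d}_i$ preserve $\mathrm{inv}$ by Proposition~\ref{prop:preserve}, but they do \emph{not} preserve $\mathrm{maj}$; indeed $\mathrm{maj}(v)$ is not even constant on $\C$, let alone equal to $\mathrm{inv}_k(\C)$. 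Concretely, with $n=k=4$, $c=(1,2,3,4)$, $D=\emptyset$, one has $\widetilde{d}_2(2431) = 3412$: both have $\mathrm{inv}=4$, but $\mathrm{maj}(2431)=5$ while $\mathrm{maj}(3412)=2$. This example also refutes your preliminary claim that $\widetilde{d}_i$ flips only $\sigma_{\imo}$ and $\sigma_i$: here $\sigma_3$ flips from $-$ to $+$ as well. Axiom~$2$ constrains only $\sigma_j$ for $j<\imt$ and $j>\ipo$; the two near bits $\sigma_{\imt}$ and $\sigma_{\ipo}$ are allowed to move, which is precisely what axiom~$3$ concerns.

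The downstream consequence is that grouping $V(\C)$ by $\mathrm{Des}(v)$ cannot assemble full Schur functions: for a fixed $D'$, the set $\{v \in V(\C) : \mathrm{Des}(v)=D'\}$ is generally a proper subset of $\{v : \mathrm{Des}(v)=D'\}$, being further cut out by the constraint on $\mathrm{inv}$. So your ``partial contributions'' to the $s_{\nu_v}$ really are partial, and the identity map on words is the wrong bijection. The paper's proof repairs exactly this mismatch with Foata's bijection $\phi$: it is $\sigma$-preserving, it carries $\mathrm{inv}$ to $\mathrm{maj}$, and it turns $(1,n)\in\mathrm{Inv}(w)$ into $\nmo\in\mathrm{Des}(\phi(w))$. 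After first establishing the saturation fact you correctly flag --- that $V(\C)$ is precisely the set of permutations with a fixed $\mathrm{inv}$ and a fixed status of $(1,n)$ --- applying $\phi$ transforms $V(\C)$ into a disjoint union of full descent classes $\{w : \mathrm{Des}(w)=\mathrm{Des}(\nu)\}$ over $\nu\in\mathrm{Rib}(\C)$, and only then does Gessel's formula \eqref{eqn:quasi-s} yield $\sum_{\nu} s_{\nu}$. Without a change of variable trading $\mathrm{inv}$ for $\mathrm{maj}$, the direct regrouping you propose does not close.
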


\begin{proof}
  From the hypotheses on $\C$, we may assume that $k=n$, $c =
  (1,\ldots,n)$ and $D = \emptyset$. Therefore $V^{(k)}_{c,D}$ is just
  the set of permutations of $[n]$ thought of as words. In this case,
  $k$-inversions are just the usual inversions for a permutation. By
  earlier remarks, for $w,v \in V(\C)$, $\mathrm{inv}(w) =
  \mathrm{inv}(v)$ and $(1,n) \in \mathrm{Inv}(w)$ if and only if
  $(1,n) \in \mathrm{Inv}(v)$. In fact, it is an exercise to show that
  this necessary condition for two vertices to coexist in $V(\C)$ is
  also sufficient. That is to say, $V(\C)$ is the set of words $w$
  with $\mathrm{inv}(w) = \mathrm{inv}(\C)$ and $(1,n) \in
  \mathrm{Inv}(w)$ if and only if $(1,n)$ is an inversion of $\C$.

  Recall Foata's bijection on words \cite{Foata1968}. For $w$ a word
  and $x$ a letter, $\phi$ is built recursively using an inner
  function $\gamma_x$ by $ \phi(wx) = \gamma_x \left( \phi(w) \right)
  x$. From this structure it follows that the last letter of $w$ is
  the same as the last letter of $\phi(w)$. Furthermore, $\gamma_x$ is
  defined so that the last letter of $w$ is greater than $x$ if and
  only if the first letter of $\gamma_x(w)$ is greater than $x$, and
  $\phi$ preserves the descent set of the inverse permutation,
  i.e. $\sigma(w) = \sigma(\phi(w))$.  Finally, the bijection
  satisfies $\mathrm{maj}(w) = \mathrm{inv}(\phi(w))$.  Summarizing
  these properties, $\phi$ is a $\sigma$-preserving bijection between
  the following sets:
  \begin{eqnarray*}
    \left\{ w \; | \; \mathrm{inv}(w)=j \; \mbox{and} \; (1,n) \in
      \mathrm{Inv}(w) \right\} & \stackrel{\sim}{\longleftrightarrow}
    & \left\{ w \; | \; \mathrm{maj}(w)=j \; \mbox{and} \; \nmo \in
      \mathrm{Des}(w) \right\}, \\ 
    \left\{ w \; | \; \mathrm{inv}(w)=j \; \mbox{and} \; (1,n) \not\in
      \mathrm{Inv}(w) \right\} & \stackrel{\sim}{\longleftrightarrow}
    & \left\{ w \; | \; \mathrm{maj}(w)=j \; \mbox{and} \; \nmo
      \not\in \mathrm{Des}(w) \right\}. 
  \end{eqnarray*}

  A standard filling of a ribbon $\nu$ is just a permutation $w$ such
  that $\mathrm{Des}(w) = \mathrm{Des}(\nu)$. Therefore by
  \eqref{eqn:quasi-s}, the Schur function $s_{\nu}$ may be expressed
  as
  \begin{equation}
    s_{\nu}(x) = \sum_{\mathrm{Des}(w) = \mathrm{Des}(\nu)} Q_{\sigma(w)}(x).
    \label{eqn:rib-s}
  \end{equation}
  Applying $\phi$ to this formula yields \eqref{eqn:rib-expand}.
\end{proof}

%
\section{Transformation into a dual equivalence graph }
%
\label{sec:Dgraphs}

\subsection{Packages and type}
\label{sec:D-type}

The algorithm used to transform $\G^{(k)}_{c,D}$ into a dual
equivalence graph primarily utilizes three transformations,
$\varphi_i$ $\psi_i$, and $\theta_i$, each of which identifies two
$i$-edges on the same connected component of $E_{2} \cup\cdots\cup
E_{i}$ and swaps the connections in the unique way that maintains the
reversal of $\sigma_{\imo}$ and $\sigma_i$. For example, in
Figure~\ref{fig:swap}, the $i$-edges given by solid lines are replaced
with $i$-edges given by the dashed lines.

\begin{figure}[ht]
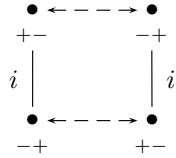

  \begin{displaymath}
    \begin{array}{\cs{7}c}
      \sbull{a}{+-} & \sbull{b}{-+} \\[6ex]
      \sbull{c}{-+} & \sbull{d}{+-}
    \end{array}
    \psset{nodesep=3pt,linewidth=.1ex}
    \ncline {aa}{c} \nbput{i}
    \ncline {bb}{d} \naput{i}
    \ncline[linestyle=dashed] {<->}{a}{b} 
    \ncline[linestyle=dashed] {<->}{c}{d} 
  \end{displaymath}
  \caption{\label{fig:swap} An illustration of how two $i$-edges are
    swapped in the transformation process.}
\end{figure}

The basic structure of these maps is depicted in
Figure~\ref{fig:involutions}. Axiom $4$ restricts the lengths of
$2$-color strings in the following way (see Definition~\ref{defn:type}
for the definition of $i$-type). Figure~\ref{fig:lambda4} forces the
number of edges of a nontrivial connected component of $E_{\imo} \cup
E_{i}$ to be two, either with three distinct vertices (in the cases
other than $i$-type W) or forming a cycle with two vertices (in the
case of $i$-type W). The map $\varphi_i$ swaps $i$-edges on connected
components of $E_{\imo} \cup E_i$ with more than two edges. Similarly,
Figure~\ref{fig:lambda5} forces the number of edges of a nontrivial
connected component of $E_{\imt} \cup E_{i}$ to be one (in the case of
$i$-type A) or four, where there are either five distinct vertices (in
the case of $i$-type B) or four vertices forming a cycle (in the case
of $i$-type C). The map $\psi_i$ swaps $i$-edges on connected
components of $E_{\imt} \cup E_i$ with more than four edges.

\begin{figure}[ht]
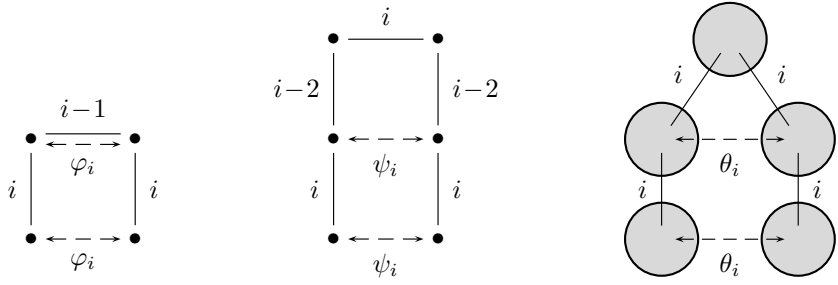

  \begin{displaymath}
    \begin{array}{\cs{8}c}
      & \\[6ex]
      \rnode{C}{\B} & \rnode{D}{\B}  \\[6ex] 
      \rnode{B}{\B} & \rnode{E}{\B}
    \end{array}
    \hspace{6em}
    \begin{array}{\cs{8}c}
      \rnode{a2}{\B} & \rnode{b2}{\B} \\[6ex] 
      \rnode{e2}{\B} & \rnode{f2}{\B} \\[6ex] 
      \rnode{h2}{\B} & \rnode{i2}{\B} 
    \end{array}
    \hspace{7em}
    \begin{array}{\cs{5}\cs{5}c}
      & \rnode{X}{%
        \psset{xunit=1ex}
        \psset{yunit=1ex}
        \pspicture(0,0)(1,1)
        \pscircle[fillstyle=solid,fillcolor=lightgray](0.5,0.5){0.5}
        \endpspicture} & \\[6ex]
      \rnode{Y1}{%
        \psset{xunit=1ex}
        \psset{yunit=1ex}
        \pspicture(0,0)(1,1)
        \pscircle[fillstyle=solid,fillcolor=lightgray](0.5,0.5){0.5}
        \endpspicture} & & \rnode{Y2}{%
        \psset{xunit=1ex}
        \psset{yunit=1ex}
        \pspicture(0,0)(1,1)
        \pscircle[fillstyle=solid,fillcolor=lightgray](0.5,0.5){0.5}
        \endpspicture} \\[6ex]
      \rnode{Z1}{%
        \psset{xunit=1ex}
        \psset{yunit=1ex}
        \pspicture(0,0)(1,1)
        \pscircle[fillstyle=solid,fillcolor=lightgray](0.5,0.5){0.5}
        \endpspicture} & & \rnode{Z2}{%
        \psset{xunit=1ex}
        \psset{yunit=1ex}
        \pspicture(0,0)(1,1)
        \pscircle[fillstyle=solid,fillcolor=lightgray](0.5,0.5){0.5}
        \endpspicture}
    \end{array}
    \psset{nodesep=3pt,linewidth=.1ex}
    \ncline[offset=2pt] {C}{D} \naput{\imo}
    \ncline {B}{C} \naput{i}
    \ncline {D}{E} \naput{i}
    \ncline[offset=-2pt,linestyle=dashed] {<->}{C}{D} \nbput{\varphi_i}
    \ncline[linestyle=dashed] {<->}{B}{E} \nbput{\varphi_i}
    \ncline {a2}{b2} \naput{i}
    \ncline {a2}{e2} \nbput{\imt}
    \ncline {b2}{f2} \naput{\imt}
    \ncline {e2}{h2} \nbput{i}
    \ncline {f2}{i2} \naput{i}
    \ncline[linestyle=dashed] {<->}{e2}{f2} \nbput{\psi_i}
    \ncline[linestyle=dashed] {<->}{h2}{i2} \nbput{\psi_i}
    \ncline {Y1}{X} \naput{i}
    \ncline {X}{Y2} \naput{i}
    \ncline {Y1}{Z1} \nbput{i}
    \ncline {Y2}{Z2} \naput{i}
    \ncline[linestyle=dashed]{<->} {Y1}{Y2} \nbput{\theta_i}
    \ncline[linestyle=dashed]{<->} {Z1}{Z2} \nbput{\theta_i}
  \end{displaymath}
  \caption{\label{fig:involutions} Illustrations of the involutions
    $\varphi_i$, $\psi_i$, and $\theta_i$ used to redefine $E_i$.}
\end{figure}

Axiom $6$ restricts the size of $E_2 \cup \cdots \cup E_{\imo}$
isomorphism classes of a connected component of $E_2 \cup \cdots \cup
E_i$ to be one. The map $\theta_i$ swaps $i$-edges on connected
components of $E_2 \cup \cdots \cup E_i$ with more than one member of
a given $E_2 \cup \cdots \cup E_{\imo}$ isomorphism class.

By construction, these transformations preserve axiom $1$. In order to
maintain axioms $2$ and $5$, we introduce the notion of the
$i$-package of a vertex admitting an $i$-neighbor. By axiom $5$, if
$\{w,x\} \in E_i$ and $\{x,y\} \in E_j$ for $|i-j| \geq 3$, then
$\{w,v\} \in E_j$ and $\{v,y\} \in E_i$ for some $v \in V$. Changing a
single $i$-edge may result in a violation of this condition. Therefore
when one $i$-edge is changed, all other $i$-edges that subsequently
violate axiom $5$ must also be changed, as illustrated in
Figure~\ref{fig:axiom5}.

\begin{figure}[ht]
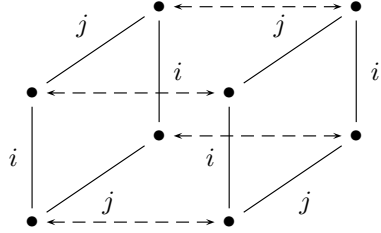

  \begin{displaymath}
    \begin{array}{\cs{5}\cs{5}\cs{5}\cs{5}\cs{5}c}
      & & \rnode{a3}{\bullet} & & & \rnode{a6}{\bullet} \\[1ex]
      & & & & & \\[1ex]
      \rnode{c1}{\bullet} & & & \rnode{c4}{\bullet} & & \\[1ex]
      & & \rnode{d3}{\bullet} & & & \rnode{d6}{\bullet} \\[1ex]
      & & & & & \\[1ex]
      \rnode{f1}{\bullet} & & & \rnode{f4}{\bullet} & & 
    \end{array}
    \psset{nodesep=3pt,linewidth=.1ex}
    \ncline {c1}{f1} \nbput{i}
    \ncline {c4}{f4} \nbput{i}
    \ncline {a3}{d3} \naput{i}
    \ncline {a6}{d6} \naput{i}
    \ncline {c1}{a3} \naput{j}
    \ncline {c4}{a6} \naput{j}
    \ncline {f1}{d3} \nbput{j}
    \ncline {f4}{d6} \nbput{j}
    \ncline[linestyle=dashed]{<->} {a3}{a6}
    \ncline[linestyle=dashed]{<->} {c1}{c4}
    \ncline[linestyle=dashed]{<->} {d3}{d6}
    \ncline[linestyle=dashed]{<->} {f1}{f4}
  \end{displaymath}
  \caption{\label{fig:axiom5} An illustration of how to maintain axiom
    $5$ when swapping $i$-edges.}
\end{figure}

\begin{definition}
  Let $(V,\sigma,E)$ be a signed, colored graph of type $(n,N)$
  satisfying axioms $1,2$ and $5$. For $w$ a vertex of $V$, the
  \emph{$i$-package of $w$} is the connected component containing $w$
  of
  \begin{displaymath}
    \left( V, (\sigma_1, \ldots, \sigma_{\imh}, \sigma_{\ipt}, \ldots,
      \sigma_{N-1}), E_2 \cup\cdots\cup E_{\imh} \cup E_{\iph}
      \cup\cdots\cup E_{\nmo} \right)
    \vspace{-\baselineskip}
  \end{displaymath}
  \label{defn:package}
\end{definition}

By axiom $2$, both $\sigma_{\imo}$ and $\sigma_{i}$ are constant on
$i$-packages. Therefore $w$ admits an $i$-neighbor if and only if
every vertex of the $i$-package of $w$ admits an $i$-neighbor. By
axiom $5$, knowing $E_i(w)$ determines $E_i$ on the entire $i$-package
of $w$. That is to say, $E_i$ may be regarded as an isomorphism
between the $i$-packages of $w$ and $E_i(w)$ that preserves $\sigma_1,
\ldots, \sigma_{\imh}, \sigma_{\ipt}, \ldots, \sigma_{N-1}$. If the
four vertices in Figure~\ref{fig:swap} have isomorphic $i$-packages,
we can swap all $i$-edges on the corresponding $i$-packages while
maintaining axioms $2$ and $5$.

By axioms $2$ and $5$, $E_h$ commutes with $E_j$ whenever $h\leq \imh$
and $j\geq\iph$. Bearing this in mind, the two halves of an
$i$-package, namely $E_2 \cup\cdots\cup E_{\imh}$ and $E_{\iph}
\cup\cdots\cup E_{\nmo}$, can be and often are handled separately in
the following sections. Most often, establishing results for $E_{\iph}
\cup\cdots\cup E_{\nmo}$ is straight-forward, though the same results
for $E_2 \cup\cdots\cup E_{\imh}$ may require considerable work.

To track axiom $3$ throughout the transformation process, it is
helpful to consider the following reformulation: For $\{w,x\} \in
E_{i}$, at least one of $w$ or $x$ admits an $i\pm 1$-neighbor. To be
more precise, if $i>2$, then at least one of $w$ or $x$ admits an
$\imo$-neighbor, and if $i<N\!-\!1$, then at least one of $w$ or $x$
admits an $\ipo$-neighbor. To see the equivalence, note that by axiom
$1$, neither $w$ nor $x$ will admit an $\imo$-neighbor if and only if
$\sigma(w)_{\imt} = \sigma(w)_{\imo}$ and $\sigma(x)_{\imt} =
\sigma(x)_{\imo}$. By axioms 1 and 2, this implies $\sigma(w)_{\imt} =
\sigma(w)_{\imo} = -\sigma(x)_{\imo} = - \sigma(x)_{\imt}$. The
analogous argument holds for $\ipo$. Therefore we will often prove
that axiom 3 holds by showing that at least one of $w$ and $E_i(w)$
admits an $\imo$-neighbor and at least one admits an
$\ipo$-neighbor. When axiom $3$ holds, we often utilize the
observation that both $w$ and $E_i(w)$ admit an $\imo$-neighbor if and
only if $\sigma(w)_{\imt} = - \sigma(E_i(w))_{\imt}$ and $w$ and
$E_i(w)$ admit an $\ipo$-neighbor if and only if $\sigma(w)_{\ipo} = -
\sigma(E_i(w))_{\ipo}$.

\begin{remark}
  For a signed, colored graph of type $(n,n)$ satisfying axiom $1$,
  axiom $3$ is implied by axiom $4$ and even by the weaker local Schur
  positivity condition. Indeed, if neither $w$ nor $E_i(w)$ admits an
  $\imo$-neighbor (resp. $\ipo$-neighbor) then the connected component
  of $E_{\imo} \cup E_{i}$ (resp. $E_{i} \cup E_{\ipo}$) containing
  $w$ consists solely of $w$ and $E_i(w)$ forcing the restricted
  degree 4 generating function to be $Q_{++-}+Q_{--+}$, which is not
  Schur positive. The requirement that the graph be of type $(n,n)$ is
  necessary in order to ensure that $E_{\ipo}$ edges exist in the
  graph. If the graph is of type $(n,N)$ with $n<N$, then neither
  local Schur positivity nor axiom $4$ is enough to ensure axiom $3$.
\end{remark}

To handle local Schur positivity, we introduce the notion of the
$i$-type of a vertex. In the case of a dual equivalence graph, a
vertex that is part of a double edge for $E_{\imo}$ and $E_i$ has
$i$-type W (compare Figure~\ref{fig:lambda4} with $i$-type W in
Figure~\ref{fig:type}), and otherwise the $i$-type of a vertex
determines the shape of the connected component of $(V, \sigma,
E_{\imt} \cup E_{\imo} \cup E_{i})$ containing the vertex (compare
Figure~\ref{fig:lambda5} with $i$-types A, B, and C in
Figure~\ref{fig:type}). More generally, we have the following.

\begin{definition}
  Let $\G$ be a signed, colored graph of type $(n,N)$ satisfying
  axioms $1, 2, 3$ and $5$. For $i \leq n$ with $i<N$, the
  \emph{$i$-type of a vertex $w$ of $\G$} admitting an $i$-neighbor is
    \begin{itemize}
    \item \emph{$i$-type W} if $\sigma(w)_{i} = -\sigma(E_{\imo}(w))_{i}$;
    \item \emph{$i$-type A} if $\sigma(w)_{i} =
      \sigma(E_{\imo}(w))_{i}$ and $w$ does not admit an $\imt$-neighbor;
    \item \emph{$i$-type B} if $\sigma(w)_{i} =
      \sigma(E_{\imo}(w))_{i}$ and $w$ admits an $\imt$-neighbor and
      if $w$ admits an $\imo$-neighbor, then $\sigma(w)_{\imo} =
      -\sigma(E_{\imt}(w))_{\imo}$; otherwise, $\sigma(w)_{i} =
      -\sigma(E_{\imo}E_{\imt}(w))_{i}$;
    \item \emph{$i$-type C} if $\sigma(w)_{i} =
      \sigma(E_{\imo}(w))_{i}$ and $w$ admits an $\imt$-neighbor and
      if $w$ admits an $\imo$-neighbor, then $\sigma(w)_{\imo} =
      \sigma(E_{\imt}(w))_{\imo}$; otherwise, $\sigma(w)_{i} =
      \sigma(E_{\imo}E_{\imt}(w))_{i}$.
    \end{itemize}
  \label{defn:type}
\end{definition}

\begin{figure}[ht]
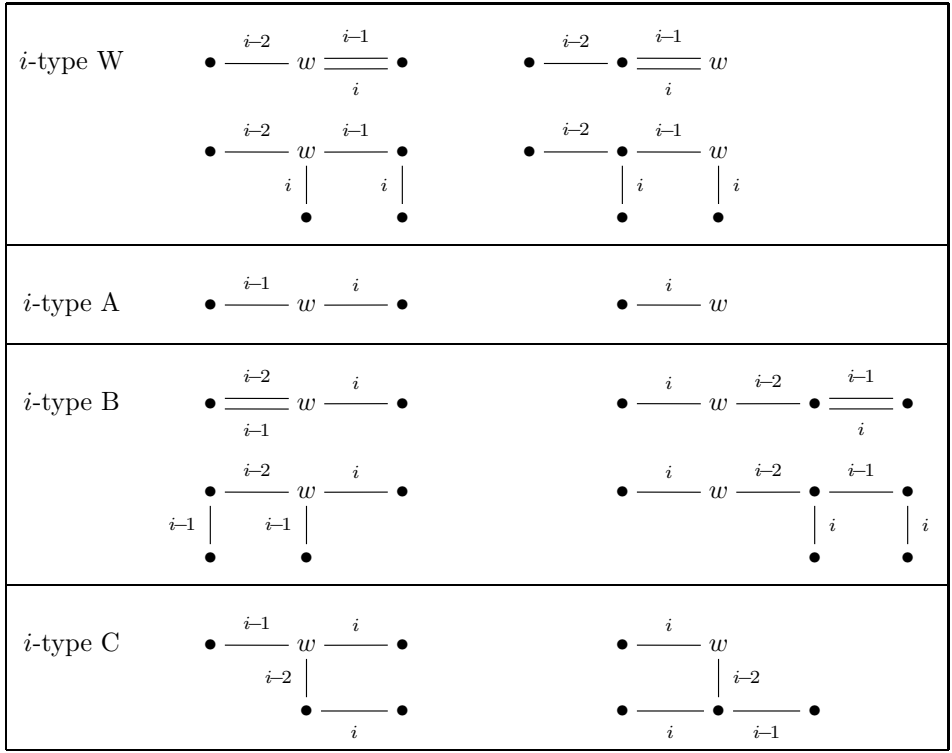

  \begin{displaymath}
    \begin{array}{|\cs{7}\cs{7}\cs{7}\cs{10}\cs{7}\cs{7}\cs{7}\cs{7}\cs{3}|}
      \hline 
      & & & & & & & & \\[1ex]
      \mbox{$i$-type W}
      & \rnode{d1}{\bullet} & \rnode{d2}{w} & \rnode{d3}{\bullet} &
      \rnode{d4}{\bullet} & \rnode{d5}{\bullet} & \rnode{d6}{w} & & \\[5ex]
      & \rnode{D1}{\bullet} & \rnode{D2}{w} & \rnode{D3}{\bullet} &
      \rnode{D4}{\bullet} & \rnode{D5}{\bullet} & \rnode{D6}{w} & & \\[3ex]
      & & \rnode{DD2}{\bullet} & \rnode{DD3}{\bullet} &
      & \rnode{DD5}{\bullet} & \rnode{DD6}{\bullet} & & \\[1ex]\hline
      & & & & & & & & \\[1ex]
      \mbox{$i$-type A} 
      & \rnode{a1}{\bullet} & \rnode{a2}{w} & \rnode{a3}{\bullet} &
      & \rnode{a4}{\bullet} & \rnode{a5}{w} & & \\[2ex]\hline
      & & & & & & & & \\[1ex]
      \mbox{$i$-type B} 
      & \rnode{b1}{\bullet} & \rnode{b2}{w} & \rnode{b3}{\bullet} &
      & \rnode{b4}{\bullet} & \rnode{b5}{w} & \rnode{b6}{\bullet} & \rnode{b7}{\bullet} \\[5ex]
      & \rnode{B1}{\bullet} & \rnode{B2}{w} & \rnode{B3}{\bullet} &
      & \rnode{B4}{\bullet} & \rnode{B5}{w} & \rnode{B6}{\bullet} & \rnode{B7}{\bullet} \\[3ex]
      & \rnode{BB1}{\bullet} & \rnode{BB2}{\bullet} & &
      & & & \rnode{BB6}{\bullet} & \rnode{BB7}{\bullet} \\[1ex]\hline
      & & & & & & & & \\[1ex]
      \mbox{$i$-type C} 
      & \rnode{c1}{\bullet} & \rnode{c2}{w} & \rnode{c3}{\bullet} &
      & \rnode{c4}{\bullet} & \rnode{c5}{w} & & \\[3ex]
      & & \rnode{cc2}{\bullet} & \rnode{cc1}{\bullet} &
      & \rnode{cc4}{\bullet} & \rnode{cc5}{\bullet} &
      \rnode{cc6}{\bullet} & \\[2ex]\hline
    \end{array}
    \psset{nodesep=3pt,linewidth=.1ex}
    \everypsbox{\scriptstyle}
    \ncline {d1}{d2} \naput{\imt}
    \ncline[offset=2pt] {d2}{d3} \naput{\imo}
    \ncline[offset=2pt] {d3}{d2} \naput{i}
    \ncline {d4}{d5} \naput{\imt}
    \ncline[offset=2pt] {d5}{d6} \naput{\imo}
    \ncline[offset=2pt] {d6}{d5} \naput{i}
    \ncline {D1}{D2} \naput{\imt}
    \ncline {D2}{D3} \naput{\imo}
    \ncline {D2}{DD2} \nbput{i}
    \ncline {D3}{DD3} \nbput{i}
    \ncline {D4}{D5} \naput{\imt}
    \ncline {D5}{D6} \naput{\imo}
    \ncline {D5}{DD5} \naput{i}
    \ncline {D6}{DD6} \naput{i}
    \ncline {a1}{a2} \naput{\imo}
    \ncline {a2}{a3} \naput{i}
    \ncline {a4}{a5} \naput{i}
    \ncline[offset=2pt] {b1}{b2} \naput{\imt}
    \ncline[offset=2pt] {b2}{b1} \naput{\imo}
    \ncline {b2}{b3} \naput{i}
    \ncline {b4}{b5} \naput{i}
    \ncline {b5}{b6} \naput{\imt}
    \ncline[offset=2pt] {b6}{b7} \naput{\imo}
    \ncline[offset=2pt] {b7}{b6} \naput{i}
    \ncline {B1}{B2} \naput{\imt}
    \ncline {B1}{BB1} \nbput{\imo}
    \ncline {B2}{BB2} \nbput{\imo}
    \ncline {B2}{B3} \naput{i}
    \ncline {B4}{B5} \naput{i}
    \ncline {B5}{B6} \naput{\imt}
    \ncline {B6}{B7} \naput{\imo}
    \ncline {B6}{BB6} \naput{i}
    \ncline {B7}{BB7} \naput{i}
    \ncline {c1}{c2} \naput{\imo}
    \ncline {c2}{c3} \naput{i}
    \ncline {c2}{cc2} \nbput{\imt}
    \ncline {cc2}{cc1} \nbput{i}
    \ncline {c4}{c5} \naput{i}
    \ncline {c5}{cc5} \naput{\imt}
    \ncline {cc4}{cc5} \nbput{i}
    \ncline {cc5}{cc6} \nbput{\imo}
  \end{displaymath}
  \caption{\label{fig:type} An illustration of $i$-type of $w$ based
    on neighboring $E_{\imt}$ and $E_{\imo}$ edges.}
\end{figure}

The $i$-type of $w$ is determined by the connected component of
$E_{\imt} \cup E_{\imo}$ containing $w$. For $i$-type W, if
$\sigma(w)_{i} = -\sigma(E_{\imo}(w))_{i}$, then certainly
$E_{\imo}(w) \neq w$ so $w$ does in fact have an $\imo$-neighbor. For
the other $i$-types, $w$ may or may not have an $\imo$-neighbor. For
$i$-types B and C, if $w$ admits an $\imt$-neighbor but not an
$\imo$-neighbor, then by axiom $3$, $E_{\imt}(w)$ admits an
$\imo$-neighbor.

Figure~\ref{fig:type} shows the $E_{\imt}$, $E_{\imo}$ and $E_{i}$
edges neighboring a vertex with a given $i$-type. If $E_i$ edges do
not exist in the graph, then the $i$-edges in Figure~\ref{fig:type}
indicate which vertices admit an $i$-neighbor. The top rows for
$i$-types W and B are the possibilities in a dual equivalence graph,
while the lower rows give the additional possibilities in the more
general setting when axiom $4$ does not hold.

\begin{remark}
  By axioms $1,2$ and $5$, edges $E_j$ with $j<\imf$ or $j \geq
  i\!+\!2$ do not change the $i$-type of a vertex, i.e. the $i$-type
  of $w$ is the $i$-type of $E_j(w)$. In contrast, $E_{\imh}$ often
  changes the $i$-type of a vertex as can $E_{\ipo}$, so these cases
  require some care. When axiom $4$ holds, $w$ and $E_i(w)$ have the
  same $i$-type, so much of the following sections is devoted to
  vertices for which this is not the case.
\end{remark}

By dual equivalence graph axioms $1$ and $2$, a connected component of
$(V,\sigma,E_{\imo} \cup E_{i})$ occurs in Figure~\ref{fig:lambda4} if
and only if it does not contain a vertex $w$ with $i$-type W for which
$E_{\imo}(w) \neq E_i(w)$. Define $W_i(\G)$ to be the set of all such
vertices bearing witness to the failure of Figure~\ref{fig:lambda4},
i.e.
\begin{equation}
  W_i(\G) \ = \ \left\{ w \in V \ | \ w \ \mbox{has $i$-type W but} \
    E_{\imo}(w) \neq E_{i}(w) \right\}.
\label{eqn:W}
\end{equation}

In the sections to follow, vertices of $i$-type W, especially those
vertices in $W_i(\G)$, play a crucial role in defining the
transformations that turn $\G_{c,D}^{(k)}$ into a dual equivalence
graph. Note that $w$ has $i$-type W if and only if $E_{\imo}(w)$ has
$i$-type W, so in some sense $i$-type W is a property of $\imo$-edges
rather than of vertices. It is helpful to have a dual property for
$i$-edges.

\begin{definition}
  Let $\G$ be a signed, colored graph of type $(n,N)$ satisfying
  axioms $1, 2, 3$ and $5$. For $i < n$, a vertex $w$ has a
  \emph{flat $i$-edge} if $\sigma(w)_{\imt} = \sigma(E_i(w))_{\imt}$.
  \label{defn:flat}
\end{definition}

In a signed, colored graph satisfying axioms $1, 2, 3$ and $5$, flat
$i$-edges relate to $i$-type W in the following way: a vertex $w$ has
a flat $i$-edge if and only if at most one of $w$ and $E_i(w)$ has
$i$-type W. By axiom $1$, a vertex $w$ has a flat $i$-edge if and only
if exactly one of $w$ and $E_i(w)$ has an $\imo$-neighbor. In a dual
equivalence graph, for a vertex $w$ that does not have $\imo$-type W,
$w$ has $i$-type C if and only if both $w \neq E_{\imt}(w)$ and both
admit flat $i$-edges. In a dual equivalence graph, a vertex $w$ has
$i$-type C if and only if $E_{\imt}(w)$ has $i$-type C.

For a graph $\G$ satisfying axioms $1,2,3$ and $5$, if all connected
components of $E_{\imo} \cup E_i$ all appear in
Figure~\ref{fig:lambda4}, axiom $4$ fails precisely when two vertices
with different $i$-types are paired by an $i$-edge or a pairing of
$i$-type C forms a cycle with more than four edges. Among $i$-types A,
B and C, $i$-type A vertices are distinguished by their lack of
$\imt$-neighbors, so, by axiom $2$, an $i$-type mismatch can only
occur between $i$-type B and $i$-type C. Therefore, when
Figure~\ref{fig:lambda4} holds, we focus on vertices of $i$-type C. In
the more general case when Figure~\ref{fig:lambda4} does not always
hold, instead of $i$-type C, we focus on vertices $x$ for which both
$x$ and $E_{\imt}(x)$ have flat $i$-edges. Define $C_i(\G)$ by
\begin{equation}
  C_i(\G) \ = \ \left\{ x \in V \ | \ x \mbox{ and } E_{\imt}(x) 
    \mbox{ have flat $i$-edges but } E_{\imt}E_i(x) \neq
    E_iE_{\imt}(x) \right\}. 
\label{eqn:X}
\end{equation}
Together, $W_i(\G)$ and $C_i(\G)$ measure how far $\G$ is from
satisfying dual equivalence axiom $4$, specifically, how many
connected components of $(V,\sigma,E_{\imt} \cup E_{\imo} \cup E_{i})$
do not appear in Figure~\ref{fig:lambda5}.

\begin{proposition}
  Let $\G$ be a locally Schur positive graph of type $(n,n)$
  satisfying dual equivalence axioms $1,2,3$ and $5$. Then $\G$
  satisfies dual equivalence axiom $4$ if and only if both $W_i(\G)$
  and $C_i(\G)$ are empty for all $1 < i < n$.
  \label{prop:empty}
\end{proposition}

\begin{proof}
  Suppose first that axiom $4$ holds for $\G$. If $w$ has $i$-type W,
  then from Figure~\ref{fig:lambda4} we must have $E_{\imo}(w) =
  E_i(w)$, and so $W_i(\G)$ is empty. If $x$ and $E_{\imt}(x)$ both
  have flat $i$-edges, then from the previous discussion both have
  $i$-type C. Therefore, from Figure~\ref{fig:lambda5}, we must have
  $E_{\imt}E_i(x) = E_iE_{\imt}(x)$, and so $C_i(\G)$ is empty.

  Now suppose that both $W_i(\G)$ and $C_i(\G)$ are empty for all
  $i$. Since $W_i(\G)$ is empty, a vertex $u$ has $i$-type W if and
  only if $E_{\imo}(u) = E_i(u)$. We claim that if $u$ has $i$-type A,
  B or C, then $u$ has a flat $i$-edge. This follows from axiom $3$
  when $u$ does not admit an $\imo$-neighbor, so assume $u$ admits an
  $\imo$-neighbor. Since, by assumption, $u$ does not have $i$-type W,
  $E_{\imo}(u)$ does not admit an $i$-neighbor. If the $i$-edge at $u$
  is not flat, then $E_i(u)$ admits an $\imo$-neighbor. If
  $E_{\imo}E_i(u)$ does not admit an $i$-neighbor, then the degree $4$
  generating function of the connected component of $E_{\imo} \cup
  E_i$ containing $u$ is $Q_{-++} + Q_{+-+} + Q_{-+-} + Q_{+--}$,
  which is not Schur positive. If $E_{\imo}E_i(u)$ admits an
  $i$-neighbor, then it has $i$-type W, and since $E_{\imo}E_i(u) \neq
  u$, this means $E_{\imo}E_i(u) \in W_i(\G)$. Since both scenarios
  lead to a contradiction, $E_i(u)$ does not admit an $\imo$-neighbor,
  and so the $i$-edge at $u$ is indeed flat. Therefore all connected
  components of $E_{\imo} \cup E_i$ appear in
  Figure~\ref{fig:lambda4}.

  As mentioned previously, among types A, B and C, a vertex has
  $i$-type A if and only if it does not admit an $\imt$-neighbor. If
  $u$ has a flat $i$-edge, then $u$ admits an $\imt$-neighbor if and
  only if $E_i(u)$ admits an $\imt$-neighbor. Therefore, if $u$ has
  $i$-type A so does $E_i(u)$, and the component of $E_{\imt} \cup
  E_{\imo} \cup E_i$ containing $u$ appears in
  Figure~\ref{fig:lambda5}. A vertex $u$ has $i$-type C if and only if
  $E_{\imt}(u)$ has $i$-type C, and, since $W_i(\G)$ is empty, both
  have flat $i$-edges. Since $C_i(\G)$ is empty, we must have
  $E_{\imt} E_i(u) = E_i E_{\imt}(u)$, and so the component of
  $E_{\imt} \cup E_{\imo} \cup E_i$ containing $u$ appears in
  Figure~\ref{fig:lambda5}. Finally, note that $i$-type B vertices may
  only appear as in the top row of Figure~\ref{fig:type} since
  connected components of $E_{\imo} \cup E_i$ all appear in
  Figure~\ref{fig:lambda4}. If $u$ has $i$-type B, then $E_i(u)$
  cannot have $i$-type W, A or C by the previous analysis. Therefore
  $E_i(u)$ must have $i$-type B, and once again the component of
  $E_{\imt} \cup E_{\imo} \cup E_i$ containing $u$ appears in
  Figure~\ref{fig:lambda5}.
\end{proof}

We conclude this section with the following result relating $i$-types
W and C on $i$-packages.

\begin{lemma}
  Let $\G$ be a dual equivalence graph of type $(n,N)$ with $i \leq
  n$. If a vertex $w$ of $\G$ has $i$-type W, then no vertex on the
  $i$-package of $w$ has $i$-type C.
  \label{lem:type-C}
\end{lemma}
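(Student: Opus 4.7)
The plan is to show that $i$-type W propagates along every edge of the $i$-package of $w$. Since Definition~\ref{defn:type} partitions vertices into four mutually exclusive $i$-types, this immediately precludes $i$-type C throughout the package.

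First I would extract signature identities from the hypothesis. The type W condition $\sigma(w)_i = -\sigma(E_{\imo}(w))_i$ combined with axiom 3 applied to the $E_{\imo}$-edge at $w$ (across which $\sigma_i$ flips) yields $\sigma(w)_i = -\sigma(w)_{\imo}$, and $w$ admitting an $\imo$-neighbor gives $\sigma(w)_{\imt} = -\sigma(w)_{\imo}$. Because $\G$ has type $(i,N)$, the $i$-package of $w$ uses only edges $E_h$ with $h \leq \imh$, and by axiom 2 every such $E_h$ preserves $\sigma_{\imo}$ and $\sigma_i$. Hence the relation $\sigma(v)_i = -\sigma(v)_{\imo}$ holds uniformly for every $v$ on the $i$-package.

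Next I would induct on the length of a path from $w$ to $v$, reducing to the single-edge claim that if $v = E_h(w)$ for some $h \leq \imh$, then $v$ has $i$-type W. When $h \leq \imh - 1$ the argument is straightforward: axiom 5 yields $E_h E_{\imo} = E_{\imo} E_h$ and axiom 2 also preserves $\sigma_{\imt}$, so $v$ admits the $\imo$-neighbor $E_h(E_{\imo}(w))$, across which $\sigma_i$ still flips, giving $v$ type W.

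The main obstacle is the case $h = \imh$: here $E_{\imh}$ need not commute with $E_{\imo}$, and axiom 3 permits $\sigma_{\imt}$ to flip across $\{w,v\}$ precisely when $\sigma(w)_{\imt} = -\sigma(w)_{\imh}$, a configuration that could a priori endow $v$ with a type C profile. To handle this case I would invoke axiom 4 at index $\imo$---valid because $\imo < i$ ensures $E_{\imh}$, $E_{\imt}$, $E_{\imo}$ all exist in $\G$---which restricts the connected component of $w$ in $(V, \sigma, E_{\imh} \cup E_{\imt} \cup E_{\imo})$ to one of the seven configurations of Figure~\ref{fig:lambda5}. Combining the identity $\sigma(w)_{\imt} = -\sigma(w)_{\imo}$ with the set of neighbors $w$ admits and applying axiom 3 at the $E_{\imh}$-edge $\{w,v\}$, one enumerates the compatible configurations and confirms in each that $v$ again admits an $\imo$-neighbor across which $\sigma_i$ flips. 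This closes the induction, and the finite case check through Figure~\ref{fig:lambda5} is the technical heart of the argument.
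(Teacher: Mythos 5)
Your strategy hinges on the claim that $i$-type~W propagates to every vertex of the $i$-package, and that claim is false. The identity $\sigma(v)_i = -\sigma(v)_{\imo}$ you extract does hold throughout the $i$-package, but it is compatible with types~B and~C as well, so it alone does not preclude type~C. To get type~W at $v$ one also needs $\sigma(v)_{\imt} = -\sigma(v)_{\imo}$ together with the sign-flip across $E_{\imo}$, and an $E_{\imh}$ edge can flip $\sigma_{\imt}$, destroying the $\imo$-neighbor entirely and demoting the vertex to type~B. Concretely, take $i=5$ and $\G = \G_{(3,2),A}$ of type $(5,6)$, with $A$ the cell $(3,3)/(3,2)$ containing $6$. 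The tableau $T$ of shape $(3,3)$ with bottom row $1,3,5$ and top row $2,4,6$ has content reading word $2,1,4,3,6,5$ and $\sigma(T) = (-,+,-,+,-)$; one checks $\sigma(E_4(T))_5 = +1 \neq -1 = \sigma(T)_5$, so $T$ has $5$-type~W. But $E_2(T)$, which lies on the $5$-package of $T$, has content reading word $3,1,4,2,6,5$ and $\sigma(E_2(T)) = (+,-,+,+,-)$; since $\sigma_3 = \sigma_4 = +1$ here, $E_2(T)$ admits no $4$-neighbor at all, and one finds it has $5$-type~B. So your induction cannot close: the enumeration at an $E_{\imh}$ edge will not always return a type~W vertex, and the configurations of Figure~\ref{fig:lambda5} do not force it to.

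The paper's proof rests on a weaker, correct invariant. It first invokes Theorem~\ref{thm:isomorphic} and Lemma~\ref{lem:extend-signs} to reduce to $\G = \G_{\mu,A}$, where the $i$-types have a positional reading: a vertex has $i$-type~W if and only if both $\imt$ and $\ipo$ lie between $\imo$ and $i$ in the reading word, and $i$-type~C if and only if $\imo$ lies between $i$ and $\ipo$. The edges $E_h$ with $h \leq \imh$ never move $\imo$, $i$, or $\ipo$, so the relative order of those three letters is constant on $i$-packages; and since ``$\ipo$ between $\imo$ and $i$'' and ``$\imo$ between $i$ and $\ipo$'' are mutually exclusive orderings of $\{\imo,i,\ipo\}$, type~W at $w$ excludes type~C across the whole $i$-package. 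Notice this works even though the position of $\imt$, and hence type~W itself, is not preserved, which is exactly the distinction your argument overlooks.
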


\begin{proof}
  By Theorem~\ref{thm:isomorphic} and Lemma~\ref{lem:extend-signs}, we
  may assume $\G = \G_{\mu,A}$ for some partition $\mu$ of $i$ and
  some augmenting tableau $A$ containing entries $\ipo,\ldots,N$. Let
  $\lambda$ be the uniquely determined shape of $\mu$ together with
  the cell in $A$ containing $\ipo$. A tableau $T \in \G_{\lambda}$
  has $i$-type W if and only if both $\imt$ and $\ipo$ lie between
  $\imo$ and $i$ in the reading word of $T$. From the proof of
  Theorem~\ref{thm:cover}, a tableau $T \in \G_{\lambda}$ has $i$-type
  C if and only if $\imo$ lies between $i$ and $\ipo$ in the reading
  word of $T$. For $h \leq \imh$, an $E_h$ edge does not change the
  positions of entries greater than $\imt$, and for $h \geq \iph$, an
  $E_h$ edge does not change the positions of entries less than
  $\ipt$. In particular, the positions of $\imo,i,\ipo$ are constant
  on $i$-packages. The result now follows.
\end{proof}

\subsection{Involutions to resolve axiom $4$}
\label{sec:D-transformations}

By Proposition~\ref{prop:empty}, axiom $4$ holds if and only if $W_i$
and $C_i$ are empty. We construct two maps, $\varphi_i^w$ and
$\psi_i^x$, with the goal of reducing the cardinality of these sets.
We begin with the construction of the map $\varphi_i^w$, depicted in
Figure~\ref{fig:phi}, which takes as input an element $w \in
W_i(\G)$. We aim to use $\varphi_i^w$ to redefine $i$-edges so that
$E_{\imo}(w) = E_i(w)$.

\begin{figure}[ht]
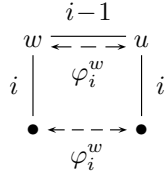

  \begin{displaymath}
    \begin{array}{\cs{8}c}
      \rnode{C}{w} & \rnode{D}{u}  \\[5ex] 
      \rnode{B}{\B} & \rnode{E}{\B}
    \end{array}
    \psset{nodesep=3pt,linewidth=.1ex}
    \ncline[offset=2pt] {C}{D} \naput{\imo}
    \ncline {B}{C} \naput{i}
    \ncline {D}{E} \naput{i}
    \ncline[offset=-2pt,linestyle=dashed] {<->}{C}{D} \nbput{\varphi_i^w}
    \ncline[linestyle=dashed] {<->}{B}{E} \nbput{\varphi_i^w}
  \end{displaymath}
  \caption{\label{fig:phi} An illustration of the involution
    $\varphi_i^w$, with $w \in W_i(\G)$ and $u = E_{\imo}(w)$.}
\end{figure}

How $\varphi_i^w$ acts on the connected component of $E_{\imo} \cup
E_i$ is straightforward. The difficulty lies in extending the map to
the $i$-package of $w$ as is necessary to maintain axiom $5$. The
following result characterizes when such an extension is possible.

\begin{lemma}
  Let $\G$ be a signed, colored graph of type $(n,N)$ satisfying dual
  equivalence axioms $1,2,3$ and $5$, and suppose that the
  $(\imt,N)$-restriction of $\G$ is a dual equivalence graph. Let $w$
  be a vertex of $i$-type W such that every vertex on the
  $\imo$-package of $w$ has a flat $\imo$-edge. Then there exists an
  isomorphism between the $i$-packages of $w$ and $E_{\imo}(w)$.
\label{lem:phi-compatible}
\end{lemma}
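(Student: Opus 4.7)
The plan is to construct an explicit signed, colored graph isomorphism $\phi$ from the $i$-package of $w$ to the $i$-package of $E_{\imo}(w)$, sending $w$ to $E_{\imo}(w)$.

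My first step is to check that the basepoints have matching restricted signatures on the tracking set $\{1,\ldots,\imh\}\cup\{\ipt,\ldots,N\!-\!1\}$ used to define signatures on $i$-packages. Axiom~2 applied to $\{w,E_{\imo}(w)\}\in E_{\imo}$ gives agreement at every index $<\imh$ and $>i$, and the hypothesis at $v=w$ extends the agreement to the index $\imh$, covering the entire tracking set.

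By axiom~5, every edge of color $h\le\imh$ commutes with every edge of color $h'\ge\iph$ (the color distance is at least $6$), so the $i$-package canonically decomposes as an internal product of its \emph{lower part}, generated by edges of colors in $\{2,\ldots,\imh\}$, and its \emph{upper part}, generated by edges of colors in $\{\iph,\ldots,\nmo\}$. On the upper part I take $\phi:=E_{\imo}$: upper edges commute with $E_{\imo}$ by axiom~5 (color distance $\ge 4$) and leave $\sigma_{\imt},\sigma_{\imo}$ unchanged by axiom~2, so every upper vertex admits $E_{\imo}$, giving a signed iso onto the upper part of the $i$-package of $E_{\imo}(w)$.

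On the lower part, $E_{\imh}$ fails to commute with $E_{\imo}$, so I exploit the standing hypothesis that the $(\imt,N)$-restriction of $\G$ is a dual equivalence graph. The lower parts of the two $i$-packages are connected components of this restriction, hence by Theorem~\ref{thm:isomorphic} and Lemma~\ref{lem:extend-signs} they are isomorphic to augmented graphs $\G_{\lambda,A}$ and $\G_{\mu,B}$ for some partitions $\lambda,\mu$ of size $\imh$ and augmenting tableaux $A,B$ with entries $\imt,\imo,\ldots,N$. The principal obstacle is to prove $\lambda=\mu$ and to exhibit a signed iso $\G_{\lambda,A}\to\G_{\mu,B}$ sending $w$ to $E_{\imo}(w)$ that preserves the tracking-set signatures.

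My plan for the obstacle is as follows. Every color of the $\imo$-package is at distance $\ge 3$ from $\imo$, so by axiom~5, $E_{\imo}$ commutes with every $\imo$-package edge; moreover $\sigma_{\imt},\sigma_{\imo}$ are constant on the $\imo$-package, so $E_{\imo}$ is an honest signed iso from the $\imo$-package of $w$ to that of $E_{\imo}(w)$, and by the hypothesis it also preserves $\sigma_{\imh}$. Restricting attention to colors $\{2,\ldots,i\!-\!4\}$ (the intersection of $\imo$-package colors with lower-part colors) yields a signature-preserving correspondence on the tracking positions $\{1,\ldots,\imh\}$ between the $E_{\imh}$-free sublobes of the two lower parts. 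Extending inductively by $E_{\imh}$ steps---whose applicability and whose effect on $\{1,\ldots,\imh\}$-signatures depend only on signatures at positions $\le\imh$---one sees that the sets of achievable $\{1,\ldots,\imh\}$-signatures in the two lower parts coincide. By Proposition~\ref{prop:noauto-noniso}, $\lambda$ (resp.\ $\mu$) is characterized as the dominance-maximal partition among these signatures, so $\lambda=\mu$, and the uniqueness clause of Lemma~\ref{lem:extend-signs} then yields the required signed iso on the lower part. Gluing with the upper-part iso via the product decomposition completes the construction of $\phi$.
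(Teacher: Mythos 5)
Your strategy matches the paper's: decompose the $i$-package into its lower half (colors at most $\imh$) and upper half (colors at least $\iph$), take $E_{\imo}$ on the upper half, and handle the lower half via the dual-equivalence-graph structure of the $(\imt,N)$-restriction; the observation that $E_{\imo}$ already gives an isomorphism of the $(\imh,\imt)$-restrictions of the $i$-packages is precisely the paper's first step. The gap is in your extension across $E_{\imh}$. The claim that the effect of an $E_{\imh}$ step on $\{1,\ldots,\imh\}$-signatures ``depend[s] only on signatures at positions $\le\imh$'' is false: axiom~2 leaves $\sigma_{\imh-2}$ unconstrained under $E_{\imh}$, and whether it flips is governed by the $\imh$-type of the vertex, a structural property of its $E_{\imh-2}\cup E_{\imh-1}\cup E_{\imh}$ component, not a function of the signature vector. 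Concretely, in $\G_{(3,2,1)}$ there are two standard tableaux with signature $-+-+-$, one of $5$-type C and one of $5$-type W; applying $E_5$ preserves $\sigma_3$ for the first and flips it for the second. So ``the sets of achievable signatures coincide'' does not follow from signature bookkeeping, and the inductive extension cannot be carried out as described.

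The missing structural input is Theorem~\ref{thm:cover}, which is exactly what the paper invokes at this point: the isomorphisms $f_w,f_u$ of the $(\imh,\imt)$-restrictions onto the \emph{same} augmented graph $\G_{\mu,A}$ extend consistently across $E_{\imh}$ to morphisms onto $\G_{\lambda}$ for a single $\lambda$, and their composite is the required lower-half isomorphism carrying $w$ to $E_{\imo}(w)$. Your appeal to Proposition~\ref{prop:noauto-noniso} (once $\lambda=\mu$ is granted) produces only uniqueness, not existence or compatibility with the basepoints and the upper half. A minor slip: the lower half carries edge colors $2,\ldots,\imh$, so by Lemma~\ref{lem:extend-signs} and Theorem~\ref{thm:isomorphic} its component in the $(\imt,N)$-restriction is isomorphic to $\G_{\lambda,A}$ with $|\lambda|=\imt$ (not $\imh$) and $A$ having entries $\imo,\ldots,N$.
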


\begin{proof}
  If $E_{\imo}(w) = E_i(w)$, then the result follows immediately from
  axioms $2$ and $5$. Suppose then that $w \in W_i(\G)$, and set $u =
  E_{\imo}(w)$. Recall that $E_{\imo}$ may be regarded as an
  involution on vertices that admit an $\imo$-neighbor. Regarded as
  such, by axioms $1,2$ and $5$, $E_{\imo}$ gives an involution
  between $\imo$-packages of $w$ and $u$. Therefore we need only show
  that this isomorphism restricted to $E_{2} \cup \cdots \cup
  E_{\imf}$ extends to an isomorphism for $E_{2} \cup \cdots \cup
  E_{\imh}$, since the isomorphism for $E_{\iph} \cup \cdots \cup
  E_{\nmo}$ is already established.

  By the assumption that all vertices $v$ on the $\imo$-package of $w$
  have flat $\imo$-edges, we know $\sigma(v)_{\imh} =
  \sigma(E_{\imo}(v))_{\imh}$. Therefore $E_{\imo}$ gives an
  involution between the $(\imh,\imt)$-restrictions of the
  $i$-packages of $w$ and $u$. We extend this isomorphism as
  illustrated in Figure~\ref{fig:compatible}.

  \begin{figure}[ht]
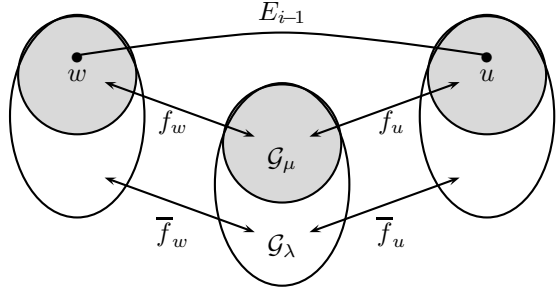

    \begin{center}
      \psset{xunit=3ex}
      \psset{yunit=3ex}
      \pspicture(0,-3)(14,5)
      \pscircle[fillstyle=solid,fillcolor=lightgray](1,3){.8}
      \psellipse(1,1.8)(2,3)
      \pscircle[fillstyle=solid,fillcolor=lightgray](7,1){.8}
      \psellipse(7,-.2)(2,3)
      \pscircle[fillstyle=solid,fillcolor=lightgray](13,3){.8}
      \psellipse(13,1.8)(2,3)
      \rput(1,3.5){$\bullet$}
      \rput(1,3){$w$}
      \rput(13,3.5){$\bullet$}
      \rput(13,3){$u$}
      \rput(7,.5){$\G_{\mu}$}
      \rput(7,-2){$\G_{\lambda}$}
      \pscurve(1.1,3.6)(4,4)(7,4.25)(10,4)(12.9,3.6)
      \rput(7,4.8){$E_{\imo}$}
      \psline{<->}(1.8,2.8)(6.2,1.2)
      \rput(3.8,1.6){$f_{w}$}
      \psline{<->}(7.8,1.2)(12.2,2.8)
      \rput(10.2,1.6){$f_{u}$}
      \psline{<->}(1.8,0)(6.2,-1.6)
      \rput(3.8,-1.6){$\overline{f}_{w}$}
      \psline{<->}(7.8,-1.6)(12.2,0)
      \rput(10.2,-1.6){$\overline{f}_{u}$}
      \endpspicture
      \caption{\label{fig:compatible}Extending the isomorphism of
        $\imo$-packages to an isomorphism of $i$-packages}
    \end{center}
  \end{figure}

  By Lemma~\ref{lem:extend-signs} and the hypothesis that the
  $(\imt,N)$-restriction of $\G$ is a dual equivalence graph, there
  exist isomorphisms, say $f_w$ and $f_u$, from the
  $(\imh,\imt)$-restrictions of the $i$-packages of $w$ and $u$ to the
  augmented dual equivalence graph $\G_{\mu,A}$ for a unique partition
  $\mu$ of $\imh$ and a unique single cell augmenting tableau $A$.  By
  Theorem~\ref{thm:cover}, the two isomorphism extend consistently
  across $E_{\imh}$ edges to give isomorphisms $\overline{f}_w$ and
  $\overline{f}_u$ from the $(\imt,\imt)$-restrictions of the
  connected components containing $w$ and $u$, respectively, to
  $\G_{\lambda}$ where $\lambda$ is the shape of $\mu$ augmented by
  $A$. In particular, the composition of these isomorphisms gives an
  isomorphism between the $(\imt,\imt)$-restrictions of the
  $i$-packages of $w$ and $u$.
\end{proof}

The hypotheses of Lemma~\ref{lem:phi-compatible} cannot be relaxed, so
these vertices are of particular importance. Therefore we define the
set $W_i^0(\G) \subseteq W_i(\G)$ by
\begin{equation}
  W_i^0(\G) = \{ w \in W_i(\G) \ | \ \mbox{every vertex on the
    $\imo$-package of $w$ has a flat $\imo$-edge} \}.
  \label{eqn:W-0}
\end{equation}

\begin{remark}
  If every connected component of $E_{\imt} \cup E_{\imo}$ appears in
  Figure~\ref{fig:lambda4}, then the $\imo$-edge at $v$ is not flat if
  and only if $E_{\imo}(v) = E_{\imt}(v)$. In this case, by axiom $2$,
  $\sigma(v)_{i} = \sigma(E_{\imt}(v))_{i} = \sigma(E_{\imo}(v))_{i}$,
  so $v$ does not have $i$-type W. Since $\sigma_i$ is constant on
  $i$-packages, no vertex on the $i$-package of $v$ has $i$-type
  W. Therefore, $W_i^0(\G) = W_i(\G)$ whenever the components of
  $E_{\imt} \cup E_{\imo}$ on the $i$-package of $w$ all appear in
  Figure~\ref{fig:lambda4}.
  \label{rmk:phi-lambda4}
\end{remark}

We use the isomorphism of Lemma~\ref{lem:phi-compatible} to define an
involution $\varphi^{w}_i$ on all vertices admitting an $i$-neighbor
as follows.

\begin{definition}
  For $w \in W_i^0(\G)$, let $u = E_{\imo}(w)$, and let $\phi$ the
  isomorphism of Lemma~\ref{lem:phi-compatible}. Define the involution
  $\varphi^{w}_i$ on all vertices admitting an $i$-neighbor by
  \begin{equation}
    \varphi^w_i(v) = \left\{ \begin{array}{rl}
        \phi(v) & \mbox{if $v$ lies on the $i$-package of $w$ or
          $u$,} \\ [1ex]
        E_{i} \phi E_{i}(v) & \mbox{if $E_{i}(v)$ lies on the
          $i$-package of $w$ or $u$,} \\ [1ex]
        E_i(v) & \mbox{otherwise.}
      \end{array} \right.
    \label{eqn:phi}
  \end{equation}
  Define $E_i'$ to be the set of pairs $\{v,\varphi_i^w(v)\}$ for each
  $v$ admitting an $i$-neighbor. Define a signed, colored graph
  $\varphi^w_i(\G)$ of type $(n,N)$ by
  \begin{equation}
    \varphi^w_i(\G) = (V, \sigma, E_2 \cup\cdots\cup E_{\imo} \cup E_i'
    \cup E_{\ipo} \cup\cdots\cup E_{\nmo}).
    \label{eqn:G'}
  \end{equation}
  \label{defn:phi}
\end{definition}

Since the isomorphisms from Lemma~\ref{lem:phi-compatible} for $w$ and
$u = E_{\imo}(w)$ are inverse to one another, we abuse notation in
Definition~\ref{defn:phi} by letting $\phi$ denote either. Note as
well that $\varphi_i^w = \varphi_i^u$.

The goal with $\varphi_i^w$ is to reduce the cardinality of
$W_i(\G)$. The following result shows that this happens provided the
$(i,N)$-restriction of $\G$ satisfies dual equivalence graph axiom
$4$.

\begin{proposition}
  Let $\G$ be a locally Schur positive graph of type $(n,N)$
  satisfying dual equivalence axioms $1,2,3$ and $5$, and suppose that
  the $(\imt,N)$-restriction of $\G$ is a dual equivalence graph and
  that the $(i,N)$-restriction of $\G$ satisfies dual equivalence
  axiom $4$. Then $W_i^0(\G) = W_i(\G)$, and for $w \in W_i(\G)$,
  $W_i(\varphi_i^w(\G))$ is a proper subset of $W_i(\G)$.
  \label{prop:phi-terminate}
\end{proposition}

\begin{proof}
  By Remark~\ref{rmk:phi-lambda4}, if connected components of
  $E_{\imt} \cup E_{\imo}$ all appear in Figure~\ref{fig:lambda4},
  then all vertices on the $\imo$-package of a vertex with $i$-type W
  have flat $\imo$-edges. Since the $(i,N)$-restriction of $\G$
  satisfies dual equivalence graph axiom $4$, this is the case, so
  $W_i^0(\G) = W_i(\G)$. As the $i$-type of a vertex is determined by
  the connected component of $E_{\imt} \cup E_{\imo}$ containing it,
  the $i$-type of a vertex is the same in $\G$ and
  $\varphi_i^w(\G)$. Therefore, to show that $v \not\in W_i(\G)$
  implies $v \not\in W_i(\varphi_i^w(\G))$, we must show that for $v$
  with $i$-type W such that $E_i(v) = E_{\imo}(v)$, we have
  $\varphi_i^w(v) = E_{\imo}(v)$ as well.

  It suffices to consider $v$ on the $i$-packages of $w$ and
  $E_{i}(w)$. We claim that for any $v$ on the $i$-package of $w$,
  $E_{\imo}(v) \neq E_i(v)$. By axiom $5$, both $E_{\imo}$ and $E_i$
  commute with $E_h$ for $h \leq \imf$ and $h \geq
  \iph$. Therefore, if the claim holds for some vertex $v$, then it
  holds for any vertex connected to $v$ by edges in $E_2 \cup \cdots
  \cup E_{\imf} \cup E_{\iph} \cup \cdots \cup E_{\nmo}$. By axiom $6$,
  it suffices to show the claim for $v = E_{\imh}(w)$ since any vertex
  on the $i$-package of $w$ can be reached by crossing at most one
  $E_{\imh}$ edge. 

  Let $v = E_{\imh}(w)$ and suppose that $E_{\imo}(v) = E_i(v)$. We
  claim that $v$ and $w$ must have $\imo$-type C. Since both admit an
  $\imh$-neighbor, they cannot have $\imo$-type A. Since $E_{\imh}
  \cup E_{\imt} \cup E_{\imo}$ satisfies axiom $4$, any vertex with
  $\imo$-type W has a double edge between $E_{\imt}$ and
  $E_{\imo}$. By axiom $2$, $E_{\imt}$ edges preserve $\sigma_i$, so
  such a vertex cannot have $i$-type W. Therefore neither $w$ nor $v$
  has $\imo$-type W. Since both $w$ and $v$ admit an $\imo$-neighbor,
  by axiom $3$ exactly one admits an $\imt$-neighbor, and so neither
  can have $\imo$-type B. All that remains must be the case, so both
  have $\imo$-type C as claimed. By axiom $4$, this means $E_{\imo}(v)
  = E_{\imh}E_{\imo}(w)$. Using this together with axiom $5$, we have
  $E_{\imh}E_i(w) = E_iE_{\imh}(w) = E_i(v) = E_{\imo}(v) =
  E_{\imh}E_{\imo}(w)$. By axiom $1$, this implies $E_{i}(w) =
  E_{\imo}(w)$, contradicting the assumption that $w \in
  W_i(\G)$. Thus for any $v$ on the $i$-package of $w$, $E_{\imo}(v)
  \neq E_i(v)$. By axiom $1$, the same now holds for vertices on the
  $i$-package of $E_i(w)$. Therefore $W_i(\varphi_i^w(\G))$ is indeed
  a proper subset of $W_i(\G)$.
\end{proof}

The second transformation, $\psi_i^x$, depicted in
Figure~\ref{fig:psi}, takes as input an element $x \in
C_i(\G)$. Similar to $\varphi_i^w$, the aim with $\psi_i^x$ is to
redefine $i$-edges so that $E_{\imt} E_i(x) = E_i E_{\imt}(x)$. 

\begin{figure}[ht]
  \begin{displaymath}
    \begin{array}{\cs{8}\cs{8}c}
      & \rnode{O1}{\B} & \\[4ex] 
      \rnode{a1}{u}  & \rnode{b1}{x}  & \rnode{c1}{\B} \\[5ex] 
      \rnode{e1}{\B} & \rnode{f1}{\B} & \\[5ex] 
      \rnode{h1}{\B} & \rnode{i1}{\B} & 
    \end{array}
    \hspace{4em}
    \begin{array}{\cs{8}\cs{8}\cs{8}c}
      & & \rnode{O}{\B} & \\[4ex] 
      \rnode{T}{\B} & \rnode{a2}{\B} & \rnode{b2}{x} & \rnode{c2}{\B} \\[5ex] 
      \rnode{B}{u} & \rnode{e2}{\B} & \rnode{f2}{\B} & \\[5ex] 
      \rnode{g2}{\B} & \rnode{h2}{\B} & \rnode{i2}{\B} & 
    \end{array}
    \psset{nodesep=3pt,linewidth=.1ex}
    \ncline {a1}{O1} \naput{\imo}
    \ncline {a1}{b1} \naput{i}
    \ncline {a1}{e1} \nbput{\imt}
    \ncline {b1}{f1} \naput{\imt}
    \ncline {f1}{c1} \nbput{\imo}
    \ncline {e1}{h1} \nbput{i}
    \ncline {f1}{i1} \naput{i}
    \ncline[linestyle=dashed] {<->}{e1}{f1} \nbput{\psi_i^x}
    \ncline[linestyle=dashed] {<->}{h1}{i1} \nbput{\psi_i^x}
    \ncline {O}{a2} \nbput{\imo}
    \ncline {T}{a2} \naput{\imt}
    \ncline {T}{B} \nbput{\imo}
    \ncline {B}{e2} \nbput{\imt}
    \ncline {a2}{b2} \naput{i}
    \ncline {b2}{f2} \naput{\imt}
    \ncline {f2}{c2} \nbput{\imo}
    \ncline {B}{g2} \nbput{i}
    \ncline {e2}{h2} \nbput{i}
    \ncline {f2}{i2} \naput{i}
    \ncline[linestyle=dashed] {<->}{e2}{f2} \nbput{\psi_i^x}
    \ncline[linestyle=dashed] {<->}{h2}{i2} \nbput{\psi_i^x}
  \end{displaymath}
  \caption{\label{fig:psi} An illustration of $\psi_i^x$ where $x \in
    C_i(\G)$ and $u = (E_{\imo}E_{\imt})^m E_{i}(x)$ does not have
    $\imo$-type W.}
\end{figure}

The definition of $\psi_i^x$ on the connected component of $E_{\imt}
\cup E_i$ containing $x$ is straightforward provided neither $x$ nor
$E_i(x)$ has $\imo$-type W. In general, $\psi_i^x$ will be defined
whenever some vertex on the connected component of $E_{\imt} \cup
E_{\imo}$ containing $E_i(x)$ does not have $\imo$-type W.  As before,
the first step in defining the transformation is to show that it can
be extended to $i$-packages.

\begin{lemma}
  Let $\G$ be a signed, colored graph of type $(n,N)$ satisfying dual
  equivalence axioms $1,2,3$ and $5$, and suppose that the
  $(\imt,N)$-restriction of $\G$ is a dual equivalence graph.  Let $x$
  not admit an $\imo$-neighbor but have a flat $i$-edge such that
  neither $x$ nor $\left( E_{\imo} E_{\imt} \right)^{m} E_i(x)$ has
  $\imo$-type W for some $m \geq 0$, and suppose all vertices between
  $x$ and $\left( E_{\imo} E_{\imt} \right)^{m} E_i(x)$ have flat
  $\imt$-edges throughout their $\imt$-packages. Then the $i$-package
  of $E_{\imt}(x)$ is isomorphic to the $i$-package of $E_{\imt}\left(
  E_{\imo} E_{\imt} \right)^{m} E_i(x)$.
\label{lem:psi-compatible}
\end{lemma}

\begin{proof}
  Let $u = \left( E_{\imo} E_{\imt} \right)^{m} E_i(x)$. By axioms
  $1,2$ and $5$, $E_{\imt}, E_{\imo}, E_i$ all commute with $E_h$ for
  $h \geq \iph$, so the restriction of the $i$-package of any vertex
  on the connected component of $E_{\imt} \cup E_{\imo} \cup E_i$
  containing $x$ to $E_{\iph} \cup \cdots \cup E_{\nmo}$ are
  isomorphic.  Therefore we focus our attention on extending the
  restriction to $E_{2} \cup \cdots \cup E_{\imh}$.

  Since all $E_{\imt}$ edges between $E_i(x)$ and $u$ are flat along
  their $\imt$-packages, Lemma~\ref{lem:phi-compatible} applies to
  each. Therefore, since $E_{\imo}$ always gives an isomorphism of
  $\imo$-packages, the $\imo$-package of $E_i(x)$ is isomorphic to the
  $\imo$-package of $u$. Further, each $E_{\imt}$ or $E_{\imo}$ edge
  changes $\sigma_j$ for $j = \imh,\imt,\imo$, and so $\sigma(u)_{j} =
  \sigma(E_i(x))_j$ for $j \leq \imo$ and $j \geq \ipo$. By axiom $2$,
  the $E_{\imt}$ edges preserve $\sigma_i$. Therefore, by axiom $1$,
  $E_{\imt} E_i(x)$ does not admit an $i$-neighbor, so neither
  $E_{\imt} E_i(x)$ nor $E_{\imo} E_{\imt} E_i(x)$ has $i$-type
  W. Continuing the argument along to $u$, no vertex of the form
  $E_{\imt} \left( E_{\imo} E_{\imt} \right)^{k} E_i(x)$ admits an
  $i$-neighbor for $0 \leq k < m$, and so none of the vertices after
  $E_i(x)$ can have $i$-type W. In particular, each $E_{\imo}$ edge
  from $E_i(x)$ to $u$ preserves $\sigma_i$ as well, and so $\sigma(u)
  = \sigma(E_i(x))$.
  
  Therefore we have an isomorphism between the
  $(\imh,\imt)$-restrictions of the $i$-packages of $E_i(x)$ and
  $u$. By the same argument used in the proof of
  Lemma~\ref{lem:phi-compatible}, we invoke
  Lemma~\ref{lem:extend-signs} and the hypothesis that the
  $(\imt,N)$-restriction of $\G$ is a dual equivalence graph to extend
  this to an isomorphism between the $i$-packages of $E_i(x)$ and $u$
  and $\sigma(u) = \sigma(E_i(x))$. Regarding $E_i$ as an isomorphism
  of $i$-packages, it follows that $x$ and $u$ also have isomorphic
  $i$-packages. Thus, by Theorem~\ref{thm:isomorphic} and
  Lemma~\ref{lem:extend-signs}, the connected components of the
  $(\imt,\imo)$-restriction of $\G$ containing $x$ and $u$ are both
  isomorphic to $\G_{\mu,A}$ for the same partition $\mu$ of $\imt$
  and the same augmenting tableau $A$ consisting of a single cell
  containing $\imo$. Denote these isomorphisms by $f_x$ and $f_u$,
  respectively, and let $\lambda$ be the shape of $\mu$ augmented by
  $A$. 

  Since the $(\imo,\imo)$-restriction of $\G$ satisfies the hypotheses
  of Theorem~\ref{thm:cover}, the isomorphisms $f_x$ and $f_u$ extend
  to morphisms $\overline{f}_x$ and $\overline{f}_u$ from the
  connected components of the $(\imo,\imo)$-restriction of $\G$
  containing $x$ and $u$ to $\G_{\lambda}$. The picture is very
  similar to Figure~\ref{fig:compatible}, though now the top map is
  $E_i$ and the extended maps are surjective though not necessarily
  injective. Despite the lack of injectivity, the uniqueness of
  $\lambda$ and the extended maps ensures that the
  $(\imt,\imo)$-restriction of $\G_{\lambda}$ containing $E_{\imt}(x)$
  is isomorphic to the $(\imt,\imo)$-restriction of $\G_{\lambda}$
  containing $E_{\imt}(u)$, thereby establishing the desired
  isomorphism of $i$-packages.
\end{proof}

As with Lemma~\ref{lem:phi-compatible}, the hypotheses of
Lemma~\ref{lem:psi-compatible} cannot be relaxed, so these vertices
are of particular importance. Therefore we define the set $C_i^0(\G)
\subseteq C_i(\G)$ by
\begin{equation}
  C_i^0(\G) = \{ x \in C_i(\G) \ | \ \begin{array}{ll}
    \mbox{neither $x$ nor $\left( E_{\imo} E_{\imt} \right)^{m}
      E_i(x)$ has $\imo$-type W for some $m \geq 0$}\\
    \mbox{all vertices between $x$ and $\left( E_{\imo}
        E_{\imt} \right)^{m} E_i(x)$ have flat $\imt$-edges}
  \end{array} \}.
  \label{eqn:X-0}
\end{equation}

\begin{remark}
  If every connected component of $E_{\imh} \cup E_{\imt}$ appears in
  Figure~\ref{fig:lambda4}, then if the $\imt$-edge at $x$ is not
  flat, by axiom $4$, $E_{\imt}(x) = E_{\imh}(x)$. By axiom $2$, this
  ensures that $E_{\imt}(x)$ does not have $\imo$-type W, so we are in
  the case where $m=0$. By axiom $5$, $E_{\imh}E_i(x) = E_iE_{\imh}(x)
  = E_iE_{\imt}(x)$. For $x \in C_i(\G)$, both $E_i(x)$ and
  $E_{\imh}E_i(x)$ admit $\imt$-neighbors, so by axiom $4$ we have
  $E_{\imt}(E_i(x)) = E_{\imh}(E_i(x))$. Therefore $E_{\imt}E_i(x) =
  E_iE_{\imt}(x)$, contradicting the assumption that $x \in
  C_i(\G)$. Therefore, $C_i^0(\G) = C_i(\G)$ whenever every connected
  component of $E_{\imh} \cup E_{\imt}$ appears in
  Figure~\ref{fig:lambda4}. 
  \label{rmk:psi-lambda4}
\end{remark}

As was the case with $w \in W^0_i(\G)$, given $x \in C^0_i(\G)$, we
use the isomorphism of Lemma~\ref{lem:psi-compatible} to define an
involution $\psi_i^x$ on all vertices admitting an $i$-neighbor.

\begin{definition}
  For $x \in C^0_i(\G)$, let $u = \left( E_{\imo} E_{\imt} \right)^{m}
  E_i(x)$ be the first vertex on the connected component of $E_{\imt}
  \cup E_{\imo}$ containing $E_i(x)$ not having $\imo$-type W. Let
  $\phi$ denote the isomorphism of
  Lemma~\ref{lem:psi-compatible}. Define the involution $\psi_i^x$ on
  all vertices admitting an $i$-neighbor as follows.
  \begin{equation}
    \psi_i^x(v) = \left\{ \begin{array}{rl}
        \phi(v) & \mbox{if $v$ lies on the $i$-package of
          $E_{\imt}(x)$ or $E_{\imt} (u)$,} \\[1ex]  
        E_i \phi E_i(v) & \mbox{if $E_i(v)$ lies on the
          $i$-package of $E_{\imt}(x)$ or $E_{\imt} (u)$,}\\[1ex]
        E_i(v) & \mbox{otherwise.}
      \end{array} \right.
    \label{eqn:psi}
  \end{equation}
  Define $E'_i$ to be the set of pairs $\{v,\psi_i^x(v)\}$ for each
  $v$ admitting an $i$-neighbor. Define a signed, colored graph
  $\psi_i^x(\G)$ of type $(n,N)$ by
  \begin{equation}
    \psi_i^x(\G) = (V, \sigma, E_2 \cup\cdots\cup E_{\imo} \cup E'_i
    \cup E_{\ipo} \cup\cdots\cup E_{\nmo}). 
  \end{equation}
\label{defn:psi}
\end{definition}

We again abuse notation by letting $\phi$ denote both the isomorphism
from the $i$-package of $x$ to the $i$-package of $u$ and its
inverse. Note that $\psi_i^x = \psi_i^u$ when $m = 0$.

The goal with $\psi_i^x$ is to reduce $C_i(\G)$ without increasing
$W_i(\G)$. The following result shows that this happens provided the
$(i,N)$-restriction of $\G$ is a dual equivalence graph, and more
generally as well.

\begin{proposition}
  Let $\G$ be a locally Schur positive graph of type $(n,N)$
  satisfying dual equivalence axioms $1,2,3$ and $5$, and suppose that
  $(\imt,N)$-restriction of $\G$ is a dual equivalence graph and that
  the $(i,N)$-restriction of $\G$ satisfies dual equivalence axiom
  $4$. Then $C_i^0(\G) = C_i(\G)$, and for $x \in C_i(\G)$ such that
  $E_i(x) \in C_i(\G)$, $C_i(\psi_i^x(\G))$ is a proper subset of
  $C_i(\G)$ and $W_i(\psi_i^x(\G)) = W_i(\G)$.
  \label{prop:psi-terminate}
\end{proposition}

\begin{proof}
  By Remark~\ref{rmk:psi-lambda4}, since connected components of
  $E_{\imh} \cup E_{\imt}$ all appear in Figure~\ref{fig:lambda4},
  $C_i^0(\G) = C_i(\G)$. The $i$-type of a vertex is determined by the
  connected component of $E_{\imt} \cup E_{\imo}$ containing it, so
  the $i$-type of a vertex remains unchanged by $\psi_i^w$. By the
  previous discussion, no $E_{\imh}$ edge on the $i$-package of $x$ or
  $E_i(x)$ is part of a double edge with $E_{\imt}$, so whether or not
  the vertex admits an $\imo$-neighbor is preserved. Therefore to show
  that $v \not\in C_i(\G)$ implies $v \not\in C_i(\psi_i^x(\G))$, we
  must show that if $E_{\imt}E_i(v) = E_iE_{\imt}(v)$, then
  $E_{\imt}\psi_i^x(v) = \psi_i^xE_{\imt}(v)$.

  Again, it suffices to consider $v$ on the $i$-packages of $x$ and
  $E_{i}(x)$. We claim that for any $v$ on the $i$-package of $x$, if
  $E_{\imt}E_i(v) = E_iE_{\imt}(v)$, then $E_{\imt}\psi_i^x(v) =
  \psi_i^xE_{\imt}(v)$.  By axiom $5$, $E_{\imt}$ and $E_i$ all
  commute with $E_h$ for $h \leq i\!-\!5$ and $h \geq \iph$. Therefore
  if the claim holds for some vertex $v$, the it holds for every
  vertex on the connected component of $E_2 \cup \cdots \cup
  E_{i\!-\!5} \cup E_{\iph} \cup \cdots \cup E_{\nmo}$ containing
  $v$. Since $\sigma(v)_{\imf} = \sigma(E_{\imt}(v))_{\imf}$ for $v =
  x, E_i(x)$, by axiom $3$ neither $E_{\imh}(x)$ nor $E_{\imh}E_i(x)$
  admits an $\imt$-neighbor, so neither can have $i$-type
  C. Therefore, by axiom $6$, it suffices to show the claim for $v =
  E_{\imf}(x)$.

  \begin{figure}[ht]
    \begin{displaymath}
      \begin{array}{\cs{8}\cs{8}\cs{8}\cs{8}\cs{8}\cs{8}\cs{8}c}
        \rnode{a1}{\B} & \rnode{b1}{\B} & \rnode{c1}{\B} & \rnode{d1}{u}  &
        \rnode{e1}{x}  & \rnode{f1}{\B} & \rnode{g1}{\B} & \rnode{h1}{\B} \\[5ex]
        \rnode{a2}{\B} & \rnode{b2}{\B} & \rnode{c2}{\B} & \rnode{d2}{\B} &
        \rnode{e2}{\B} & \rnode{f2}{\B} & \rnode{g2}{\B} & \rnode{h2}{\B} \\[5ex]
        & \rnode{b3}{\B} & \rnode{c3}{\B} & & & \rnode{f3}{\B} & \rnode{g3}{\B} & 
      \end{array}
      \psset{nodesep=3pt,linewidth=.1ex}
      \everypsbox{\scriptstyle}
      \ncline {a1}{b1} \naput{\imt}
      \ncline {b1}{c1} \naput{i}
      \ncline {c1}{d1} \naput{\imt}
      \ncline {d1}{e1} \naput{i}
      \ncline {e1}{f1} \naput{\imt}
      \ncline {f1}{g1} \naput{i}
      \ncline {g1}{h1} \naput{\imt}
      \ncline {b1}{b2} \nbput{\imf}
      \ncline {c1}{c2} \nbput{\imf}
      \ncline[offset=2pt] {a1}{a2} \nbput{\imf}
      \ncline[offset=2pt] {a2}{a1} \nbput{\imh}
      \ncline[offset=2pt] {d1}{d2} \nbput{\imf}
      \ncline[offset=2pt] {d2}{d1} \nbput{\imh}
      \ncline[offset=2pt] {e1}{e2} \nbput{\imf}
      \ncline[offset=2pt] {e2}{e1} \nbput{\imh}
      \ncline[offset=2pt] {h1}{h2} \nbput{\imf}
      \ncline[offset=2pt] {h2}{h1} \nbput{\imh}
      \ncline {f1}{f2} \naput{\imf}
      \ncline {g1}{g2} \naput{\imf}
      \ncline {b2}{c2} \naput{i}
      \ncline {d2}{e2} \naput{i}
      \ncline {f2}{g2} \naput{i}
      \ncline[offset=2pt] {b2}{b3} \nbput{\imh}
      \ncline[offset=2pt] {b3}{b2} \nbput{\imt}
      \ncline[offset=2pt] {c2}{c3} \nbput{\imh}
      \ncline[offset=2pt] {c3}{c2} \nbput{\imt}
      \ncline {b3}{c3} \naput{i}
      \ncline {f3}{g3} \naput{i}
      \ncline[offset=2pt] {f2}{f3} \nbput{\imh}
      \ncline[offset=2pt] {f3}{f2} \nbput{\imt}
      \ncline[offset=2pt] {g2}{g3} \nbput{\imh}
      \ncline[offset=2pt] {g3}{g2} \nbput{\imt}
    \end{displaymath}
    \caption{\label{fig:psi-X} Components of $E_{\imf} \cup
      E_{\imh} \cup E_{\imt}$ when $x$ has $\imt$-type B.}
  \end{figure}
  
  Consider the $\imt$-type of $x$ and $E_i(x)$, which must be the same
  since, by axiom $5$, $E_i$ commutes with both $E_{\imf}$ and
  $E_{\imh}$. From before, both $x$ and $E_i(x)$ have flat
  $\imt$-edge, and so, by axiom $4$, they cannot have $\imt$-type
  W. Since by assumption both admit an $\imf$-neighbor, they cannot
  have $\imt$-type A. If they have $\imt$-type C, then the top row of
  Figure~\ref{fig:psi-X} commutes with $E_{\imf}$, so $E_{\imf}(x) \in
  C_i(\G)$ and $E_{\imf}(x) \not\in C_i(\psi_i^x(\G))$. If they have
  $\imt$-type B, then, by axioms $4$ and $5$, the situation is as
  depicted in Figure~\ref{fig:psi-X} since none of the endpoints of
  the $i$-edges has $i$-type W by Lemma~\ref{lem:type-C}. From the
  figure, it is clear that applying $\psi_i^x$ adds no vertices to
  $C_i(\G)$, and so $C_i(\psi_i^x(\G))$ is indeed a proper subset of
  $C_i(\G)$.  Moreover, by Lemma~\ref{lem:type-C}, none of the
  vertices involved has $i$-type W, and so $W_i(\psi_i^x(\G)) =
  W_i(\G)$.
\end{proof}

For $\G$ a locally Schur positive graph of type $(n,N)$ satisfying
dual equivalence axioms $1, 2, 3$ and $5$ such that the
$(\imt,N)$-restriction of $\G$ is a dual equivalence graph, both
$\varphi^w_i(\G)$ and $\psi_i^x(\G)$ also satisfy axioms $1, 2$ and
$5$. This follows immediately from Lemmas~\ref{lem:phi-compatible} and
\ref{lem:psi-compatible} and the definitions of the maps on
$i$-packages. It turns out that $\varphi_i^w$ and $\psi_i^x$ also
preserve axiom $3$, but this requires considerably more work to prove
in general. However, when restricting to edges $E_i$ and lower, not
only does axiom $3$ hold, but $\LSP_4$ does as well.

\begin{lemma}
  Let $\G$ be a locally Schur positive graph of type $(\ipo,\ipo)$
  satisfying dual equivalence axioms $1,2,3$ and $5$, and suppose that
  the $(\imt,\ipo)$-restriction of $\G$ is a dual equivalence graph
  and that the $(i,\ipo)$-restriction of $\G$ satisfies dual
  equivalence axiom $4$. Then $W_i^0(\G) = W_i(\G)$ and $C_i^0(\G) =
  C_i(\G)$, and both $\varphi_i^w(\G)$ and $\psi_i^x(\G)$ are $\LSP_4$
  for any $w \in W_i(\G)$ and any $x \in C_i(\G)$.
  \label{lem:LSP4}
\end{lemma}

\begin{proof}
  Since the $(i,\ipo)$-restriction of $\G$ satisfies dual equivalence
  axiom $4$, by Remark~\ref{rmk:phi-lambda4}, $\varphi_i^w$ may be
  applied for any $w \in W_i(\G)$ and by Remark~\ref{rmk:psi-lambda4},
  $\psi_i^x$ may be applied for any $x \in C_i(\G)$ and, in this case,
  $E_i(x) \in C_i(\G)$ with $\psi_i^{E_i(x)} = \psi_i^x$. 

  First consider $\varphi_i^w(\G)$. The component containing $w$ and
  $E_{\imo}(w)$ has degree $4$ generating function $s_{(2,2)}$, which
  is Schur positive, and so the positivity for $E_i(w)$ and $E_i
  E_{\imo}(w)$ follows since the component was Schur positive in
  $\G$. By axiom $5$, if the component containing $v$ is $\LSP_4$,
  then so are the components on any vertex of the connected component
  of $E_2 \cup \cdots \cup E_{\imf}$ containing $v$. By axiom $6$, it
  suffices to consider the positivity across a single $E_{\imh}$ edge
  from $E_i(w), w, E_{\imo}(w)$ and $E_i E_{\imo}(w)$. Since all four
  vertices have isomorphic $i$-packages by
  Lemma~\ref{lem:psi-compatible}, if one of the four admits an
  $\imh$-neighbor, then they all do. Assuming this is the case,
  consider the $\imo$-type of $w$. By the symmetry between $w$ and
  $E_{\imo}(w)$ and the fact that the $\imo$-edge between them is
  flat, we may assume $w$ admits an $\imt$-neighbor and $E_i(w)$ does
  not, as depicted in Figure~\ref{fig:phi-degree4}. The $\imo$-type of
  $w$ cannot be W, since $w$ has a flat $\imo$-edge, nor can it be A,
  since $w$ admits an $\imh$-neighbor. If $w$ has $\imo$-type C, then
  by axiom $4$, $E_{\imo} E_{\imh}(w) = E_{\imh} E_{\imo}(w)$, and so
  $E_{\imh}(w) \in W_i(\G)$ and $\varphi_i^{E_{\imh}(w)} =
  \varphi_i^w$, in which case the previous argument shows that
  $\LSP_4$ is maintained. If $w$ has $\imo$-type B, then $E_{\imh}(w)
  = E_{\imt}(w)$, and so $E_{\imh}(w)$ has no $\imo$-neighbor. On the
  other side, $E_{\imo} E_{\imh} E_{\imo}(w) = E_{\imt} E_{\imh}
  E_{\imo}(w)$ which does not admit an $i$-neighbor. Therefore the
  connected component $E_{\imo} \cup E_i$ containing $E_{\imh}(w)$
  will have generating function $s_{(3,1)}$ or $s_{(2,1,1)}$, thus
  establishing $\LSP_4$ in this case as well. Therefore $\LSP_4$ holds
  in $\varphi_i^w(\G)$ for any $w \in W_i(\G)$.

  \begin{figure}[ht]
    \begin{displaymath}
      \begin{array}{\cs{8}\cs{8}\cs{8}c}
        & & \rnode{uu}{\B} & \\[3ex]
        \rnode{x}{\B} & \rnode{w}{w} & \rnode{u}{\B} & \rnode{v}{\B} \\[4ex]
        \rnode{X}{\B} & \rnode{W}{\B} & \rnode{U}{\B} & \rnode{V}{\B} \\[3ex]
        & \rnode{WW}{\B} & &       
      \end{array}
      \hspace{5em}
      \begin{array}{\cs{8}\cs{8}\cs{8}c}
        & & & \\[3ex]
        \rnode{x1}{\B} & \rnode{w1}{w} & \rnode{u1}{\B} & \rnode{v1}{\B} \\[4ex]
        \rnode{X1}{\B} & \rnode{W1}{\B} & \rnode{U1}{\B} & \rnode{V1}{\B} \\[3ex]
        & \rnode{WW1}{\B} & &
      \end{array}
      \psset{nodesep=3pt,linewidth=.1ex}
      \everypsbox{\scriptstyle}
      \ncline {w}{uu} \naput{\imt}
      \ncline {x}{w} \naput{i}
      \ncline {w}{u} \naput{\imo}
      \ncline {u}{v} \naput{i}
      \ncline {x}{X} \nbput{\imh}
      \ncline {w}{W} \nbput{\imh}
      \ncline {u}{U} \naput{\imh}
      \ncline {v}{V} \naput{\imh}
      \ncline {X}{W} \nbput{i}
      \ncline {U}{V} \nbput{i}
      \ncline {W}{U} \nbput{\imo}
      \ncline {WW}{U} \nbput{\imt}
      \ncline {x1}{w1} \naput{i}
      \ncline {w1}{u1} \naput{\imo}
      \ncline {u1}{v1} \naput{i}
      \ncline {x1}{X1} \nbput{\imh}
      \ncline[offset=2pt] {w1}{W1} \naput{\imt}
      \ncline[offset=2pt] {W1}{w1} \naput{\imh}
      \ncline {u1}{U1} \naput{\imh}
      \ncline {v1}{V1} \naput{\imh}
      \ncline {X1}{W1} \nbput{i}
      \ncline {U1}{V1} \nbput{i}
      \ncline[offset=2pt] {WW1}{U1} \nbput{\imo}
      \ncline[offset=2pt] {U1}{WW1} \nbput{\imt}
    \end{displaymath}
    \caption{\label{fig:phi-degree4} The two possible $\imo$-types for
      $w \in W_i(\G)$ admitting an $\imh$-neighbor.}
  \end{figure}

  Second consider $\psi_i^x(\G)$. Since the $i$-edge between $x$ and
  $E_i(x)$ is flat, exactly one of these vertices admits an
  $\imo$-neighbor, say $E_i(x)$ does and $x$ does not. Since $x$ does
  not admit an $\imo$-neighbor, by axiom $3$, $E_{\imt}(x)$
  does. Since $x \in C_i(\G)$, $E_{\imt}(x)$ has a flat $i$-edge, and
  so since $E_{\imt}(x)$ admits an $\imo$-neighbor, $E_i E_{\imt}(x)$
  does not. On the other side, since $E_i(x)$ admits an
  $\imo$-neighbor but does not have $\imo$-type W, $E_{\imt} E_i(x)$
  does not admit an $\imo$-neighbor. Therefore neither
  $E_i(E_{\imt}(x))$ nor $\psi_i^x(E_{\imt}(x)) = E_{\imt} E_i(x)$
  admits an $\imo$-neighbor, so $\LSP_4$ of the connected component
  containing $E_{\imt}(x)$ is preserved. Similarly, since neither
  $E_i( E_i E_{\imt} E_i (x)) = E_{\imt} E_i(x)$ nor $\psi_i^x( E_i
  E_{\imt} E_i (x) ) = E_i E_{\imt}(x)$ admit an $\imo$-neighbor,
  $\LSP_4$ for the connected component containing $E_{\imt} E_i (x)$
  is preserved.  By axioms $2$ and $5$, both $E_{\imo}$ and $E_i$
  commute with $E_h$ for $h \leq \imf$, so $\LSP_4$ is maintained on
  $E_2 \cup \cdots \cup E_{\imf}$. Since both $E_{\imt}(x)$ and
  $E_{\imt} E_i(x)$ have flat $\imt$-edges, neither has $\imt$-type W,
  and so if either, and hence both, admits an $\imh$-neighbor, the
  $\imh$-edge must preserve $\sigma_{\imo}$. Thus $\LSP_4$ extends
  across a single $E_{\imh}$ edge as well. By axiom $6$, the claim
  follows.
\end{proof}

Unfortunately, neither $\varphi_i^w(\G)$ nor $\psi_i^x(\G)$ always has
$\LSP_5$. If $\varphi_i^w(\G)$ or $\psi_i^x(\G)$ has $\LSP_5$ for at
least one $w \in W_i(\G)$ or at least one $x \in C_i(\G)$, then by
Propositions~\ref{prop:phi-terminate} and \ref{prop:psi-terminate}, we
could always apply one of the maps until both $W_i$ and $C_i$ were
both empty. With this idea in mind, define a set $U_i(\G) \subset
W_i(\G) \cup C_i(\G)$ by
\begin{equation}
  U_i(\G) = \left\{ 
    \begin{array}{c}
      w \in W_i(\G) \\
      x \in C_i(\G)
    \end{array}
    \big| \ \mbox{components of
      $E_{\imt} \cup E_{\imo} \cup E_i$ are Schur positive in} 
    \begin{array}{c}
      \varphi_i^w(\G) \\
      \psi_i^x(\G)
    \end{array} 
  \right\} .
  \label{eqn:U}
\end{equation}

When $W_i(\G) \cup C_i(\G)$ is nonempty but $U_i(\G)$ is empty, we can
apply a slight variant of the map $\psi_i$, denoted $\gamma_i$, to
$\G$ so that $U_i(\G)$ is nonempty. The map $\gamma_i$ is depicted in
Figure~\ref{fig:gamma}. As usual, we begin by establishing the
necessary isomorphism of $i$-packages.

\begin{lemma}
  Let $\G$ be a signed, colored graph of type $(n,N)$ satisfying dual
  equivalence axioms $1,2,3$ and $5$, and suppose that the
  $(\imt,N)$-restriction of $\G$ is a dual equivalence graph.  Let $z$
  have a non-flat $i$-edge such that no vertex between $z$ and $\left(
  E_{\imo} E_{i} \right)^{m} (z)$ has $\imo$-type W, and suppose $z$
  and $\left( E_{\imo} E_{i} \right)^{m} (z)$ admit an $\imt$-neighbor
  and that both or neither have flat $\imt$-edges. Then the
  $i$-package of $E_{\imt}(z)$ is isomorphic to the $i$-package of
  $E_{\imt}\left( E_{\imo} E_{i} \right)^{m} (z)$.
  \label{lem:gamma-compatible}
\end{lemma}

\begin{proof}
  Since $z$ has a non-flat $i$-edge, $E_{i}(z)$ must have $i$-type W,
  and so, too, must $E_{\imo} E_{i}(z)$. If $E_{\imo} E_{i}(z)$ has a
  non-flat $i$-edge, then the pattern persists so that all vertices
  between $z$ and $u = \left( E_{\imo} E_{i} \right)^{m} (z)$ have
  $i$-type W, and all $E_i$ edges between them are non-flat. Thus each
  $E_{\imo}$ edge between $z$ and $u$ toggles $\sigma_{\imt,\imo}$ by
  axiom $2$ and toggles $\sigma_i$ since it has $\imo$-type
  W. Similarly, each $E_i$ edge between $z$ and $u$ toggles
  $\sigma_{\imo,i}$ by axiom $2$ and toggles $\sigma_{\imt}$ since it
  is non-flat. Finally, since there is an even number of edges between
  $u$ and $z$ each of which toggles $\sigma_{\imt,\imo,i}$, we have
  $\sigma(z)_{\imt,\imo,i} = \sigma(u)_{\imt,\imo,i}$. In particular,
  both or neither admit an $\imt$-neighbor. If neither does, the
  result is clearly true, so assume both do. By
  Lemma~\ref{lem:phi-compatible} and the fact that $E_i$ gives an
  isomorphism of $i$-packages, the $i$-package of $z$ is isomorphic to
  the $i$-package of $u$. Now the same argument in
  Lemma~\ref{lem:psi-compatible} applies to extend the $i$-package
  isomorphisms across the flat $E_{\imt}$ edges since neither has
  $\imo$-type W.
\end{proof}

\begin{figure}[ht]
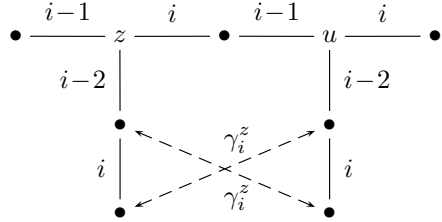

  \begin{displaymath}
    \begin{array}{\cs{8}\cs{8}\cs{8}\cs{8}c}
      \rnode{a0}{\B} & \rnode{b0}{z} & \rnode{c0}{\B} &
      \rnode{d0}{u} & \rnode{e0}{\B} \\[5ex] 
      & \rnode{b1}{\B} & & \rnode{d1}{\B} & \\[5ex]
      & \rnode{b2}{\B} & & \rnode{d2}{\B} & 
    \end{array}
    \psset{nodesep=3pt,linewidth=.1ex}
    \ncline {a0}{b0} \naput{\imo}
    \ncline {b0}{c0} \naput{i}
    \ncline {c0}{d0} \naput{\imo}
    \ncline {d0}{e0} \naput{i}
    \ncline {b0}{b1} \nbput{\imt}
    \ncline {d0}{d1} \naput{\imt}
    \ncline {b1}{b2} \nbput{i}
    \ncline {d1}{d2} \naput{i}
    \ncline[linestyle=dashed] {<->}{b1}{d2} \naput{\gamma_i^z}
    \ncline[linestyle=dashed] {<->}{b2}{d1} \nbput{\gamma_i^z}
  \end{displaymath}
  \caption{\label{fig:gamma} An illustration of
    $\gamma_i^z$.}
\end{figure}

Following the familiar pattern, we use the isomorphism of
Lemma~\ref{lem:gamma-compatible} to define an involution
$\gamma_i^z$ on all vertices admitting an $i$-neighbor.

\begin{definition}
  For $z$ not $\imo$-type W with a non-flat $i$-edge and a flat
  $\imt$-edge, let $u = \left( E_{\imo} E_{i} \right)^{m} (z)$, $m>0$,
  such that $u$ does not have $\imo$-type W and has a flat
  $\imt$-edge. Let $\phi$ denote the isomorphism of
  Lemma~\ref{lem:gamma-compatible}. Define the involution
  $\gamma_i^z$ on all vertices admitting an $i$-neighbor as
  follows.
  \begin{equation}
    \gamma_i^z(v) = \left\{ \begin{array}{rl}
        E_i\phi(v) & \mbox{if $v$ lies on the $i$-package of
          $E_{\imt}(z)$ or $E_{\imt} (u)$,} \\[1ex]  
        \phi E_i(v) & \mbox{if $E_i(v)$ lies on the
          $i$-package of $E_{\imt}(z)$ or $E_{\imt} (u)$,}\\[1ex]
        E_i(v) & \mbox{otherwise.}
      \end{array} \right.
    \label{eqn:gamma}
  \end{equation}
  Define $E'_i$ to be the set of pairs $\{v,\gamma_i^x(v)\}$
  for each $v$ admitting an $i$-neighbor. Define a signed, colored
  graph $\gamma_i^x(\G)$ of type $(n,N)$ by
  \begin{equation}
    \gamma_i^x(\G) = (V, \sigma, E_2 \cup\cdots\cup E_{\imo}
    \cup E'_i \cup E_{\ipo} \cup\cdots\cup E_{\nmo}). 
  \end{equation}
\label{defn:gamma}
\end{definition}

\begin{remark}
  Note that the $m>0$ case for $\psi_i$ handles the situation where
  vertices have $\imo$-type W, that is, components of $E_{\imt} \cup
  E_{\imo}$ that do not appear in Figure~\ref{fig:lambda4}. The map
  $\gamma_i$ is similar, but it handles the situation where vertices
  have $i$-type W, that is, components of $E_{\imo} \cup E_{i}$ do not
  appear in Figure~\ref{fig:lambda4}. Therefore while applying
  $\psi_i$ for $m>0$ is relatively rare (e.g. does not arise when
  axiom $4$ holds for the $(i,N)$-restriction), $\gamma_i$ is often
  indispensable.
  \label{rmk:psi-v-gamma}
\end{remark}

The map $\gamma_i^z(\G)$ maintains $\LSP_4$ for the same reasons that
$\psi_i^x$ does. Unlike $\varphi_i^w$ and $\psi_i^x$, the map
$\gamma_i^z$ does not separate connected components of $E_{\imt} \cup
E_{\imo} \cup E_i$, so $\LSP_5$ is trivially maintained. While
$\gamma_i^z(\G)$ does not decrease $W_i$ or $C_i$, neither does it
increase them. Its usefulness lies in the fact that it allows
$\varphi_i^w(\G)$ or $\psi_i^x(\G)$ to be applied while maintaining
$\LSP_5$. 

We begin our study of $\gamma_i$ by observing that when $W_i(\G) \cup
C_i(\G)$ is nonempty and $U_i(\G)$ is empty, the structure of
connected components of $E_{\imt} \cup E_{\imo} \cup E_i$ of the graph
is that of a rooted tree.

\begin{lemma}
  Let $\G$ be a locally Schur positive graph of type $(\ipo,\ipo)$
  satisfying dual equivalence axioms $1,2,3$ and $5$, and suppose that
  the $(\imt,\ipo)$-restriction of $\G$ is a dual equivalence graph
  and that the $(i,\ipo)$-restriction of $\G$ satisfies dual
  equivalence axiom $4$. If a connected component of $E_{\imt} \cup
  E_{\imo} \cup E_i$ not appearing in Figure~\ref{fig:lambda5} has no
  element in $U_i(\G)$, then, treating double edges as single edges,
  the component is a tree consisting of vertices of $i$-types W, B and
  C. Moreover, the component contains a unique vertex $w$ not
  admitting an $\imt$-neighbor such that $E_{\imo}(w) = E_i(w)$.
  \label{lem:tree}
\end{lemma}

\begin{proof}
  If there is a vertex of $i$-type A, then we claim there exists a
  vertex $u$ admitting an $\imt$-neighbor but not an $\imo$-neighbor
  nor an $i$-neighbor. Suppose $w$ has $i$-type A. Then either there
  exists $w$ admitting neither an $\imo$-neighbor nor an
  $\imt$-neighbor, or there exists $w$ admitting an $\imo$-neighbor
  but not an $\imt$-neighbor such that $E_{\imo}(w)$ does not admit an
  $i$-neighbor. In the former case, $\sigma_{\imh,\imt,\imo,i}(w) =
  +++-$ or $---+$, which, by $\LSP_5$, must be contribute to the Schur
  function $s_{(4,1)}$ or $s_{2,1,1,1}$, respectively. Therefore there
  must be a vertex $u$ with signature $\sigma_{\imh,\imt,\imo,i}(u) =
  -+++$ or $+---$, respectively, and so $u$ admits an $\imt$-neighbor
  but not an $\imo$-neighbor nor an $i$-neighbor. In the latter case,
  since the $(i,\ipo)$-restriction satisfies axiom $4$, $E_{\imo}(w)$
  cannot have $\imo$-type W, and so $u = E_{\imt} E_{\imo} (w)$ will
  admit neither an $\imo$-neighbor nor an $i$-neighbor, thereby
  establishing the claim. 

  If $E_{\imo}E_{\imt}(u)$ has a flat $i$-edge, then the component
  appears in Figure~\ref{fig:lambda5} after all, so assume it has a
  non-flat $i$-edge. Since the component of $E_{\imo} \cup E_i$ begins
  at $E_{\imt}(u)$ with $\sigma(E_{\imt}(u))_{\imh,\imt,\imo,i} =
  +-++$ or $-+--$, $\LSP_4$ ensures that after an even number of
  alternating $E_{\imo}$ and $E_i$ edges, the components ends after a
  flat $i$-edge at a vertex $v$ with $\sigma(v)_{\imt,\imo,i} = ++-$
  or $--+$, respectively. Each $E_{\imo}$ edge on the component must
  be flat, since otherwise by Figure~\ref{fig:lambda4} it would be a
  double edge with $E_{\imt}$, and by axiom $4$ $E_i$ fixes
  $\sigma_{\imh}$, so $\sigma(v)_{\imh} =
  \sigma(E_{\imt}(u))_{\imh}$. Therefore applying $\varphi_i^{w}$ for
  any $w$ between $E_{\imt}(u)$ and $E_i(v)$ removes a component of
  $E_{\imt} \cup E_{\imo} \cup E_i$ with generating function
  $s_{(4,1)}$ or $s_{(2,1,1,1)}$, respectively, contradicting the
  assumption that $U_i(\G)$ is empty. Hence no vertex on the component
  has $i$-type A.

  Suppose there is a sequence of at least three edges forming a
  loop. If the loop does not contain a vertex of $i$-type W, then it
  must consist entirely of $E_{\imt}$ and $E_i$ edges. In this case,
  any vertex $x$ on the loop lies in $C_i(\G)$. Applying $\psi_i^x$
  removes a component of $E_{\imt} \cup E_{\imo} \cup E_i$ with
  generating function $s_{(3,1,1)}$, contradicting the assumption that
  $U_i(\G)$ is empty. If the loop contains a vertex $w$ of $i$-type W,
  then applying $\varphi_i^w$ does not split the component, once again
  contradicting the assumption that $U_i(\G)$ is empty. Therefore
  there is no closed loop apart from double edges.

  We next claim that there is a vertex $w$ such that $E_{\imo}(w) =
  E_i(w)$. If no vertex has left $i$-type B, meaning the component of
  $i$-type B on the left side of Figure~\ref{fig:type}, then $\LSP_5$
  dictates that no vertex can have right $i$-type B either since both
  vertices can only contribute to the Schur function $s_{(3,2)}$ or
  $s_{(2,2,1)}$, and so no vertex can have $i$-type W. Thus all
  vertices have $i$-type C, in which case the finiteness of the graph
  ensures there is a closed loop, contradicting the previous
  result. Therefore there must be a vertex with left $i$-type
  B. Starting from this vertex, we can follow the graph outwards never
  looping back. If we reach a vertex with $i$-type C, then the
  $E_{\imo}$ leads to a leaf and the other edge continues on. If we
  reach a vertex with left $i$-type B, then we reach a leaf since we
  must follow the double edge between $E_{\imt}$ and $E_{\imo}$. If we
  reach a vertex with right $i$-type B, then we reach a vertex with
  $i$-type W. At this point, if $E_{\imo}$ and $E_i$ form a double
  edge, then we have reached a leaf. Otherwise, we branch in two
  directions. Therefore every path must end in a double edge. If all
  endings are at vertices with a double edge between $E_{\imt}$ and
  $E_{\imo}$, then there will be one more left $i$-type B component
  than right $i$-type B component, contradicting that $\G$ is
  $\LSP_5$.

  Follow edges from $w$, say starting with $E_{\imt}$. Each time we
  reach a vertex $v$ admitting an $i$-edge, either $v$ does not admit
  an $\imo$-neighbor, thereby forcing the $i$-edge to be flat, or $v
  \in W_i(\G)$. In either case, we cannot have $E_{\imo}(v) = E_i(v)$,
  so the vertex $w$ is unique up to interchanging $w$ and $E_{\imo}(w)
  = E_i(w)$. Since exactly one of the two admits an $\imt$-neighbor,
  the lemma follows.
\end{proof}

\begin{theorem}
  Let $\G$ be a locally Schur positive graph of type $(\ipo,\ipo)$
  satisfying dual equivalence axioms $1,2,3$ and $5$, and suppose that
  the $(\imt,\ipo)$-restriction of $\G$ is a dual equivalence graph
  and that the $(i,\ipo)$-restriction of $\G$ satisfies dual
  equivalence axiom $4$. If $W_i(\G) \cup C_i(\G)$ is nonempty and
  $U_i(\G)$ is empty, then there exists $z$ such that
  $U_i(\gamma_i^z(\G))$ is nonempty.
  \label{thm:nonempty}
\end{theorem}

\begin{proof}
  Fix a connected component of $E_{\imt} \cup E_{\imo} \cup E_i$ not
  appearing in Figure~\ref{fig:lambda5}. Recall that under the
  hypothesis that axiom $4$ holds for the $(i,\ipo)$-restriction of
  $\G$, if $w \in W_i(\G)$ then $\varphi_i^w$ applies and if $x \in
  C_i(\G)$ then $\psi_i^x$ applies, though neither necessarily
  preserves $\LSP_5$.  By Lemma~\ref{lem:tree}, the component is a
  rooted tree consisting of vertices of $i$-types W, B and C, with the
  root being the unique vertex not admitting an $\imt$-neighbor with a
  double edge for $\imo$ and $i$.
  
  Identify each connected component of $E_{\imt} \cup E_{\imo}$ as
  $i$-type C ($C$-node), left $i$-type B ($L$-node), or right $i$-type
  B and $i$-type W ($R$-node), where this last case has a vertex $v$
  of right $i$-type B and a vertex $E_{\imt}(v)$ with $i$-type
  W. Consider the graph with nodes given by these components and
  directed edges given by $i$-edges directed away from the root. Since
  the graph is a tree, every node has a unique incoming
  $i$-edge. Furthermore, $L$-nodes correspond precisely to leaves, a
  $C$-node has one outgoing flat $i$-edge, and an $R$-node has one
  outgoing flat $i$-edge and one outgoing non-flat $i$-edge. In the
  case of the root, an $R$-node, the outgoing non-flat $i$-edge is a
  double edge with $E_{\imo}$, i.e. a loop back to the root, and this
  is the only loop in the graph.

  Figure~\ref{fig:peel} illustrates three situations that cannot arise
  in this graph when $U_i(\G)$ is empty. First, if an $R$-node goes to
  an $L$-node by a flat $i$-edge, then $\varphi_i$ preserves local
  Schur positivity as depicted in the left case of
  Figure~\ref{fig:peel}. Second, if a $C$-node goes to another
  $C$-node (necessarily by a flat $i$-edge), then $\psi_i$ preserves
  local Schur positivity as depicted in the middle case of
  Figure~\ref{fig:peel}. The third case is more complicated. If an
  $R$-node goes to another $R$-node by a flat $i$-edge and each of
  these $R$-nodes goes to an $L$-node by a nonflat $i$-edge, then
  $\psi_i$ preserves local Schur positivity. This is the rightmost
  case depicted in Figure~\ref{fig:peel}. The assumption that
  $U_i(\G)$ is empty forbids these cases.

  \begin{figure}[ht]
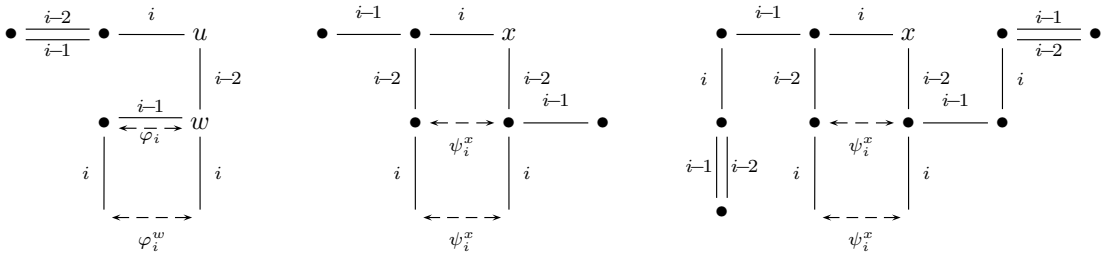

    \begin{displaymath}
      \begin{array}{\cs{7}\cs{7}c}
        \rnode{a3}{\B} & \rnode{a2}{\B} & \rnode{a1}{u} \\[5ex]
        & \rnode{b2}{\B} & \rnode{b1}{w}  \\[5ex]
        & \rnode{c2}{} & \rnode{c1}{}
      \end{array}
      \hspace{3em}
      \begin{array}{\cs{7}\cs{7}\cs{7}c}
        \rnode{A1}{\B} & \rnode{A2}{\B} & \rnode{A3}{x} &  \\[5ex]
        & \rnode{B2}{\B} & \rnode{B3}{\B} & \rnode{B4}{\B} \\[5ex]
        & \rnode{C2}{} & \rnode{C3}{} &
      \end{array}
      \hspace{3em}
      \begin{array}{\cs{7}\cs{7}\cs{7}\cs{7}c}
        \rnode{AA1}{\B} & \rnode{AA2}{\B} & \rnode{AA3}{x} & \rnode{AA4}{\B} & \rnode{ZZ4}{\B}  \\[5ex]
        \rnode{BB1}{\B} & \rnode{BB2}{\B} & \rnode{BB3}{\B} & \rnode{BB4}{\B}&  \\[5ex]
        \rnode{CC1}{\B} & \rnode{CC2}{} & \rnode{CC3}{} & & 
      \end{array}
      \psset{nodesep=3pt,linewidth=.1ex}
      \everypsbox{\scriptstyle}
      \ncline{a2}{a1} \naput{i}
      \ncline[offset=2pt]{a3}{a2} \nbput{\imo}
      \ncline[offset=2pt]{a2}{a3} \nbput{\imt}
      \ncline{b1}{a1} \nbput{\imt}
      \ncline[offset=2pt,linestyle=dashed]{<->}{b1}{b2} \nbput{\imo}
      \ncline[offset=2pt]{b2}{b1} \nbput{\varphi_i}
      \ncline{c1}{b1} \nbput{i}
      \ncline{c2}{b2} \naput{i}
      \ncline[linestyle=dashed]{<->}{c2}{c1} \nbput{\varphi^w_i}
      \ncline{A1}{A2} \naput{\imo}
      \ncline{A2}{A3} \naput{i}
      \ncline{A2}{B2} \nbput{\imt}
      \ncline{A3}{B3} \naput{\imt}
      \ncline[linestyle=dashed]{<->}{B2}{B3} \nbput{\psi^x_i}
      \ncline{B3}{B4} \naput{\imo}
      \ncline{B2}{C2} \nbput{i}
      \ncline{B3}{C3} \naput{i}
      \ncline[linestyle=dashed]{<->}{C2}{C3} \nbput{\psi^x_i}
      \ncline[offset=2pt]{AA4}{ZZ4} \nbput{\imt}
      \ncline[offset=2pt]{ZZ4}{AA4} \nbput{\imo}
      \ncline{AA1}{AA2} \naput{\imo}
      \ncline{AA2}{AA3} \naput{i}
      \ncline{AA1}{BB1} \nbput{i}
      \ncline{AA2}{BB2} \nbput{\imt}
      \ncline{AA3}{BB3} \naput{\imt}
      \ncline{AA4}{BB4} \naput{i}
      \ncline[linestyle=dashed]{<->}{BB2}{BB3} \nbput{\psi^x_i}
      \ncline{BB3}{BB4} \naput{\imo}
      \ncline[offset=2pt]{BB1}{CC1} \nbput{\imo}
      \ncline[offset=2pt]{CC1}{BB1} \nbput{\imt}
      \ncline{BB2}{CC2} \nbput{i}
      \ncline{BB3}{CC3} \naput{i}
      \ncline[linestyle=dashed]{<->}{CC2}{CC3} \nbput{\psi^x_i}
    \end{displaymath}
    \caption{\label{fig:peel} Three cases where $\varphi_i^w$ (left)
      or $\psi_i^x$ (middle and right) preserve $\LSP_5$.}
  \end{figure}

  Figure~\ref{fig:bar} depicts two cases that are easily resolved with
  $\gamma_i$. The lefthand case depicts the situation when an $R$-node
  goes to a $C$-node by a nonflat $i$-edge and that $C$-node goes to
  an $L$-node (necessarily by a flat $i$-edge). In this case, applying
  $\gamma_i$ interchanges the subtree below the $R$-node with that
  subtree below the $C$-node, and the result is an instance of the
  leftmost case of Figure~\ref{fig:peel} where $\varphi_i$ can by
  applied. The righthand case of Figure~\ref{fig:bar} depicts that
  situation when both the flat and nonflat $i$-edges from an $R$-node
  go to $C$-nodes. Once again, applying $\gamma_i$ interchanges the
  subtrees, now resulting in an instance of the middle case of
  Figure~\ref{fig:peel} where $\psi_i$ can by applied.

  \begin{figure}[ht]
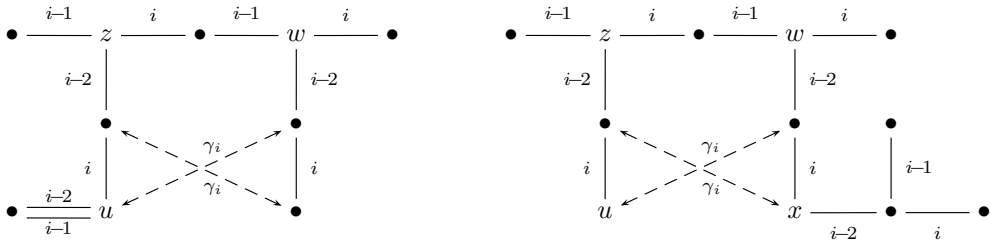

    \begin{displaymath}
      \begin{array}{\cs{7}\cs{7}\cs{7}\cs{7}c}
        \rnode{a1}{\B} & \rnode{a2}{z} & \rnode{a3}{\B} & \rnode{a4}{w} & \rnode{a5}{\B} \\[5ex]
        & \rnode{b2}{\B} & & \rnode{b4}{\B} & \\[5ex]
        \rnode{c1}{\B} & \rnode{c2}{u} & & \rnode{c4}{\B} &
      \end{array}
      \hspace{3em}
      \begin{array}{\cs{7}\cs{7}\cs{7}\cs{7}\cs{7}c}
        \rnode{A1}{\B} & \rnode{A2}{z} & \rnode{A3}{\B} & \rnode{A4}{w} & \rnode{A5}{\B} & \\[5ex]
        & \rnode{B2}{\B} & & \rnode{B4}{\B} & \rnode{B5}{\B} & \\[5ex]
        & \rnode{C2}{u} & & \rnode{C4}{x} & \rnode{C5}{\B} & \rnode{C6}{\B}
      \end{array}
      \psset{nodesep=3pt,linewidth=.1ex}
      \everypsbox{\scriptstyle}
      \ncline{a1}{a2} \naput{\imo}
      \ncline{a2}{a3} \naput{i}
      \ncline{a3}{a4} \naput{\imo}
      \ncline{a4}{a5} \naput{i}
      \ncline{a2}{b2} \nbput{\imt}
      \ncline{a4}{b4} \naput{\imt}
      \ncline{b2}{c2} \nbput{i}
      \ncline{b4}{c4} \naput{i}
      \ncline[offset=2pt]{c1}{c2} \nbput{\imo}
      \ncline[offset=2pt]{c2}{c1} \nbput{\imt}
      \ncline[linestyle=dashed]{<->}{b2}{c4} \naput{\gamma_i}
      \ncline[linestyle=dashed]{<->}{b4}{c2} \naput{\gamma_i}
      \ncline{A1}{A2} \naput{\imo}
      \ncline{A2}{A3} \naput{i}
      \ncline{A3}{A4} \naput{\imo}
      \ncline{A4}{A5} \naput{i}
      \ncline{A2}{B2} \nbput{\imt}
      \ncline{A4}{B4} \naput{\imt}
      \ncline{B2}{C2} \nbput{i}
      \ncline{B4}{C4} \naput{i}
      \ncline[linestyle=dashed]{<->}{B2}{C4} \naput{\gamma_i}
      \ncline[linestyle=dashed]{<->}{B4}{C2} \naput{\gamma_i}
      \ncline{B5}{C5} \naput{\imo}
      \ncline{C4}{C5} \nbput{\imt}
      \ncline{C5}{C6} \nbput{i}
    \end{displaymath}
    \caption{\label{fig:bar} Two cases where $\gamma_i^z$
      allows $\varphi_i^w$ (left) or $\psi_i^x$ (right) to preserve
      $\LSP_5$.}
  \end{figure}
  
  We claim that this analysis resolves all configurations for edges
  coming from an $R$-node, except for the four shown in
  Figure~\ref{fig:node} or the case where the non-flat edge connected
  to another $R$-node. For the figures, we draw flat $i$-edges
  vertically and nonflat $i$-edges horizontally. If the nonflat
  $i$-edge of the $R$-node goes to an $L$-node, then the flat $i$-edge
  must either go to a $C$-node or another $R$-node, since the left
  side of Figure~\ref{fig:peel} precludes an $L$-node. In the former
  case, the (necessarily flat) $i$-edge from the $C$-node must go
  either to an $L$-node or an $R$-node, the left two cases of
  Figure~\ref{fig:node}, since the middle case of
  Figure~\ref{fig:peel} precludes another $C$-node. In the latter
  case, the nonflat $i$-edge of the second $R$-node must go to a
  $C$-node, since the right case of Figure~\ref{fig:peel} precludes
  another $L$-node. The flat $i$-edge from the second $R$-node cannot
  go to an $L$-node (by the left case of Figure~\ref{fig:peel}) nor to
  a $C$-node (by the right case of Figure~\ref{fig:bar}), so it must
  go to another $R$-node. Similarly, the (necessarily flat) $i$-edge
  from the $C$-node cannot go to another $C$-node (by the middle case
  of Figure~\ref{fig:peel}) nor to an $L$-node (by the left case of
  Figure~\ref{fig:bar}), so it must go to yet another $R$-node. The
  resulting case is the third of Figure~\ref{fig:node}. This handles
  all cases where the nonflat $i$-edge of an $R$-node goes to an
  $L$-node, so consider the alternative case in which the nonflat
  $i$-edge must go to a $C$-node. The analysis here is identical to
  the previous case, resulting in the rightmost case in
  Figure~\ref{fig:node}. Thus the claim is proved, and
  Figure~\ref{fig:node} contains all the remaining cases. Moreover,
  the root, necessarily an $R$-node, must be one of the middle two
  cases but with the non-flat edge looping instead of going to an
  $L$-node.

  \begin{figure}[ht]
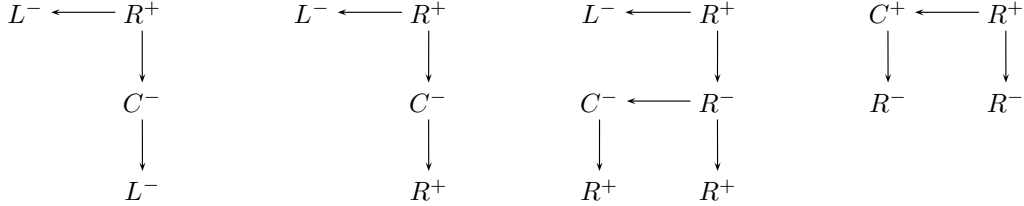

    \begin{displaymath}
      \begin{array}{\cs{7}c}
        \rnode{a1}{L^-} & \rnode{x1}{R^+} \\[5ex]
        \rnode{b1}{ } & \rnode{y1}{C^-} \\[5ex]
        \rnode{c1}{ } & \rnode{z1}{L^-}
      \end{array}
      \hspace{4em}
      \begin{array}{\cs{7}c}
        \rnode{a2}{L^-} & \rnode{x2}{R^+} \\[5ex]
        \rnode{b2}{ } & \rnode{y2}{C^-} \\[5ex]
        \rnode{c2}{ } & \rnode{z2}{R^+}
      \end{array}
      \hspace{4em}
      \begin{array}{\cs{7}c}
        \rnode{a3}{L^-} & \rnode{x3}{R^+} \\[5ex]
        \rnode{b3}{C^-} & \rnode{y3}{R^-} \\[5ex]
        \rnode{c3}{R^+} & \rnode{z3}{R^+}
      \end{array}
      \hspace{4em}
      \begin{array}{\cs{7}c}
        \rnode{a4}{C^+} & \rnode{x4}{R^+} \\[5ex]
        \rnode{b4}{R^-} & \rnode{y4}{R^-} \\[5ex]
        &
      \end{array}
      \psset{nodesep=3pt,linewidth=.1ex}
      \everypsbox{\scriptstyle}
      \ncline{->}{x1}{a1}
      \ncline{->}{x1}{y1}
      \ncline{->}{y1}{z1}
      \ncline{->}{x2}{a2}
      \ncline{->}{x2}{y2}
      \ncline{->}{y2}{z2}
      \ncline{->}{x3}{a3}
      \ncline{->}{x3}{y3}
      \ncline{->}{y3}{b3}
      \ncline{->}{b3}{c3}
      \ncline{->}{y3}{z3}
      \ncline{->}{x4}{a4}
      \ncline{->}{a4}{b4}
      \ncline{->}{x4}{y4}
    \end{displaymath}
    \caption{\label{fig:node} The four possible scenarios for edges
      emanating from an $R^+$-node, where horizontal edges are flat
      and vertical edges are non-flat.}
  \end{figure}

  For the case where long chains of $R$-nodes are connected by
  non-flat edges, the finiteness of the graph ensures that eventually
  one of these $R$-nodes must connect to either an $L$-node or a
  $C$-node, so this last $R$-node will also fall into one of the four
  cases depicted in Figure~\ref{fig:node}. 

  Associate a sign to each node as follows. For $C$-nodes, the sign is
  positive if $\sigma_{\imh}(v)=++--$ where $v$ is the vertex
  admitting neither an $\imt$-neighbor nor an $i$-neighbor and
  negative otherwise. For $L$-nodes and $R$-nodes, the sign is
  positive if the component belongs to $\G_{(3,2)}$ and negative if it
  belongs to $\G_{(2,2,1)}$. Then the graph described in this way has
  $\LSP_5$ if and only if
  \begin{equation}
    \# C^+ = \# C^- \mbox{ and } \# L^+ = \# R^+ \mbox{ and } \# L^- = \# R^-. 
    \label{e:LSP5}
  \end{equation}
  Note that a flat edge changes the sign except for leaves, and a
  non-flat edge preserves the sign except for leaves.

  Note that if the leaf reached from the root using only flat edges
  has the same sign as the root, then $\psi_i$ may be applied to
  remove this leaf and the root, which has generating function
  $s_{(3,2)}$ if positive or $s_{(2,2,1)}$ if negative. Given the four
  possibilities in Figure~\ref{fig:node}, the only terminal case is
  the leftmost. Since the graph is locally Schur positive,
  \eqref{e:LSP5} ensures that there must be some leaf with the same
  sign as the root and a flat incoming edge. In the two rightmost
  cases in Figure~\ref{fig:node}, the map $\gamma_i$ may always be
  applied and doing so swaps the subtrees from the lower two
  $R$-nodes, similar to the scenarios in Figure~\ref{fig:bar}.
  Therefore we may use $\gamma_i$ to swap subtrees until this leaf
  lies on the flat path from the root.
\end{proof}

\begin{theorem}
  Let $\G$ be a locally Schur positive graph of type $(n,N)$
  satisfying dual equivalence axioms $1,2,3$ and $5$, and suppose that
  the $(\imt,N)$-restriction of $\G$ is a dual equivalence graph and
  that the $(i,N)$-restriction of $\G$ satisfies dual equivalence
  axiom $4$. 

  Then we can apply $\varphi_i, \psi_i$ and $\gamma_i$ in such a way
  that the resulting graph still satisfies axioms $1,2$ and $5$, the
  $(\ipo,N)$-restriction satisfies axioms $3$ and $4$, and the
  $(i,N)$-restriction remains a dual equivalence graph.
  \label{thm:axiom4}
\end{theorem}

\begin{proof}
  By Theorem~\ref{thm:nonempty}, we may always apply either
  $\varphi_i$ or $\psi_i$, perhaps with an intermediate application of
  $\gamma_i$. By Proposition~\ref{prop:phi-terminate}, each
  application of $\varphi_i$ strictly decreases $|W_i|$, and by
  Proposition~\ref{prop:psi-terminate} applying $\psi_i$ does not
  increase $|W_i|$, so eventually $W_i$ will be empty. By
  Proposition~\ref{prop:psi-terminate}, applying $\psi_i$ strictly
  decreases $|C_i|$, so once $W_i$ is empty, $\varphi_i$ will no
  longer be applied, and repeated applications of $\psi_i$ will result
  in $C_i$ being empty as well. At this point, by
  Proposition~\ref{prop:empty}, axiom $4$ holds for the
  $(\ipo,N)$-restriction. By construction, these maps maintain axioms
  $1,2$ and $5$. Finally, axiom $4$ implies axiom $3$ for the
  $(\ipo,N)$-restriction. 
\end{proof}

\subsection{An involution to resolve axiom $6$}
\label{sec:D-axiom6}

Let $\G$ be a locally Schur positive graph of type $(n,N)$ satisfying
dual equivalence graph axioms $1,2,3$ and $5$ such that the
$(i,N)$-restriction of $\G$ is a dual equivalence graph. Using the
results of the previous section, we can alter $i$-edges of $\G$,
without losing local Schur positivity for the $(\ipo,N)$-restriction,
until axiom $4$ holds for the $(\ipo,N)$-restriction of $\G$. However,
if we wish to continue further to establish axiom $4$ for higher
edges, the hypotheses of those results require that a suitable
restriction of $\G$ also satisfies axiom $6$.

By Theorem~\ref{thm:cover}, for each connected component $\mathcal{H}$
of the $(\ipo,\ipo)$-restriction of $\G$, there exists a (surjective)
morphism $\phi$ from $\mathcal{H}$ to $\G_{\lambda}$ for a unique
partition $\lambda$ of $\ipo$, and, by Corollary~\ref{cor:fibers}, the
fiber over each vertex of $\G_{\lambda}$ has the same cardinality. By
Proposition~\ref{prop:good-defn} and Theorem~\ref{thm:isomorphic},
$\mathcal{H}$ satisfies axiom $6$ if and only if $\phi$ is an
isomorphism.

Similar to the previous transformations, we define an involution
$\theta_i$ on vertices of $\mathcal{H}$ admitting an $i$-neighbor as
indicated in Figure~\ref{fig:theta} and use it to redefine $i$-edges
that are in violation of axiom $6$.

\begin{figure}[ht]
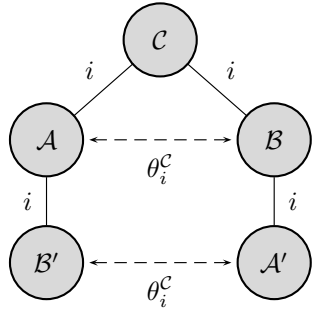

  \begin{displaymath}
    \begin{array}{\cs{9}\cs{9}c}
      & \rnode{X}{%
        \psset{xunit=1ex}
        \psset{yunit=1ex}
        \pspicture(0,0)(1,1)
        \pscircle[fillstyle=solid,fillcolor=lightgray](0.5,0.5){0.5}
        \rput(0.5,0.5){$\C$}
        \endpspicture} & \\[6ex]
      \rnode{Y1}{%
        \psset{xunit=1ex}
        \psset{yunit=1ex}
        \pspicture(0,0)(1,1)
        \pscircle[fillstyle=solid,fillcolor=lightgray](0.5,0.5){0.5}
        \rput(0.5,0.5){$\mathcal{A}$}
        \endpspicture} & & \rnode{Y2}{%
        \psset{xunit=1ex}
        \psset{yunit=1ex}
        \pspicture(0,0)(1,1)
        \pscircle[fillstyle=solid,fillcolor=lightgray](0.5,0.5){0.5}
        \rput(0.5,0.5){$\mathcal{B}$}
        \endpspicture} \\[8ex]
      \rnode{Z1}{%
        \psset{xunit=1ex}
        \psset{yunit=1ex}
        \pspicture(0,0)(1,1)
        \pscircle[fillstyle=solid,fillcolor=lightgray](0.5,0.5){0.5}
        \rput(0.5,0.5){$\mathcal{B}^{\prime}$}
        \endpspicture} & & \rnode{Z2}{%
        \psset{xunit=1ex}
        \psset{yunit=1ex}
        \pspicture(0,0)(1,1)
        \pscircle[fillstyle=solid,fillcolor=lightgray](0.5,0.5){0.5}
        \rput(0.5,0.5){$\mathcal{A}^{\prime}$}
        \endpspicture}
    \end{array}
    \psset{nodesep=12pt,linewidth=.1ex}
    \ncline[nodesep=8pt] {Y1}{X} \naput{i}
    \ncline[nodesep=8pt] {X}{Y2} \naput{i}
    \ncline {Y1}{Z1} \nbput{i}
    \ncline {Y2}{Z2} \naput{i}
    \ncline[nodesep=14pt,linestyle=dashed]{<->} {Y1}{Y2} \nbput{\theta_i^{\C}}
    \ncline[nodesep=14pt,linestyle=dashed]{<->} {Z1}{Z2} \nbput{\theta_i^{\C}}
  \end{displaymath}
  \caption{\label{fig:theta} An illustration of the involution
    $\theta_i^{\C}$ where $\mathcal{A} \cong \mathcal{A}^{\prime}$ and
    $\mathcal{B} \cong \mathcal{B}^{\prime}$.}
\end{figure}

\begin{definition}
  Let $\mathcal{H}$ be a connected component of the
  $(\ipo,\ipo)$-restriction of $\G$ and let $\C$ be a connected
  component of the $(i,i)$-restriction of $\mathcal{H}$. Let $E_i(\C)$
  be the union of all connected components $\mathcal{B}$ of the
  $(i,i)$-restriction of $\mathcal{H}$ such that $\mathcal{B} \neq \C$
  and $\{w,u\} \in E_{i}$ for some $w \in \C$ and some $u \in
  \mathcal{B}$. For each connected component $\mathcal{B}^{\prime}$ of
  the $(i,i)$-restriction of $\mathcal{H}$, let
  $\phi_{\mathcal{B}^{\prime}}$ be the (unique) isomorphism from
  $\mathcal{B}^{\prime}$ to some (unique) $\mathcal{B} \subset
  E_i(\C)$. Define the involution $\theta_i^{\C}$ by
  \begin{equation}
    \theta_i^{\C}(u) = \left\{ \begin{array}{rl}
        \phi_{\mathcal{B}^{\prime}}(E_{i}(u))
        & \mbox{if} \ u \in E_i(\C) \ \mbox{and} \ E_i(u) \in
        \mathcal{B}^{\prime}, \\  
        E_i(\phi_{\mathcal{B}^{\prime}}(u))
        & \mbox{if} \ E_i(u) \in E_i(\C) \ \mbox{and} \ u \in
        \mathcal{B}^{\prime}, \\  
        E_{i}(u) & \mbox{otherwise.}
      \end{array} \right.
    \label{eqn:theta}
  \end{equation}
  Define $E_i'$ to be the set of pairs $\{v,\theta_i^{\C}(v)\}$ for
  all vertices $v$ admitting an $i$-neighbor. Define a signed, colored
  graph $\theta_i^{\C}(\G)$ by
  \begin{equation}
    \theta_i^{\C}(\G) = (V, \sigma, E_2 \cup\cdots\cup E_{\imo} \cup E'_i
    \cup E_{\ipo} \cup\cdots\cup E_{\nmo}). 
  \end{equation}
  \label{defn:theta}
\end{definition}

Note that lower $i$-packages are implicitly preserved for the
definition of $\theta_i$ since all $i$-edges on a connected component
of $E_2 \cup \cdots \cup E_{\imo}$ are redefined together. In order to
ensure that axiom $3$ is maintained, one must be careful in the choice
of $\C$.

\begin{definition}
  Let $\mathcal{H}$ be a connected component of the
  $(\ipo,\ipo)$-restriction of $\G$, and let $\lambda$ be the unique
  partition of $\ipo$ such that there is a surjective morphism from
  $\mathcal{H}$ to $\G_{\lambda}$. A connected component $\C$ of the
  $(i,i)$-restriction of $\mathcal{H}$ is \emph{negatively dominant}
  if one of the following holds:
  \begin{itemize}
  \item $\sigma_{\ipo}(\C) \equiv -1$ and for every connected
    component $\mathcal{B}$ of the $(i,i)$-restriction of
    $\mathcal{H}$ such that $\sigma_{\ipo}(\mathcal{B}) \equiv -1$, if
    $\C \cong \G_{\mu}$ and $\mathcal{B} \cong \G_{\nu}$ for $\mu,\nu
    \subset \lambda$, then $\mu \geq \nu$ in dominance order;
  \item $\sigma_{\ipo}(\mathcal{B}) \equiv +1$ for every connected
    component $\mathcal{B}$ of the $(i,i)$-restriction of
    $\mathcal{H}$, and if $\C \cong \G_{\mu}$ and $\mathcal{B} \cong
    \G_{\nu}$ for $\mu,\nu \subset \lambda$, then $\mu \geq \nu$ in
    dominance order.
  \end{itemize}
  \label{defn:hindge}
\end{definition}

\begin{proposition}
  Let $\G$ be a locally Schur positive graph of type $(n,N)$
  satisfying dual equivalence axioms $1,2,3$ and $5$ such that the
  $(i,N)$-restriction is a dual equivalence graph and the
  $(\ipo,N)$-restriction satisfies dual equivalence axiom $4$. For
  $\C$ a negatively dominant $(i,i)$-restricted component of $\G$, the
  graph $\theta_i^{\C}(\G)$ also satisfies dual equivalence axioms
  $1,2,3$ and $5$ and the $(\ipo,N)$-restriction of
  $\theta_i^{\C}(\G)$ also satisfies dual equivalence axiom
  $4$. Moreover, if $\mathcal{H}$ is the connected component of the
  $(\ipo,N)$-restriction of $\G$ containing $\C$, then
  $\theta_i^{\C}(\mathcal{H})$ has two connected components.
  \label{prop:theta-reasonable}
\end{proposition}

\begin{proof}
  The assertion that $\mathcal{H}$ has two connected components is
  obvious from the definition of $\theta_i^{\C}$. Axioms $1,2$ and $5$
  follow from the definition of $\theta_i^{\C}$, and axiom $4$ follows
  from the fact that edges are swapped only between isomorphic
  components, so the local structure of the $E_{\imt} \cup E_{\imo}
  \cup E_i$ remains unchanged by $\theta_i^{\C}$. Therefore we need
  only address axiom $3$.

  Let $\mathcal{A}$ and $\mathcal{B}$ be two connected components of
  the $(i,i)$-restriction of $\mathcal{H}$, and suppose $\mathcal{A}
  \cong \G_{\alpha}$ and $\mathcal{B} \cong \G_{\beta}$ with $\alpha >
  \beta$ in dominance order. Let $a \in \mathcal{A}$ and $b \in
  \mathcal{B}$ and suppose $\{a,b\} \in E_i$. Similar to the proof of
  Lemma~\ref{lem:extend-signs}, we have $\sigma(w)_{\imo,i} =+-$ and
  $\sigma(v)_{\imo,i} = -+$. Therefore axiom $3$ fails for this edge
  if and only if $\sigma(w)_{\imo,i,\ipo} =+--$ and
  $\sigma(v)_{\imo,i,\ipo} = -++$ if and only if
  $\sigma(\mathcal{A})_{\ipo} = -1$ and $\sigma(\mathcal{B})_{\ipo} =
  +1$. With this characterization in mind, suppose now that
  $\mathcal{A},\mathcal{B}$ and $\mathcal{B}'$ are restricted
  components of $\mathcal{H}$, with $\mathcal{A},\mathcal{B} \in
  E_i(\C)$, $\mathcal{B}' \cong \mathcal{B}$, and $a \in \mathcal{A},
  b \in \mathcal{B}$ and $b' \in \mathcal{B}'$ such that $\{a,b'\} \in
  E_i$ and $b = \theta_i^{\C}(a)$. As before, let $\mathcal{A} \cong
  \G_{\alpha}$ and $\mathcal{B}' \cong \mathcal{B} \cong
  \G_{\beta}$. Suppose $\sigma(\mathcal{A})_{\ipo} = -1$ and
  $\sigma(\mathcal{B})_{\ipo} = +1$. Let $\C \cong \G_{\mu}$.  Then
  the choice of $\C$ as negatively dominant ensures that
  $\sigma(\C)_{\ipo} = -1$ and that $\mu > \alpha$. Further, since
  axiom $3$ holds for $\G$, the preceding characterization ensures
  that $\beta > \mu$. Therefore $\beta > \alpha$ and axiom $3$ holds
  for $\theta_i^{\C}(\G)$ as well.
\end{proof}

Finally, we must consider the local Schur positivity. This is not
difficult to show when the $(\iph,N)$-restriction of $\G$ satisfies
dual equivalence axiom $4$.

\begin{theorem}
  Let $\G$ be a locally Schur positive graph of type $(n,N)$
  satisfying dual equivalence axioms $1,2,3$ and $5$ such that the
  $(i,N)$-restriction is a dual equivalence graph and the
  $(\iph,N)$-restriction satisfies dual equivalence axiom $4$. For
  $\C$ a negatively dominant restricted component, $\theta_i^{\C}(\G)$
  is locally Schur positive.
  \label{thm:theta-LSP}
\end{theorem}

\begin{proof}
  Let $\{w,x\}, \{u,v\} \in E_i(\G)$ with $\theta_i^{\C}(w) = u$ and
  $\theta_i^{\C}(x) = v$. By the definition of $\theta_i^{\C}$, there
  is an isomorphism from the connected component of the
  $(i,N)$-restriction of $\G$ containing $w$ to the connected
  component of the $(i,N)$-restriction of $\G$ containing $v$ that
  sends $w$ to $v$, and similarly for the pair $x$ and
  $u$. Furthermore, by axiom $2$, $\sigma(u)_{\ipt} = \sigma(w)_{\ipt}
  = \sigma(x)_{\ipt} = \sigma(v)_{\ipt}$.

  By Proposition~\ref{prop:theta-reasonable}, if local Schur
  positivity fails, then it must do so for some connected component of
  $E_i \cup E_{\ipo}$, $E_{\imo} \cup E_i \cup E_{\ipo}$, or $E_i \cup
  E_{\ipo} \cup E_{\ipt}$. If either $\{w,x\} \in E_{\ipo}(\G)$ or
  $\{u,v\} \in E_{\ipo}(\G)$, then applying $\theta_i^{\C}$ results in
  the two components being joined for all three cases, thereby
  ensuring that local Schur positivity is preserved. Therefore assume
  none of $u,w,x,v$ has $\ipo$-type W, so that $\sigma(w)_{\ipo} =
  \sigma(x)_{\ipo} = \sigma(u)_{\ipo} = \sigma(v)_{\ipo}$ by axiom
  $3$. For $E_i \cup E_{\ipo}$, since these components appear in
  Figure~\ref{fig:lambda4}, each chain must have one $E_i$ edge and
  one $E_{\ipo}$ edge. Therefore if some component is not locally
  Schur positive in $\theta_i^{\C}(\G)$, then it must be that one
  chain has two $E_{\ipo}$ edges while the other has none, violating
  axiom $3$ contradicting
  Proposition~\ref{prop:theta-reasonable}. Thus $\theta_i^{\C}(\G)$
  maintains $\LSP_4$.

  Consider now components of $E_{\imo} \cup E_i \cup E_{\ipo}$. As
  just observed, either $u$ and $x$ admit $\ipo$-neighbors or $v$ and
  $w$ admit $\ipo$-neighbors, but not both. By symmetry, we may assume
  $u$ and $x$ admit $\ipo$-neighbors, and $w$ and $v$ do not.  By the
  isomorphisms mentioned earlier and the preservation of
  $\sigma_{\ipo}$, $u$ and $x$ must have the same
  $\ipo$-type. Furthermore, there are only two possibilities for the
  $\ipo$-types, namely A or C, since they cannot have $\ipo$-type B
  since $\theta_i^{\C}$ doesn't alter $i$-edges within a restricted
  component, i.e. when $E_{\imo}$ and $E_i$ coincide. If $u$ and $x$
  have $\ipo$-type C, then both admit an $\imo$-neighbor as well and
  neither $v$ nor $w$ admits an $\imo$-neighbor. Thus the components
  are isomorphic after applying $\theta_i^{\C}$. Similarly, if $u$ and
  $x$ have $\ipo$-type A, then they do not admit an $\imo$-neighbor
  but both $v$ and $w$ do, and neither $E_{\imo}(v)$ nor $E_{\imo}(w)$
  admits an $\ipo$-neighbor. Once again, the components are isomorphic
  after applying $\theta_i^{\C}$. Thus connected components of
  $E_{\imo} \cup E_i \cup E_{\ipo}$ remain locally Schur positive.
 
  The case of $E_i \cup E_{\ipo} \cup E_{\ipt}$ is similarly resolved
  by considering the $\ipt$-types of $u,w,x,v$. Since none of these
  vertices has $\ipo$-type W, all or none of them admit an
  $\ipt$-neighbor. If none does, then the preservation of local Schur
  positivity, in fact of axiom $4$, is clear. If any of them has
  $\ipt$-type C, then, since axiom $4$ holds for $\G$, the two
  components become one in $\theta_i^{\C}(\G)$, thus preserving local
  Schur positivity. The only remaining options are for $x$ and $u$ to
  have $\ipt$-type W or $\ipt$-type B, in which case $w$ and $v$ have
  the $\ipt$-type B or $\ipt$-type W, respectively, and axiom $4$ is
  preserved by $\theta_i^{\C}$. Therefore $\theta_i^{\C}(\G)$ has
  $\LSP_5$.
\end{proof}

\begin{theorem}
  Let $\G$ be a signed, colored graph of type $(n,N)$ satisfying dual
  equivalence axioms $1,2,3$ and $5$ such that the $(i,N)$-restriction
  is a dual equivalence graph and the $(\iph,N)$-restriction satisfies
  dual equivalence axiom $4$.

  Then we can apply $\theta_i$ together with $\varphi_{\ipo}$ and
  $\psi_{\ipt}$ in such a way that the resulting graph still satisfies
  axioms $1,2$ and $5$, the $(\iph,N)$-restriction satisfies axioms
  $3$ and $4$, and the $(\ipo,N)$-restriction remains a dual
  equivalence graph.
  \label{thm:axiom6}
\end{theorem}

\begin{proof}
  By Proposition~\ref{prop:theta-reasonable}, if axiom $6$ fails for
  the $(\ipo,N)$-restriction, then we may choose a negatively dominant
  $(i,i)$-restricted component $\C$ and apply $\theta_i^{\C}$ while
  maintaining axioms $1,2,3$ and $5$. By Theorem~\ref{thm:theta-LSP},
  the resulting graph remains locally Schur positive. Moreover, there
  are only two cases where the the $(\iph,N)$-restriction of
  $\theta_i^{\C}(\G)$ fails to satisfy axiom $4$: if one of $u,w,x,v$
  has $\ipo$-type W or if one of $u,w,x,v$ has $\ipt$-type C. Suppose
  $w$ is the offending vertex. Then $w \in W_{\ipo}(\G)$ in the former
  case and $z = E_{\ipt}(w) \in C_{\ipt}(\G)$ in the latter. Given the
  strong hypotheses of the Proposition, $W_{\ipo}^0(\G) =
  W_{\ipo}(\G)$ in the former case and $C_{\ipt}^0(\G) = C_{\ipt}(\G)$
  in the latter. Therefore $\varphi_{\ipo}$ or $\psi_{\ipt}$ may be
  used to restore axiom $4$ for the $(\iph,N)$-restriction. 
  Therefore we may choose another negatively dominant component and
  continue thus. By Proposition~\ref{prop:theta-reasonable}, this
  process terminates exactly when axiom $6$ is satisfied for the
  $(\ipo,N)$-restriction, thus completing step 6. The result satisfies
  axioms $1, 2$ and $5$ by construction, and once again axiom $4$
  implies axiom $3$ for the $(\iph,N)$-restriction. 
\end{proof}

\subsection{Transforming a D graph into a dual equivalence graph}
\label{sec:D-transformation}

We now outline the algorithm for transforming $\G = \G^{(k)}_{c,D}$
into a dual equivalence graph. We proceed by constructing a sequence
of signed, colored graphs $\G = \G_2 , \ldots, \G_{\nmo} =
\widetilde{\G}$ such that $\G_{\imo}$ is a locally Schur positive
graph satisfying dual equivalence axioms $1,2,3$ and $5$, and the
$(i,N)$-restriction of $\G_{\imo}$ is a dual equivalence graph. 

The result for $\G_2 = \G$ is trivial, so we consider how to construct
$\G_i$ from $\G_{\imo}$. By Theorem~\ref{thm:axiom4}, we can apply
$\varphi_i, \psi_i$ and $\gamma_i$ until axiom $4$ holds for the
$(\ipo,N)$-restriction while preserving axioms $1,2$ and $5$. If they
also maintain local Schur positivity (which implies axiom $3$ for the
$(n,n)$-restriction), then we may apply $\varphi_{\ipo}$,
$\psi_{\ipo}$, and $\gamma_{\ipo}$ until axiom $4$ holds for the
$(\ipt,N)$-restriction and then apply $\varphi_{\ipt}$, $\psi_{\ipt}$,
and $\gamma_{\ipt}$ until axiom $4$ holds for the
$(\iph,N)$-restriction. At this point, by Theorem~\ref{thm:axiom6},
we may apply $\theta_i$ together with $\varphi_{\ipo}$ and
$\psi_{\ipt}$ as needed until the $(\ipo,N)$-restriction satisfies
axiom $6$, all while maintaining axioms $1,2$ and $5$ as well as
axioms $3$ and $4$ for the $(\iph,N)$-restriction.

Therefore, assuming that $\varphi_i, \psi_i$ and $\gamma_i$ maintain
local Schur positivity, we may set $\G_i$ to be the resulting graph,
and the induction may proceed. However, this assumption is not
necessarily true for edges higher than $E_i$. Therefore before this
algorithm can be put into effect, we require a more careful analysis
of these maps and how they affect local Schur positivity.

The following two conditions are essential properties for ensuring
that there is a way to maintain $\LSP_4$ for edges greater than $i$.

\begin{definition}
  A signed, colored graph $\G$ satisfying dual equivalence graph
  axioms $1,2,3$ and $5$ \emph{satisfies axiom $4'$} if the following
  conditions hold:
  \begin{itemize}
  \item[(ax$4'a$)] if $w \in W_i(\G)$ has a non-flat $\imo$-edge, then
    the components of $E_{\imt} \cup E_{\imo}$ and $E_{\imo} \cup E_i$
    containing $w$ have the same quasisymmetric functions in their
    degree $4$ generating functions;
  \item[(ax$4'b$)] if $x \in C_i(\G)$ has $\ipo$-type W and
    $E_{\imt}(x)$ does not, then $\left(E_{\imt}E_i\right)^m(x)$ 
    has $\ipo$-type W for all $m \geq 1$ for which
    $\left(E_{\imt}E_i\right)^{m-1}(x)$ admits an $i$-neighbor.
  \end{itemize}
  \label{defn:axiom4p}
\end{definition}

\begin{figure}[ht]
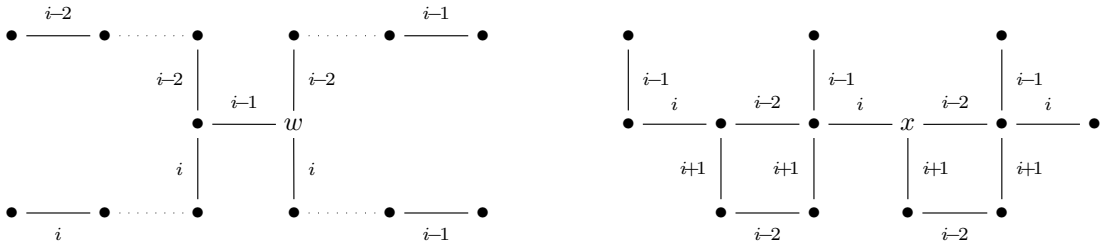

  \begin{displaymath}
    \begin{array}{\cs{7}\cs{7}\cs{7}\cs{7}\cs{7}c}
      \rnode{A0}{\B} & \rnode{B0}{\B} & \rnode{C0}{\B} & \rnode{D0}{\B} & \rnode{E0}{\B} & \rnode{F0}{\B} \\[5ex]
      & & \rnode{C1}{\B} & \rnode{D1}{w} & & \\[5ex]      
      \rnode{A2}{\B} & \rnode{B2}{\B} & \rnode{C2}{\B} & \rnode{D2}{\B} & \rnode{E2}{\B} & \rnode{F2}{\B}
    \end{array}
    \hspace{4em}
    \begin{array}{\cs{7}\cs{7}\cs{7}\cs{7}\cs{7}c}
      \rnode{a0}{\B} & \rnode{b0}{  } & \rnode{c0}{\B} &
      \rnode{d0}{  } & \rnode{e0}{\B} & \rnode{f0}{  } \\[5ex]
      \rnode{a1}{\B} & \rnode{b1}{\B} & \rnode{c1}{\B} &
      \rnode{d1}{x}  & \rnode{e1}{\B} & \rnode{f1}{\B} \\[5ex]
      \rnode{a2}{  } & \rnode{b2}{\B} & \rnode{c2}{\B} &
      \rnode{d2}{\B} & \rnode{e2}{\B} & \rnode{f2}{  }
    \end{array}
    \psset{nodesep=3pt,linewidth=.1ex}
    \everypsbox{\scriptstyle}
    \ncline {A0}{B0} \naput{\imt}
    \ncline[linestyle=dotted] {B0}{C0} 
    \ncline[linestyle=dotted] {D0}{E0}
    \ncline {E0}{F0} \naput{\imo}
    \ncline {C0}{C1} \nbput{\imt}
    \ncline {D0}{D1} \naput{\imt}
    \ncline {C1}{D1} \naput{\imo}
    \ncline {C1}{C2} \nbput{i}
    \ncline {D1}{D2} \naput{i}
    \ncline {A2}{B2} \nbput{i}
    \ncline[linestyle=dotted] {B2}{C2} 
    \ncline[linestyle=dotted] {D2}{E2}
    \ncline {E2}{F2} \nbput{\imo}
    \ncline {a1}{b1} \naput{i}
    \ncline {b1}{c1} \naput{\imt}
    \ncline {c1}{d1} \naput{i}
    \ncline {d1}{e1} \naput{\imt}
    \ncline {e1}{f1} \naput{i}
    \ncline {a1}{a0} \nbput{\imo}
    \ncline {c1}{c0} \nbput{\imo}
    \ncline {e1}{e0} \nbput{\imo}
    \ncline {b1}{b2} \nbput{\ipo}
    \ncline {c1}{c2} \nbput{\ipo}
    \ncline {d1}{d2} \naput{\ipo}
    \ncline {e1}{e2} \naput{\ipo}
    \ncline {b2}{c2} \nbput{\imt}
    \ncline {d2}{e2} \nbput{\imt}
  \end{displaymath}
  \caption{\label{fig:axiom4p} The cases forbidden by axiom $4'a$
    (left) and $4'b$ (right).}
\end{figure}

The hypotheses of axiom $4'a$ ensure that both $w$ and $E_{\imo}(w)$
admit an $\imt$, an $\imo$ and an $i$-neighbor. If $E_{\imo} \cup E_i$
forms a closed loop through $w$, then each edge toggles
$\sigma_{\imt}$. By axiom $2$, $E_i$ preserves $\sigma_{\imh}$. Since
$w$ admits an $\imt$-neighbor, $E_i(w)$ does not. By axiom $2$,
$E_{\imo}E_i(w)$ therefore admits an $\imt$-neighbor and the cycle
continues so that $(E_{\imo}E_i)^m (w)$ admits an $\imt$-neighbor and
$E_i (E_{\imo}E_i)^m (w)$ does not for all $m > 0$. By the assumption
that the component is a loop, we must have $w = (E_{\imo}E_i)^m (w)$
for some $m > 0$, so then $E_i(w) = E_i (E_{\imo}E_i)^m (w)$ does not
admit an $\imt$-neighbor. This contradiction works for $E_{\imt} \cup
E_{\imo}$ as well, therefore neither can be a loop. This leaves two
ways to align the two-color strings sharing an $E_{\imo}$ edge. One
way results in the same degree $4$ signatures while the other is given
on the left side of Figure~\ref{fig:axiom4p}. Note that applying
$\varphi_{\imo}^w$ in this case breaks $\LSP_4$, if it held for the
graph, for $E_{\imo} \cup E_{i}$ and $\varphi_{i}^w$ cannot be applied
since the $\imo$-edge at $w$ is not flat. Moreover, this is the only
case where both maps fail.

The hypotheses of axiom $4'b$ ensure that both $x$ and $E_i(x)$ admit
an $\ipo$-neighbor (though $E_{\ipo}$ edges need not exist). By axioms
$2$ and $1$, both $E_{\imt}(x)$ and $E_{\imt}(E_i(x))$ must admit an
$\ipo$-neighbor (again, $E_{\ipo}$ edges need not exist, though if
they do, axiom $5$ ensures the shown commutativity). The forbidden
conclusion is that neither $E_iE_{\imt}(x)$ nor $E_iE_{\imt}(E_i(x))$
admits an $\ipo$-neighbor as depicted on the right side of
Figure~\ref{fig:axiom4p}. Note that applying $\psi_{i}^x$ in this case
breaks $\LSP_4$, if it held or the graph, for $E_{i} \cup E_{\ipo}$
and, even if $E_{\ipo}$ edges do not exist, fails axiom $3$. Assuming
$E_{\ipo}$ edges exist, applying $\varphi_{\ipo}^x$ breaks $\LSP_4$,
if it held for the graph, for $E_{i} \cup E_{\ipo}$ across the
$E_{\imt}$ edges, which are part of the $\ipo$-package of $w$. Again,
this is the only case where both maps fail. See
Appendix~\ref{app:bad-graphs} for examples of locally Schur positive
graphs satisfying axioms $1,2,3,5$ and exactly half of $4'$, neither
of which has a Schur positive generating function.

\begin{definition}
  A \emph{D graph} is a locally Schur positive graph satisfying dual
  equivalence axioms $1,2,3$ and $5$ as well as axiom $4'$. 
\label{def:Dgraph}
\end{definition}

Since axiom $4$ implies axiom $4'$ (vacuously) and local Schur
positivity, D graphs are a generalization of dual equivalence
graphs. The following result show that the $\G^{(k)}_{c,D}$ are
further examples of D graphs. Two non-examples are given in
Appendix~\ref{app:bad-graphs}.

\begin{proposition}
  For any content vector $c$ and $k$-descent set $D$, $\G^{(k)}_{c,D}$
  is a D graph.
  \label{prop:ax4p}
\end{proposition}

\begin{proof}
  By Proposition~\ref{prop:ax1235}, we need only show that axiom $4'$
  holds. Note that the conditions of axiom $4'$ are local;
  specifically they need only be tested for connected components of
  $E_{\imt} \cup E_{\imo} \cup E_{i}$ with signatures
  $\sigma_{\imh,\ldots,\ipo}$.  Recall from the proof of
  Proposition~\ref{prop:ax1235} that there are $25$ non-isomorphic
  connected components of $(V,\sigma,E_{\imt} \cup E_{\imo} \cup E_i)$
  in $\G^{(k)}_{c,D}$, of which 18 are not dual equivalence
  graphs. Axiom $4'a$ can be directly checked on these cases. For
  axiom $4'b$, we must first add all possible signatures
  $\sigma_{\ipo}$. To do this, we may construct the $14$ graph
  structures on permutations of length $5$, as discussed in the proof
  of Proposition~\ref{prop:ax1235}, and then check each $6$ times
  corresponding to the six possible locations to insert a $6$ in the
  permutations, thereby producing the possible signatures. Again,
  these computations can be done by hand or by computer (see
  Appendix~\ref{app:code}).
\end{proof}

Note that we now have the additional task of ensuring that all of the
maps maintain axiom $4'$. 

\begin{lemma}
  Let $\G$ be a D graph of type $(n,N)$ such that the
  $(i,N)$-restriction is a dual equivalence graph and the
  $(\iph,N)$-restriction satisfies dual equivalence axiom $4$. For
  $\C$ a negatively dominant restricted component, $\theta_i^{\C}(\G)$
  vacuously satisfies axiom $4'$, and so is a D graph.
  \label{lem:theta-4p}
\end{lemma}

\begin{proof}
  By Theorem~\ref{thm:theta-LSP}, it suffices to show that axiom $4'$
  is maintained.  Since axiom $4$ holds for the $(\iph,N)$-restriction
  of $\G$, even after applying $\theta_i^{\C}$, axiom $4'a$ is
  vacuously satisfied since only $E_i \cup E_{\ipo}$ strings have the
  potential not to appear in Figure~\ref{fig:lambda4}. Similarly for
  axiom $4'b$, the result for $E_{\imt} \cup E_i$ vacuously follows
  from Proposition~\ref{prop:theta-reasonable}. For $E_i \cup
  E_{\ipt}$, as before let $\{w,x\}, \{u,v\} \in E_i(\G)$ with
  $\theta_i^{\C}(w) = u$ and $\theta_i^{\C}(x) = v$.  Note that if
  $w,x$ have $\ipt$-type C, then any violation of axiom $4'b$ that
  occurs in $\theta_i^{\C}(\G)$ must also have existed in $\G$, which
  is not possible. Alternatively, if none of $u,w,x,v$ has $\ipt$-type
  C, then axiom $4'b$ is vacuously satisfied.
\end{proof}

\begin{lemma}
  Let $\G$ be a D graph such that the $(\imt,N)$-restriction of $\G$
  is a dual equivalence graph. For any $w \in W_i^0(\G)$ or $x \in
  C_i^0(\G)$, if $\varphi_i^w(\G)$ or $\psi_i^x(\G)$ or
  $\gamma_i^z(\G)$ has $\LSP_4$, then $\varphi_i^w(\G)$ or
  $\psi_i^x(\G)$ or $\gamma_i^z(\G)$ maintains axiom $4'$,
  respectively.
\label{lem:axiom4p}
\end{lemma}

\begin{proof}
  For each map, there are three cases to consider for axiom $4'a$:
  $E_{\imt} \cup E_{\imo} \cup E_i$, $E_{\imo} \cup E_i \cup
  E_{\ipo}$, and $E_i \cup E_{\ipo} \cup E_{\ipt}$. For the outer two
  cases, the middle edge of the left side of Figure~\ref{fig:axiom4p}
  connecting to $w$ remains unchanged along with all edges to the
  right or left of it. Thus, since $\G$ is assumed to be locally Schur
  positive, the violation must have existed in $\G$ as well,
  contradicting the assertion that $\G$ is a D graph. For $E_{\imo}
  \cup E_i \cup E_{\ipo}$, the same argument applies if the map
  creates any but the middle $i$-edge. If the middle $i$-edge is newly
  created, then by local Schur positivity of $\G$, $w$ must still have
  $i$-type W and a nonflat $i$-edge, thus ensuring a violation of
  axiom $4'a$ already existed in $\G$. Therefore none of the maps can
  create a new violation of axiom $4'a$.

  There are two cases to consider for axiom $4'b$: $E_{\imt} \cup E_i$
  and $E_i \cup E_{\ipt}$. Similar to the analysis for axiom $4'a$,
  the latter case is easily resolved by axiom $2$ since all edges
  involved in an application of $\varphi_i^w$ or $\psi_i^x$ or
  $\gamma_i^z$ preserve $\sigma_{\ipt,\iph}$. Thus any violation after
  applying either map must have already existed in $\G$. Any
  violation of axiom $4'b$ for $E_{\imt} \cup E_i$ created by
  $\psi_i^x$ or $\gamma_i^z$ must have existed already in $\G$, so we
  consider how $\varphi_i^w$ might result in a component as depicted
  in the right side of Figure~\ref{fig:axiom4p}. There are three
  $i$-edges that could have resulted from $\varphi_i^w$.

  \begin{figure}[ht]
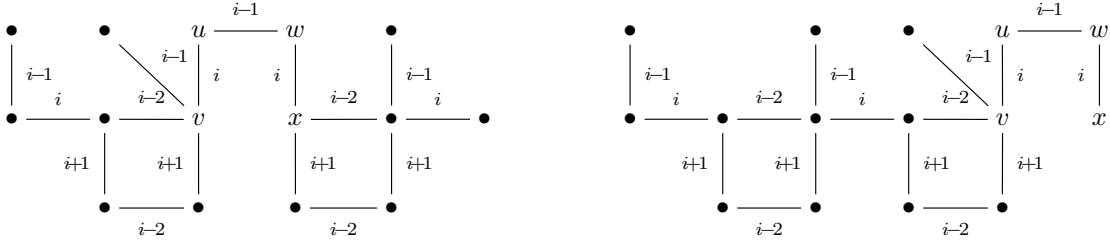

    \begin{displaymath}
      \begin{array}{\cs{7}\cs{7}\cs{7}\cs{7}\cs{7}c}
        \rnode{a0}{\B} & \rnode{b0}{\B} & \rnode{c0}{u}  &
        \rnode{d0}{w}  & \rnode{e0}{\B} & \rnode{f0}{  } \\[5ex]
        \rnode{a1}{\B} & \rnode{b1}{\B} & \rnode{c1}{v}  &
        \rnode{d1}{x}  & \rnode{e1}{\B} & \rnode{f1}{\B} \\[5ex]
        \rnode{a2}{  } & \rnode{b2}{\B} & \rnode{c2}{\B} &
        \rnode{d2}{\B} & \rnode{e2}{\B} & \rnode{f2}{  }
      \end{array}
      \hspace{4em}
      \begin{array}{\cs{7}\cs{7}\cs{7}\cs{7}\cs{7}c}
        \rnode{A0}{\B} & \rnode{B0}{  } & \rnode{C0}{\B} &
        \rnode{D0}{\B} & \rnode{E0}{u}  & \rnode{F0}{w}  \\[5ex]
        \rnode{A1}{\B} & \rnode{B1}{\B} & \rnode{C1}{\B} &
        \rnode{D1}{\B} & \rnode{E1}{v}  & \rnode{F1}{x}  \\[5ex]
        \rnode{A2}{  } & \rnode{B2}{\B} & \rnode{C2}{\B} &
        \rnode{D2}{\B} & \rnode{E2}{\B} & \rnode{F2}{  }
      \end{array}
      \psset{nodesep=3pt,linewidth=.1ex}
      \everypsbox{\scriptstyle}
      \ncline {a1}{b1} \naput{i}
      \ncline {b1}{c1} \naput{\imt}
      \ncline {c1}{c0} \nbput{i}
      \ncline {c0}{d0} \naput{\imo}
      \ncline {d0}{d1} \nbput{i}
      \ncline {d1}{e1} \naput{\imt}
      \ncline {e1}{f1} \naput{i}
      \ncline {a1}{a0} \nbput{\imo}
      \ncline {c1}{b0} \nbput{\imo}
      \ncline {e1}{e0} \nbput{\imo}
      \ncline {b1}{b2} \nbput{\ipo}
      \ncline {c1}{c2} \nbput{\ipo}
      \ncline {d1}{d2} \naput{\ipo}
      \ncline {e1}{e2} \naput{\ipo}
      \ncline {b2}{c2} \nbput{\imt}
      \ncline {d2}{e2} \nbput{\imt}
      \ncline {A1}{B1} \naput{i}
      \ncline {B1}{C1} \naput{\imt}
      \ncline {C1}{D1} \naput{i}
      \ncline {D1}{E1} \naput{\imt}
      \ncline {E1}{E0} \nbput{i}
      \ncline {E0}{F0} \naput{\imo}
      \ncline {F0}{F1} \nbput{i}
      \ncline {A1}{A0} \nbput{\imo}
      \ncline {C1}{C0} \nbput{\imo}
      \ncline {E1}{D0} \nbput{\imo}
      \ncline {B1}{B2} \nbput{\ipo}
      \ncline {C1}{C2} \nbput{\ipo}
      \ncline {D1}{D2} \naput{\ipo}
      \ncline {E1}{E2} \naput{\ipo}
      \ncline {B2}{C2} \nbput{\imt}
      \ncline {D2}{E2} \nbput{\imt}
    \end{displaymath}
    \caption{\label{fig:phi-ax4p} An illustration of how $\varphi_i^w$
      might result in a violation of axiom $4'b$.}
  \end{figure}

  For the middle edge, let $x,w,u,v$ be as depicted in
  Figure~\ref{fig:phi-ax4p}. By axiom $2$ and the fact that the
  $i$-edge between $u$ and $v$ is not flat, $u$ has no $\imt$-neighbor
  since $v$ does. By axiom $2$ again, $w$ must admit an
  $\imt$-neighbor. Since $x$ also admits an $\imt$-neighbor, the
  $i$-edge between $w$ and $x$ must be flat by axiom $3$. Now for
  $\ipo$-neighbors, by axiom $2$, $\sigma(u)_{\ipo} =
  \sigma(w)_{\ipo}$ and, by axiom $1$, $\sigma(u)_{i} =
  -\sigma(w)_{i}$. Therefore exactly one of $u$ and $w$ admits an
  $\ipo$-neighbor, so we consider each case in turn. If $w$ admits an
  $\ipo$-neighbor, then the $i$-edge between $w$ and $x$ is the middle
  edge in a violation of axiom $4'b$ for $E_{\imt} \cup E_i$ in $\G$,
  a contradiction. Alternatively, if $u$ admits an $\ipo$-neighbor,
  then by axiom $4'a$ for $E_{\imo} \cup E_i \cup E_{\ipo}$ in $\G$,
  following the $E_i \cup E_{\ipo}$ string from $u$ through $v$ and
  onwards must terminate in an $i$-edge. In particular, $E_{\ipo}(v)$
  admits an $i$-neighbor, say $z = E_i E_{\ipo}(v)$. Axioms $2$ and
  $3$ ensure that both $v$ and $z$ admit an $\imo$-neighbor while
  $E_{\ipo}(v) = E_i(z)$ does not. Moreover, all of $v, E_{\ipo}(v) =
  E_i(z)$ and $z$ admit an $\imt$-neighbor. In particular, the
  $i$-edge between $E_i(z)$ and $z$ is flat, and, since $E_i(z)$ does
  not admit an $\imo$-neighbor, neither $E_i(z)$ nor $E_{\imt}E_i(z)$
  has $\imo$-type W. Therefore by axioms $2$ and $3$, $E_{\imt}E_i(z)$
  admits an $i$-neighbor. By axiom $5$, $E_{\imt}E_{\ipo}(v) =
  E_{\ipo}E_{\imt}(v)$, and the assumption on $\G$ is that $E_i
  E_{\imt}(v)$ does not admit an $\ipo$-neighbor. Therefore by local
  Schur positivity of $E_i \cup E_{\ipo}$ in $\G$, the $E_i \cup
  E_{\ipo}$ string beginning at $E_i E_{\imt}(v)$ ends with an
  $\ipo$-edge. In particular, this implies $E_i E_{\imt} E_i(z)$ admit
  an $\ipo$-neighbor. At long last, this creates a violation of axiom
  $4'b$ in $\G$, another contradiction. 

  For the right edge, again let $x,w,u,v$ be as depicted. By
  assumption $x$ does not admit an $\ipo$-neighbor, so by axiom $2$,
  $w$ must. By axioms $1$ and $2$, this ensures that $u$ does not
  admit an $\ipo$-neighbor. Therefore the $i$-edge between $v$ and $u$
  is the right edge in a violation of axiom $4'b$ in $\G$. The case
  for left edge is similarly resolved.
\end{proof}

With axiom $4'$ resolved, we now aim to understand when $\varphi_i$ or
$\psi_i$ breaks local Schur positivity.

Note that the proof of Lemma~\ref{lem:LSP4} for $\psi_i^x$ did not use
the stronger hypothesis that the $(i,\ipo)$-restriction of the graph
satisfied axiom $4$, only that it has $\LSP_4$. However, the proof for
$\varphi_i^w$ relied strongly on the stronger hypothesis. While
$\varphi_i^w$ will not always preserve $\LSP_4$ in this more general
setting, there is always a choice of $w$ such that it does.

\begin{lemma}
  Let $\G$ be a D graph such that the $(\imt,N)$-restriction of $\G$
  is a dual equivalence graph. For any $w \in W_i^0(\G)$, if some
  connected component of $E_{\imo} \cup E_{i}$ in $\varphi_i^w(\G)$ is
  not locally Schur positive, then $z \in W_i(\G)$ for $z =
  E_{\imh}(w)$ or $E_{\imh}E_{\imo}(w)$ and connected components of
  $E_{\imo} \cup E_{i}$ are locally Schur positive in both
  $\varphi_i^z(\G)$ and $\varphi_i^w(\varphi_i^z(\G))$.
  \label{lem:phi-2-down}
\end{lemma}

\begin{proof}
  For vertices $v$ such that neither $v$ nor $\varphi_{i}(v)$ lies on
  the $i$-package of $w$ or $E_i(w)$, the result follows from the
  hypotheses on $\G$. By the symmetry between $w$ and $E_{\imo}(w)$,
  we consider only those vertices on the $i$-packages of $w$ and
  $x = E_i(w)$.

  The result is immediate for $w$ and $u = \varphi_i^w(w)$, since the
  degree $4$ generating function is $s_{(2,2)}$, and so the result
  also follows for $x$ and $v = \varphi_i^w(x)$ since $\G$ is locally
  Schur positive. By axiom $2$, both $\sigma_{\imt}$ and
  $\sigma_{\imo}$ are constant on $E_2 \cup \cdots \cup E_{\imf} \cup
  E_{\iph} \cup \cdots \cup E_{\nmo}$, and by axiom $5$, those edges
  all commute with $E_{\imo}$ and $E_i$. Therefore we need only show
  the result across $E_{\imh}$ edges. By dual equivalence axiom $6$,
  we can reach any vertex on the $i$-package of a given vertex by
  crossing at most one $\imh$-edge, so we need only prove the result
  for vertices on the connected component of $E_{\imh} \cup E_{\imo}
  \cup E_i$ containing $w$.

  \begin{figure}[ht]
    \begin{displaymath}
      \begin{array}{\cs{8}\cs{8}\cs{8}\cs{8}\cs{8}c}
        & & & \rnode{uu}{\B} & & \\[3ex]
        \rnode{l}{} & \rnode{x}{x} & \rnode{w}{w} & \rnode{u}{u} &
        \rnode{v}{v} & \rnode{r}{} \\[4ex]
        & \rnode{X}{\B} & \rnode{W}{\B} & \rnode{U}{\B} & \rnode{V}{\B}
        & \\[3ex]
        & & \rnode{WW}{\B} & & &
      \end{array}
      \psset{nodesep=3pt,linewidth=.1ex}
      \everypsbox{\scriptstyle}
      \ncline {w}{uu} \naput{\imt}
      \ncline[linestyle=dotted] {l}{x} \naput{\imo}
      \ncline {x}{w} \naput{i}
      \ncline {w}{u} \naput{\imo}
      \ncline {u}{v} \naput{i}
      \ncline[linestyle=dotted] {v}{r} \naput{\imo}
      \ncline {x}{X} \nbput{\imh}
      \ncline {w}{W} \nbput{\imh}
      \ncline {u}{U} \naput{\imh}
      \ncline {v}{V} \naput{\imh}
      \ncline {X}{W} \nbput{i}
      \ncline {U}{V} \nbput{i}
      \ncline {WW}{U} \nbput{\imo}
    \end{displaymath}
    \caption{\label{fig:phi-3-down} Forced $E_{\imo}$ edges on a
      component of $W_i(\G) \cap (E_{\imh} \cup E_{\imo} \cup E_i)$.}
  \end{figure}
  
  By Lemma~\ref{lem:phi-compatible}, the $i$-packages of $x,w,u$ and
  $v$ are all isomorphic, and so one of $x,w,u,v$ admits an 
  $\imh$-neighbor if and only if they all do. Assume they all admit an
  $\imh$-neighbor. By axiom $5$, $E_{\imh}(x) = E_iE_{\imh}(w)$ and
  $E_{\imh}(v) = E_iE_{\imh}(u)$, as shown in
  Figure~\ref{fig:phi-3-down}. Since $\sigma_i(w)_{\imh} =
  \sigma_i(u)_{\imh}$, by axioms $1$ and $2$, exactly one of $w$ and
  $u$ admits an $\imt$-neighbor. Since both $w,u \in W_i(\G)$, we may
  assume $w$ admits an $\imt$-neighbor and $u$ does not. By axiom $1$,
  this means $\sigma(u)_{\imh} = \sigma(u)_{\imt}$, and so by axiom
  $3$ for $\G$, $\sigma(u)_{\imt} = \sigma(E_{\imh}(u))_{\imt}$. By
  axiom $2$, $\sigma(u)_{\imo} = \sigma(E_{\imh}(u))_{\imo}$, and so
  $E_{\imh}(u)$ must also admit an $\imo$-neighbor. Therefore we have
  the situation depicted in Figure~\ref{fig:phi-3-down}.
  
  We claim that if neither $E_{\imh}(w)$ nor $E_{\imh}(u)$ lies in
  $W_i(\G)$, then connected components of $E_{\imo} \cup E_i$ in
  $\varphi_i^w(\G)$ are locally Schur positive. If either
  $E_{\imo}(E_{\imh}(w)) = E_{\imh}(x)$ or $E_{\imo}(E_{\imh}(u)) =
  E_{\imh}(v)$, then applying $\varphi_i^w$ results in all four of
  $E_{\imh}(x), E_{\imh}(w), E_{\imh}(u)$ and $E_{\imh}(v)$ lying on
  the same connected component of $E_{\imo} \cup E_i$, making the
  component the union of two locally Schur positive components. This
  proves the latter assertion of the proposition: once
  $\varphi_i^{E_{\imh}(w)}$ and $\varphi_i^{E_{\imh}(u)}$ have been
  applied, $\varphi_i^w$ preserves local Schur positivity.
 
  Note that $E_{\imh}(w)$ admits an $\imo$-neighbor if and only if
  $E_{\imh}(w)$ has $\imo$-type C and lies in $C_{\imo}(\G)$. In this
  case, $E_{\imh}(w)$ must have $i$-type W by axiom $4'b$. Then local
  Schur positivity would fail only in the situation depicted on the
  left in Figure~\ref{fig:phi-bad-2}. In the alternative case,
  $E_{\imh}(w)$ does not admit an $\imo$-neighbor, so by axiom $2$,
  $E_{\imh}(x)$ does. Since $u$ has no $\imt$-neighbor, axiom $3$
  ensures that $\sigma(u)_{\imt} = \sigma(E_{\imh}(u))_{\imt}$, so
  $E_{\imh}(w)$ admits an $\imo$-neighbor. If $E_{\imh}(u)$ does not
  have $i$-type W, then local Schur positivity is preserved. In fact,
  local Schur positivity would fail only in the situation depicted on
  the right in Figure~\ref{fig:phi-bad-2}. The claim having been
  proved, we now address these two cases.

  \begin{figure}[ht]
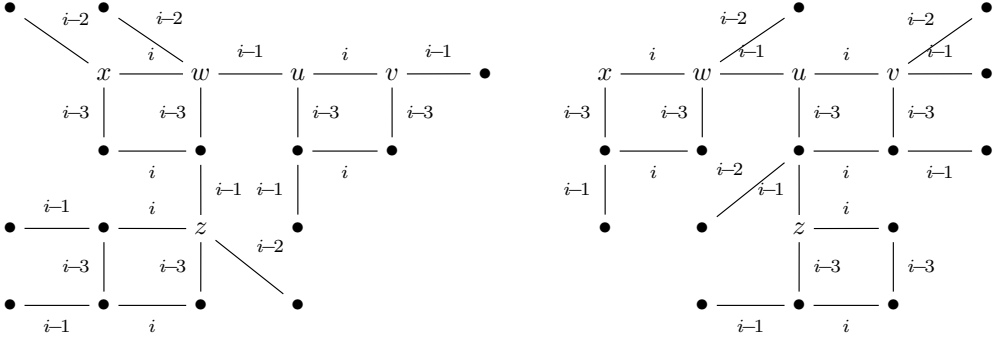

    \begin{displaymath}
      \begin{array}{\cs{7}\cs{7}\cs{7}\cs{7}\cs{7}c}
        \rnode{ww}{\B} & \rnode{uu}{\B} & & \\[3ex]
        & \rnode{x}{x} & \rnode{w}{w} & \rnode{u}{u} & \rnode{v}{v} & \rnode{r}{\B} \\[4ex]
        & \rnode{X}{\B} & \rnode{W}{\B} & \rnode{U}{\B} & \rnode{V}{\B} & \\[4ex]
        \rnode{LL}{\B} & \rnode{XX}{\B} & \rnode{WW}{z} & \rnode{UU}{\B} & & \\[4ex]
        \rnode{LLL}{\B} & \rnode{XXX}{\B} & \rnode{WWW}{\B} & \rnode{UUU}{\B} & & 
      \end{array}
      \hspace{3em}
      \begin{array}{\cs{7}\cs{7}\cs{7}\cs{7}c}
        & & \rnode{ww2}{\B} & & \rnode{uu2}{\B} \\[3ex]
        \rnode{x2}{x} & \rnode{w2}{w} & \rnode{u2}{u} & \rnode{v2}{v} & \rnode{r2}{\B} \\[4ex]
        \rnode{X2}{\B} & \rnode{W2}{\B} & \rnode{U2}{\B} & \rnode{V2}{\B} & \rnode{R2}{\B} \\[4ex]
        \rnode{XX2}{\B} & \rnode{WW2}{\B} & \rnode{UU2}{z} & \rnode{VV2}{\B} & \\[4ex]
        & \rnode{WWW2}{\B} & \rnode{UUU2}{\B} & \rnode{VVV2}{\B} & 
      \end{array}
      \psset{nodesep=3pt,linewidth=.1ex}
      \everypsbox{\scriptstyle}
      \ncline {x}{ww} \nbput{\imt}
      \ncline {w}{uu} \nbput{\imt}
      \ncline {x}{w} \naput{i}
      \ncline {w}{u} \naput{\imo}
      \ncline {u}{v} \naput{i}
      \ncline {v}{r} \naput{\imo}
      \ncline {x}{X} \nbput{\imh}
      \ncline {w}{W} \nbput{\imh}
      \ncline {u}{U} \naput{\imh}
      \ncline {v}{V} \naput{\imh}
      \ncline {X}{W} \nbput{i}
      \ncline {U}{V} \nbput{i}
      \ncline {W}{WW} \naput{\imo}
      \ncline {U}{UU} \nbput{\imo}
      \ncline {LL}{XX} \naput{\imo}
      \ncline {XX}{WW} \naput{i}
      \ncline {XX}{XXX} \nbput{\imh}
      \ncline {WW}{WWW} \nbput{\imh}
      \ncline {WW}{UUU} \naput{\imt}
      \ncline {LLL}{XXX} \nbput{\imo}
      \ncline {XXX}{WWW} \nbput{i}
      \ncline {w2}{ww2} \naput{\imt}
      \ncline {v2}{uu2} \naput{\imt}
      \ncline {x2}{w2} \naput{i}
      \ncline {w2}{u2} \naput{\imo}
      \ncline {u2}{v2} \naput{i}
      \ncline {v2}{r2} \naput{\imo}
      \ncline {x2}{X2} \nbput{\imh}
      \ncline {w2}{W2} \nbput{\imh}
      \ncline {u2}{U2} \naput{\imh}
      \ncline {v2}{V2} \naput{\imh}
      \ncline {X2}{W2} \nbput{i}
      \ncline {U2}{V2} \nbput{i}
      \ncline {V2}{R2} \nbput{\imo}
      \ncline {U2}{UU2} \nbput{\imo}
      \ncline {U2}{WW2} \nbput{\imt}
      \ncline {X2}{XX2} \nbput{\imo}
      \ncline {UU2}{VV2} \naput{i}
      \ncline {UU2}{UUU2} \naput{\imh}
      \ncline {VV2}{VVV2} \naput{\imh}
      \ncline {WWW2}{UUU2} \nbput{\imo}
      \ncline {UUU2}{VVV2} \nbput{i}
    \end{displaymath}
    \caption{\label{fig:phi-bad-2} The two possible scenarios where
      $\varphi_i^w$ breaks the local Schur positivity of $E_{\imo} \cup
      E_i$. The resolution is to apply $\varphi_i^{z}$ first.}
  \end{figure}
  
  For the left hand side, we assume that $E_{\imh}(x)$ does not admit an
  $\imo$-neighbor, and so, by axiom $3$, $E_{\imh}(w)$ must. By earlier
  remarks, this implies that $E_{\imh}(w)$ must have $i$-type W, and by
  the local Schur positivity of $\G$, the connected component of
  $E_{\imo} \cup E_i$ containing $E_{\imh}(x)$ and $E_{\imh}(w)$ must be
  as depicted. Since $w$ admits an $\imt$-neighbor and has $\imo$-type
  C, $E_{\imh}(w)$ does not admit an $\imt$-neighbor. By axiom $3$ and
  the fact that $E_{\imh}(x)$ does not admit an $\imo$-neighbor,
  $\sigma(E_{\imh}(w))_{\imt} = \sigma(E_{\imh}(x))_{\imt}$, and so, by
  axiom $1$, $E_{\imh}(x)$ must also not admit an $\imt$-neighbor. By
  axiom $3$, this means $x$ must admit an $\imt$-neighbor. Now since
  both $x$ and $w$ admit $\imt$-neighbors, by axiom $1$, we have
  $\sigma(w)_{\imt} = \sigma(x)_{\imt}$, and so $x$ does not admit an
  $\imo$-neighbor. Moving down the diagram, since $E_{\imh}(w)$ does not
  admit an $\imt$-neighbor, $z$ must admit an $\imt$-neighbor and an
  $\imh$-neighbor by axiom $3$. By axiom $2$, $E_i(z)$ must also admit
  an $\imt$-neighbor, and by axiom $5$, $E_{\imh}E_i(z) =
  E_iE_{\imh}(z)$. Since both $z$ and $E_i(z)$ admit an $\imo$-neighbor,
  $E_i(z)$ cannot admit an $\imt$-neighbor. Therefore axiom $3$ ensures
  that since $E_i(z)$ admits an $\imo$-neighbor, so does
  $E_{\imh}E_i(z)$. Finally, if $E_{\imh}(z)$ admits an $\imo$-neighbor,
  then both $z$ and $E_{\imh}(z)$ must have $\imo$-type C, and so by
  axiom $4'b$, $E_{\imh}(z)$ must have $i$-type W. Therefore if
  $E_{\imh}(z)$ admits an $\imo$-neighbor, then $E_{\imo}E_{\imh}(z)$
  admits an $i$-neighbor. Whether this is the case or not, applying
  $\varphi_i^{E_{\imh}(w)} = \varphi_i^z$ is seen to preserve local
  Schur positivity across the $E_{\imh}$ edges, thereby resolving this
  case. A similar analysis and diagram chase resolves the righthand side.
\end{proof}

We now conclude that $\varphi^w_i$, $\psi^x_i$ and $\gamma^z_i$ all
preserve the local Schur positivity of connected components of
$E_{\imo} \cup E_i$. However, this is not enough to conclude that
$\LSP_4$ is preserved since connected components of $E_i \cup
E_{\ipo}$ might be, and often are, disconnected in non-Schur positive
ways. Therefore we must resolve local Schur positivity for $E_i \cup
E_{\ipo}$, $E_{\imo} \cup E_i \cup E_{\ipo}$ and $E_i \cup E_{\ipo}
\cup E_{\ipt}$ when applying these maps.

We first address the case of $E_i \cup E_{\ipo}$. While neither
$\varphi^w_i$ nor $\psi^x_i$ necessarily maintains local Schur
positivity for this case, by first applying $\varphi^z_{\ipo}$ for a
cleverly selected $z \in W_{\ipo}^0(\G)$, connected components of $E_i
\cup E_{\ipo}$ do remain locally Schur positive.

\begin{lemma}
  Let $\G$ be a D graph and the $(\imo,N)$-restriction of $\G$ a
  dual equivalence graph. 
  \begin{enumerate}
  \item For any $w \in W_i^0(\G)$, if some connected component of
    $E_{i} \cup E_{\ipo}$ in $\varphi_i^w(\G)$ is not locally Schur
    positive, then $w \in W_{i+1}^0(\G)$ and connected components of
    $E_{i} \cup E_{\ipo}$ are locally Schur positive in both
    $\varphi_{i+1}^w(\G)$ and $\varphi_i^w(\varphi_{i+1}^w(\G))$.
  \item For any $x \in C_i^0(\G)$, if there is a connected component
    of $E_{i} \cup E_{\ipo}$ in $\psi_i^x(\G)$ that is not locally
    Schur positive, then for $z = E_{\imt}(x)$ or $E_{\imt}E_{i}(x)$,
    we have $z \in W_{\ipo}^0(\G)$ and connected components of $E_{i}
    \cup E_{\ipo}$ are locally Schur positive in both
    $\varphi_{\ipo}^z(\G)$ and $\psi_i^x(\varphi_{\ipo}^z(\G))$.
  \end{enumerate}
  In particular, if $\varphi_i^w$ or $\psi_i^x$ breaks the local Schur
  positivity of $E_{i} \cup E_{\ipo}$, then it can be restored with
  $\varphi^u_{\ipo}$ for some $u \in W_{\ipo}^0(\G)$.
  \label{lem:phi-2-up}
\end{lemma}

\begin{proof}
  We begin with $\varphi_i^w$. We need only be concerned with
  vertices on the $i$-packages of $w$ and $E_i(w)$. Keeping the
  notation from before, let $x = E_i(w)$, $u = E_{\imo}(w)$, and $v =
  E_i(u) = E_i(E_{\imo}(w))$.

  By axioms $2$ and $5$, both $E_{\imo}$ and $E_i$ commute with $E_h$
  for $h \geq \iph$. Therefore any vertex connected component of
  $E_{\iph} \cup \cdots \cup E_{\nmo}$ containing $w$ or $u$ also lies
  in $W_i^0(\G)$. Similarly, both $E_{i}$ and $E_{\ipo}$ commute with
  $E_{h}$ for $h \leq \imh$, and $\sigma_{i}$ and $\sigma_{\ipo}$ are
  constant on $E_2 \cup \cdots \cup E_{\imh}$. Therefore if the result
  holds for some $x,w,u$ and $v$, then it holds for any vertex on the
  connected component of $E_{2} \cup \cdots \cup E_{\imh}$ containing
  those vertices. Therefore it suffices to prove the result for
  $x,w,u$ and $v$.

  Since $w \in W^0_i(\G)$, $\sigma(w)_{i} = -\sigma(u)_{i}$ and, by
  axiom 2, $\sigma(w)_{\ipo} = \sigma(u)_{\ipo}$, so exactly one of
  $w$ and $u = \varphi_i(w)$ admits an $\ipo$-neighbor. Since both $w,
  u \in W_i^0(\G)$, we may assume $w$ admits an $\ipo$-neighbor and
  $u$ does not. By axiom $3$ for $\G$, $v$ must admit an
  $\ipo$-neighbor since $u$ does not. We now have the situation
  depicted in Figure~\ref{fig:phi-3-up}.

  \begin{figure}[ht]
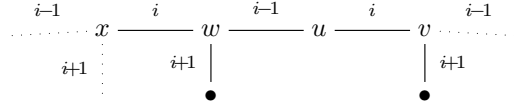

    \begin{displaymath}
      \begin{array}{\cs{8}\cs{8}\cs{8}\cs{8}\cs{8}c}
        \rnode{l}{} & \rnode{x}{x} & \rnode{w}{w} & \rnode{u}{u} &
        \rnode{v}{v} & \rnode{r}{} \\[3ex]
        & \rnode{X}{} & \rnode{W}{\B} & & \rnode{V}{\B} & 
      \end{array}
      \psset{nodesep=3pt,linewidth=.1ex}
      \everypsbox{\scriptstyle}
      \ncline[linestyle=dotted] {l}{x} \naput{\imo}
      \ncline {x}{w} \naput{i}
      \ncline {w}{u} \naput{\imo}
      \ncline {u}{v} \naput{i}
      \ncline[linestyle=dotted] {v}{r} \naput{\imo}
      \ncline[linestyle=dotted] {x}{X} \nbput{\ipo}
      \ncline {w}{W} \nbput{\ipo}
      \ncline {v}{V} \naput{\ipo}
    \end{displaymath}
    \caption{\label{fig:phi-3-up} Forced $E_{\ipo}$ edges on a
      component of $W_i(\G) \cap (E_{\imo} \cup E_i)$, with possible
      edges indicated by dotted lines.}
  \end{figure}
  
  If $x$ does not admit an $\ipo$-neighbor, then the connected
  components of $E_i \cup E_{\ipo}$ remain locally Schur positive in
  $\varphi_i^w(\G)$, so assume it does. In particular, this implies $w
  \in W_{\ipo}(\G)$. If $x$ admits an $\imo$-neighbor, then by axiom
  2, $E_{\imo}(x)$ does not admit an $\ipo$-neighbor. If $E_{\imo}(x)$
  admits an $i$-neighbor, then by axiom 2, $E_i E_{\imo}(x)$ admits an
  $\ipo$-neighbor. Continuing along an alternating chain of $E_{\imo}$
  followed by $E_i$ edges from $x$, every other vertex admits an
  $\ipo$-neighbor. Similarly, continuing along an alternating chain of
  $E_{\imo}$ followed by $E_{i}$ edges from $v$, every other vertex
  admits an $\ipo$-neighbor. If these two alternating chains form a
  closed loop, then $x = E_{\imo}(E_iE_{\imo})^m(v)$ does not admit an
  $\ipo$-neighbor. A similar contraction arises if the connected
  component of $E_i \cup E_{\ipo}$ containing $w$ and $x$ is a loop,
  so assume that neither is.

  By local Schur positivity, since $u$ does not admit an
  $\ipo$-neighbor, the $E_i \cup E_{\ipo}$ chain starting at $u$ must
  end with an $E_{\ipo}$ edge. If the $E_i \cup E_{\ipo}$ passing from
  $w$ through $x$ and continuing on ends in an $E_i$ edge, then local
  Schur positivity is maintained in $\varphi_i^w(\G)$. In the
  alternative case, axiom $4'a$ ensures that the $E_{\imo} \cup E_i$
  chain passing from $w$ through $x$ and continuing on must end in an
  $E_{i}$ edge. Moreover, since the vertices along this chain
  alternate in whether or not they admit an $\ipo$-neighbor, for any
  vertex $z$ along this chain, we may apply $\varphi_i^z(\G)$ while
  maintaining local Schur positivity for $E_i \cup
  E_{\ipo}$. Therefore we may assume that $x$ is the endpoint of this
  chain, i.e. $x$ does not admit an $\imo$-neighbor. This means the
  $i$-edge between $x$ and $w$ is flat, so $w \in W_{\ipo}^0(\G)$ and
  $\varphi_{\ipo}^w$ may be applied after which $E_{\ipo}(x) = E_i(x)
  = w$, so $\varphi_i^{w}$ will then preserve local Schur positivity
  of $E_i \cup E_{\ipo}$.

  Next we consider $\psi_i^x$. For vertices $v$ such that neither $v$
  nor $\psi_i^x(v)$ lies on the $i$-package of $E_{\imt}(x)$ or
  $E_iE_{\imt}(x)$, all results follow from the hypotheses on $\G$. To
  ease notation, let $u = (E_{\imo}E_{\imt})^m E_i(x)$, $w =
  E_{\imt}(x)$, and $v = E_{\imt}(u)$. By axiom $2$, both $E_{\imt}$
  and $E_{\imo}$ preserve $\sigma_{\ipo}$, so $\sigma(x)_{\ipo} =
  -\sigma(u)_{\ipo}$ if and only if $x$ has $\ipo$-type W. By axiom
  $4'b$, this implies that $w$ also has $\ipo$-type W, in which case
  both $w$ and $E_i(w)$ admit an $\ipo$-neighbor thereby ensuring
  axiom $3$ is preserved. To see the ways in which local Schur
  positivity may fail, we revisit Figure~\ref{fig:phi-bad-2}, and note
  that this figure is precisely an instance where $\varphi_{\imo}$ is
  applicable. The resolution therefore is to apply $\varphi_{\ipo}^v$
  and $\varphi_{\ipo}^w$ as needed before proceeding with
  $\psi_i^x$. This result extends along $E_{\iph} \cup \cdots \cup
  E_{\nmo}$ by the commutativity of $E_{\imt} \cup E_i$ ensured by
  axioms $2$ and $5$, and it extends along $E_{2} \cup \cdots \cup
  E_{\imh}$ by the commutativity of $E_{i} \cup E_{\ipo}$ ensured by
  axioms $2$ and $5$.
\end{proof}

Note that even when $\varphi_i^w$ or $\psi_i^x$ breaks local Schur
positivity, axiom $3$ is still maintained.

Next, we consider the cases necessary to establish degree $5$ local
Schur positivity. The key idea is that if there is an alternative path
between two vertices connected by an $i$-edge that gets deleted by the
map, then the component remains connected after applying the map.

\begin{lemma}
  Let $\G$ be a D graph and the $(\imo,N)$-restriction of $\G$
  a dual equivalence graph. 
  \begin{enumerate}
  \item For any $w \in W_i^0(\G)$, if there is a connected component
    of $E_{\imo} \cup E_{i} \cup E_{\ipo}$ in $\varphi_i^w(\G)$ that
    is not locally Schur positive, then either $u \in W_{\ipo}^0(\G)$
    for $u = w$ or $E_{\imo}(w)$ such that connected components of
    $E_{\imo} \cup E_{i} \cup E_{\ipo}$ are locally Schur positive in
    both $\varphi_{\ipo}^u(\G)$ and
    $\varphi_i^w(\varphi_{\ipo}^u(\G))$, or there exists $z \in
    C_{\ipo}^0(\G)$ such that connected components of $E_{\imo} \cup
    E_{i} \cup E_{\ipo}$ are locally Schur positive in both
    $\psi_{\ipo}^z(\G)$ and $\varphi_i^w(\psi_{\ipo}^z(\G))$.
  \item For any $x \in C_i^0(\G)$, if there is a connected component
    of $E_{\imo} \cup E_{i} \cup E_{\ipo}$ in $\psi_i^x(\G)$ that is
    not locally Schur positive, then there exists $z \in
    W_{\ipo}^0(\G)$ such that connected components of $E_{\imo} \cup
    E_{i} \cup E_{\ipo}$ are locally Schur positive in both
    $\varphi_{\ipo}^z(\G)$ and $\psi_i^x(\varphi_{\ipo}^z(\G))$.
  \end{enumerate}
  In particular, if $\varphi_i^w$ or $\psi_i^x$ breaks the local Schur
  positivity of $E_{\imo} \cup E_{i} \cup E_{\ipo}$, then it can be
  restored with either $\varphi^u_{\ipo}$ or $\psi^z_{\ipo}$ for some
  $u \in W_{\ipo}^0(\G)$ or $z \in C_{\ipo}^0(\G)$.
  \label{lem:phi-3-middle}
\end{lemma}

\begin{proof}
  We begin with $\varphi_i^w$. As depicted in
  Figure~\ref{fig:phi-3-up}, let $x = E_i(w)$ and $u =
  E_{\imo}(w)$. Similar to the discussion in Lemma~\ref{lem:phi-2-up},
  exactly one of $w$ and $u$ admits an $\ipo$-neighbor, so without
  loss of generality assume $w$ does and $u$ does not. If $w \in
  W_{\ipo}(\G)$, then by the same analysis as in
  Lemma~\ref{lem:phi-2-up}, $w \in W_{\ipo}^0(\G)$ and applying
  $\varphi_{\ipo}^w$ results in an alternative path from $w$ to $x$
  via $E_{\ipo}$. Therefore a subsequent application of $\varphi_i^w$
  does not disconnect the component of $E_{\imo} \cup E_{i} \cup
  E_{\ipo}$, thereby ensuring it remains locally Schur
  positive. Alternatively, if $w \not\in W_{\ipo}(\G)$, then letting
  $z = E_{\ipo}(w)$, $z$ must admit an $\imo$-neighbor and no
  $i$-neighbor. Following the $E_{\imo} \cup E_i$ string from $x$
  through $w$ and onwards results in a vertex $v$ admitting both an
  $i$-neighbor and an $\imo$-neighbor but not having $i$-type
  W. Therefore $E_{\imo}(v)$ will not admit an $i$-neighbor. Since the
  edges along the string toggle $E_{\ipo}$ by axioms $2$ and $3$, $v$
  must admit an $\ipo$-neighbor and so, too, must
  $E_{\imo}(v)$. Therefore $z \in C_{\ipo}^0(\G)$ with $m>0$, and
  applying $\psi_{\ipo}^z$ results in an alternative path from $w$ to
  $v$. See, e.g. Figure~\ref{fig:phi-bad-3b}. Once again, a subsequent
  application of $\varphi_i^w$ does not disconnect the component of
  $E_{\imo} \cup E_{i} \cup E_{\ipo}$, thereby ensuring it remains
  locally Schur positive.

  \begin{figure}[ht]
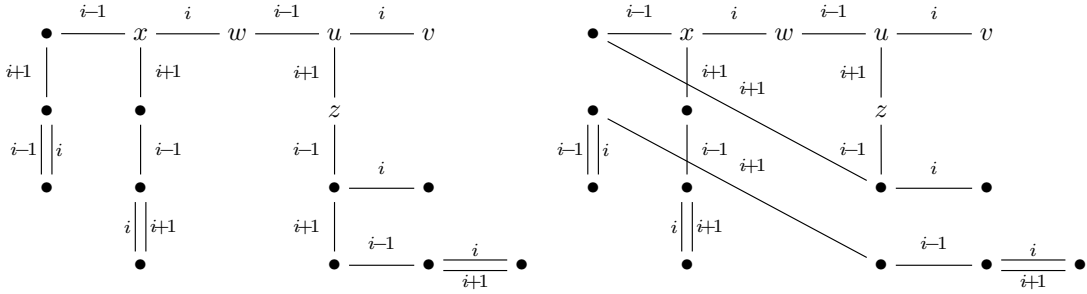

    \begin{displaymath}
      \begin{array}{\cs{7}\cs{7}\cs{7}\cs{7}\cs{7} c}
        \rnode{a1}{\B} & \rnode{b1}{x} & \rnode{c1}{w} & \rnode{d1}{u} & \rnode{e1}{v} & \\[4ex]
        \rnode{a2}{\B} & \rnode{b2}{\B} & & \rnode{d2}{z} & & \\[4ex]
        \rnode{a3}{\B} & \rnode{b3}{\B} & & \rnode{d3}{\B} & \rnode{e3}{\B} & \\[4ex]
        & \rnode{b4}{\B} & & \rnode{d4}{\B} & \rnode{e4}{\B} & \rnode{f4}{\B} 
      \end{array}
      \hspace{\cellsize}
      \begin{array}{\cs{7}\cs{7}\cs{7}\cs{8}\cs{7} c}
        \rnode{A1}{\B} & \rnode{B1}{x} & \rnode{C1}{w} & \rnode{D1}{u} & \rnode{E1}{v} & \\[4ex]
        \rnode{A2}{\B} & \rnode{B2}{\B} & & \rnode{D2}{z} & & \\[4ex]
        \rnode{A3}{\B} & \rnode{B3}{\B} & & \rnode{D3}{\B} & \rnode{E3}{\B} & \\[4ex]
        & \rnode{B4}{\B} & & \rnode{D4}{\B} & \rnode{E4}{\B} & \rnode{F4}{\B} 
      \end{array}
      \psset{linewidth=.1ex,nodesep=3pt}
      \everypsbox{\scriptstyle}
      \ncline {a1}{b1} \naput{\imo}
      \ncline {b1}{c1} \naput{i}
      \ncline {c1}{d1} \naput{\imo}
      \ncline {d1}{e1} \naput{i}
      \ncline {a1}{a2} \nbput{\ipo}
      \ncline {b1}{b2} \naput{\ipo}
      \ncline {d1}{d2} \nbput{\ipo}
      \ncline[offset=2pt] {a2}{a3} \nbput{\imo}
      \ncline[offset=2pt] {a3}{a2} \nbput{i}
      \ncline {b2}{b3} \naput{\imo}
      \ncline {d2}{d3} \nbput{\imo}
      \ncline {d3}{e3} \naput{i}
      \ncline[offset=2pt] {b3}{b4} \nbput{i}
      \ncline[offset=2pt] {b4}{b3} \nbput{\ipo}
      \ncline {d3}{d4} \nbput{\ipo}
      \ncline {d4}{e4} \naput{\imo}
      \ncline[offset=2pt] {e4}{f4} \nbput{\ipo}
      \ncline[offset=2pt] {f4}{e4} \nbput{i}
      \ncline {A1}{B1} \naput{\imo}
      \ncline {B1}{C1} \naput{i}
      \ncline {C1}{D1} \naput{\imo}
      \ncline {D1}{E1} \naput{i}
      \ncline {B1}{B2} \naput{\ipo}
      \ncline {D1}{D2} \nbput{\ipo}
      \ncline[offset=2pt] {A2}{A3} \nbput{\imo}
      \ncline[offset=2pt] {A3}{A2} \nbput{i}
      \ncline {B2}{B3} \naput{\imo}
      \ncline {D2}{D3} \nbput{\imo}
      \ncline {D3}{E3} \naput{i}
      \ncline[offset=2pt] {B3}{B4} \nbput{i}
      \ncline[offset=2pt] {B4}{B3} \nbput{\ipo}
      \ncline {D4}{E4} \naput{\imo}
      \ncline[offset=2pt] {E4}{F4} \nbput{\ipo}
      \ncline[offset=2pt] {F4}{E4} \nbput{i}
      \ncline {A1}{D3} \naput{\ipo}
      \ncline {A2}{D4} \naput{\ipo}
    \end{displaymath}
    \caption{\label{fig:phi-bad-3b}Scenario where $\varphi_i^w$ breaks
      local Schur positivity of $E_{\imo} \cup E_{i} \cup E_{\ipo}$
      (left), and the solution by applying $\psi_{\ipo}^{z}$ first
      (right).}
  \end{figure}

  For $\psi_i^x$, the situation is simpler. As usual, let $u =
  (E_{\imo} E_{\imt})^m E_i(x)$, and let $w = E_{\imt}(x)$ and $v =
  E_{\imt}(u)$. If either $w$ or $v$ has $\ipo$-type W, then applying
  $\varphi_{\ipo}$ at that vertex creates an alternative path via
  $E_{\ipo}$ for the $i$-edge at that vertex, and so a subsequent
  application of $\psi_i^x$ does not disconnect the component of
  $E_{\imo} \cup E_{i} \cup E_{\ipo}$. If $x$ has $\ipo$-type W, then
  by axiom $4'b$, so must at least one of $w$ or $v$, so the same
  solution applies. Therefore we may assume that none of $w,v,x$ has
  $\ipo$-type W. In this case, exactly one of $x$ and $u$ admits an
  $\ipo$-neighbor, so assume $x$ does and $u$ does not. By axiom $2$,
  this implies that $w$ admits an $\ipo$-neighbor and $v$ does
  not. Since neither has $\ipo$-type W, we conclude that $E_i(w)$ does
  not admit an $\ipo$-neighbor while $E_i(v)$ does. Following
  $\imo$-neighbors in the same way, both $v$ and $E_i(w)$ admit an
  $i$-edge but not $\imo$-neighbor nor an $\ipo$-neighbor. Therefore
  applying $\psi_i^x$ does not change the quasisymmetric functions
  associated to either component of $E_{\imo} \cup E_{i} \cup
  E_{\ipo}$ containing $v$ or $w$. Hence local Schur positivity of
  $E_{\imo} \cup E_{i} \cup E_{\ipo}$ is once again maintained.
\end{proof}

\begin{lemma}
  Let $\G$ be a D graph and the $(\imo,N)$-restriction of $\G$
  a dual equivalence graph. 
  \begin{enumerate}
  \item For any $w \in W_i^0(\G)$, if there is a connected component
    of $E_{i} \cup E_{\ipo} \cup E_{\ipt}$ in $\varphi_i^w(\G)$ that
    is not locally Schur positive, then either there exists $u \in
    W_{\ipo}^0(\G)$ such that connected components of $E_{i} \cup
    E_{\ipo} \cup E_{\ipt}$ are locally Schur positive in both
    $\varphi_{\ipo}^u(\G)$ and $\varphi_i^w(\varphi_{\ipo}^u(\G))$, or
    there exists $z \in C_{\ipt}^0(\G)$ such that connected components
    of $E_{i} \cup E_{\ipo} \cup E_{\ipt}$ are locally Schur positive
    in both $\psi_{\ipt}^z(\G)$ and $\varphi_i^w(\psi_{\ipt}^z(\G))$.
  \item For any $x \in C_i^0(\G)$, if there is a connected component
    of $E_{i} \cup E_{\ipo} \cup E_{\ipt}$ in $\psi_i^x(\G)$ that is
    not locally Schur positive, then either there exists $u \in
    W_{\ipo}^0(\G)$ such that connected components of $E_{i} \cup
    E_{\ipo} \cup E_{\ipt}$ are locally Schur positive in both
    $\varphi_{\ipo}^u(\G)$ and $\psi_i^x(\varphi_{\ipo}^u(\G))$, or
    there exists $z \in C_{\ipt}^0(\G)$ such that connected components
    of $E_{i} \cup E_{\ipo} \cup E_{\ipt}$ are locally Schur positive
    in both $\psi_{\ipt}^z(\G)$ and $\psi_i^x(\psi_{\ipt}^z(\G))$.
  \end{enumerate}
  In particular, if $\varphi_i^w$ or $\psi_i^x$ breaks the local Schur
  positivity of $E_{i} \cup E_{\ipo} \cup E_{\ipt}$, then it can be
  restored with either $\varphi^u_{\ipo}$ or $\psi^z_{\ipt}$ for some
  $u \in W_{\ipo}^0(\G)$ or $z \in C_{\ipt}^0(\G)$.
  \label{lem:phi-3-up}
\end{lemma}

\begin{proof}
  We begin with $\varphi_i^w$. Keeping notation from before, exactly
  one of $w$ or $u$ admits an $\ipo$-neighbor, so assume $u$ does and
  $w$ does not. By axiom $3$, $x$ admits an $\ipo$-neighbor since $w$
  does not. By axiom $2$, $\sigma(w)_{\ipo,\ipt} =
  \sigma(u)_{\ipo,\ipt}$, so $w$ admits an $\ipt$-neighbor if and only
  if $u$ admits an $\ipt$-neighbor, and, if so, by axiom $5$,
  $E_{\imo}E_{\ipt}(w) = E_{\ipt}(u)$. Since $w$ does not admit an
  $\ipo$-neighbor, by axioms $1$ and $3$, $\sigma(w)_{\ipo} =
  \sigma(x)_{\ipo}$, and by axiom $2$, $\sigma(w)_{\ipt} =
  \sigma(x)_{\ipt}$. In particular, $x$ admits an $\ipt$-neighbor if
  and only if $w$ admits an $\ipt$-neighbor. As before, if $u$ has
  $\ipo$-type W, then we may apply $\varphi_{\ipo}^w$, thereby
  creating an alternative path. Therefore assume $u$ does not have
  $\ipo$-type W, and so by axiom $2$, $v$ does not admit an
  $\ipo$-neighbor and $v$ admits an $\ipt$-neighbor if and only if $u$
  admits an $\ipt$-neighbor. Thus consider the two cases based on
  whether all or none of $x,w,u,v$ admit an $\ipt$-neighbor.

  \begin{figure}[ht]
    \begin{displaymath}
      \begin{array}{\cs{7}\cs{7}\cs{7} c}
        & \rnode{a0}{\B} & & \rnode{c0}{\B} \\[4ex]
        \rnode{a1}{x} & \rnode{b1}{w} & \rnode{c1}{u} & \rnode{d1}{v} \\[4ex]
        \rnode{a2}{\B} & \rnode{b2}{\B} & \rnode{c2}{\B} & \rnode{d2}{\B} \\[4ex]
        \rnode{a3}{\B} & \rnode{b3}{\B} & \rnode{c3}{\B} & \rnode{d3}{\B}
      \end{array}
      \hspace{4em}
      \begin{array}{\cs{7}\cs{7}\cs{7} c}
        & \rnode{A0}{\B} & & \rnode{C0}{\B} \\[4ex]
        \rnode{A1}{x} & \rnode{B1}{w} & \rnode{C1}{u} & \rnode{D1}{v} \\[4ex]
        & & & \\[4ex]
        & & &
      \end{array}
      \psset{linewidth=.1ex,nodesep=3pt}
      \everypsbox{\scriptstyle}
      \ncline {a0}{a1} \nbput{\ipo}
      \ncline {c0}{c1} \nbput{\ipo}
      \ncline {a1}{b1} \naput{i}
      \ncline {b1}{c1} \naput{\imo}
      \ncline {c1}{d1} \naput{i}
      \ncline {a1}{a2} \nbput{\ipt}
      \ncline {b1}{b2} \nbput{\ipt}
      \ncline {c1}{c2} \naput{\ipt}
      \ncline {d1}{d2} \naput{\ipt}
      \ncline {b2}{c2} \naput{\imo}
      \ncline {b2}{b3} \nbput{i}
      \ncline {b2}{a3} \nbput{\ipo}
      \ncline {d2}{d3} \naput{i}
      \ncline {d2}{c3} \nbput{\ipo}
      \ncline {A0}{A1} \nbput{\ipo}
      \ncline {C0}{C1} \nbput{\ipo}
      \ncline {A1}{B1} \naput{i}
      \ncline {B1}{C1} \naput{\imo}
      \ncline {C1}{D1} \naput{i}
    \end{displaymath}
    \caption{\label{fig:phi-bad-3c}The two possibilities of $E_{\ipo}
      \cup E_{\ipt}$ on a component of $W_i(\G) \cap (E_{\imo} \cup
      E_i)$.}
  \end{figure}

  If $x,w,u,v$ all admit an $\ipt$-neighbor, then we have the case
  depicted in the left side of Figure~\ref{fig:phi-bad-3c}. If both
  $E_{\ipt}(w)$ and $E_{\ipt}(v)$ have $\ipo$-type W, then applying
  $\varphi_{\ipo}$ to both results in terminal pieces of $E_i \cup
  E_{\ipo} \cup E_{\ipt}$. In this case, the quasisymmetric expansion
  of the components remains unchanged after applying $\varphi_i^w$. On
  the other hand, if one of them does not have $\ipo$-type W, then
  $\psi_{\ipt}$ applies, creating an alternative path so that a
  subsequent application of $\varphi_i^w$ does not separate the
  components of $E_i \cup E_{\ipo} \cup E_{\ipt}$. Hence local Schur
  positivity can always be maintained in this case. Alternately, if
  none of $x,w,u,v$ admits an $\ipt$-neighbor, then we have the case
  depicted in the right side of Figure~\ref{fig:phi-bad-3c}. In this
  case, both $w$ and $v$ are $\ipt$-type A extremal points of a
  component of $E_i \cup E_{\ipo} \cup E_{\ipt}$, so the
  quasisymmetric expansion of the components remains unchanged after
  applying $\varphi_i^w$. Therefore local Schur positivity is again
  maintained.

  The case for $\psi_i^x$ is not dissimilar. As usual, let $u =
  (E_{\imo} E_{\imt})^m E_i(x)$, and set $w = E_{\imt}(x)$ and $v =
  E_{\imt}(u)$. As in the previous lemma, if any of $v,x,v$ has
  $\ipo$-type W, then $\varphi_{\ipo}$ creates the desired alternative
  path. Therefore assume none does, and so $x$ admits an
  $\ipt$-neighbor if and only if $u$ does, and by axiom $2$, $w$
  admits an $\ipt$-neighbor if and only if $x$ does and similarly for
  $v$ and $u$. Therefore there are two cases to consider, either they
  all admit $\ipt$-neighbors or none of them does. If none does, then
  both $v$ and $E_{i}(w)$ are $\ipt$-type A terminal vertices for the
  component of $E_i \cup E_{\ipo} \cup E_{\ipt}$, so the
  quasisymmetric expansion of the components remains unchanged after
  applying $\psi_i^x$. If they all admit $\ipt$-neighbors, then $v \in
  C_{\ipt}^0(\G)$, and applying $\psi_{\ipt}^v$ creates the desired
  alternative path so that a subsequent application of $\psi_i^x$ does
  not separate components of $E_i \cup E_{\ipo} \cup E_{\ipt}$. Once
  again, local Schur positivity of $E_i \cup E_{\ipo} \cup E_{\ipt}$
  can always be maintained.
\end{proof}

Lemmas~\ref{lem:phi-2-down}, \ref{lem:phi-2-up},
\ref{lem:phi-3-middle} and \ref{lem:phi-3-up} determine when local
Schur positivity is maintained by $\varphi$ and $\psi$. For our
purposes, however, it is more prudent to use them to understand how
local Schur positivity can be \emph{restored} after applying these
maps..

\begin{theorem}
  Let $\mathcal{H}$ be a D graph. Let $\G$ be a signed, colored graph
  obtained from a $\mathcal{H}$ by applying the maps $\varphi_j, \psi_j,
  \gamma_j, \theta_j$ for $j < i$ in such a way that the
  $(i,N)$-restriction of $\G$ is a D graph satisfying dual equivalence
  axiom $4$ and the $(\imt,N)$-restriction of $\G$ is a dual
  equivalence graph.

  Then there exists a signed, colored graph $\widetilde{\G}$ obtained
  from $\mathcal{H}$ by applying the maps $\varphi_j, \psi_j,
  \gamma_j, \theta_j$ for $j \leq i$ such that $\widetilde{\G}$
  satisfies axioms $1, 2, 3$ and $5$, the $(\ipo,N)$-restriction of
  $\widetilde{\G}$ is a D graph, the $(i,N)$-restriction of
  $\widetilde{\G}$ satisfies dual equivalence axiom $4$, and the
  $(\imt,N)$-restriction of $\widetilde{\G}$ is a dual equivalence
  graph.
  \label{thm:restore}
\end{theorem}

\begin{proof}
  Since the original graph is locally Schur positive, if at some point
  during the construction of $\G$ the $(\ipo,N)$-restriction is not
  locally Schur positive, then the failure must be with some component
  of $E_{\imo} \cup E_{i}$ failing $\LSP_4$ or some component of
  $E_{\imt} \cup E_{\imo} \cup E_{i}$ failing $\LSP_5$, or both. 

  If some component of $E_{\imo} \cup E_{i}$ fails $\LSP_4$, then this
  must have resulted from an alteration of an $\imo$-edge by either
  $\varphi_{\imo}$ or $\psi_{\imo}$. Consider the graph obtained just
  before the offending map was applied. By Lemma~\ref{lem:phi-2-up},
  $\LSP_4$ can be maintained by first applying $\varphi_{i}^u$ for a
  suitable $u \in W_{i}^0(\G)$. Since the action of $\varphi_{\imo}$
  is independent of the $j$ edges of the graph for $j \geq i$, this
  does not change the $(i,N)$-restriction of the final
  graph. Therefore $\LSP_4$ can always be maintained without changing
  the $(i,N)$-restriction. By Lemmas~\ref{lem:axiom4p} and
  \ref{lem:theta-4p}, this also ensures that the maps maintain axiom
  $4'$ for the $(\ipo,N)$-restriction.
  
  Similarly, if at some point during the transformation $E_{\imt} \cup
  E_{\imo} \cup E_{i}$ fails $\LSP_5$, then this must have resulted
  from an altered $E_{\imt}$ or $E_{\imo}$ edge after an application
  of $\varphi_{\imt}, \psi_{\imt}$ or $\varphi_{\imo}, \psi_{\imo}$,
  respectively.  Lemma~\ref{lem:phi-3-up} ensures that the first two
  cases can be avoided by first applying $\varphi_{\imo}^u$ or
  $\psi_{i}^z$ for a suitable $u \in W_{\imo}^0(\G)$ or $z \in
  C_{i}^0(\G)$. Since the action of $\varphi_{\imt}$ is independent of
  the $j$ edges of the graph for $j \geq \imo$, this does not change
  the $(\imo,N)$-restriction of the final graph. Since the
  $(i,N)$-restriction of $\G$ is a D graph satisfying axiom $4$, the
  required application of $\varphi_{\imo}^u$ to ensure $\LSP_5$ was
  also necessary to achieve axiom $4$, so this does not change the
  $(i,N)$-restriction either. By Lemma~\ref{lem:phi-3-middle}, the
  latter two cases can be avoided by first applying $\varphi_{i}^u$ or
  $\psi_{i}^z$ for a suitable $u \in W_{i}^0(\G)$ or $z \in
  C_{i}^0(\G)$. Again, the actions of the maps are independent of the
  $i$ edges of the graph, so the $(i,N)$-restriction of the graph
  remains unchanged. Therefore $\LSP_5$ can be maintained as
  well. Thus the $(\ipo,N)$-restriction of the resulting graph is a D
  graph, and, since the $(i,N)$-restriction of $\widetilde{\G}$ is
  that same as the $(i,N)$-restriction of $\G$, it satisfies axiom $4$
  and the $(\imt,N)$-restriction is a dual equivalence graph.
\end{proof}

Finally, we are ready to show that we can apply the maps $\varphi,
\psi, \gamma$ and $\theta$ repeatedly to $\G^{(k)}_{c,D}$, or, more
generally, to any D graph, until dual equivalence axioms $4$ and $6$
hold while maintaining axioms $1, 2, 3$ and $5$. The following theorem
is the final ingredient to the proof of Theorem~\ref{thm:Dgraph}, and,
as a corollary, to LLT and Macdonald positivity.

\begin{theorem}
  Let $\G = (V,\sigma,E)$ be a D graph of type $(n,N)$. Then there
  exists a dual equivalence graph $\widetilde{\G} =
  (V,\sigma,\widetilde{E})$ of type $(n,N)$ with the same vertex set
  and signature function. In particular, for $n=N$, the sum $\sum_{v
    \in V} Q_{\sigma(v)}(X)$ is symmetric and Schur positive.
  \label{thm:D}
\end{theorem}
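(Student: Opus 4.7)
The plan is to induct on the color $i$, constructing a nested sequence of signed, colored graphs
\[
\G = \G_2,\ \G_3,\ \ldots,\ \G_{\nmo} = \widetilde{\G}
\]
on the common vertex set $V$ with common signature function $\sigma$, such that at each stage the $(\ipo,N)$-restriction of $\G_i$ is a dual equivalence graph while $\G_i$ itself remains a D graph. At the final stage $\G_{\nmo}$ will satisfy dual equivalence axioms $1$--$6$ everywhere and so be a dual equivalence graph of type $(n,N)$; the Schur positivity of the generating function then follows from Theorem~\ref{thm:isomorphic} and Corollary~\ref{cor:schurpos} applied to each connected component.

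The core of the inductive step is to pass from $\G_{\imo}$ to $\G_i$, i.e., to force dual equivalence axioms $4$ and $6$ in color $i$ without disturbing the axiomatization already in place for colors $<i$. First I would repeatedly apply the involutions $\varphi_i^w$ for $w\in W_i(\G_{\imo})$; by Lemma~\ref{lem:terminate} each application strictly shrinks the set $W_i$, so after finitely many steps $W_i$ is empty and every connected component of $E_{\imo}\cup E_i$ conforms to Figure~\ref{fig:lambda4}. Next I would apply $\psi_i^x$ for $x\in X_i$, which by the same lemma strictly shrinks $X_i$ without repopulating $W_i$, thereby forcing every component of $E_{\imt}\cup E_{\imo}\cup E_i$ to conform to Figure~\ref{fig:lambda5} and establishing axiom $4$ for the $(\ipo,N)$-restriction. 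Finally, with axioms $1$--$5$ and axiom $4$ for the $(\ipo,N)$-restriction in hand, Theorem~\ref{thm:cover} provides a surjective morphism from each connected component $\mathcal{H}$ of the $(\ipo,\ipo)$-restriction onto $\G_{\lambda}$ for some $\lambda$, and Corollary~\ref{cor:fibers} guarantees equal fiber sizes. Applying $\theta_i^{\C}$ repeatedly swaps $i$-edges between isomorphic components of the $(i,i)$-restriction and thus strictly reduces the cardinality of these fibers until the morphism becomes an isomorphism, at which point axiom $6$ holds; axioms $3$ and $4$ are preserved throughout by the $i$-compatibility condition of Definition~\ref{defn:hindge}.

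The main obstacle—and this is where the detailed analysis of Section~\ref{sec:D-LSP} is indispensable—is that each individual application of $\varphi_i$, $\psi_i$, or $\theta_i$ may temporarily destroy local Schur positivity or axiom $4'$, so the inductive invariant that $\G_i$ is a D graph is not automatically preserved. To handle this, I would at each step check the obstruction diagrams (compare Figures~\ref{fig:phi-bad-2}, \ref{fig:phi-bad-3a}, \ref{fig:phi-bad-3b}, \ref{fig:phi-bad-3c} and \ref{fig:psi-bad-2}) against the candidate move, and if applying the chosen involution would create a non-Schur-positive restricted component, first apply a repair move: a $\varphi_j$ or $\psi_j$ with $j\in\{\imo,i,\ipo,\ipt\}$ on a vertex forced to exist by axioms $3$ and $4'$. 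The analysis in Section~\ref{sec:D-LSP} shows that in every possible failure configuration such a repair move exists, preserves axioms $1,2,3,5$, and strictly reduces an appropriate complexity (lexicographic on $(|W_{\ipo}|, |X_{\ipo}|, |W_i|, |X_i|)$, say, with the $\theta_i$-phase tracked by fiber size), so the combined procedure terminates.

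Once the inductive construction reaches $i = \nmo$, the resulting graph $\widetilde{\G}$ satisfies all six dual equivalence axioms and so is a dual equivalence graph of type $(n,N)$. For $n = N$, Theorem~\ref{thm:isomorphic} decomposes each connected component as some $\G_{\lambda}$, and Corollary~\ref{cor:schurpos} (noting that signatures are unchanged throughout the transformation) yields
\[
\sum_{v \in V} Q_{\sigma(v)}(X) \;=\; \sum_{\C} s_{\lambda(\C)}(X),
\]
the sum over connected components of $\widetilde{\G}$. This is manifestly symmetric and Schur positive, proving the theorem.
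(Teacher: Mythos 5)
Your proposal follows the same high-level strategy as the paper (induction on the color $i$, cleaning up $E_i$ with $\varphi_i$, $\psi_i$, $\theta_i$ and invoking Lemma~\ref{lem:terminate}, Theorem~\ref{thm:cover} and Corollary~\ref{cor:fibers}), but it has a real gap in the bookkeeping that makes the procedure terminate.

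You state the inductive invariant as ``$\G_i$ itself remains a D graph,'' and you claim termination via a lexicographic complexity $(|W_{\ipo}|, |X_{\ipo}|, |W_i|, |X_i|)$. Neither of these works. The invariant the paper actually maintains is strictly weaker: $\G_i$ satisfies axioms $1,2,3,5$ and its $(\ipo,N)$-restriction is a dual equivalence graph. It does \emph{not} remain a D graph at intermediate stages, because modifying $E_i$ edges can and does break local Schur positivity for $E_i \cup E_{\ipo}$, $E_{\imo}\cup E_i\cup E_{\ipo}$, and $E_i\cup E_{\ipo}\cup E_{\ipt}$, which is precisely why $|W_{\ipo}|$ and $|X_{\ipo}|$ can \emph{increase} when you apply $\varphi_i$ or $\psi_i$. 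Your proposed lexicographic order therefore does not decrease, and the termination argument collapses. Lemma~\ref{lem:terminate} only controls $W_i$ and $X_i$ under $\varphi_i$ and $\psi_i$; it says nothing about the effect of those maps on higher-colored witness sets.

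The deeper omission is the mechanism the paper uses to handle the cascade. The repair moves $\varphi_{\ipo}$, $\psi_{\ipo}$, $\psi_{\ipt}$ modify $E_{\ipo}$ and $E_{\ipt}$ edges, and these in turn can break local Schur positivity for $E_j\cup E_{j+1}$ and $E_j\cup E_{j+1}\cup E_{j+2}$ with $j\geq\ipt$ --- but the hypotheses required to apply $\varphi_j$ and $\psi_{j+1}$ at those colors (namely that the $(j\!-\!1,N)$- or $(j\!-\!2,N)$-restriction be a dual equivalence graph) are not yet satisfied in $\G_{\imo}$ or $\G_i$. The paper resolves this with its ``remembering'' device: apply $\varphi_{\ipt}$ and $\psi_{\iph}$ using the newly cleaned $E_i$ edges of $\G_i$ but the stale $E_h$ edges from $\G_{\imo}$ for $h>i$, so that the relevant $j$-packages are well defined and the repairs can be deferred and performed ``in due course'' as $i$ increases. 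Without this idea, and with a termination measure that does not actually decrease, your proposed procedure is not shown to halt, and the induction does not close.
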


\begin{proof}
  We proceed by modifying the edges of $\G$ iteratively to construct a
  sequence of signed, colored graphs $\G = \G_2 , \ldots , \G_{\nmo} =
  \widetilde{\G}$ such that, for each $i>2$, $\G_{\imo}$ is
  constructed from $\G$ by applying the maps $\varphi_j, \psi_j,
  \gamma_j, \theta_j$ for $j < \ipt$, $\G_{\imo}$ satisfies dual
  equivalence axioms $1,2,3$ and $5$, and the $(i,N)$-restriction of
  $\G_{\imo}$ is a dual equivalence graph. Note that the intermediate
  graphs $\G_{\imo}$ might fail local Schur positivity, and so they
  are not necessarily D graphs.

  Since axioms $4$ and $6$ are vacuously satisfied for a graph of type
  $(3,N)$, the base case $\G_2 = \G$ is proved. Therefore we proceed
  by constructing $\G_i$ from $\G_{\imo}$ as follows. Since the
  $(i,N)$-restriction of $\G_{\imo}$ is a dual equivalence graph, by
  Theorem~\ref{thm:restore}, we can $\varphi_i$ and $\psi_i$ until the
  $(\ipo,N)$-restriction is once again a D graph while maintaining
  axioms $1,2,3$ and $5$. Then by Theorem~\ref{thm:axiom4}, we can
  apply $\varphi_i, \psi_i$ and $\gamma_i$ until axiom $4$ holds for
  the $(\ipo,N)$-restriction while preserving axioms $1,2$ and $5$. By
  Lemmas~\ref{lem:phi-2-down} and \ref{lem:phi-2-up}, axiom $3$ is
  also maintained. At this point the $(\ipo,N)$-restriction is a D
  graph satisfying axiom $4$ and the $(i,N)$-restriction remains a
  dual equivalence graph, so by Theorem~\ref{thm:restore}, we can
  $\varphi_{\ipo}$ and $\psi_{\ipo}$ until the $(\ipt,N)$-restriction
  is once again a D graph while maintaining axioms $1,2,3$ and
  $5$. Next, by Theorem~\ref{thm:axiom4}, we can apply
  $\varphi_{\ipo}, \psi_{\ipo}$ and $\gamma_{\ipo}$ until axiom $4$
  holds for the $(\ipt,N)$-restriction while preserving axioms $1,2$
  and $5$ as well as axiom $3$, by Lemmas~\ref{lem:phi-2-down} and
  \ref{lem:phi-2-up}. Continuing on, since the the
  $(\ipt,N)$-restriction is now a D graph satisfying axiom $4$ and the
  $(i,N)$-restriction remains a dual equivalence graph, by
  Theorem~\ref{thm:restore}, we can $\varphi_{\ipt}$ and $\psi_{\ipt}$
  until the $(\iph,N)$-restriction is once again a D graph while
  maintaining axioms $1,2,3$ and $5$. Again, by
  Theorem~\ref{thm:axiom4}, we can apply $\varphi_{\ipt}, \psi_{\ipt}$
  and $\gamma_{\ipt}$ until axiom $4$ holds for the
  $(\iph,N)$-restriction while preserving axioms $1,2$ and $5$ as well
  as axiom $3$, by Lemmas~\ref{lem:phi-2-down} and
  \ref{lem:phi-2-up}. The result is a graph satisfying axioms $1,2,3$
  and $5$ for which the $(\iph,N)$-restriction satisfies axiom $4$ and
  the $(i,N)$-restriction is a dual equivalence graph. 

  By Theorem~\ref{thm:axiom6}, we may apply $\theta_i$ together with
  $\varphi_{\ipo}$ and $\psi_{\ipt}$ as needed until the
  $(\ipo,N)$-restriction satisfies axiom $6$, all while maintaining
  axioms $1,2$ and $5$ as well as axiom and $4$ for the
  $(\iph,N)$-restriction. Call this resulting graph $\G_i$ and notice
  that it satisfies dual equivalence axioms $1,2,3$ and $5$, the
  $(\iph,N)$-restriction is a D graph, and the $(\ipo,N)$-restriction
  is a dual equivalence graph. Further, $\G_i$ was constructed from
  $\G$ using the maps $\varphi_j, \psi_j, \gamma_j, \theta_j$ for $j
  \leq \ipt$. Therefore we may proceed with the construction until
  $\G_{\nmo} = \widetilde{\G}$, which is its own $(n,N)$-restriction
  and, as such, is a dual equivalence graph.
\end{proof}

While transforming a D graph into a dual equivalence graph is quite
complicated, it is not necessary to carry out explicitly for any given
application. Once a D graph structure is established, the generating
function is proved to be Schur positive by Theorem~\ref{thm:D} and
Corollary~\ref{cor:schurpos}. Therefore we hope that there will be
many further applications of this theory to other classes of symmetric
functions beyond the immediate application to LLT and Macdonald
polynomials.

%
%

\bibliographystyle{amsalpha} 
\bibliography{../../references}

%
\appendix
%

\clearpage
\section{Standard dual equivalence graphs}
\label{app:DEGs}

Below we give the dual equivalence graphs of type $(6,6)$. The graphs
for the conjugate shapes may be obtained by transposing each tableau
and multiplying the signature coordinate-wise by $-1$.

\begin{figure}[ht]
  \begin{displaymath}
    \begin{array}{c}
      \tableau{1 & 2 & 3 & 4 & 5 & 6} \\  _{+++++} 
    \end{array}
\end{displaymath}
\caption{\label{fig:G6}The standard dual equivalence graph $\G_{6}$.}
\end{figure}

\begin{figure}[ht]
  \begin{displaymath}
    \begin{array}{\cs{3} \cs{3} \cs{3} \cs{3} c}
      \stab{a}{2 \\ 1 & 3 & 4 & 5 & 6}{-++++} &
      \stab{b}{3 \\ 1 & 2 & 4 & 5 & 6}{+-+++} &
      \stab{c}{4 \\ 1 & 2 & 3 & 5 & 6}{++-++} &
      \stab{d}{5 \\ 1 & 2 & 3 & 4 & 6}{+++-+} &
      \stab{e}{6 \\ 1 & 2 & 3 & 4 & 5}{++++-} 
    \end{array}
    \psset{nodesep=3pt,linewidth=.1ex}
    \everypsbox{\scriptstyle}
    \ncline            {a}{b} \naput{2}
    \ncline            {b}{c} \naput{3}
    \ncline            {c}{d} \naput{4}
    \ncline            {d}{e} \naput{5}
  \end{displaymath}
\caption{\label{fig:G51}The standard dual equivalence graph $\G_{5,1}$.}
\end{figure}

\begin{figure}[ht]
  \begin{displaymath}
    \begin{array}{\cs{2} \cs{2} \cs{2} \cs{4} \cs{4} \cs{2} \cs{2} c}
      \stab{a}{3 & 4 \\ 1 & 2 & 5 & 6}{+-+++} & &     
      \stab{b}{2 & 4 \\ 1 & 3 & 5 & 6}{-+-++} & &     
      \stab{c}{2 & 5 \\ 1 & 3 & 4 & 6}{-++-+} & & & \\[1.5\cellsize]
      & & & \stab{i}{2 & 6 \\ 1 & 3 & 4 & 5}{-+++-} & &
      \stab{j}{3 & 5 \\ 1 & 2 & 4 & 6}{+-+-+} & &
      \stab{k}{4 & 5 \\ 1 & 2 & 3 & 6}{++-++} \\[1.5\cellsize]
      \stab{z}{5 & 6 \\ 1 & 2 & 3 & 4}{+++-+} & &     
      \stab{y}{4 & 6 \\ 1 & 2 & 3 & 5}{++-+-} & &     
      \stab{x}{3 & 6 \\ 1 & 2 & 4 & 5}{+-++-} & & & 
    \end{array}
    \psset{nodesep=3pt,linewidth=.1ex}
    \everypsbox{\scriptstyle}
    \ncline            {cc}{i} \naput{5}
    \ncline            {ii}{x} \naput{2}
    \ncline            {c}{b} \nbput{4}
    \ncline            {y}{x} \nbput{3}
    \ncline[offset=2pt]{b}{a}\naput{3}
    \ncline[offset=2pt]{a}{b}\naput{2}
    \ncline            {cc}{j} \nbput{2}
    \ncline            {jj}{x} \nbput{5}
    \ncline[offset=2pt]{y}{z}\naput{5}
    \ncline[offset=2pt]{z}{y}\naput{4}
    \ncline[offset=2pt]{j}{k}\naput{3}
    \ncline[offset=2pt]{k}{j}\naput{4}
  \end{displaymath}
\caption{\label{fig:G42}The standard dual equivalence graph $\G_{4,2}$.}
\end{figure}

\begin{figure}[ht]
  \begin{displaymath}
    \begin{array}{\cs{5} \cs{5} \cs{5} c}
      & & \stab{w}{3 & 4 & 6 \\ 1 & 2 & 5}{+-++-} & \\[.5\cellsize]
      \stab{z}{4 & 5 & 6 \\ 1 & 2 & 3}{++-++} &
      \stab{y}{3 & 5 & 6 \\ 1 & 2 & 4}{+-+-+} & &
      \stab{v}{2 & 4 & 6 \\ 1 & 3 & 5}{-+-+-} \\[.5\cellsize]
      & & \stab{x}{2 & 5 & 6 \\ 1 & 3 & 4}{-++-+} & 
    \end{array}
    \psset{nodesep=3pt,linewidth=.1ex}
    \everypsbox{\scriptstyle}
    \ncline[offset=2pt]{v}{w} \naput{3}
    \ncline[offset=2pt]{w}{v} \naput{2}
    \ncline            {x}{y} \naput{2}
    \ncline[offset=2pt]{y}{z} \naput{4}
    \ncline[offset=2pt]{z}{y} \naput{3}
    \ncline[offset=2pt]{v}{x} \nbput{4}
    \ncline[offset=2pt]{x}{v} \nbput{5}
    \ncline            {w}{y} \nbput{5}
  \end{displaymath}  
\caption{\label{fig:G33}The standard dual equivalence graph $\G_{3,3}$.}
\end{figure}

\begin{figure}[ht]
  \begin{displaymath}
    \begin{array}{\cs{1} \cs{1} \cs{1} \cs{1} \cs{1} \cs{1} c}
      \stab{a}{3 \\ 2 \\ 1 & 4 & 5 & 6}{--+++} & &
      \stab{c}{4 \\ 3 \\ 1 & 2 & 5 & 6}{+--++} & &
      \stab{f}{5 \\ 4 \\ 1 & 2 & 3 & 6}{++--+} & &
      \stab{j}{6 \\ 5 \\ 1 & 2 & 3 & 4}{+++--} \\[2\cellsize]
      &
      \stab{b}{4 \\ 2 \\ 1 & 3 & 5 & 6}{-+-++} & &
      \stab{e}{5 \\ 3 \\ 1 & 2 & 4 & 6}{+-+-+} & &
      \stab{i}{6 \\ 4 \\ 1 & 2 & 3 & 5}{++-+-} &  \\[2\cellsize]
      & & 
      \stab{d}{5 \\ 2 \\ 1 & 3 & 4 & 6}{-++-+} & &
      \stab{h}{6 \\ 3 \\ 1 & 2 & 4 & 5}{+-++-} & & \\[2\cellsize]
      & & & \stab{g}{6 \\ 2 \\ 1 & 3 & 4 & 5}{-+++-} & & & 
    \end{array}
    \psset{nodesep=3pt,linewidth=.1ex}
    \everypsbox{\scriptstyle}
    \ncline            {a}{b} \naput{3}
    \ncline            {b}{d} \naput{4}
    \ncline            {c}{e} \naput{4}
    \ncline            {d}{g} \naput{5}
    \ncline            {e}{h} \naput{5}
    \ncline            {f}{i} \naput{5}
    \ncline            {b}{c} \naput{2}
    \ncline            {d}{e} \naput{2}
    \ncline            {e}{f} \naput{3}
    \ncline            {g}{h} \naput{2}
    \ncline            {h}{i} \naput{3}
    \ncline            {i}{j} \naput{4}
  \end{displaymath}  
\caption{\label{fig:G411}The standard dual equivalence graph $\G_{4,1,1}$.}
\end{figure}

\begin{figure}[ht]
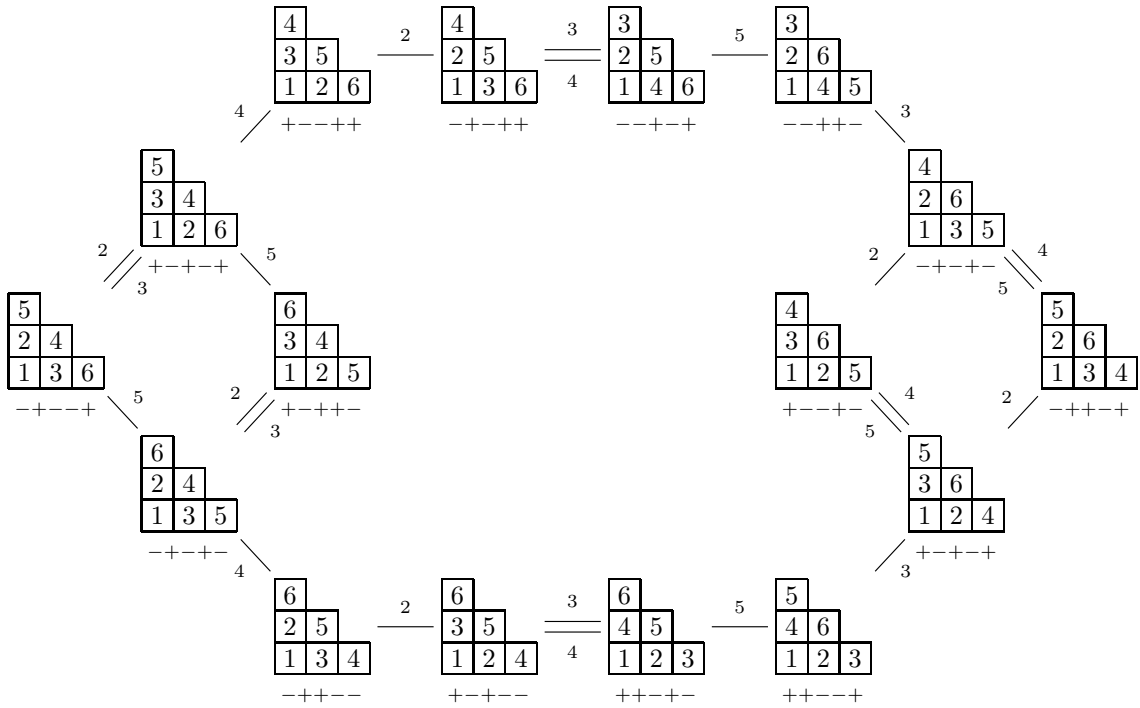

  \begin{displaymath}
    \begin{array}{\cs{1} \cs{1} \cs{4} \cs{4} \cs{4} \cs{1} \cs{1} c}
      & &
      \stab{b2}{4 \\ 3 & 5 \\ 1 & 2 & 6}{+--++} &
      \stab{a3}{4 \\ 2 & 5 \\ 1 & 3 & 6}{-+-++} &
      \stab{a4}{3 \\ 2 & 5 \\ 1 & 4 & 6}{--+-+} &
      \stab{b5}{3 \\ 2 & 6 \\ 1 & 4 & 5}{--++-} & & \\[2\cellsize]
      &
      \stab{c2}{5 \\ 3 & 4 \\ 1 & 2 & 6}{+-+-+} & & & & &
      \stab{c5}{4 \\ 2 & 6 \\ 1 & 3 & 5}{-+-+-} & \\[2\cellsize]
      \stab{d1}{5 \\ 2 & 4 \\ 1 & 3 & 6}{-+--+} & &
      \stab{d3}{6 \\ 3 & 4 \\ 1 & 2 & 5}{+-++-} & & &
      \stab{d4}{4 \\ 3 & 6 \\ 1 & 2 & 5}{+--+-} & &
      \stab{d6}{5 \\ 2 & 6 \\ 1 & 3 & 4}{-++-+}  \\ [2\cellsize]
      &
      \stab{e2}{6 \\ 2 & 4 \\ 1 & 3 & 5}{-+-+-} & & & & &
      \stab{e5}{5 \\ 3 & 6 \\ 1 & 2 & 4}{+-+-+} & \\ [2\cellsize]
      & &
      \stab{f2}{6 \\ 2 & 5 \\ 1 & 3 & 4}{-++--} &
      \stab{g3}{6 \\ 3 & 5 \\ 1 & 2 & 4}{+-+--} &
      \stab{g4}{6 \\ 4 & 5 \\ 1 & 2 & 3}{++-+-} &
      \stab{f5}{5 \\ 4 & 6 \\ 1 & 2 & 3}{++--+} & & 
    \end{array}
    \psset{nodesep=3pt,linewidth=.1ex}
    \everypsbox{\scriptstyle}
    \ncline[offset=2pt]{a3}{a4} \naput{3}
    \ncline[offset=2pt]{a4}{a3} \naput{4}
    \ncline            {b2}{a3} \naput{2}
    \ncline            {a4}{b5} \naput{5}
    \ncline            {b2}{c2} \nbput{4}
    \ncline            {b5}{c5} \naput{3}
    \ncline[offset=2pt]{d1}{c2} \naput{2}
    \ncline[offset=2pt]{c2}{d1} \naput{3}
    \ncline            {c2}{d3} \naput{5}
    \ncline            {d4}{c5} \naput{2}
    \ncline[offset=2pt]{c5}{d6} \naput{4}
    \ncline[offset=2pt]{d6}{c5} \naput{5}
    \ncline            {d1}{e2} \naput{5}
    \ncline[offset=2pt]{e2}{d3} \naput{2}
    \ncline[offset=2pt]{d3}{e2} \naput{3}
    \ncline[offset=2pt]{d4}{e5} \naput{4}
    \ncline[offset=2pt]{e5}{d4} \naput{5}
    \ncline            {e5}{d6} \naput{2}
    \ncline            {e2}{f2} \nbput{4}
    \ncline            {e5}{f5} \naput{3}
    \ncline            {f2}{g3} \naput{2}
    \ncline            {g4}{f5} \naput{5}
    \ncline[offset=2pt]{g3}{g4} \naput{3}
    \ncline[offset=2pt]{g4}{g3} \naput{4}
  \end{displaymath}
  \caption{\label{fig:G321}The standard dual equivalence graph $\G_{3,2,1}$.}
\end{figure}

\clearpage
\section{Graphs for tuples of tableaux}
\label{app:Dgraphs}

This appendix gives examples of connected components of the graphs
$\G^{(k)}_{c,D}$ constructed in Section~\ref{sec:llt}. The graph in
Figure~\ref{fig:domino} come from domino tableaux of shape $((3),
(2,1))$. Comparing this graph with the examples above, it is
isomorphic to $\G_{(4,2)}$. This demonstrates
Theorem~\ref{thm:dominoes}, which states that the graph on domino
tableaux is always a dual equivalence graph.

\begin{figure}[ht]
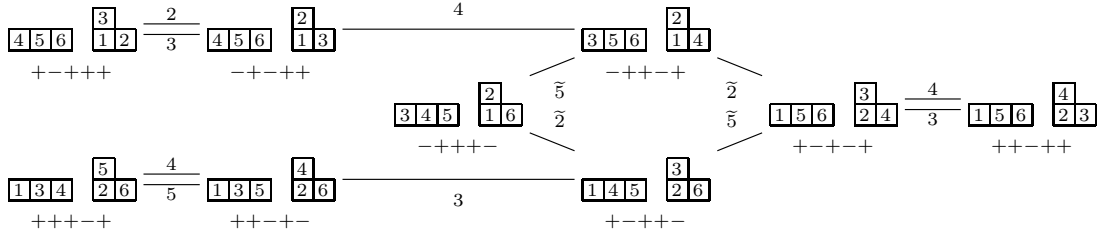

  \begin{displaymath}
    \begin{array}{\cs{4} \cs{3} \cs{3} \cs{3} \cs{4} c}
      \smstab{I}{ & & & & _3 \\ _4 & _5 & _6 & & _1 & _2}{+-+++} & 
      \smstab{G}{ & & & & _2 \\ _4 & _5 & _6 & & _1 & _3}{-+-++} & &
      \smstab{E}{ & & & & _2 \\ _3 & _5 & _6 & & _1 & _4}{-++-+} & & \\ & &
      \smstab{A}{ & & & & _2 \\ _3 & _4 & _5 & & _1 & _6}{-+++-} & &
      \smstab{F}{ & & & & _3 \\ _1 & _5 & _6 & & _2 & _4}{+-+-+} &
      \smstab{H}{ & & & & _4 \\ _1 & _5 & _6 & & _2 & _3}{++-++} \\
      \smstab{D}{ & & & & _5 \\ _1 & _3 & _4 & & _2 & _6}{+++-+} &
      \smstab{C}{ & & & & _4 \\ _1 & _3 & _5 & & _2 & _6}{++-+-} & &
      \smstab{B}{ & & & & _3 \\ _1 & _4 & _5 & & _2 & _6}{+-++-} & &
    \end{array}
    \psset{nodesep=3pt,linewidth=.1ex}
    \everypsbox{\scriptstyle}
    \ncline            {A}{B} \naput{\widetilde{2}}
    \ncline            {B}{C} \naput{3}
    \ncline[offset=2pt]{C}{D} \nbput{4}
    \ncline[offset=2pt]{D}{C} \nbput{5}
    \ncline            {A}{E} \nbput{\widetilde{5}}
    \ncline            {B}{F} \naput{\widetilde{5}}
    \ncline            {E}{F} \nbput{\widetilde{2}}
    \ncline            {E}{G} \nbput{4}
    \ncline[offset=2pt]{F}{H} \nbput{3}
    \ncline[offset=2pt]{H}{F} \nbput{4}
    \ncline[offset=2pt]{G}{I} \nbput{2}
    \ncline[offset=2pt]{I}{G} \nbput{3}
  \end{displaymath}
  \caption{\label{fig:domino}A connected component of the graph for
    domino tableaux of shape $((3),(2,1))$.}
\end{figure}

The graph in Figure~\ref{fig:non-DEG} comes from the graph for the
Macdonald polynomial $\widetilde{H}_{(4,1)}(X;q,t)$. Note that while
the generating function of the graph is $s_{(3,2)}+s_{(4,1)}$ which
indeed is Schur positive, the graph itself is not a dual equivalence
graph.

\begin{figure}[ht]
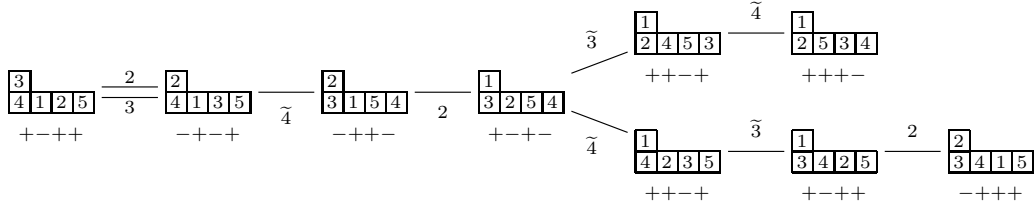

  \begin{displaymath}
    \begin{array}{\cs{4} \cs{4} \cs{4} \cs{4} \cs{4} \cs{4} c}
      & & & & 
      \smstab{b2}{_1 \\ _2 & _4 & _5 & _3}{++-+} &
      \smstab{a2}{_1 \\ _2 & _5 & _3 & _4}{+++-} & \\[-.5\cellsize]
      \smstab{a1}{_3 \\ _4 & _1 & _2 & _5}{+-++} &
      \smstab{b1}{_2 \\ _4 & _1 & _3 & _5}{-+-+} &
      \smstab{c1}{_2 \\ _3 & _1 & _5 & _4}{-++-} &
      \smstab{c2}{_1 \\ _3 & _2 & _5 & _4}{+-+-} & & & \\[-.5\cellsize]
      & & & & 
      \smstab{c3}{_1 \\ _4 & _2 & _3 & _5}{++-+} &
      \smstab{b3}{_1 \\ _3 & _4 & _2 & _5}{+-++} &
      \smstab{a3}{_2 \\ _3 & _4 & _1 & _5}{-+++} 
    \end{array}
    \psset{linewidth=.1ex,nodesep=3pt}
    \everypsbox{\scriptstyle}
    \ncline[offset=2pt]{a1}{b1} \nbput{3}
    \ncline[offset=2pt]{b1}{a1} \nbput{2}
    \ncline            {c1}{b1} \naput{\widetilde{4}}
    \ncline            {c1}{c2} \nbput{2}
    \ncline            {b2}{a2} \naput{\widetilde{4}}
    \ncline            {b2}{c2} \nbput{\widetilde{3}}
    \ncline            {c2}{c3} \nbput{\widetilde{4}}
    \ncline            {a3}{b3} \nbput{2}
    \ncline            {b3}{c3} \nbput{\widetilde{3}}
  \end{displaymath}
  \caption{\label{fig:non-DEG}A connected component of the graph for
    standard fillings of shape $(4,1)$.}
\end{figure}

\section{Resolution of axiom $4$}
\label{app:axiom4}

The graph in Figure~\ref{fig:box} arises from the graph for the
Macdonald polynomial $\widetilde{H}_{(5)}(X;q,t)$. The transformation
of this graph into a dual equivalence graph requires only $\varphi_3$
and $\varphi_4$. The result is the dual equivalence graph given in
Figure~\ref{fig:open-box}. For this example, axiom $6$ is immediate
from axiom $4$ given the size of the graph, and it is mere coincidence
that $\psi_4$ was not needed to resolve axiom $4$.

\begin{figure}[ht]
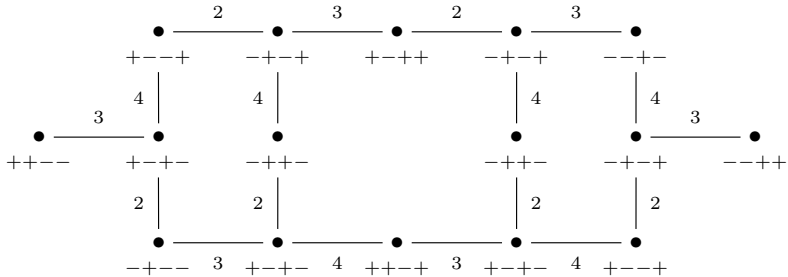

  \begin{displaymath}
    \begin{array}{\cs{7}\cs{7}\cs{7}\cs{7}\cs{7}\cs{7}c}
      & \sbull{t1}{+--+} & \sbull{t2}{-+-+} & \sbull{t3}{+-++} 
      & \sbull{t4}{-+-+} & \sbull{t5}{--+-} & \\[2\cellsize]
      \sbull{m0}{++--} & \sbull{m1}{+-+-} & \sbull{m2}{-++-} & 
      & \sbull{m4}{-++-} & \sbull{m5}{-+-+} & \sbull{m6}{--++} \\[2\cellsize]
      & \sbull{b1}{-+--} & \sbull{b2}{+-+-} & \sbull{b3}{++-+} 
      & \sbull{b4}{+-+-} & \sbull{b5}{+--+} &
    \end{array}
    \psset{linewidth=.1ex,nodesep=3pt}
    \everypsbox{\scriptstyle}
    \ncline {t1}{t2} \naput{2}
    \ncline {t2}{t3} \naput{3}
    \ncline {t3}{t4} \naput{2}
    \ncline {t4}{t5} \naput{3}
    \ncline {t1t1}{m1} \nbput{4}
    \ncline {t2t2}{m2} \nbput{4}
    \ncline {t4t4}{m4} \naput{4}
    \ncline {t5t5}{m5} \naput{4}
    \ncline {m0}{m1} \naput{3}
    \ncline {m5}{m6} \naput{3}
    \ncline {m1m1}{b1} \nbput{2}
    \ncline {m2m2}{b2} \nbput{2}
    \ncline {m4m4}{b4} \naput{2}
    \ncline {m5m5}{b5} \naput{2}
    \ncline {b1}{b2} \nbput{3}
    \ncline {b2}{b3} \nbput{4}
    \ncline {b3}{b4} \nbput{3}
    \ncline {b4}{b5} \nbput{4}
  \end{displaymath}
  \caption{\label{fig:box}A connected component for the graph for the
    $5$-tuple $((1),(1),(1),(1),(1))$ with generating function
    $s_{3,2} + s_{3,1,1} + s_{2,2,1}$.}
\end{figure}  

\begin{figure}[ht]
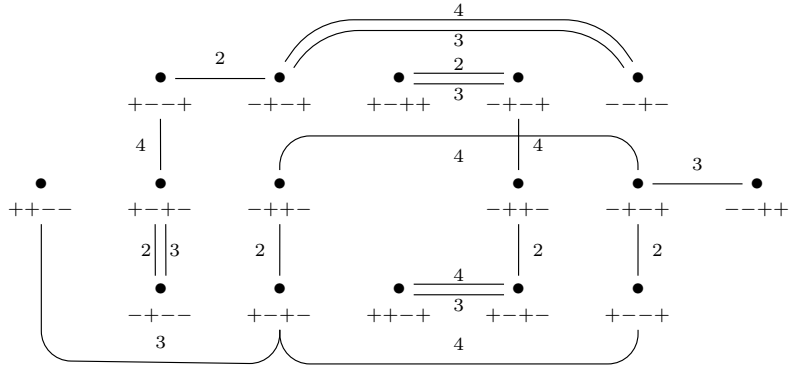

  \begin{displaymath}
    \begin{array}{\cs{7}\cs{7}\cs{7}\cs{7}\cs{7}\cs{7}c} \\[\cellsize]
      & \sbull{t1}{+--+} & \sbull{t2}{-+-+} & \sbull{t3}{+-++} 
      & \sbull{t4}{-+-+} & \sbull{t5}{--+-} & \\[2\cellsize]
      \sbull{m0}{++--} & \sbull{m1}{+-+-} & \sbull{m2}{-++-} & 
      & \sbull{m4}{-++-} & \sbull{m5}{-+-+} & \sbull{m6}{--++} \\[2\cellsize]
      & \sbull{b1}{-+--} & \sbull{b2}{+-+-} & \sbull{b3}{++-+} 
      & \sbull{b4}{+-+-} & \sbull{b5}{+--+} & \\[\cellsize]
    \end{array}
    \psset{linewidth=.1ex,nodesep=3pt}
    \everypsbox{\scriptstyle}
    \ncline {t1}{t2} \naput{2}
    \ncline[offset=2pt] {t3}{t4} \nbput{3}
    \ncline[offset=2pt] {t4}{t3} \nbput{2}
    \ncdiag[offset=2pt,angleA=55,angleB=125,arm=4.5ex,linearc=1] {t2}{t5} \nbput{3}
    \ncdiag[offset=2pt,angleB=55,angleA=125,arm=4ex,linearc=1] {t5}{t2} \nbput{4}
    \ncline {t1t1}{m1} \nbput{4}
    \ncline {t4t4}{m4} \naput{4}
    \ncdiag[angleA=90,angleB=90,arm=3ex,linearc=.4] {m2}{m5} \nbput{4}
    \ncdiag[angleA=-90,armA=12ex,angleB=-90,armB=3ex,linearc=.4] {m0m0}{b2b2} \naput{3}
    \ncline {m5}{m6} \naput{3}
    \ncline[offset=2pt] {m1m1}{b1} \nbput{2}
    \ncline[offset=2pt] {b1}{m1m1} \nbput{3}
    \ncline {m2m2}{b2} \nbput{2}
    \ncline {m4m4}{b4} \naput{2}
    \ncline {m5m5}{b5} \naput{2}
    \ncdiag[angleA=-90,angleB=-90,arm=3ex,linearc=.4] {b5b5}{b2b2} \nbput{4}
    \ncline[offset=2pt] {b3}{b4} \nbput{3}
    \ncline[offset=2pt] {b4}{b3} \nbput{4}
  \end{displaymath}
  \caption{\label{fig:open-box}The transformation of the graph in
    Figure~\ref{fig:box} using $\varphi_3$ and $\varphi_4$.}
\end{figure}  

The graph in Figure~\ref{fig:frog} is also not a dual equivalence
graph and also arises as a connected component of the graph for the
Macdonald polynomial
$\widetilde{H}_{(5)}(X;q,t)$. Figure~\ref{fig:frog} shows the
resulting dual equivalence graph after implementing the algorithms of
Section~\ref{sec:Dgraphs}, this time requiring $\psi_4$ as well as
$\varphi_3$ and $\varphi_4$. Again, axiom $6$ is immediate from axiom
$4$ given the size of the graph.

\begin{figure}[ht]
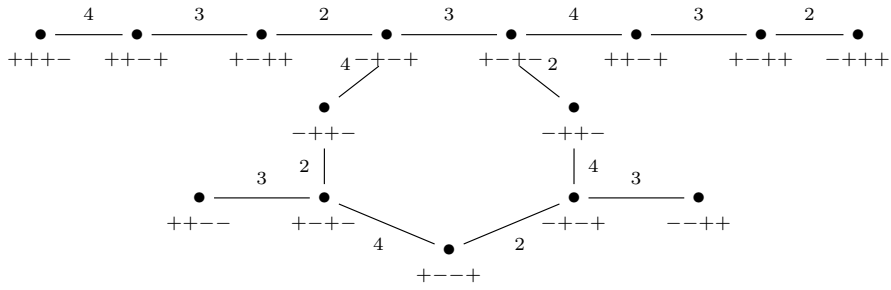

  \begin{displaymath}
    \begin{array}{\cs{3} \cs{2} \cs{2} \cs{2} \cs{2} \cs{2} \cs{2} \cs{2} \cs{2} \cs{2} \cs{2} \cs{2} \cs{2} \cs{3} c}
      \\[\cellsize]
      \sbull{A0}{+++-} & &
      \sbull{A1}{++-+} & & 
      \sbull{B1}{+-++} & & 
      \sbull{C1}{-+-+} & & 
      \sbull{D1}{+-+-} & & 
      \sbull{E1}{++-+} & & 
      \sbull{F1}{+-++} & &
      \sbull{F0}{-+++} \\[\cellsize] & & & & &
      \sbull{C2}{-++-} & & & & 
      \sbull{D2}{-++-} & & & & & \\[1.5\cellsize] & & &
      \sbull{B3}{++--} & & 
      \sbull{C3}{+-+-} & &  & & 
      \sbull{D3}{-+-+} & & 
      \sbull{E3}{--++} & & & \\ & & & & & & &
      \sbull{CD}{+--+} & & & & & & &
    \end{array}
    \psset{linewidth=.1ex,nodesep=3pt}
    \everypsbox{\scriptstyle}
    \ncline {A0}{A1} \naput{4}
    \ncline {F1}{F0} \naput{2}
    \ncline {A1}{B1} \naput{3}
    \ncline {B1}{C1} \naput{2}
    \ncline {C1}{D1} \naput{3}
    \ncline {D1}{E1} \naput{4}
    \ncline {E1}{F1} \naput{3}
    \ncline {C2}{C1C1} \naput{4}
    \ncline {D1D1}{D2} \naput{2}
    \ncline {C3}{C2C2} \naput{2}
    \ncline {D2D2}{D3} \naput{4}
    \ncline {B3}{C3} \naput{3}
    \ncline {C3}{CD} \nbput{4}
    \ncline {CD}{D3} \nbput{2}
    \ncline {D3}{E3} \naput{3}
  \end{displaymath}
  \caption{\label{fig:frog}A connected component of the graph for the
    $5$-tuple $((1),(1),(1),(1),(1))$ with generating function
    $s_{4,1} + s_{3,2} + s_{3,1,1}$.}
\end{figure}

\begin{figure}[ht]
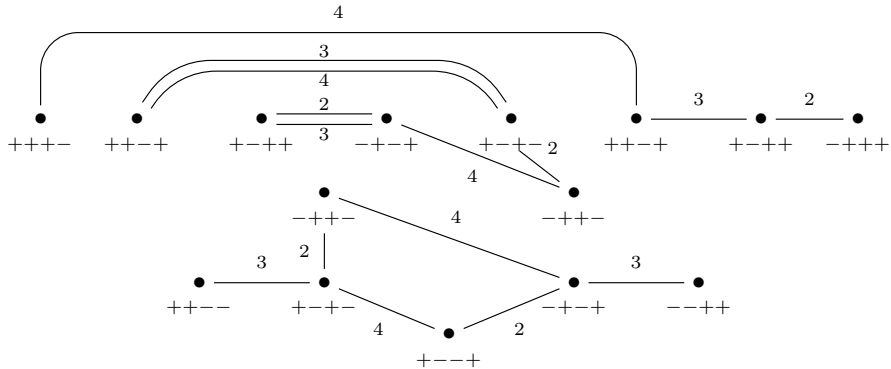

  \begin{displaymath}
    \begin{array}{\cs{3} \cs{2} \cs{2} \cs{2} \cs{2} \cs{2} \cs{2} \cs{2} \cs{2} \cs{2} \cs{2} \cs{2} \cs{2} \cs{3} c}
      \\[2\cellsize]
      \sbull{xA0}{+++-} & &
      \sbull{xA1}{++-+} & & 
      \sbull{xB1}{+-++} & & 
      \sbull{xC1}{-+-+} & & 
      \sbull{xD1}{+-+-} & & 
      \sbull{xE1}{++-+} & & 
      \sbull{xF1}{+-++} & &
      \sbull{xF0}{-+++} \\[\cellsize] & & & & &
      \sbull{xC2}{-++-} & & & & 
      \sbull{xD2}{-++-} & & & & & \\[1.5\cellsize] & & &
      \sbull{xB3}{++--} & & 
      \sbull{xC3}{+-+-} & &  & & 
      \sbull{xD3}{-+-+} & & 
      \sbull{xE3}{--++} & & & \\ & & & & & & &
      \sbull{xCD}{+--+} & & & & & & &
    \end{array}
    \psset{linewidth=.1ex,nodesep=3pt}
    \everypsbox{\scriptstyle}
    \ncdiag[angleA=90,angleB=90,arm=6.4ex,linearc=.5] {xA0}{xE1} \naput{4}
    \ncline[offset=2pt] {xB1}{xC1} \nbput{3}
    \ncline[offset=2pt] {xC1}{xB1} \nbput{2}
    \ncline {xE1}{xF1} \naput{3}
    \ncdiag[offset=2pt,angleA=55,angleB=125,arm=4.5ex,linearc=1]%
    {xA1}{xD1} \nbput{4}
    \ncdiag[offset=2pt,angleB=55,angleA=125,arm=4ex,linearc=1]%
    {xD1}{xA1} \nbput{3}
    \ncline {xF1}{xF0} \naput{2}
    \ncline {xD1xD1}{xD2} \naput{2}
    \ncline {xC2}{xD3} \naput{4}
    \ncline {xD2}{xC1} \naput{4}
    \ncline {xC3}{xC2xC2} \naput{2}
    \ncline {xCD}{xD3} \nbput{2}
    \ncline {xB3}{xC3} \naput{3}
    \ncline {xC3}{xCD} \nbput{4}
    \ncline {xD3}{xE3} \naput{3}
  \end{displaymath}
  \caption{\label{fig:dissect}The transformation of the graph in
    Figure~\ref{fig:frog} using $\varphi_3, \varphi_4$ and $\psi_4$.}
\end{figure}

\clearpage
\section{Resolution of axiom $6$}
\label{app:axiom6}

The example in Figure~\ref{fig:gregg}, first observed by Gregg
Musiker, demonstrates the necessity of axiom $6$. This graph arises
when transforming the graph for the Macdonald polynomial
$\widetilde{H}_{(6)}(X;q,t)$. It satisfies axioms $1$ through $5$, but
fails axiom $6$. Comparing with the standard dual equivalence graphs
in Appendix~\ref{app:DEGs}, this graph is a two-fold cover of
$\G_{(3,2,1)}$ as expected from its generating function $2
s_{(3,2,1)}(X)$. Figure~\ref{fig:tab-gregg} gives the isomorphism
classes of the $(5,6)$-restriction of this graph.

\begin{figure}[ht]
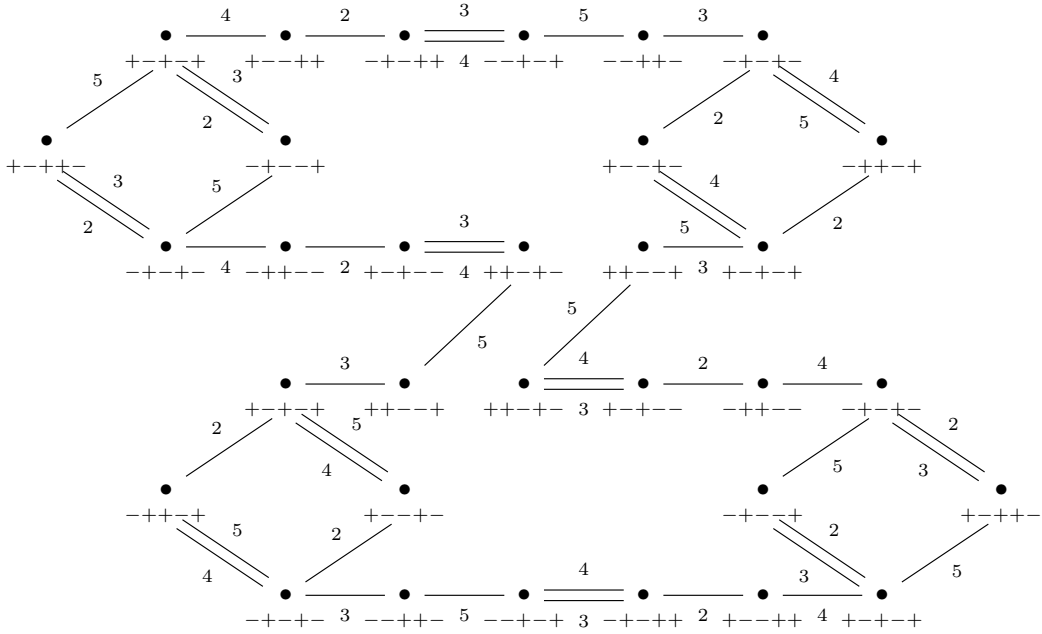

  \begin{displaymath}
    \begin{array}{\cs{7} \cs{7} \cs{7} \cs{7} \cs{7} \cs{7} \cs{7} \cs{7} c}
      &
      \sbull{c2}{+-+-+} &
      \sbull{b2}{+--++} &
      \sbull{a3}{-+-++} &
      \sbull{a4}{--+-+} &
      \sbull{b5}{--++-} &
      \sbull{c5}{-+-+-} &
      & \\[2\cellsize]
      \sbull{d3}{+-++-} & &
      \sbull{d1}{-+--+} & & &
      \sbull{d4}{+--+-} & &
      \sbull{d6}{-++-+} 
      & \\[2\cellsize]
      &
      \sbull{e2}{-+-+-} &
      \sbull{f2}{-++--} &
      \sbull{g3}{+-+--} &
      \sbull{g4}{++-+-} &
      \sbull{f5}{++--+} &
      \sbull{e5}{+-+-+} & 
      & \\[3\cellsize]
      & &
      \sbull{xe5}{+-+-+} & 
      \sbull{xf5}{++--+} &
      \sbull{xg4}{++-+-} &
      \sbull{xg3}{+-+--} &
      \sbull{xf2}{-++--} &
      \sbull{xe2}{-+-+-} & 
      \\[2\cellsize]
      &
      \sbull{xd6}{-++-+} & &
      \sbull{xd4}{+--+-} & & &
      \sbull{xd1}{-+--+}  & &
      \sbull{xd3}{+-++-}
      \\ [2\cellsize]
      & &
      \sbull{xc5}{-+-+-} &
      \sbull{xb5}{--++-} &
      \sbull{xa4}{--+-+} &
      \sbull{xa3}{-+-++} &
      \sbull{xb2}{+--++} &
      \sbull{xc2}{+-+-+} & 
    \end{array}
    \psset{nodesep=5pt,linewidth=.1ex}
    \everypsbox{\scriptstyle}
    \ncline[offset=2pt]{a3}{a4} \naput{3}
    \ncline[offset=2pt]{a4}{a3} \naput{4}
    \ncline            {b2}{a3} \naput{2}
    \ncline            {a4}{b5} \naput{5}
    \ncline            {c2}{b2} \naput{4}
    \ncline            {b5}{c5} \naput{3}
    \ncline[offset=2pt]{d1}{c2c2} \naput{2}
    \ncline[offset=2pt]{c2c2}{d1} \naput{3}
    \ncline            {c2c2}{d3} \nbput{5}
    \ncline            {d4}{c5c5} \nbput{2}
    \ncline[offset=2pt]{c5c5}{d6} \naput{4}
    \ncline[offset=2pt]{d6}{c5c5} \naput{5}
    \ncline            {d1d1}{e2} \nbput{5}
    \ncline[offset=2pt]{e2}{d3d3} \naput{2}
    \ncline[offset=2pt]{d3d3}{e2} \naput{3}
    \ncline[offset=2pt]{d4d4}{e5} \naput{4}
    \ncline[offset=2pt]{e5}{d4d4} \naput{5}
    \ncline            {e5}{d6d6} \nbput{2}
    \ncline            {e2}{f2} \nbput{4}
    \ncline            {f5}{e5} \nbput{3}
    \ncline            {f2}{g3} \nbput{2}
    \ncline            {g4g4}{xf5} \naput{5}
    \ncline[offset=2pt]{g3}{g4} \naput{3}
    \ncline[offset=2pt]{g4}{g3} \naput{4}
    \ncline[offset=2pt]{xa3}{xa4} \naput{3}
    \ncline[offset=2pt]{xa4}{xa3} \naput{4}
    \ncline            {xb2}{xa3} \naput{2}
    \ncline            {xa4}{xb5} \naput{5}
    \ncline            {xc2}{xb2} \naput{4}
    \ncline            {xb5}{xc5} \naput{3}
    \ncline[offset=2pt]{xd1xd1}{xc2} \naput{2}
    \ncline[offset=2pt]{xc2}{xd1xd1} \naput{3}
    \ncline            {xc2}{xd3xd3} \nbput{5}
    \ncline            {xd4xd4}{xc5} \nbput{2}
    \ncline[offset=2pt]{xc5}{xd6xd6} \naput{4}
    \ncline[offset=2pt]{xd6xd6}{xc5} \naput{5}
    \ncline            {xd1}{xe2xe2} \nbput{5}
    \ncline[offset=2pt]{xe2xe2}{xd3} \naput{2}
    \ncline[offset=2pt]{xd3}{xe2xe2} \naput{3}
    \ncline[offset=2pt]{xd4}{xe5xe5} \naput{4}
    \ncline[offset=2pt]{xe5xe5}{xd4} \naput{5}
    \ncline            {xe5xe5}{xd6} \nbput{2}
    \ncline            {xe2}{xf2} \nbput{4}
    \ncline            {xf5}{xe5} \nbput{3}
    \ncline            {xf2}{xg3} \nbput{2}
    \ncline            {xg4}{f5f5} \naput{5}
    \ncline[offset=2pt]{xg3}{xg4} \naput{3}
    \ncline[offset=2pt]{xg4}{xg3} \naput{4}
  \end{displaymath}
  \caption{\label{fig:gregg}The smallest graph satisfying dual
    equivalence graph axioms $1-5$ but not $6$.}
\end{figure}

\begin{figure}[ht]
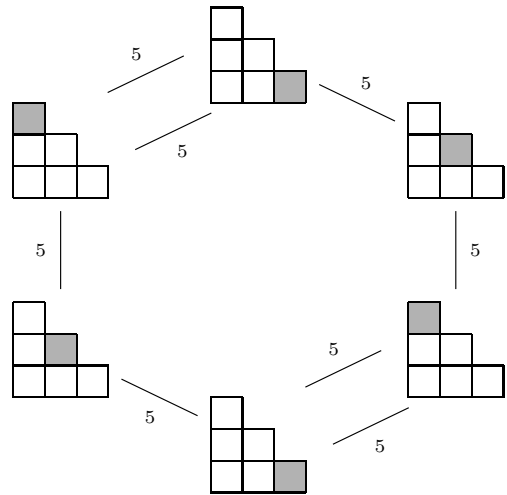

  \begin{displaymath}
    \begin{array}{\cs{9}\cs{9}c}
      & \rnode{a}{\tableau{\e \\ \e & \e \\ \e & \e & \cb}} & \\[\cellsize]
      \rnode{b}{\tableau{\cb \\ \e & \e \\ \e & \e & \e}} & 
      & \rnode{c}{\tableau{\e \\ \e & \cb \\ \e & \e & \e}} \\[5\cellsize]
      \rnode{C}{\tableau{\e \\ \e & \cb \\ \e & \e & \e}} & 
      & \rnode{B}{\tableau{\cb \\ \e & \e \\ \e & \e & \e}} \\[\cellsize]
      & \rnode{A}{\tableau{\e \\ \e & \e \\ \e & \e & \cb}} & 
    \end{array}
    \psset{nodesep=5pt,linewidth=.1ex}
    \everypsbox{\scriptstyle}
    \ncline[offset=12pt] {a}{b} \naput{5}
    \ncline[offset=12pt] {b}{a} \naput{5}
    \ncline {a}{c} \naput{5}
    \ncline {b}{C} \nbput{5}
    \ncline {c}{B} \naput{5}
    \ncline {C}{A} \nbput{5}
    \ncline[offset=12pt] {B}{A} \naput{5}
    \ncline[offset=12pt] {A}{B} \naput{5}
  \end{displaymath}
  \caption{\label{fig:tab-gregg}The $(5,6)$-restriction of
    Figure~\ref{fig:gregg} highlighting the two-fold cover of
    $\G_{3,2,1}$.}
\end{figure}

\clearpage
\section{Graphs failing axiom $4'$}
\label{app:bad-graphs}

In this final appendix, we give examples of locally Schur positive
graphs satisfying dual equivalence axioms $1,2,3$ and $5$ but failing
axiom $4'$. Not coincidentally, the transformations presented in
Section~\ref{sec:Dgraphs} cannot be applied to transform these graphs
into dual equivalence graphs.

Figure~\ref{fig:4'a} shows a graph violating only axiom $4'a$. The
generating function is not Schur positive. Here $\varphi_4$ is needed
in two places, and in both instances breaks local Schur
positivity. There are two places requiring $\varphi_5$, however
neither satisfies the hypotheses necessary to apply the map.

\begin{figure}[ht]
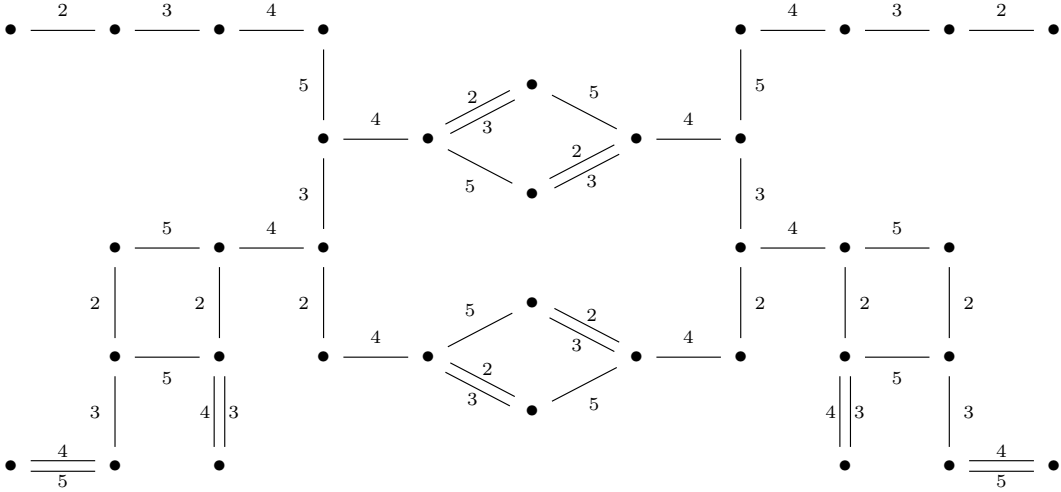

  \begin{displaymath}
    \begin{array}{\cs{8}\cs{8}\cs{8}\cs{8}\cs{8}\cs{8}\cs{8}\cs{8}\cs{8}\cs{8}c}
      \rnode{z1}{\B} & \rnode{a0}{\B} & \rnode{b0}{\B} & \rnode{c0}{\B} & \rnode{d0}{  } & \rnode{e0}{  } & \rnode{f0}{  } & \rnode{g0}{\B} & \rnode{h0}{\B} & \rnode{i0}{\B} & \rnode{y1}{\B} \\[2ex]
      & \rnode{a1}{  } & \rnode{b1}{  } & \rnode{c1}{  } & \rnode{d1}{  } & \rnode{e1}{\B} & \rnode{f1}{  } & \rnode{g1}{  } & \rnode{h1}{  } & \rnode{i1}{  } & \\[2ex]
      & \rnode{a2}{  } & \rnode{b2}{  } & \rnode{c2}{\B} & \rnode{d2}{\B} & \rnode{e2}{  } & \rnode{f2}{\B} & \rnode{g2}{\B} & \rnode{h2}{  } & \rnode{i2}{  } & \\[2ex]
      & \rnode{a3}{  } & \rnode{b3}{  } & \rnode{c3}{  } & \rnode{d3}{  } & \rnode{e3}{\B} & \rnode{f3}{  } & \rnode{g3}{  } & \rnode{h3}{  } & \rnode{i3}{  } & \\[2ex]
      & \rnode{a4}{\B} & \rnode{b4}{\B} & \rnode{c4}{\B} & \rnode{d4}{  } & \rnode{e4}{  } & \rnode{f4}{  } & \rnode{g4}{\B} & \rnode{h4}{\B} & \rnode{i4}{\B} & \\[2ex]
      & \rnode{a5}{  } & \rnode{b5}{  } & \rnode{c5}{  } & \rnode{d5}{  } & \rnode{e5}{\B} & \rnode{f5}{  } & \rnode{g5}{  } & \rnode{h5}{  } & \rnode{i5}{  } & \\[2ex]
      & \rnode{a6}{\B} & \rnode{b6}{\B} & \rnode{c6}{\B} & \rnode{d6}{\B} & \rnode{e6}{  } & \rnode{f6}{\B} & \rnode{g6}{\B} & \rnode{h6}{\B} & \rnode{i6}{\B} & \\[2ex]
      & \rnode{a7}{  } & \rnode{b7}{  } & \rnode{c7}{  } & \rnode{d7}{  } & \rnode{e7}{\B} & \rnode{f7}{  } & \rnode{g7}{  } & \rnode{h7}{  } & \rnode{i7}{  } & \\[2ex]
      \rnode{z8}{\B} & \rnode{a8}{\B} & \rnode{b8}{\B} & \rnode{c8}{  } & \rnode{d8}{  } & \rnode{e8}{  } & \rnode{f8}{  } & \rnode{g8}{  } & \rnode{h8}{\B} & \rnode{i8}{\B} & \rnode{y8}{\B} 
    \end{array}
    \psset{nodesep=5pt,linewidth=.1ex}
    \everypsbox{\scriptstyle}
    \ncline {z1}{a0} \naput{2}
    \ncline {a0}{b0} \naput{3}
    \ncline {b0}{c0} \naput{4}
    \ncline {g0}{h0} \naput{4}
    \ncline {h0}{i0} \naput{3}
    \ncline {i0}{y1} \naput{2}
    \ncline {c0}{c2} \nbput{5}
    \ncline {g0}{g2} \naput{5}
    \ncline {c2}{d2} \naput{4}
    \ncline[offset=2pt] {d2}{e1} \nbput{3}
    \ncline[offset=2pt] {e1}{d2} \nbput{2}
    \ncline {d2}{e3} \nbput{5}
    \ncline[offset=2pt] {e3}{f2} \nbput{3}
    \ncline[offset=2pt] {f2}{e3} \nbput{2}
    \ncline {e1}{f2} \naput{5}
    \ncline {f2}{g2} \naput{4}
    \ncline {c2}{c4} \nbput{3}
    \ncline {g2}{g4} \naput{3}
    \ncline {a4}{b4} \naput{5}
    \ncline {b4}{c4} \naput{4}
    \ncline {g4}{h4} \naput{4}
    \ncline {h4}{i4} \naput{5}
    \ncline {a4}{a6} \nbput{2}
    \ncline {b4}{b6} \nbput{2}
    \ncline {c4}{c6} \nbput{2}
    \ncline {g4}{g6} \naput{2}
    \ncline {h4}{h6} \naput{2}
    \ncline {i4}{i6} \naput{2}
    \ncline {a6}{b6} \nbput{5}
    \ncline {c6}{d6} \naput{4}
    \ncline[offset=2pt] {d6}{e7} \nbput{3}
    \ncline[offset=2pt] {e7}{d6} \nbput{2}
    \ncline {d6}{e5} \naput{5}
    \ncline[offset=2pt] {e5}{f6} \nbput{3}
    \ncline[offset=2pt] {f6}{e5} \nbput{2}
    \ncline {e7}{f6} \nbput{5}
    \ncline {f6}{g6} \naput{4}
    \ncline {h6}{i6} \nbput{5}
    \ncline {a6}{a8} \nbput{3}
    \ncline[offset=2pt] {b6}{b8} \nbput{4}
    \ncline[offset=2pt] {b8}{b6} \nbput{3}
    \ncline[offset=2pt] {h6}{h8} \nbput{4}
    \ncline[offset=2pt] {h8}{h6} \nbput{3}
    \ncline {i6}{i8} \naput{3}
    \ncline[offset=2pt] {z8}{a8} \nbput{5}
    \ncline[offset=2pt] {a8}{z8} \nbput{4}
    \ncline[offset=2pt] {i8}{y8} \nbput{5}
    \ncline[offset=2pt] {y8}{i8} \nbput{4}
  \end{displaymath}
  \caption{\label{fig:4'a}A locally Schur positive graph satisfying
    axioms $1,2,3$ and $5$ along with axiom $4'b$ but not $4'a$.}
\end{figure}

Figure~\ref{fig:4'b} shows a graph that violates only axiom $4'b$. The
generating function is not Schur positive. Neither $\varphi_3$ nor
$\varphi_4$ is needed. Each of $\varphi_5, \psi_4$ and $\psi_5$ can be
applied in exactly one place, and none of these preserves local Schur
positivity. In fact, both $\varphi_5$ and $\psi_4$ violate axiom $3$.

\begin{figure}[ht]
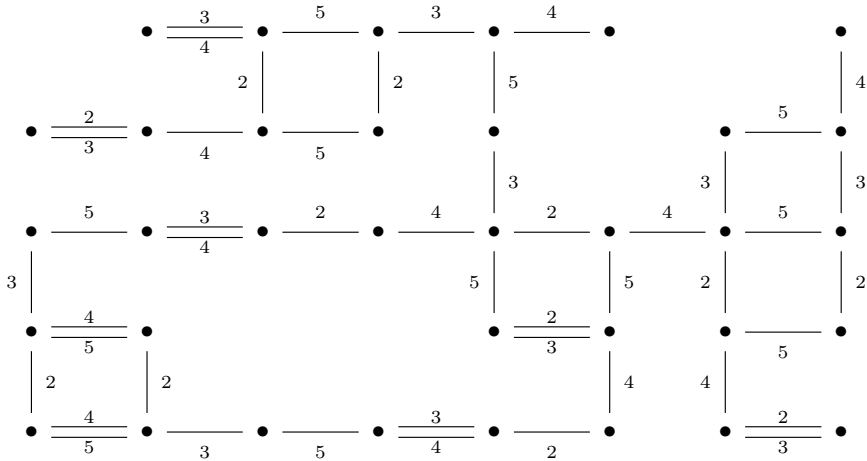

  \begin{displaymath}
    \begin{array}{\cs{9}\cs{9}\cs{9}\cs{9}\cs{9}\cs{9}\cs{9}c}
      \rnode{a1}{  } & \rnode{b1}{\B} & \rnode{c1}{\B} & \rnode{d1}{\B} & \rnode{e1}{\B} & \rnode{f1}{\B} & \rnode{g1}{  } & \rnode{h1}{\B} \\[6ex]
      \rnode{a2}{\B} & \rnode{b2}{\B} & \rnode{c2}{\B} & \rnode{d2}{\B} & \rnode{e2}{\B} & \rnode{f2}{  } & \rnode{g2}{\B} & \rnode{h2}{\B} \\[6ex]
      \rnode{b4}{\B} & \rnode{b3}{\B} & \rnode{c3}{\B} & \rnode{d3}{\B} & \rnode{e3}{\B} & \rnode{f3}{\B} & \rnode{g3}{\B} & \rnode{h3}{\B} \\[6ex]
      \rnode{b5}{\B} & \rnode{c5}{\B} & \rnode{c4}{  } & \rnode{d4}{  } & \rnode{e4}{\B} & \rnode{f4}{\B} & \rnode{g4}{\B} & \rnode{h4}{\B} \\[6ex]
      \rnode{b6}{\B} & \rnode{c6}{\B} & \rnode{d6}{\B} & \rnode{e6}{\B} & \rnode{f6}{\B} & \rnode{f5}{\B} & \rnode{g5}{\B} & \rnode{g6}{\B}
    \end{array}
    \psset{nodesep=5pt,linewidth=.1ex}
    \everypsbox{\scriptstyle}
    \ncline[offset=2pt] {b1}{c1} \nbput{4}
    \ncline[offset=2pt] {c1}{b1} \nbput{3}
    \ncline {c1}{d1} \naput{5}
    \ncline {d1}{e1} \naput{3}
    \ncline {e1}{f1} \naput{4}
    \ncline {c1}{c2} \nbput{2}
    \ncline {d1}{d2} \naput{2}
    \ncline {e1}{e2} \naput{5}
    \ncline {h1}{h2} \naput{4}
    \ncline[offset=2pt] {a2}{b2} \nbput{3}
    \ncline[offset=2pt] {b2}{a2} \nbput{2}
    \ncline {b2}{c2} \nbput{4}
    \ncline {c2}{d2} \nbput{5}
    \ncline {g2}{h2} \naput{5}
    \ncline {e2}{e3} \naput{3}
    \ncline {g2}{g3} \nbput{3}
    \ncline {h2}{h3} \naput{3}
    \ncline[offset=2pt] {b3}{c3} \nbput{4}
    \ncline[offset=2pt] {c3}{b3} \nbput{3}
    \ncline {c3}{d3} \naput{2}
    \ncline {d3}{e3} \naput{4}
    \ncline {e3}{f3} \naput{2}
    \ncline {f3}{g3} \naput{4}
    \ncline {g3}{h3} \naput{5}
    \ncline {b3}{b4} \nbput{5}
    \ncline {e3}{e4} \nbput{5}
    \ncline {f3}{f4} \naput{5}
    \ncline {g3}{g4} \nbput{2}
    \ncline {h3}{h4} \naput{2}
    \ncline[offset=2pt] {e4}{f4} \nbput{3}
    \ncline[offset=2pt] {f4}{e4} \nbput{2}
    \ncline {g4}{h4} \nbput{5}
    \ncline {b4}{b5} \nbput{3}
    \ncline {f4}{f5} \naput{4}
    \ncline {g4}{g5} \nbput{4}
    \ncline[offset=2pt] {b5}{c5} \nbput{5}
    \ncline[offset=2pt] {c5}{b5} \nbput{4}
    \ncline {b5}{b6} \naput{2}
    \ncline {c5}{c6} \naput{2}
    \ncline {f5}{f6} \naput{2}
    \ncline[offset=2pt] {g5}{g6} \nbput{3}
    \ncline[offset=2pt] {g6}{g5} \nbput{2}
    \ncline[offset=2pt] {b6}{c6} \nbput{5}
    \ncline[offset=2pt] {c6}{b6} \nbput{4}
    \ncline {c6}{d6} \nbput{3}
    \ncline {d6}{e6} \nbput{5}
    \ncline[offset=2pt] {e6}{f6} \nbput{4}
    \ncline[offset=2pt] {f6}{e6} \nbput{3}
  \end{displaymath}
  \caption{\label{fig:4'b}A locally Schur positive graph satisfying
    axioms $1,2,3$ and $5$ along with axiom $4'a$ but not
    $4'b$.}
\end{figure}

\clearpage
\section{Computer code to verify local Schur positivity and axiom $4'$}
\label{app:code}

In Proposition~\ref{prop:ax1235}, we claim that the graphs
$\G^{(k)}_{c,D}$ are locally Schur positive for any content vector $c$
and any $k$-descent set $D$. As mentioned in the proof, it suffices to
check graphs of type $(5,5)$. Similarly, in
Proposition~\ref{prop:ax4p}, we claim that these graphs also satisfy
axiom $4'$. To show this, it suffices to check graphs of type
$(5,6)$. In this appendix, we provide the computer code used to verify
these cases. The following code is written in Maple. 

\subsection{Basic combinatorial objects}

The function \texttt{nextPerm()} takes as input a permutation (as a
single line array) and returns the next permutation in lexicographic
order or a special character (\texttt{NULL}) after the last
permutation. 

\begin{verbatim}

nextPerm := proc(word) local N, i, j, left, right, new;
  N:=nops(word);
  # READING RIGHT TO LEFT, FIND FIRST INSTANCE OF A DECENT
  i:=N;
  while i>1 do
    if word[i-1] < word[i] then break; fi;
    i:=i-1; od;
  # IF NO SUCH INSTANCE EXISTS, THIS IS THE LAST PERMUTATION
  if i=1 then RETURN(NULL); fi;
  # OTHERWISE FIND THE POSITION WITH WHICH TO SWAP 
  left:=i; right:=N;
  if word[N]>word[i-1] then left:=N;
  else while left+1<>right do
    j:=ceil((left+right)/2);
    if word[i-1]>word[j] then right:=j;
    else left:=j; fi; od; fi;
  # BUILD NEXT PERMUTATION
  new:=word;
  new[i-1]:=word[left];
  new[left]:=word[i-1];
  for j from i to floor((N+i)/2) do
    right:=new[N-(j-i)];
    new[N-(j-i)]:=new[j];
    new[j]:= right; od;
  return new;
end:

\end{verbatim}

Recall from the proof of Proposition~\ref{prop:ax1235} that we may
encode content vectors by recording the last position with which $w_j$
can form the first member of an inversion pair. The function
\texttt{nextContent()} takes as input a content vector and returns the
next content vector in lexicographic order or a special character
(\texttt{NULL}) after the last content vector.

\begin{verbatim}

nextContent := proc(convec) local C, R, N, i;
  N:=nops(convec); 
  ## i <= convec[i] <= convec[i+1] <= N+1
  for i from N to 1 by -1 do
    if convec[i] < N+1 then break; fi; od;
  ## NO INDEX CAN INCREASE
  if i=0 then return NULL;
  ## INCREASE AND RESET convec
  else return [op(convec[1..i-1]), seq(max(convec[i]+1,j+1),j=i..N)]; fi;
end:

\end{verbatim}

\subsection{Generating D graphs}

Given the above method for encoding content vectors, the conditions in
\eqref{eqn:Dk} may be reinterpretted as apply $d_i$ if $i$ and the
farther away of $\imo,\ipo$ are not attacking; otherwise apply
$\widetilde{d}_i$. The function \texttt{iMove()} takes a permutation
and a content vector and returns the permutation resulting from
applying $D_i^{(k)}$.

\begin{verbatim}

iMove := proc(word,convec,i)
local j, L, C, R, result;
  result:=word;
  ## INITIALIZE LOCATIONS TO 0, SEARCH FOR i-1,i,i+1
  L:=0; C:=0; R:=0;
  for j from 1 to nops(word) do
    if word[j]=i-1 then
      if L=0 then L:=j;
      elif C=0 then C:=j;
      else R:=j; fi;
    elif word[j]=i then
      if L=0 then L:=j;
      elif C=0 then C:=j;
      else R:=j; fi;
    elif word[j]=i+1 then
      if L=0 then L:=j;
      elif C=0 then C:=j;
      else R:=j; fi; fi; od;
  ## IF AN i IS NOT THE MIDDLE LETTER
  if word[C]<>i then
    ## DO AN i-SWITCH
    if convec[L] < R then result[L]:=word[R]; result[R]:=word[L];
    ## DO AN i-SHIFT
    elif word[L]=i then result[L]:=word[C]; result[C]:=word[R]; result[R]:=word[L];
    else result[L]:=word[R]; result[C]:=word[L]; result[R]:=word[C]; fi;
    return result;
  else return 0; fi;
end:

\end{verbatim}

The function \texttt{findIt()} is a simple routine to find an element
in an ordered list. It takes as input an element and a list, and it
returns the (first) index of the element, if found, otherwise it
returns $0$. (Note that Maple lists begin at index $1$.)

\begin{verbatim}

findIt := proc(elem,items) local i;
  for i from 1 to nops(items) do
    if elem=items[i] then return i; fi;
  od;
  return 0;
end:

\end{verbatim}

The function \texttt{connComp()} takes as input a permutation and a
content vector and returns the connected component of graph
$\G^{(k)}_{c,D}$ corresponding to the given content vector that
contains the given permutation. The graph is encoded as vertices and
edges. The vertices are an ordered list of permutations. The edges are
encoded as a set of pairs where the first entry is the set of indices
corresponding to the paired vertices, and the second entry is the
color of the edge. Double edges are encoded by digits in base $10$,
e.g. a double edge between $2$ and $3$ is encoded by the number $23$.

\begin{verbatim}

connComp := proc(word,convec) 
  local N, i, vertices, edges, w, u, j, m, top, mid, bot;
  ## INITIALIZE LOCAL VARIABLES
  N:=nops(word);
  j := binarySig(word);
  vertices:=[word]; edges:={};
  top := []; mid:=[word]; bot:=[];
  ## GENERATE VERTICES AND EDGES IN THREE LAYERS
  while mid <> [] do
    ## RUN THROUGH ALL VERTICES IN MIDDLE LAYER
    for w from 1 to nops(mid) do
      ## GENERATE ALL POSSIBLE EDGES
      for i from 2 to N-1 do
        u := iMove(mid[w],convec,i);
        ## IF i-EDGE EXISTS AND NOT ALREADY INCLUDED
        if u<>0 and not(u in top) then
          ## CHECK IF ALREADY IN BOTTOM LAYER
          j := findIt(u,bot);
          m := nops(vertices);
          if j=0 then
            j:=findIt(u,mid);
            ## ENTIRELY NEW VERTEX
            if j=0 then
              bot := [op(bot),u];
              ## CHECK FOR DOUBLE EDGE WITH i+1
              if i+1<N and u = iMove(mid[w],convec,i+1) then
                edges := edges union {[{m-nops(mid)+w,m+nops(bot)},i*11+1]};
                i:=i+1;
              else
                edges := edges union {[{m-nops(mid)+w,m+nops(bot)},i]}; fi;
            ## LATER VERTEX IN MIDDLE LAYER
            elif w<j then
              ## CHECK FOR DOUBLE EDGE WITH i+1
              if i+1<N and u = iMove(mid[w],convec,i+1) then
                edges := edges union {[{m-nops(mid)+w,m-nops(mid)+j},i*11+1]};
                i:=i+1;
              else
                edges := edges union {[{m-nops(mid)+w,m-nops(mid)+j},i]}; fi; fi;
          ## EXISTING VERTEX IN BOTTOM LAYER
          else
            ## CHECK FOR DOUBLE EDGE WITH i+1
            if i+1<N and u = iMove(mid[w],convec,i+1) then
              edges := edges union {[{m-nops(mid)+w, m+j},i*11+1]};
              i:=i+1;
            else
              edges := edges union {[{m-nops(mid)+w, m+j},i]}; fi; fi; fi;
      od; od;
    ## ADD NEW VERTICES TO LIST
    vertices := [op(vertices), op(bot)];
    ## RESET LAYERS
    top:=mid; mid:=bot; bot:=[]; od;
  return [vertices,edges];
end:

\end{verbatim}

\subsection{Verifying local Schur positivity}

The following routines compute the generating function of the
components.

The function \texttt{binarySig()} computes the signature of a
permutation encoded as a number by interpretting $\sigma$ to be a
binary expansion where $+1$ represents $0$ and $-1$ represents
$1$. For a partition, \texttt{domSig()} computes the signature for the
superstandard tableau of shape of the given shape.

\begin{verbatim}

binarySig := proc(word) local i, j, N, sig;
  N:=nops(word); sig:=0;
  for i from 1 to N-1 do
    for j from 1 to N do
      if word[j]=i+1 then sig:=sig+2^(i-1); break;
      elif word[j]=i then break; fi; od; od;
  return sig;
end:

domSig := mu -> convert([seq(2^(convert(mu[1..i],`+`)-1),i=1..nops(mu)-1)],`+`):

\end{verbatim}

The function \texttt{QtoS()} takes a sum of fundamental quasisymmetric
functions (indexed by integers taken from signatures as mentioned
above) and returns the Schur expansion or $0$ if the sum is not Schur
positive. This is the key ingredient in the function
\texttt{genFun()}, which takes a graph and returns its generating
function. 

\begin{verbatim}

if not assigned(`StoQ`) then StoQ:=table():
  ## N = 4
  StoQ[4] := Q[0]:
  StoQ[3,1] := Q[1]+Q[2]+Q[4]:
  StoQ[2,2] := Q[5]+Q[2]:
  StoQ[2,1,1] := Q[3]+Q[5]+Q[6]:
  StoQ[1,1,1,1] := Q[7]:
  ## N = 5
  StoQ[5] := Q[0]:
  StoQ[4,1] := Q[1]+Q[2]+Q[4]+Q[8]:
  StoQ[3,2] := Q[5]+Q[2]+Q[9]+Q[10]+Q[4]:
  StoQ[3,1,1] := Q[3]+Q[5]+Q[6]+Q[9]+Q[10]+Q[12]:
  StoQ[2,2,1] := Q[11]+Q[5]+Q[6]+Q[13]+Q[10]:
  StoQ[2,1,1,1] := Q[7]+Q[11]+Q[13]+Q[14]:
  StoQ[1,1,1,1,1] := Q[15]: fi:
if not assigned(StoQ[N]) then ERROR(`insufficient data to convert`); fi:

QtoS := proc(Qsum,N) local mu, m, r, sam; global StoQ;
  r:=Qsum; sam:=0; mu:=[N];
  while mu<>NULL do
    m := coeff(r,Q[domSig(mu)]);
    sam := sam + m*s[op(mu)];
    r := r - m*StoQ[op(mu)];
    mu:=nextPar(mu); od;
  if r=0 then return sam;
  else return 0; fi;
end:

genFun := GG -> QtoS(convert([seq(Q[binarySig(GG[1][i])],i=1..nops(GG[1]))],`+`),
                     nops(GG[1][1])):

\end{verbatim}

The above uses the function \texttt{nextPar()}, which a partition and
returns the next partition in lexicographic order. This procedure was
copied from John Stembridge's SF package.

\begin{verbatim}

nextPar := proc(mu) local i,k,m,r;
  if member(1,mu,'i') then i:=i-1 else i:=nops(mu) fi;
  if i=0 then NULL else
    k:=mu[i]-1; m:=iquo(nops(mu)-i+mu[i],k,'r');
    if r=0 then r:=NULL fi;
    [op(1..i-1,mu),k$m,r]; fi;
end:

\end{verbatim}

One can check $\LSP_4$ and $\LSP_5$ for all $\G^{(k)}_{c,D}$ with the
function \texttt{checkLSP}.

\begin{verbatim}

checkLSP := proc() local word, convec, G;
  print("CHECKING LSP FOR N=4");
  convec := [2,3,4];
  while convec <> NULL do 
    word := [1,2,3,4];
    while word <> NULL do
      G := connComp(word, convec);
      if genFun(G) = 0 then return false; fi;
      word := nextPerm(word); od;
    convec := nextContent(convec); od;
  print(true);
  print("CHECKING LSP FOR N=5");
  convec := [2,3,4,5];
  while convec <> NULL do 
    word := [1,2,3,4,5];
    while word <> NULL do
      G := connComp(word, convec);
      if genFun(G) = 0 then return false; fi;
      word := nextPerm(word); od;
    convec := nextContent(convec); od;
  return true;
end:

\end{verbatim}

\subsection{Verifying axiom $4'$}

The function \texttt{twoString()} takes a permutation and content
vector along with two edge colors $i$ and $j$, and it returns the
connected component of $\G^{(k)}_{c,D}$ restricted to $E_i \cup E_j$
containing the given permutation. Since edges are involutions, the
returned data is an list of vertices that alternate edges together
with an index $k$ specifying the location in the list of the given
permutation. 

\begin{verbatim}

twoString := proc(word, convec, i, j) local str, u, k;
  str := [word];
  u := iMove(word,convec,i);
  while u <> 0 do
    if u = str[1] or u = str[-1] then break; fi;
    str := [u,op(str)];
    u := iMove(u,convec,j);
    if u = 0 or u = str[1] or u = str[-1] then break; fi;
    str := [u,op(str)];
    u := iMove(u,convec,i); od;
  k := nops(str);
  u := iMove(word,convec,j);
  while u <> 0 do
    if u = str[1] or u = str[-1] then break; fi;
    str := [op(str),u];
    u := iMove(u,convec,i);
    if u = 0 or u = str[1] or u = str[-1] then break; fi;
    str := [u,op(str)];
    u := iMove(u,convec,j); od;
  return [str,k];
end:

\end{verbatim}

The function \texttt{twoString} is used in the following two functions
that check axiom $4'$.

\begin{verbatim}

axiom4pa := proc(word, convec) local N, i, u, mStr, pStr, mSet, pSet;
  N := nops(word);
  for i from 3 to N-2 do
    u := iMove(word,convec,i);
    if u = 0 then next; fi;
    if iMove(word,convec,i-1) <> 0 and iMove(word,convec,i+1) <> 0
    and iMove(u,convec,i-1) <> 0 and iMove(u,convec,i+1) <> 0 then
      mStr := twoString(word,convec,i-1,i)[1];
      mSet := {op(map(binarySig,mStr))};
      pStr := twoString(word,convec,i,i+1)[1];
      pSet := {op(map(binarySig,pStr))};
      if mSet <> pSet then return false; fi; fi; od;
  return true;
end:

axiom4pb := proc(word, convec) local N, M, i, str, k;
  N := nops(word);
  for i from 4 to N-2 do
    str := twoString(word,convec,i-2,i);
    k := str[2];
    str := str[1];
    M := nops(str);
    ## FINE FOR LOOPS
    if iMove(str[1],convec,i) = str[M] or iMove(str[1],convec,i-2) = str[M] 
    ## MUST NOT BE FIXED POINT FOR phi_i-2 OR phi_i
    or k = 1 or k = M then
        next; fi;
    ## CHECK NEITHER word NOR phi_i(word) IS A FIXED POINT FOR phi_i+1
    if iMove(str[k],convec,i+1) <> 0 and iMove(str[k+1],convec,i+1)<> 0 then
      ## phi_i phi_i-2(word) IS A FIXED POINT FOR phi_i+1
      if (k-2 >= 1 and iMove(str[k-2],convec,i+1) = 0) then
        ## CHECK NOTHING ELSE ON THE TWO STRING IS A FIXED POINT FOR phi_i+1
        k := k+1;
        while k <= M-2 do
          if iMove(str[k+2],convec,i+1) = 0 then return false; fi;
          k := k+2; od; fi; fi; od;
  return true;
end:

\end{verbatim}

Finally, one can check axiom $4'$ for all $\G^{(k)}_{c,D}$ with the
function \texttt{checkAx4p}.

\begin{verbatim} 

checkAx4p := proc() local word, convec;
  print("CHECKING AXIOM 4'a FOR N=5");
  convec := [2,3,4,5];
  while convec <> NULL do 
    word := [1,2,3,4,5];
    while word <> NULL do
        if not(axiom4pa(word,convec)) then return false; fi;
        word := nextPerm(word); od;
    convec := nextContent(convec); od;
  print(true);
  print("CHECKING AXIOM 4'b FOR N=6");
  convec := [2,3,4,5,6];
  while convec <> NULL do 
    word := [1,2,3,4,5,6];
    while word <> NULL do
        if not(axiom4pb(word,convec)) then return false; fi;
        word := nextPerm(word); od;
    convec := nextContent(convec); od;
  return true;
end:

\end{verbatim}

\end{document}